\documentclass[11pt,letterpaper,reqno]{amsart}
\setlength{\baselineskip}{15pt}
\setlength{\topmargin}{-1cm} 
\setlength{\oddsidemargin}{0.2cm}
\setlength{\evensidemargin}{0.2cm}
\setlength{\textwidth}{15.8cm}
\setlength{\textheight}{22.42cm}

\usepackage{amssymb,amsmath,amsfonts,amscd,amssymb,amsmath,amsbsy,amsthm}
\usepackage[all]{xy}
\usepackage{mathrsfs}
\usepackage{latexsym}
\usepackage[pagebackref]{hyperref}
\usepackage{graphicx,tikz}
\usepackage{TooYoung}
\usepackage{enumerate}
\usepackage{TooYoung}
\usepackage{enumerate}

\newtheorem{Thm}{Theorem}

\newtheorem{Corol}[Thm]{Corollary}
\numberwithin{equation}{section}
\newtheorem{Th}{Theorem}[section]
\newtheorem{Lemma}[Th]{Lemma}
\newtheorem{Coro}[Th]{Corollary}
\newtheorem{Prop}[Th]{Proposition}
\newtheorem{Eg}[Th]{Example}

\theoremstyle{definition}
\newtheorem{Def}[Th]{Definition}

\theoremstyle{remark}
\newtheorem{Rmk}[Th]{Remark}


\begin{document}

\title%
[A Pieri type formula for motivic Chern classes of Schubert cells]%
{A Pieri type formula for motivic Chern classes of Schubert cells in Grassmannians}

\author{Neil J.Y. Fan}
\address[Neil J.Y. Fan]{Department of Mathematics, 
Sichuan University, Chengdu, Sichuan 610065, P.R. China}
\email{fan@scu.edu.cn}

\author{Peter L. Guo}
\address[Peter L. Guo]{Center for Combinatorics, LPMC, 
Nankai University, Tianjin 300071, P.R. China}
\email{lguo@nankai.edu.cn}

\author{Changjian Su}
\address[Changjian Su]{Yau Mathematical Sciences Center, Tsinghua University, Beijing, China}
\email{changjiansu@mail.tsinghua.edu.cn}

\author{Rui Xiong}
\address[Rui Xiong]{Department of Mathematics and Statistics, University of Ottawa, 150 Louis-Pasteur, Ottawa, ON, K1N 6N5, Canada}
\email{rxion043@uottawa.ca}


\maketitle
{}

\def\Fun{\operatorname{\mathsf{Fun}}}
\def\CSM{c_{\mathrm{SM}}}
\def\MC{\operatorname{MC}}
\def\SM{s_{\mathrm{SM}}}
\def\SMC{\operatorname{SMC}}
\def\H{{H}}
\def\S{\mathcal{S}}
\def\Haff{\widehat{\H}}
\def\ev{\operatorname{ev}}
\def\T{{T}}
\def\bT{{\overline{T}}}
\def\A{\mathcal{A}_{\hbar}}
\def\Gr{\operatorname{Gr}}
\def\Fl{\operatorname{Fl}}
\def\GL{\operatorname{\it GL}}
\def\S{\mathcal{S}}
\def\pt{\mathsf{pt}}
\def\wdd{\operatorname{wd}}
\def\htt{\operatorname{ht}}

\def\ftail#1{\mathop{\makebox{\(\textnormal{\raisebox{0.125pc}{\tiny\textbullet}\rule{-0.4pc}{0pc}\texttt{[}}#1\textnormal{\texttt{|}}\)}}\nolimits}%
\def\fhead#1{\mathop{\makebox{\(\textnormal{\texttt{|}}#1\textnormal{\raisebox{0.125pc}{\tiny\textbullet}\rule{-0.4pc}{0pc}\texttt{]}}\)}}\nolimits}%
\def\xftail#1{\mathop{\makebox{\(\textnormal{\texttt{[}}#1\textnormal{\raisebox{0.125pc}{\tiny\textbullet}\rule{-0.425pc}{0pc}\texttt{|}}\)}}\nolimits}%
\def\xfhead#1{\mathop{\makebox{\(   \textnormal{\raisebox{0.125pc}{\tiny\textbullet}\rule{-0.4pc}{0pc}\texttt{|}}
#1\textnormal{\texttt{]}}\)}}\nolimits}
\def\ttail#1{\mathop{\makebox{\(\textnormal{\texttt{[}}#1\textnormal{\texttt{|}}\)}}\nolimits}%
\def\hhead#1{\mathop{\makebox{\(\textnormal{\texttt{|}}#1\textnormal{\texttt{]}}\)}}\nolimits}%
\def\ff#1{\mathop{\makebox{\({\textnormal{\texttt{[}}#1\textnormal{\texttt{]}}}\)}}\nolimits}
\def\f#1{\mathop{\makebox{\({\textnormal{\texttt{(}}#1\textnormal{\texttt{)}}}\)}}\nolimits}


\begin{abstract}
We prove a Pieri formula for motivic Chern classes of Schubert cells in the  equivariant  K-theory of Grassmannians, which is described in terms of ribbon operators on partitions.
Our approach is to transform  the Schubert calculus over Grassmannians to the calculation in a certain  affine Hecke algebra.
As a consequence, we derive a Pieri formula for Segre motivic   classes of Schubert cells in Grassmannians. We  apply the Pieri formulas to establish a relation between motivic Chern classes and Segre motivic classes, extending a well-known relation  between the classes of structure sheaves and ideal sheaves.  As another application, we find a symmetric power series  representative for  the class  of the dualizing sheaf  of a Schubert variety.
\end{abstract}

\setcounter{tocdepth}{1}
\tableofcontents


\section{Introduction}

The primary goal of this paper is to build a Pieri type formula for motivic  Chern classes of Schubert cells in the equivariant K-theory of Grassmannians. The notion of motivic Chern classes was defined by Brasselet, Sch\"urmann and Yokura \cite{BSY} as a generalization of the total $\lambda$ classes of the cotangent bundle to singular varieties. Its equivariant version was constructed by Feh\'er, Rim\'anyi, and Weber \cite{FRW}, see also Aluffi, Mihalcea, Sch\"urmann and Su \cite{AMSS19}.
Motivic Chern classes specialize to other important characteristic classes for singular varieties in homology, including for example  Chern classes by MacPherson \cite{MacPherson}, Todd classes by Yokura \cite{Yokura}, and $L$-classes by Cappell and Shaneson \cite{CS}.

We shall focus on (equivariant) motivic Chern classes of Schubert cells in the Grassmannian  $\Gr(k, n)$ or the (complete) flag variety  $\Fl(n)$. 
Motivic Chern classes of Schubert cells are equivalent to K-theoretic stable envelopes introduced by Okounkov \cite{Okounkov}, see \cite{AMSS19,FRW,K22,KW}, and can be computed via the Demazure--Lusztig operators \cite{AMSS19}.
They are also related, for example, to 
generalized oriented cohomology \cite{LSZZ}, 
mixed Hodge modules \cite{BSY,AMSS}, 
Tarasov--Varchenko weight functions \cite{FRW,Rimanyi}, Casselman basis \cite{AMSS19}, and Whittaker functions \cite{MSA}. We refer the readers to \cite{AMSS22} for several  `folklore properties' about  motivic Chern classes of Schubert cells. The Chevalley formula for  motivic Chern classes has been obtained recently in \cite{MNS2}.

Our main result is a Pieri type formula for equivariant motivic Chern classes of Schubert cells in $\Gr(k, n)$, which will be described in Theorem \ref{ThmA} below. {Building upon this, we obtain a Pieri formula for equivariant Segre motivic classes of Schubert cells, the dual basis of motivic Chern classes with respect to the  Poincar\'e pairing, see Theorem \ref{ThmB}.}  

The Pieri formulas lead to two applications. First, guided by the similarity of the Pieri formulas in Theorem \ref{ThmA} and  Theorem \ref{ThmB}, we deduce a surprising relation between equivariant motivic Chern classes and  Segre motivic classes, see Theorem  \ref{ThmD}. This generalizes a well-known relation concerning the classes of structure sheaves and ideal sheaves for Grassmannians. Second, we apply Theorem \ref{ThmA}  to find a symmetric power series representative for the class of the dualizing sheaf of an opposite  Schubert variety, see 
Theorem \ref{ThmE}. The symmetric power series is constructed by means of the image of stable  Grothendieck polynomials under the omega involution investigated first by Lam and Pylyavskyy \cite{LP} from a combinatorial point of view. 

To state Theorem \ref{ThmA}, we need to introduce the notion of \emph{ribbon Schubert  operators}  on partitions inside the $k\times (n-k)$ rectangle
$(n-k)^k$. We do
not distinguish a partition $\lambda=(\lambda_1\geq \lambda_2\geq \cdots\geq  \lambda_k\geq 0)$ with its Young diagram, a left-justified array with $\lambda_i$ boxes in row $i$. 
A \emph{ribbon } (also called rim hook, or border strip) is a nonempty connected skew shape $\eta=\mu/\lambda$, containing no $2\times 2$ squares $\young{&\\&}$. 
The \emph{head} (resp.,  \emph{tail}) of $\eta$ is the box at its top right  (resp., lower left) end. 
The \emph{height} $\htt(\eta)$  (resp.,  \emph{width} $\wdd(\eta)$)   of $\eta$ is the number of rows (resp., columns) of $\eta$. 
For a ribbon $\eta=\mu/\lambda$  with head in row $i$ and tail in row $j$, let
\begin{equation}\label{POIU}
\mathtt{h}(\eta)=\mu_i+k+1-i,\qquad\mathtt{t}(\eta)=\lambda_j+k+1-j.
\end{equation}

Fix a partition $\lambda\subseteq(n-k)^k$. For $i=1,\ldots,k$,  the (head-valued) \emph{ribbon   Schubert  operator}, denoted by $\fhead{i}\to \lambda$, is defined as follows
\begin{equation}\label{POIU-1s}
\fhead{i}\to \lambda = 
t_{c}\cdot \lambda+
\sum_{\mu}
\left(\hbar-(p-q)\,t_{\mathtt{h}(\mu/\lambda)}\right)\,p^{\htt(\mu/\lambda)-1}q^{\wdd(\mu/\lambda)-1}\cdot\mu,
\end{equation}
where the sum is taken over all $\mu\subseteq (n-k)^k$ such that $\mu/\lambda$ is a ribbon with head in row $i$, $c=\lambda_i+k+1-i$, and $p, q,\hbar$ are parameters. 
The ribbon Schubert operators may be regarded as generalizations of the classical Schur operators \cite{Fomin,FG},  the K-theory operators   \cite{LP}, and the ribbon Schur operators   \cite{Lam}.
Below is an example of the ribbon Schubert operator for $n=7$, $k=3$, $\lambda=(3,1,0)$, and $i=2$.
\begin{align*}
\fhead{2}\to \young{&&&u|\\\\|l|}
&=t_3\cdot\young{&&&u|\\{\Ythicker\wall{r}}||\\l|}
+
(\hbar-(p-q)t_4)\cdot\young{&&&u|\\&||*(yellow)\,\,\,2\bullet\\|l|}+
(\hbar-(p-q)t_5)q\cdot\young{&&&u|\\&|<|*(yellow)&>|*(yellow)\,\,\,2\bullet\\|l|}\\
&\quad+
(\hbar-(p-q)t_4)pq\cdot\young{&&&u|\\&|A|*(yellow)\,\,\,2\bullet\\<|*(yellow)&J|*(yellow)}+
(\hbar-(p-q)t_5)pq^2\cdot\young{&&&u|\\&F|*(yellow)&>|*(yellow)\,\,\,2\bullet\\<|*(yellow)&J|*(yellow)}.
\end{align*}


For $0\leq r\leq k$, let $c_r(\mathcal{V}^\vee)\in K_T(\Gr(k,n))$ be the $r$-th equivariant Chern class of the dual tautological bundle  over $\Gr(k,n)$. 

\begin{Thm}\label{ThmA}
For $\lambda\subseteq (n-k)^k$, let  $\MC_y(Y(\lambda)^\circ)\in K_T(\Gr(k,n))[y]$ be the equivariant motivic Chern class indexed by  the opposite Schubert cell $Y(\lambda)^\circ$ in $\Gr(k, n)$. Set 
 $(p, q, \hbar)=(1, -y, 1+y)$. 
Then, for $0\leq r\leq k$, we have 
\begin{align}\label{ThmA-PPP}
c_r(\mathcal{V}^\vee)\cdot \MC_y(Y(\lambda)^\circ)
=\sum_{1\leq i_1<\cdots<i_r\leq k}
\fhead{i_r}\to \cdots \to \fhead{i_1}\to 
\MC_y(Y(\lambda)^\circ).
\end{align}
Here, the notation 
\[
\fhead{i}\to \MC_y(Y(\lambda)^\circ)  = 
t_{c}\cdot \MC_y(Y(\lambda)^\circ) +(1+y)
\sum_{\mu}
\left(1-\,t_{\mathtt{h}(\mu/\lambda)}\right)(-y)^{\wdd(\mu/\lambda)-1}\cdot\MC_y(Y(\mu)^\circ) 
\]
is defined in  the same manner as  \eqref{POIU-1s} (in the case $(p, q, \hbar)=(1, -y, 1+y)$), except that here we replace  $\lambda$ by $\MC_y(Y(\lambda)^\circ)$, and   $\mu$ by    $\MC_y(Y(\mu)^\circ)$.
\end{Thm}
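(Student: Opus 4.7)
} The plan is to work inside the (graded or double) affine Hecke algebra $\widehat{H}$ that acts naturally on $K_T(\Fl(n))$ and, via pushforward, on $K_T(\Gr(k,n))$. As indicated in the abstract, the strategy is to transport the problem from Schubert calculus on the Grassmannian to an algebraic identity in $\widehat{H}$, and then unpack that identity combinatorially in terms of ribbons. Concretely, I first invoke the result of Aluffi--Mihalcea--Sch\"urmann--Su \cite{AMSS19} that motivic Chern classes of Schubert cells in $\Fl(n)$ are produced by iterating Demazure--Lusztig operators starting from a fixed class; via the canonical projection $\Fl(n)\to\Gr(k,n)$, one obtains a parallel statement for $\MC_y(Y(\lambda)^\circ)$ in $K_T(\Gr(k,n))[y]$, realized as the image of a certain element of $\widehat{H}$ acting on a base class. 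Multiplication by $c_r(\mathcal{V}^\vee)$, being multiplication by an equivariant K-theory class, is central in the appropriate sense, and it remains to identify this multiplication with a specific element in $\widehat{H}$.

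The second step is to write $c_r(\mathcal{V}^\vee)$ as the $r$-th elementary symmetric polynomial $e_r(x_1,\ldots,x_k)$ in the K-theoretic Chern roots of the tautological bundle, and to expand
\[
e_r(x_1,\ldots,x_k)=\sum_{1\leq i_1<\cdots<i_r\leq k} x_{i_r}\cdots x_{i_1}.
\]
Each factor $x_i$ acts on the motivic Chern class basis as a Jucys--Murphy type operator inside $\widehat H$. The heart of the argument is a single-variable Pieri identity: namely, that $x_i\cdot \MC_y(Y(\lambda)^\circ)$ equals $\fhead{i}\to \MC_y(Y(\lambda)^\circ)$ at the specialization $(p,q,\hbar)=(1,-y,1+y)$. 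I would prove this by a direct computation of $x_i$ in the Demazure--Lusztig presentation, using the recursive construction of $\MC_y(Y(\lambda)^\circ)$: commuting $x_i$ past a single Demazure--Lusztig operator $\tau_s$ produces either a diagonal term (giving the leading $t_c\cdot\MC_y(Y(\lambda)^\circ)$) or a correction governed by the $q$-commutation $\tau_s x \tau_s^{-1}$, and iterating these commutations forces the non-trivial boxes added to form a connected skew shape with no $2\times 2$ square, that is, a ribbon. The height/width statistics arise naturally from counting how many simple reflections of each type are used in the chain of commutations, and the factor $(1-t_{\mathtt h(\mu/\lambda)})$ comes from the head-row Demazure--Lusztig coefficient when the operator finally ``stops''.

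With the single-step case in hand, the general Pieri formula follows by induction on $r$: applying $x_{i_r}\cdots x_{i_1}$ one factor at a time and invoking the single-step identity gives the iterated ribbon operator $\fhead{i_r}\to\cdots\to\fhead{i_1}\to$ applied to $\MC_y(Y(\lambda)^\circ)$, and summing over strictly increasing sequences matches the expansion of $e_r$. A sanity check is that the formula must respect the rectangle constraint $\mu\subseteq (n-k)^k$, which it does because terms in $\fhead{i}$ producing $\mu\not\subseteq (n-k)^k$ correspond to Schubert cells that are zero in $K_T(\Gr(k,n))$.

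The main obstacle I foresee is the single-step identity itself: showing that $x_i$ really acts as $\fhead{i}$ requires proving that iterated Demazure--Lusztig commutations collapse into precisely the ribbon enumeration, with the geometric coefficients matching the specialization $(p,q,\hbar)=(1,-y,1+y)$. This is essentially a rank-one calculation amplified by a careful combinatorial tracking of indices; the non-commutativity of simple reflections in different rows is what forbids $2\times 2$ squares and enforces the height exponent on $p$ and width exponent on $q$. Once this combinatorial bookkeeping is done, the rest of the proof is formal.
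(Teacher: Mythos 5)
Your high-level plan is on the right track: the affine Hecke algebra translation, the Schubert-representation picture coming from \cite{AMSS19}, and the decomposition $c_r(\mathcal{V}^\vee)=e_r(x_1,\ldots,x_k)$ into monomial pieces all appear in the paper. However, the device that actually makes the ribbons appear is entirely missing from your plan, and its absence breaks both your single-step identity and your iteration. The paper does not compute $\bT_{w_\mu}x_i$ on the flag side and project afterward; it computes $\bT_{w_\mu}\,x_{i_1}\cdots x_{i_r}\,\Sigma_k^n$, where $\Sigma_k^n$ is a normalized quantum symmetrizer over $\S_k\times\S_{n-k}$ (Definition~\ref{YHN5766}). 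It is precisely the absorption identities $\bT_i\Sigma_k^n=p\,\Sigma_k^n$ that turn iterated Demazure--Lusztig corrections into a single ribbon contribution: the ribbon height in $p^{\htt(\eta)-1}$ counts how many reflections get absorbed into $\Sigma_k^n$ (see Lemmas~\ref{PLem4} and~\ref{PLem3-1}). Without $\Sigma_k^n$ you get a Monk-type formula indexed by permutations, not a ribbon formula indexed by partitions.

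Your ``single-variable Pieri identity'' $x_i\cdot\MC_y(Y(\lambda)^\circ)=\fhead{i}\to\MC_y(Y(\lambda)^\circ)$ is not well-posed on $K_T(\Gr(k,n))$: the class $x_i$ lives on the flag variety and does not descend unless it is part of a symmetric polynomial in $x_1,\ldots,x_k$. The nearest precise statement is the algebra identity $\bT_{w_\mu}\,x_i\,\Sigma_k^n=\bT_\mu\to\fhead{k+1-i}$ (the $r=1$ case of Theorem~\ref{Amonomial}), but note the $\Sigma_k^n$ at the far right. This is also why your proposed induction on $r$ cannot simply chain: $\bT_{w_\mu}x_jx_i\Sigma_k^n$ does not factor as the composite of two ``$r=1$'' steps, because $x_j x_i$ (for $\{i,j\}\subsetneq\{1,\ldots,k\}$) is not $\S_k\times\S_{n-k}$-symmetric and cannot be slid through a symmetrizer one factor at a time. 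The paper's Theorem~\ref{Amonomial} is instead proved by induction on $k$, peeling off $\mu_1$ via $\bT_{w_\mu}=\bT_{w_{\mu^-}}\bT_k^{[\mu_1]}$, applying a right-hand $\bT_k^{[j]}x_k\Sigma_k^n$ expansion (Lemma~\ref{PLem4}) and reconstituting the result through Lemmas~\ref{PLem2}--\ref{PLem3-1}. So while your intuition about where the argument should live is sound, the key combinatorial engine is missing and the proposed induction scheme would fail.
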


The strategy to prove Theorem \ref{ThmA} is to transform the Schubert calculus over Grassmannians to the determination of products in a certain affine Hecke algebra. This will be done through Sections \ref{sec3}--\ref{sec5}.

For an example to illustrate 
Theorem \ref{ThmA}, let us 
take $n=5$, $k=2$, $\lambda=(2,0)$, and $r=2$. By \eqref{ThmA-PPP}, we see that
\[
c_2(\mathcal{V}^\vee)\cdot \MC_y(Y(2,0)^\circ)=\fhead{2}\to \fhead{1}\to \MC_y(Y(2,0)^\circ).
\]
The following depicts the procedure of  
$\fhead{2}\to \fhead{1}\to \lambda=(2,0)$.
$$
\xymatrix@!=1.5pc{
&&&
\young{&&u|\\l|}
\ar[dll]|{{1}}
\ar[dr]|{{1}}
\ar[drrr]|{{1}}
&&&
\\
&
\young{&&{\Ythicker\wall{l}}|u|\\l|}
\ar[dl]|{{2}}
\ar[dr]|{{2}}
\ar[d]|{{2}}
&&&
\young{&&*(yellow)\,\,\,1\bullet\\l|}
\ar[dl]|{{2}}
\ar[dr]|{{2}}
\ar[d]|{{2}}
\ar[drr]|{{2}}
&&
\young{&&A|*(yellow)\,\,\,1\bullet\\
<|*(yellow)&=|*(yellow)&|J|*(yellow)}
\ar[dr]|{{2}}
\\
\young{&&{\Ythicker\wall{l}}|u|\\{\Ythicker\wall{l}}|l|}
&
\young{&&{\Ythicker\wall{l}}|u|\\
*(yellow)\,\,\,2\bullet}
&
\young{&&{\Ythicker\wall{l}}|u|\\
<|*(yellow)&>|*(yellow)\,\,\,2\bullet}
&
\young{&&*(yellow)\,\,\,1\bullet\\{\Ythicker\wall{l}}|l|}
&
\young{&&*(yellow)\,\,\,1\bullet\\
*(yellow)\,\,\,2\bullet}
&
\young{&&*(yellow)\,\,\,1\bullet\\
<|*(yellow)&>|*(yellow)\,\,\,2\bullet}
&
\young{&&*(yellow)\,\,\,1\bullet\\
<|*(yellow)&=|*(yellow)&>|*(yellow)\,\,\,2\bullet}
&
\young{&&A|*(yellow)\,\,\,1\bullet\\
<|*(yellow)&=|*(yellow)&{\Ythicker\wall{r}}|J|*(yellow)}}$$
%
So, with the abbreviation 
$M_{\lambda}=\MC_y(Y(\lambda)^\circ)$, we obtain that
\begin{align*}
c_2(\mathcal{V}^\vee)\cdot M_{(2,0)} 
&=  
t_1t_4\cdot M_{(2,0)}   
+(1+y)(1-t_2)t_4\cdot M_{(2,1)}
- y(1+y)(1-t_3)t_4\cdot M_{(2,2)}\\
&\quad +(1+y)t_1(1-t_5)\cdot M_{(3,0)} 
+ (1+y)^2(1-t_2)(1-t_5)\cdot M_{(3,1)}\\
&\quad- y(1+y)^2(1-t_3)(1-t_5)\cdot M_{(3,2)}
 + y^2(1+y)^2(1-t_4)(1-t_5)\cdot M_{(3,3)}\\
&\quad+y^2(1+y)t_4(1-t_5)\cdot M_{(3,3)}.
\end{align*}

Based on Theorem \ref{ThmA}, we can derive a Pieri formula for Segre motivic classes, which live in the localized equivariant K-theory of the Grassmannian, denoted by $\mathbb{K}_T(\Gr(k,n))$. The formulation requires {tail-valued} ribbon Schubert operators, denoted by $\xfhead{i}\to \lambda $, which has the same formulation as $\fhead{i}\to \lambda$ defined in \eqref{POIU-1s}, except that the variable $t_{\mathtt{h}(\mu/\lambda)}$ is replaced by $t_{\mathtt{t}(\mu/\lambda)}$ (the subscript $\mathtt{t}(\mu/\lambda)$ is defined in \eqref{POIU}).

\begin{Thm}\label{ThmB}
Set $(p, q, \hbar)=(1, -y, 1+y)$. For $\lambda\subseteq (n-k)^k$ and $0\leq r\leq k$, the equivariant  Segre motivic  class $\SMC_y(Y(\lambda)^\circ)\in \mathbb{K}_T(\Gr(k,n))$
satisfies the following Pieri formula:
\begin{align}\label{ThmB-QQQ}
c_r(\mathcal{V}^\vee)\cdot \SMC_y(Y(\lambda)^\circ)
=\sum_{1\leq i_1<\cdots<i_r\leq k}
\xfhead{i_r}\to \cdots \to \xfhead{i_1}\to 
\SMC_y(Y(\lambda)^\circ).
\end{align}
\end{Thm}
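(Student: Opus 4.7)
The plan is to derive Theorem~\ref{ThmB} from Theorem~\ref{ThmA} using the Poincar\'e duality between motivic Chern classes and Segre motivic classes in $\mathbb{K}_T(\Gr(k,n))$. By definition, $\{\SMC_y(Y(\lambda)^\circ)\}_\lambda$ is the dual basis of $\{\MC_y(Y(\mu)^\circ)\}_\mu$ under the localized Poincar\'e pairing, and multiplication by the scalar element $c_r(\mathcal{V}^\vee)$ is self-adjoint with respect to this pairing. Consequently, the structure constants of this multiplication expressed in the Segre basis are obtained from the structure constants in the motivic Chern basis by transposition. The task is then to identify this transposed expansion of Theorem~\ref{ThmA} with the tail-valued ribbon operator formula on the right-hand side of \eqref{ThmB-QQQ}.

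The combinatorial mechanism responsible for the head-to-tail interchange is the $180^{\circ}$ rotation symmetry of the ambient rectangle $(n-k)^k$, which sends any partition $\lambda \subseteq (n-k)^k$ to its complement $\lambda^c$ with $\lambda^c_i = (n-k) - \lambda_{k+1-i}$. Under this rotation each ribbon $\mu/\lambda$ is taken to the ribbon $\lambda^c/\mu^c$, with head and tail interchanged. At the level of equivariant parameters the rotation corresponds to the substitution $t_a \mapsto t_{n+1-a}$; a direct computation yields $\mathtt{h}(\mu/\lambda) + \mathtt{t}(\lambda^c/\mu^c) = n+1$, which is precisely the identity needed to translate the factors $t_{\mathtt{h}}$ appearing in Theorem~\ref{ThmA} into the factors $t_{\mathtt{t}}$ demanded by Theorem~\ref{ThmB}. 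Within the affine Hecke algebra framework developed in Sections~\ref{sec3}--\ref{sec5}, I expect this to correspond to an antiautomorphism that interchanges the primal and dual bases used to represent the $\MC_y$ and $\SMC_y$ classes respectively.

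The main obstacle will be the careful bookkeeping required to transport Theorem~\ref{ThmA} through these dualities while maintaining the strict-ordering condition $i_1 < \cdots < i_r$ on the row indices as well as the $(-y)^{\wdd-1}$ signs. Concretely, one must verify that reversing the direction of each chain $\lambda = \nu_0 \subseteq \cdots \subseteq \nu_r = \mu$ (forced by the transposition) and re-reading the resulting data through the antidiagonal reflection recovers a valid sequence of tail-valued ribbon operator applications, with matching height and width weights. I anticipate that the proof ultimately reduces to a single combinatorial lemma identifying the two chain enumerations, which should be a routine verification once the Hecke-algebraic dictionary between the $\MC$-basis and the $\SMC$-basis is in place.
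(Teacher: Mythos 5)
Your overall strategy --- Poincar\'e duality plus the $180^\circ$ rotation of the rectangle --- is close in spirit to the paper's, but there are two concrete problems that keep the proposal from closing.

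First, the Poincar\'e duality is mis-stated. By Proposition~\ref{SchRep4:Gr}(ii), the basis dual to $\{\SMC_y(Y(\lambda)^\circ)\}_\lambda$ is $\{\MC_y(X(\lambda)^\circ)\}_\lambda$, built from the \emph{non-opposite} Schubert cells, not $\{\MC_y(Y(\mu)^\circ)\}_\mu$ as you claim. With your stated duality, transposing the structure constants of Theorem~\ref{ThmA} would express $c_r(\mathcal{V}^\vee)\cdot\SMC_y(Y(\lambda)^\circ)$ as a sum over $\mu\subseteq\lambda$, which is the wrong inclusion direction. The $180^\circ$ rotation you invoke is not a decorative combinatorial observation: it is the Weyl action $w_0^L$, which sends $\MC_y(Y(\overline{\lambda})^\circ)$ to $\MC_y(X(\lambda)^\circ)$ and $t_a$ to $t_{n+1-a}$, and that is precisely the step required to pass from the $Y$-side data of Theorem~\ref{ThmA} to the $X$-side basis that is actually dual to the Segre classes. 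In the paper this intermediate step is Theorem~\ref{ThmBGH}.

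Second, and more substantively, the output of the duality-plus-rotation argument is \emph{not} the $\xfhead{i}$ formula stated in Theorem~\ref{ThmB}. The rotation interchanges both the $t$-index attached to the ribbon (which you notice via $\mathtt{h}(\mu/\lambda)+\mathtt{t}(\lambda^c/\mu^c)=n+1$) \emph{and} the row that carries the distinguished end of the ribbon. Tracking both effects, what you actually obtain is Theorem~\ref{ThmBBB}, a Pieri formula written in the operator $\ftail{i}$: add a ribbon with \emph{tail} in row $i$ and weight involving $t_{\mathtt{t}(\eta)}$. The operator $\xfhead{i}$ in Theorem~\ref{ThmB} is a different thing: add a ribbon with \emph{head} in row $i$, still with weight $t_{\mathtt{t}(\eta)}$. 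As individual operators $\ftail{i}$ and $\xfhead{i}$ do not agree, and the identity that converts one Pieri formula into the other --- equation~\eqref{OMJUE} in Theorem~\ref{sect8-2} --- is valid only after summing over all strictly increasing sequences $i_1<\cdots<i_r$. Its proof in the appendix is a genuine bijection argument, resting on commutation relations for the refined ribbon operators $\upsilon_{ab}$ and on showing the $a$'s (resp.\ the $b$'s) along any nonzero chain are pairwise distinct. This is exactly the ``routine verification'' you were hoping to wave away; it is the combinatorial core of the passage and cannot be absorbed into the rotation bookkeeping.
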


Since $\MC_y(Y(\lambda)^\circ)|_{y=0} = [\mathcal{I}_{\partial Y(\lambda)}]$ and 
$\SMC_y(Y(\lambda)^\circ)|_{y=0} = [\mathcal{O}_{Y(\lambda)}]$, 
setting $y=0$ in Theorem \ref{ThmA} and \ref{ThmB}, we obtain Pieri formulas for the classes of ideal sheaves and structure sheaves, whose non-equivariant version was due to Lenart \cite{Lenart1} (see Remark \ref{JHG678}).

\begin{Corol}\label{coroC}
The non-equivariant  motivic Chern classes and  non-equivariant Segre motivic classes enjoy completely the same Pieri formula, which is obtained respectively from \eqref{ThmA-PPP} and  \eqref{ThmB-QQQ} by
replacing $\fhead{i}$ and  $\xfhead{i}$ with the operator  $\hhead{i}$. 
More precisely, $\hhead{i}$ is obtained from \eqref{POIU-1s} by letting  $(p, q, \hbar)=(1, -y, 1+y)$ and 
$t=0$:
\begin{equation*} 
\hhead{i}\to \lambda = 
(1+y)\sum_{\mu/\lambda=\eta}
(-y)^{\wdd(\eta)-1}\cdot\mu
\end{equation*}
with $\mu$ ranging  over  partitions in  $(n-k)^k$  such that $\mu/\lambda$ is a ribbon with head in row $i$.
\end{Corol}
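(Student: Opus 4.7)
The plan is to derive both non-equivariant Pieri formulas simultaneously as a direct specialization of Theorems \ref{ThmA} and \ref{ThmB}, using the passage from $T$-equivariant to non-equivariant K-theory. Concretely, non-equivariant K-theory of $\Gr(k,n)$ is obtained from $K_T(\Gr(k,n))$ by setting all equivariant parameters $t_j$ to zero in the paper's convention (this is the convention under which the example in Section 1 specializes correctly: setting $t_j = 0$ in the displayed expansion of $c_2(\mathcal{V}^\vee) \cdot M_{(2,0)}$ kills exactly the terms that involve boxes of row 1 and leaves the horizontal strip contribution in the bottom row, which is what the non-equivariant formula must give). The classes $\MC_y(Y(\lambda)^\circ)$ and $\SMC_y(Y(\lambda)^\circ)$ specialize under this map to the corresponding non-equivariant motivic Chern and Segre motivic classes, and $c_r(\mathcal{V}^\vee)$ specializes to the non-equivariant Chern class of the dual tautological bundle.

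First I would specialize the operator $\fhead{i}\to\lambda$ defined in \eqref{POIU-1s}. With $(p,q,\hbar)=(1,-y,1+y)$, the coefficient in front of $\mu$ takes the form $(1+y)(1-t_{\mathtt{h}(\mu/\lambda)})(-y)^{\wdd(\mu/\lambda)-1}$, and the ``diagonal'' contribution is $t_c \cdot \lambda$ with $c=\lambda_i+k+1-i$. Setting $t_j = 0$ for all $j$, the diagonal term vanishes, while each factor $(1-t_{\mathtt{h}(\mu/\lambda)})$ becomes $1$. Hence $\fhead{i}\to\lambda$ specializes precisely to
\[
(1+y)\sum_{\mu/\lambda=\eta}(-y)^{\wdd(\eta)-1}\cdot\mu,
\]
with $\eta$ ranging over ribbons inside $(n-k)^k$ whose head lies in row $i$; this is exactly the operator $\hhead{i}$ of the corollary. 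An identical argument applies to $\xfhead{i}\to\lambda$: the only difference between $\fhead{i}$ and $\xfhead{i}$ is whether one writes $t_{\mathtt{h}(\mu/\lambda)}$ or $t_{\mathtt{t}(\mu/\lambda)}$ in the non-diagonal factor, but at $t=0$ both become $1$. Thus $\fhead{i}$ and $\xfhead{i}$ have the same specialization $\hhead{i}$.

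Next I would check that the specialization commutes with composition of operators. The composition $\fhead{i_r}\to\cdots\to\fhead{i_1}\to$ expands, for each intermediate chain $\lambda=\mu^{(0)}\subseteq\mu^{(1)}\subseteq\cdots\subseteq\mu^{(r)}$ of ribbon additions, into a product of $r$ polynomial coefficients in $t$ and $y$; evaluating this whole polynomial at $t=0$ gives the same result as first evaluating each factor at $t=0$ and then taking the product. So applying $t=0$ to the right-hand side of \eqref{ThmA-PPP} term-by-term produces $\sum_{i_1<\cdots<i_r}\hhead{i_r}\to\cdots\to\hhead{i_1}\to$ acting on the non-equivariant class; similarly for \eqref{ThmB-QQQ}. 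Combining with the observation that $\fhead{i}$ and $\xfhead{i}$ share a common specialization yields the unified Pieri formula.

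The one step requiring care, and the main place where I would want to spell out details carefully, is the identification of the $t=0$ specialization with the forgetful map $K_T(\Gr(k,n))\to K(\Gr(k,n))$ applied to both sides of Theorems \ref{ThmA} and \ref{ThmB}. This is essentially a compatibility statement between the conventions used to define $\MC_y(Y(\lambda)^\circ)$ and $\SMC_y(Y(\lambda)^\circ)$ in the equivariant setting and their non-equivariant counterparts; once that is granted (and it is consistent with the example computed in the text after Theorem \ref{ThmA}, which is the main sanity check I would perform), the corollary follows immediately by the two observations above. Remark~\ref{JHG678} is then the further specialization $y=0$, recovering Lenart's non-equivariant Pieri rule for structure sheaves and ideal sheaves.
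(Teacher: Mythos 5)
Your argument is correct and is essentially the proof the paper intends but leaves implicit: pass to non-equivariant K-theory by sending every $t_j\mapsto 0$, note that this kills the diagonal term $t_c\cdot\lambda$ in \eqref{POIU-1s} and turns the factors $1-t_{\mathtt{h}(\eta)}$ and $1-t_{\mathtt{t}(\eta)}$ into $1$ (while $p^{\htt(\eta)-1}=1$ since $p=1$), so that both $\fhead{i}$ and $\xfhead{i}$ collapse to $\hhead{i}$, and then observe that this specialization is a ring map so it commutes with the iterated composition in \eqref{ThmA-PPP} and \eqref{ThmB-QQQ}.

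One caveat: the parenthetical ``sanity check'' on the example following Theorem~\ref{ThmA} is garbled. Setting $t_j=0$ in that displayed expansion does \emph{not} kill ``the terms that involve boxes of row~1'' — the survivors $M_{(3,1)},M_{(3,2)},M_{(3,3)}$ all have a box added in row~1, and the killed term $M_{(3,0)}$ adds one too — rather, $t=0$ kills precisely those terms of the iterated operator in which at least one step took the diagonal $t_c\cdot\lambda$ branch instead of adding a ribbon. Since this is an aside and not load-bearing, it does not affect the correctness of the proof, but it is worth fixing so the consistency check actually supports the convention $t=0 \leftrightarrow$ forgetful map.
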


The similarity between the Pieri formulas in Theorems \ref{ThmA} and \ref{ThmB} indicates that
there should be some relation between   motivic Chern classes and Segre motivic classes.  Recall that
the total $\lambda$-class of the cotangent bundle in 
$K_T(\Gr(k,n))[y]$ is 
$$\lambda_y(\mathscr{T}^\vee_{\Gr(k,n)})
=\sum_{r\geq 0}y^r[\mathsf{\Lambda}^r\mathscr{T}^\vee_{\Gr(k,n)}].$$

\begin{Thm}\label{ThmD}
Let $\lambda\subseteq (n-k)^k$. 
Over $\mathbb{K}_T(\Gr(k,n))$, we have 
\begin{equation}\label{eq:thmMCSMC}
\lambda_y(\mathscr{T}^\vee_{\Gr(k,n)})\cdot \frac{(1-[\mathcal{O}_{Y(\square)}])\cdot \SMC_y(Y(\lambda)^\circ)}{1-[\mathcal{O}_{Y(\square)}]|_{\lambda}}=
 \MC_y(Y(\lambda)^\circ),
 \end{equation}
where $[\mathcal{O}_{Y(\square)}]$ is the class of the structure sheaf for a single box $\square$, and   $\cdot |_{\lambda}$ is the restriction map (see Section \ref{sec:relMCSMC} for detailed information). 
In the non-equivariant case, we get
\begin{equation}\label{nonequismc}
\lambda_y(\mathscr{T}^\vee_{\Gr(k,n)})\cdot (1-[\mathcal{O}_{Y(\square)}])\cdot \SMC_y(Y(\lambda)^\circ)=
\MC_y(Y(\lambda)^\circ).
\end{equation}
\end{Thm}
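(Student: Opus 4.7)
The plan is to derive \eqref{eq:thmMCSMC} by showing that $\MC_y(Y(\mu)^\circ)$ and $G(\mu)\,\SMC_y(Y(\mu)^\circ)$ satisfy the same Pieri-type recursion in $\mathbb{K}_T(\Gr(k,n))[y]$, where
\[
G(\mu) \;=\; \lambda_y(\mathscr{T}^\vee_{\Gr(k,n)}) \cdot \frac{1-[\mathcal{O}_{Y(\square)}]}{1-[\mathcal{O}_{Y(\square)}]|_\mu}.
\]
My first step would be to multiply both sides of the desired identity $\MC_y(Y(\mu)^\circ)=G(\mu)\SMC_y(Y(\mu)^\circ)$ by $c_1(\mathcal{V}^\vee)$ and invoke Theorems~\ref{ThmA} and~\ref{ThmB}: the diagonal terms (coefficient $t_c$) cancel term-by-term, and a coefficient comparison of $\SMC_y(Y(\nu)^\circ)$ for every ribbon $\nu/\mu$ reduces the task to the single identity
\begin{equation}\label{eq:thmD-key}
\frac{1-[\mathcal{O}_{Y(\square)}]|_{\mu}}{1-[\mathcal{O}_{Y(\square)}]|_{\nu}}=\frac{1-t_{\mathtt{t}(\nu/\mu)}}{1-t_{\mathtt{h}(\nu/\mu)}}.
\end{equation}
The higher-$r$ Pieri cases would then follow by iterating single-ribbon operators.

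The main technical step is \eqref{eq:thmD-key}. Under the bijection $\nu\leftrightarrow I_\nu=\{\nu_i+k+1-i:1\le i\le k\}\subseteq\{1,\dots,n\}$ between partitions and torus-fixed points of $\Gr(k,n)$, the restriction $[\mathcal{O}_{Y(\square)}]|_\nu$ has an explicit expression depending only on $I_\nu$. For a ribbon $\nu/\mu$, the symmetric difference $I_\mu\triangle I_\nu=\{\mathtt{t}(\nu/\mu),\mathtt{h}(\nu/\mu)\}$ by the very definitions in \eqref{POIU}, so all interior-box contributions to the ratio on the left of \eqref{eq:thmD-key} telescope and only the head/tail entries survive.

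With \eqref{eq:thmD-key} in hand, both $\{\MC_y(Y(\mu)^\circ)\}$ and $\{G(\mu)\SMC_y(Y(\mu)^\circ)\}$ obey the same Pieri recursion; agreement at the base case $\mu=\emptyset$, where both sides reduce to explicit expressions at the corresponding fixed point, propagates to every $\mu$ via the unitriangular structure of the recursion in the Schubert basis, establishing \eqref{eq:thmMCSMC}. For the non-equivariant identity \eqref{nonequismc}, I would use Corollary~\ref{coroC}: the non-equivariant Pieri recursions for $\MC_y$ and $\SMC_y$ already coincide, so the $K(\pt)[y]$-linear map $\SMC_y(Y(\lambda)^\circ)\mapsto \MC_y(Y(\lambda)^\circ)$ commutes with multiplication by every $c_r(\mathcal{V}^\vee)$, hence by every element of $K(\Gr(k,n))[y]$, and must be multiplication by a single $G_0\in K(\Gr(k,n))[y]$; a single explicit check identifies $G_0=\lambda_y(\mathscr{T}^\vee_{\Gr(k,n)})(1-[\mathcal{O}_{Y(\square)}])$.

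The main obstacle is the clean verification of \eqref{eq:thmD-key}: it demands pinning down the exact fixed-point restriction of $[\mathcal{O}_{Y(\square)}]$ and tracking how the ribbon combinatorics interact with the symmetric difference of the associated $k$-subsets. The rest of the proof is a formal consequence of the Pieri recursion.
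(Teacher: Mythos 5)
Your strategy and the paper's coincide in their essential structure: you note the intertwining behavior of the head/tail weight factors, and you recognize that the Pieri formulas force the map $\SMC_y(Y(\lambda)^\circ)\mapsto \MC_y(Y(\lambda)^\circ)$ (suitably normalized) to be a $\mathbb{K}_T(\Gr(k,n))$-module homomorphism. Your key identity
\[
\frac{1-[\mathcal{O}_{Y(\square)}]|_{\mu}}{1-[\mathcal{O}_{Y(\square)}]|_{\nu}}=\frac{1-t_{\mathtt{t}(\nu/\mu)}}{1-t_{\mathtt{h}(\nu/\mu)}}
\]
is exactly the paper's intertwining relation $(1-t_{\mathtt{t}(\eta)})\mathcal{D}_{\mu}=(1-t_{\mathtt{h}(\eta)})\mathcal{D}_{\lambda}$ for a ribbon $\eta=\mu/\lambda$, phrased through the reformulation $\mathcal{D}_\lambda=\mathcal{D}_\varnothing\bigl(1-[\mathcal{O}_{Y(\square)}]|_\lambda\bigr)$; the symmetric-difference observation $I_\mu\triangle I_\nu=\{\mathtt{t},\mathtt{h}\}$ is the right combinatorial reason it holds. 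Your handling of the non-equivariant case \eqref{nonequismc} via Corollary~\ref{coroC} is also essentially the paper's.

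The gap is in your determination of the ``constant'' in the equivariant case. Once you know $\varphi$ is a module homomorphism you have $\varphi(x)=\varphi(1)\cdot x$, but the claim that ``agreement at the base case $\mu=\emptyset$\ldots propagates via the unitriangular structure of the recursion'' does not by itself pin down $\varphi(1)\in\mathbb{K}_T(\Gr(k,n))$. The Pieri formula expresses $c_r\cdot\MC_y(Y(\lambda)^\circ)$ in terms of higher $\MC_y(Y(\mu)^\circ)$, so it is not a recursion you can run forward from the base. Extracting $\varphi(1)$ from $\varphi(\SMC_y(Y(\emptyset)^\circ))$ alone requires $\SMC_y(Y(\emptyset)^\circ)$ to be invertible in $\mathbb{K}_T(\Gr(k,n))$, which you would have to justify separately. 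The paper sidesteps this: it proves the localization identity $\MC_y(Y(\lambda)^\circ)|_\lambda=\lambda_y(\mathscr{T}^\vee_{\Gr(k,n)})|_\lambda\cdot\SMC_y(Y(\lambda)^\circ)|_\lambda\neq0$ for \emph{every} $\lambda$ (Lemma~\ref{lem:locidtt}) and hence gets $\varphi(1)|_\lambda=(\mathcal{D}\cdot\lambda_y(\mathscr{T}^\vee_{\Gr(k,n)}))|_\lambda$ at every fixed point, after which injectivity of the localization map finishes it. You would need a comparable all-fixed-points check, or else an invertibility argument, to close this step.
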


Setting $y=0$ in  \eqref{nonequismc} 
we obtain the following  relation in the ordinary K-theory due to Buch \cite[\textsection 8]{Buch}:
$$(1-[\mathcal{O}_{Y(\square)}])\cdot [\mathcal{O}_{Y(\lambda)}]=
[\mathcal{I}_{\partial Y(\lambda)}].$$
This was generalized to the equivariant K-theory, see, for example, \cite[Prop. 4.2]{BCMP},
$$\frac{(1-[\mathcal{O}_{Y(\square)}])\cdot [\mathcal{O}_{Y(\lambda)}]}{1-[\mathcal{O}_{Y(\square)}]|_{\lambda}}
=[\mathcal{I}_{\partial Y(\lambda)}],$$
which is an immediate consequence of \eqref{eq:thmMCSMC} by setting $y=0$.

As another application of Theorem \ref{ThmA}, we find a symmetric power series representative of the class $[\omega_{Y(\lambda)}]\in K(\Gr(k,n))$ of the dualizing sheaf of the opposite Schubert variety $Y(\lambda)$. To be specific, let $G_{\lambda}(x)$ be the stable Grothendieck polynomial, which is a symmetric power series introduced by Fomin and Kirillov \cite{FK-1} as the limit of the Grothendieck polynomials for Grassmannian permutations. 
Buch \cite{Buch} showed that $G_\lambda(x)$ can be interpreted as the generating function of set-valued tableaux of shape $\lambda$.
Let $J_\lambda(x)$ be the image of $(-1)^{|\lambda|}G_{\lambda}(-x_1,-x_2,\ldots)$ under the omega involution which sends  elementary symmetric functions $e_r(x)$ to  homogeneous symmetric functions $h_r(x)$. 
As proved by Lam and  Pylyavskyy  \cite[\textsection 9.7]{LP}, $J_{\lambda}(x)$ 
can be combinatorially generated by 
weak set-valued tableaux of shape $\lambda$.   

Define an algebra homomorphism 
\begin{equation}
\rho\colon \hat{\Lambda}\longrightarrow K(\Gr(k,n)),\qquad e_r(x) \longmapsto 
\begin{cases}
c_r(\mathcal{V}^\vee), & r\leq k,\\
0, & r> k.
\end{cases}    
\end{equation}
from the ring
$\hat{\Lambda}$ of symmetric power series  to $K(\Gr(k,n))$.
The following theorem reveals the geometric meaning of $J_\lambda(x)$.


\begin{Thm}\label{ThmE}
Let $\lambda\subseteq (n-k)^k$, and $\lambda'$ be its conjugate partition.
Then
\begin{equation}
\rho\big((1-G_{\square}(x))^nJ_{\lambda'}(x)\big)=[\omega_{Y(\lambda)}].    
\end{equation}
\end{Thm}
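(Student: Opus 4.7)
The plan is to deduce Theorem E from the non-equivariant Pieri of Corollary C by reducing it to a cleaner identity and then matching Pieri-type recursions on both sides.

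First, a direct K-theoretic computation gives $\rho((1-G_\square(x))^n) = [\omega_{\Gr(k,n)}]$. Indeed, since $Y(\square)$ is the Schubert divisor representing the hyperplane class $c_1(\mathcal{O}_{\Gr}(1)) = c_1(\det\mathcal{V}^\vee)$, the sequence $0\to\mathcal{O}(-Y(\square))\to\mathcal{O}\to\mathcal{O}_{Y(\square)}\to 0$ yields $1-[\mathcal{O}_{Y(\square)}] = [\det\mathcal{V}]$. From $T\Gr(k,n) = \mathcal{V}^\vee\otimes\mathcal{Q}$ together with $\det\mathcal{Q} = \det\mathcal{V}^\vee$ one obtains $\omega_{\Gr(k,n)} = (\det\mathcal{V})^n$, which matches. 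Theorem E thus reduces to the identity $\rho(J_{\lambda'}(x)) = [\omega_{Y(\lambda)}]\cdot [\omega_{\Gr(k,n)}]^{-1}$, a ``relative dualizing'' class.

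To prove the reduced identity, I would induct on $|\lambda|$ by computing $c_r(\mathcal{V}^\vee)\cdot\rho(J_{\lambda'}(x))$ in two ways. On the symmetric-function side, the Pieri rule for $J_{\lambda'}\cdot e_r$ is the $\omega$-dual of the classical Pieri for $G_\lambda\cdot h_r$ (expanding over set-valued horizontal strips à la Buch); after the conjugation $\lambda\leftrightarrow\lambda'$, this becomes an expansion over weak set-valued column strips on $\lambda$, which --- via a direct tableau-to-ribbon bijection --- should match the terms produced by the row-indexed ribbon operators $\hhead{i}$ of Corollary C at an appropriate specialization of $y$. On the geometric side, a Serre/Grothendieck-duality argument (applicable because opposite Schubert varieties are Cohen--Macaulay) converts the Pieri rule for $\MC_y$ furnished by Theorem A into a Pieri rule for the relative dualizing class. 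The base case $\lambda=\emptyset$ is immediate: both sides equal $1$.

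The main obstacle is the matching in the inductive step, which is two-fold. First, one must pinpoint the specialization of $y$ in Corollary C that converts the motivic-Chern-class Pieri into a recursion for the dualizing sheaf; this likely hinges on the duality pairing between $\MC_y$ and $\SMC_y$ implicit in Theorem D, combined with Grothendieck duality on the ambient $\Gr(k,n)$. Second, establishing the combinatorial bijection between ribbon tilings indexed by \emph{row} positions (from $\hhead{i}$) and the \emph{column}-indexed weak set-valued tableaux of Lam--Pylyavskyy enumerated by $J_{\lambda'}(x)$ requires careful tracking of the weights $(1+y)$ and $(-y)^{\wdd-1}$, so that after the conjugation $\lambda\leftrightarrow\lambda'$ the two generating functions agree term by term.
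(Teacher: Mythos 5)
Your overall strategy---establish $\rho\big((1-G_\square(x))^n\big)=[\omega_{\Gr(k,n)}]$ and then match Pieri-type recursions to pin down the rest---is indeed the paper's plan, and your derivation of the base identity via $1-[\mathcal{O}_{Y(\square)}]=[\det\mathcal V]$ and $\omega_{\Gr(k,n)}=(\det\mathcal V)^{\otimes n}$ is the same as Lemma \ref{BHT121}. However, there are two genuine gaps in what you propose, both localized in exactly the places you flag as ``the main obstacle.''

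First, there is no specialization of $y$ that turns Corollary \ref{coroC} into a Pieri rule for $[\omega_{Y(\lambda)}]$. The needed input is \emph{not} a duality manipulation combining Theorem \ref{ThmD} with Grothendieck duality on $\Gr(k,n)$; it is the identification, due to \cite[Theorem 5.1]{AMSS22}, of $[\omega_{Y(\lambda)}]$ as the \emph{leading} $y$-coefficient of the motivic Chern class: $\MC_y(Y(\lambda)^\circ)=y^{k(n-k)-|\lambda|}[\omega_{Y(\lambda)}]+(\text{lower $y$-degree})$, equivalently $[\omega_{Y(\lambda)}]=\lim_{y\to\infty}y^{|\lambda|-k(n-k)}\MC_y(Y(\lambda)^\circ)$. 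Extracting the top $y$-degree on both sides of Corollary \ref{coroC} forces each added ribbon to have height one, since $(1+y)(-y)^{\wdd(\eta)-1}$ contributes degree $\wdd(\eta)\le|\eta|$ with equality only when $\htt(\eta)=1$, and immediately yields the Pieri rule $c_r(\mathcal V^\vee)\cdot[\omega_{Y(\lambda)}]=\sum_\mu(-1)^{|\mu/\lambda|-r}[\omega_{Y(\mu)}]$ over $\mu/\lambda$ with exactly $r$ nonempty rows (Corollary \ref{I87R43}). Without this fact your proposed duality route is a sketch that is unlikely to close.

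Second, the combinatorial matching is substantially easier than the ``tableau-to-ribbon bijection'' you anticipate. The paper invokes Lenart's identity $h_r(x)\cdot\tilde K_\lambda(x)=\sum_\mu(-1)^{|\mu/\lambda|-r}\tilde K_\mu(x)$ over $\mu\supset\lambda$ with $\mu/\lambda$ having exactly $r$ nonempty columns; applying $\omega$ gives the same expansion for $e_r(x)\cdot J_\lambda(x)$, and conjugating $\lambda\mapsto\lambda'$ converts ``$r$ nonempty columns'' into ``$r$ nonempty rows,'' matching Corollary \ref{I87R43} sign-for-sign and shape-for-shape with no tableau enumeration. Once you have those two inputs, the paper closes the argument not by induction on $|\lambda|$ but by defining $\varphi\colon\hat\Lambda\to K(\Gr(k,n))$, $(1-G_\square)^nJ_{\lambda'}\mapsto[\omega_{Y(\lambda)}]$ (or $0$ if $\lambda\not\subseteq(n-k)^k$), checking that $\varphi$ is a $\hat\Lambda$-module map for the $e_r$-action, and using $\varphi\big((1-G_\square)^n\big)=\rho\big((1-G_\square)^n\big)$ together with invertibility of $1-G_\square$ to conclude $\varphi=\rho$; your induction would do the same work but less cleanly.
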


This paper is organized as follows. In Section \ref{sec:geom}, we provide the necessary geometric background. 
In Section \ref{sec3}, we define an affine Hecke algebra $\Haff_n$ depending on parameters $p$, $q$ and $\hbar$, and demonstrate that   Schubert classes, classes of ideal sheaves and structure sheaves, Chern--Schwartz--MacPherson classes, and motivic Chern classes can be realized as basis in particular representations of $\Haff_n$. In Section
\ref{sec4}, we prove the equivalence between Schubert calculus over Grassmannians and the computation of certain products in $\Haff_n$. 
In Section \ref{sec5}, we show that the products in $\Haff_n$ that we need to calculate can be formulated in terms of the ribbon Schubert operators 
$\fhead{i}$. Section \ref{sec-66} is devoted to proofs of the theorems listed in the introduction. In Section \ref{sec67}, by taking specific values of $p$, $q$ and $\hbar$,  we further exhibit Pieri formulas for 
Schubert classes, classes of ideal sheaves and structure sheaves, Chern--Schwartz--MacPherson classes, and  Segre--MacPherson classes. From this point of view, our approach provides a unified way to deal with Pieri type formulas. The appendix section proves the equivalences between different types of ribbon Schubert operators. 
 
\subsection*{Acknowledgements}
We are grateful to Cristian Lenart and  Leonardo Mihalcea
for valuable discussions and suggestions. N.F. was supported by
the National Natural Science Foundation of China (No. 12071320) and Sichuan Science and Technology Program (No. 2023ZYD0012), P.G. was  supported by the National Natural Science Foundation of China (Nos. 11971250, 12371329), and 
R.X. acknowledges the partial support from the NSERC Discovery grant RGPIN-2015-04469, Canada. 

\section{Geometric background}\label{sec:geom}

This section lays down the geometric preliminaries.
Here and throughout, we assume that  $G=\GL_n(\mathbb{C})$,  $P=\binom{\GL_k\quad\,\,*\,\,\quad}{\phantom{\GL_k}\GL_{n-k}}$ is a standard parabolic subgroup, and  $B\subset P$ is the Borel subgroup consisting of upper triangular matrices of $G$. The Grassmannian  $\Gr(k,n)$  of $k$-dimensional subspaces in $\mathbb{C}^n$  is a homogeneous variety isomorphic to $G/P$. The {flag variety} $\Fl(n)$ is the variety of (complete) flags 
$\{0\}=V_0\subsetneq V_1\subsetneq\cdots \subsetneq V_n=\mathbb{C}^n$
of $\mathbb{C}^n$, which is isomorphic to
$G/B$.

\subsection{Schubert cells and  varieties}\label{FEDC}

Let $T$ be the torus of diagonal matrices of $G$. The associated Weyl group $\S_n=N_G(T)/T$ is the symmetric group of permutations of $\{1,2, \ldots, n\}$. For $1\leq i<n$, let $s_i=(i, i+1)$ be the simple transposition interchanging   $i$ and $i+1$. Then $\{s_1,\ldots, s_{n-1}\}$ constitutes a generating set of $\S_n$,   satisfying $s_i^2=1$ and the braid relations: $s_is_j=s_js_i$ for $|i-j|>1$, $s_is_{i+1}s_i=s_{i+1}s_is_{i+1}$.
Each   $w\in \S_n$ can be decomposed into a product of simple transpositions, and its length $\ell(w)$   is the minimum $\ell$ such that $w=s_{i_1}\cdots s_{i_\ell}$.
Combinatorially,   $\ell(w)$ equals the number of   inversions of $w$:
\[
\ell(w)=|\{(i, j)\colon 1\leq i<j\leq n, \,w(i)>w(j)\}|.
\]
For $u,w\in \S_n$, we say $u\leq w$ in the {\it Bruhat order}   if $u$ can be obtained by erasing some factors of a reduced decomposition of $w$.

Let $\S_k\times \S_{n-k}$ be the maximal  parabolic subgroup  of $\S_n$   generated by $\{s_1,\ldots, s_{n-1}\}\setminus \{s_k\}$. That is,  $w\in \S_n$ belongs to $\S_k\times \S_{n-k}$ if and only if $\{w(i)\colon 1\leq i\leq k\}=\{1,2,\ldots, k\}$.
Each  $w\in \S_n$ admits a unique \emph{parabolic decomposition} 
\begin{equation}\label{eq:parabolicdecomposition}
w=w_{\lambda}\,v,
\end{equation}
where  $v\in \S_k\times \S_{n-k}$, and $w_\lambda$ is a {\it Grassmannian permutation} indexed by a partition $\lambda$ inside $(n-k)^k$ (which is the
minimal coset representative of the left coset of $\S_k\times \S_{n-k}$ with respect to $w$). 
To be precise, $w_\lambda$ is the permutation obtained from $w$ by 
rearranging the first $k$ values   as well as  the last $n-k$ values in increasing order, with
 $\lambda$ being  determined by
\begin{equation}\label{GRWS-12}
w_{\lambda}(k+1-i)=\lambda_i+k+1-i \qquad \text{for   $i=1,\ldots,k$}.
\end{equation}
The correspondence in \eqref{GRWS-12} describes  a  bijection between partitions inside $(n-k)^k$
and Grassmannian permutations of $\S_n$ 
with descent at position $k$.
Notice that $\ell(w_\lambda)=|\lambda|$, which is the number of boxes in $\lambda$, and that $\ell(w)=\ell(w_\lambda)+\ell(v)=|\lambda|+\ell(v)$.
We shall adopt the notation $\Gr(w)=\lambda$. For example, 
for $k=3$ and $w=4165273\in \S_7$,
we have $w_\lambda=1462357$ with $\Gr(w)=\lambda=(3,2,0)$, and $v=213 6475\in \S_3\times \S_4$.

The Grassmannian $\Gr(k,n)=G/P$ is a disjoint union of {\it Schubert cells }indexed by partitions inside $(n-k)^k$. To each $\lambda\subseteq (n-k)^k$, the associated { Schubert cell} is  $X(\lambda)^\circ = Bw_{\lambda}P/P$. 
The {\it Schubert variety} $X(\lambda)=\overline{X(\lambda)^\circ}$ is its closure.
The {\it opposite} Schubert cell and variety are defined accordingly as 
$Y(\lambda)^\circ = B^-w_{\lambda}P/P$ and $Y(\lambda) = \overline{Y(\lambda)^\circ}$,
where $B^-$ is the opposite Borel subgroup consisting of lower triangular matrices of $G$. 

The flag variety $\Fl(n)=G/B$ has a decomposition into Schubert cells indexed by permutations of $\S_n$. The Schubert cell indexed by $w\in \S_n$ is 
$X(w)^\circ = BwB/B$,
whose closure is  the Schubert variety $X(w) = \overline{X(w)^\circ}$.
Similarly, the {opposite Schubert cell  and variety} in   $G/B$ are 
$Y(w)^\circ = B^-wB/B$ and $ Y(w) = \overline{Y(w)^\circ}$.
Let
\begin{align}\label{eq:pi:GB->GP}
    \pi\colon G/B\longrightarrow G/P
\end{align}
 be the natural projection.
Then
\begin{gather*}
\pi(X(w)^\circ) = X(\Gr(w))^\circ,\qquad
\pi(X(w)) = X(\Gr(w)),
\\[5pt]
\pi(Y(w)^\circ) = Y(\Gr(w))^\circ,\qquad
\pi(Y(w)) = Y(\Gr(w)).
\end{gather*}

\subsection{Equivariant cohomology and Schubert classes}

This subsection is mainly a brief overview of the equivariant cohomology of $\Gr(k,n)$ or $\Fl(n)$, as well as the basis of Schubert classes of Schubert varieties. The readers are referred to \cite{AF,Brion1997}
for more details.

Let   $\mathbb{E}_N=(\mathbb{C}^N\setminus 0)^n$ equipped with the action of torus  $T$. 
For a nonsingular $T$-variety  $X$, the {equivariant cohomology} of $X$ is a graded ring whose $i$-th component is
$$H^{i}_T(X):=H^{i}(\mathbb{E}\times^T X),$$
where $\mathbb{E}=\mathbb{E}_N$ for $N\gg 0$.  
A $T$-equivariant vector bundle $\mathcal{V}$ over $X$ induces a bundle $\mathcal{V}'=\mathbb{E}\times^T \mathcal{V}$ over $\mathbb{E}\times^T X$. For $i\geq 0$,   \emph{the $i$-th equivariant Chern class} is defined by 
$$c_i(\mathcal{V}):=c_i(\mathcal{V}')\in 
H^{2i}(\mathbb{E}\times^T X)=H^{2i}_T(X). $$
A $T$-equivariant subvariety $Y\subset X$ of codimension $d$ defines a subvariety $Y'=\mathbb{E}\times^T Y\subset \mathbb{E}\times^T X$ of the same codimension, its \emph{equivariant fundamental class} is
$$[Y] := [Y']\in H^{2d}(\mathbb{E}\times^T X)=H_T^{2d}(X).$$


Denote by $\pt=\operatorname{Spec}\mathbb{C}$   a single $\mathbb{C}$-point.
Following the historical convention in Schubert calculus, 
we write 
\begin{align*}
t_i & = c_1(\mathbb{C}_{-z_i})\in H^2_T(\pt), 
\end{align*}
where $\mathbb{C}_{-z_i}$ is the bundle corresponding to the character
$$T\ni\operatorname{diag}(z_1,\ldots,z_n)\longmapsto z_i^{-1}\in \mathbb{C}^\times. $$
Then $H^\bullet_T(\pt)$ is naturally isomorphic to the ring of polynomials in $t_1,\ldots, t_n$:
\[
H^\bullet_T(\pt)
=\mathbb{Q}[t_1,\ldots,t_n].
\]

Now we consider the case when $X$ is $\Gr(k,n)$ or $\Fl(n)$.  
Over the Grassmannian, we have the \emph{tautological bundle} $\mathcal{V}$ whose total space is 
$\{(V,v)\in \Gr(k,n)\times \mathbb{C}^n\colon v\in V\}$. 
Let  $\mathcal{V}^\vee$ be the  \emph{dual} tautological bundle of $\mathcal{V}$. 
For $0\leq r\leq k$, let
\begin{align}
c_r(\mathcal{V}^\vee) \in H^{2r}_T(G/P)
\end{align}
be the $r$-th equivariant Chern class of $\mathcal{V}^\vee$. 
Similarly, over the flag variety, we have the \emph{tautological flag}
$0= \mathcal{V}_0\subsetneq \mathcal{V}_1\subsetneq \cdots \subsetneq \mathcal{V}_n=\mathcal{O}^{\oplus n}$,
where the total space of $\mathcal{V}_i$  is 
$\{(V_\bullet,v)\in \Fl(n)\times \mathbb{C}^n\colon v\in V_i\}$. 
For $i=1,\ldots,n$, let
\begin{align}\label{X-123}
x_i = c_1\big((\mathcal{V}_i/\mathcal{V}_{i-1})^{\vee}\big)\in H^2_T(G/B)
\end{align}
be the first equivariant Chern class of $(\mathcal{V}_i/\mathcal{V}_{i-1})^{\vee}$.
Denote by
\begin{align}\label{eq:pullcoh}
\pi^*\colon H^\bullet_T(G/P)\longrightarrow
H^\bullet_T(G/B)
\end{align}
the pullback algebra homomorphism induced by \eqref{eq:pi:GB->GP}. 
By the Whitney product formula of Chern classes, it is not hard to obtain that
$$\pi^*c_r(\mathcal{V}^\vee)=e_r(x_1,\ldots,x_k)\in H^\bullet_T(G/B),$$
where 
\[
e_r(x_1,\ldots, x_k)=\sum_{1\leq i_1<\cdots<i_r\leq k} x_{i_1}\cdots x_{i_r}
\]
is the $r$-th elementary symmetric polynomial in $x_1,\ldots, x_k$.

\begin{Th}[Borel \cite{Borel}]\label{Borelthm:inj}
We have 
\begin{enumerate}[\rm(i)]
\setlength{\itemsep}{1ex}
    \item The algebra $H_T^\bullet(G/B)$ is generated by $x_1,\ldots,x_n$ over $H_T^\bullet(\pt)$; 
    
    \item The algebra $H_T^\bullet(G/P)$ is generated by $c_1(\mathcal{V}^\vee),\ldots,c_k(\mathcal{V}^\vee)$ over $H_T^\bullet(\pt)$;
    
    \item The pullback $\pi^*$ defined in \eqref{eq:pullcoh} is an injective algebra homomorphism. 
\end{enumerate}
\end{Th}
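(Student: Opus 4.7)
The plan is to deduce all three statements from the Leray--Hirsch theorem combined with the classical (non-equivariant) Borel presentation. The essential geometric input is that both $G/B$ and $G/P$ admit Schubert cell decompositions $X(w)^\circ\cong\mathbb{A}^{\ell(w)}$ and $X(\lambda)^\circ\cong\mathbb{A}^{|\lambda|}$ into affine spaces; in particular, the ordinary cohomology rings $H^\bullet(G/B)$ and $H^\bullet(G/P)$ are torsion-free and concentrated in even degrees, with $\mathbb{Q}$-bases given by the Schubert classes. Applying Leray--Hirsch to the Borel construction fibrations $\mathbb{E}\times^T G/B \to \mathbb{E}/T$ and $\mathbb{E}\times^T G/P \to \mathbb{E}/T$, whose fibers are $G/B$ and $G/P$, I conclude that $H_T^\bullet(G/B)$ and $H_T^\bullet(G/P)$ are free modules over $H_T^\bullet(\pt)$, and that any lift of a non-equivariant basis serves as a module basis.

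For parts (i) and (ii), I would invoke the classical theorem of Borel: $H^\bullet(G/B)$ is generated as a $\mathbb{Q}$-algebra by the Chern roots $\bar{x}_1,\ldots,\bar{x}_n$ (it is the coinvariant ring $\mathbb{Q}[x_1,\ldots,x_n]/(e_1,\ldots,e_n)$), and $H^\bullet(G/P)$ is generated by the images of $c_1(\mathcal{V}^\vee),\ldots,c_k(\mathcal{V}^\vee)$. Combined with the freeness established in the previous paragraph (so that multiplicative generators of the fiber cohomology lift to multiplicative generators over $H_T^\bullet(\pt)$), this immediately gives (i) and (ii). Concretely, any class in $H_T^\bullet(G/B)$ is a unique $H_T^\bullet(\pt)$-linear combination of (lifts of) Schubert classes, and each Schubert class can in turn be written as a polynomial in $x_1,\ldots,x_n$, yielding the claim; and similarly for $G/P$.

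For (iii), I would use the fact that $\pi\colon G/B\to G/P$ is a locally trivial fiber bundle with fiber $P/B\cong \Fl(k)\times \Fl(n-k)$, which itself has a Schubert-cell decomposition into even-dimensional affine cells. A second application of Leray--Hirsch, this time to the $T$-equivariant bundle $\pi$ itself, shows that $H_T^\bullet(G/B)$ is a free $H_T^\bullet(G/P)$-module via $\pi^*$; in particular $\pi^*$ is injective. An alternative proof uses equivariant integration along the fiber: one chooses a class $\alpha\in H_T^\bullet(G/B)$ (say, the class of a section or of a point class in $P/B$) with $\pi_*\alpha=1$, so that the projection formula gives $\pi_*(\pi^*\beta\cdot\alpha)=\beta$, exhibiting a left inverse to $\pi^*$.

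The main thing to justify carefully is the degeneration hypothesis for Leray--Hirsch, but this follows in each case from the Schubert decomposition and the resulting concentration in even degrees, so no serious obstacle is expected; indeed the result is classical and goes back to Borel. In writing the proof I would emphasize the uniform mechanism ``even cells $\Rightarrow$ Leray--Hirsch $\Rightarrow$ free module plus generators lift,'' since the same pattern recurs in all three parts.
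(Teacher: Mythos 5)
The paper does not prove Theorem~\ref{Borelthm:inj}; it is stated as background and attributed to Borel \cite{Borel} (see also the reference \cite{AF}). Your proposal is therefore filling a gap the authors chose to leave, and the route you take --- even-dimensional Schubert cells, hence $H^\bullet(G/B)$ and $H^\bullet(G/P)$ are free and concentrated in even degrees; Leray--Hirsch on the Borel fibration to get freeness of $H^\bullet_T$ over $H^\bullet_T(\pt)$, and on $\pi$ itself to get injectivity of $\pi^*$; then the classical coinvariant-ring description of $H^\bullet(GL_n/B)$ and the Chern-class description of $H^\bullet(G/P)$ --- is exactly the standard modern argument and is correct.

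One step you flag only implicitly deserves a sentence: passing from ``the $x_i$ (resp.\ $c_r(\mathcal{V}^\vee)$) generate the fiber cohomology'' to ``their equivariant lifts generate $H^\bullet_T$ over $H^\bullet_T(\pt)$'' is a graded Nakayama argument. Let $A\subseteq H^\bullet_T(X)$ be the $H^\bullet_T(\pt)$-subalgebra generated by the lifts and $\mathfrak m=(t_1,\dots,t_n)$; since $A$ surjects onto $H^\bullet_T(X)/\mathfrak m H^\bullet_T(X)\cong H^\bullet(X)$ and everything is non-negatively graded with $\mathfrak m$ in positive degree, one gets $A=H^\bullet_T(X)$. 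With that line added, the proof is complete. Your alternative for (iii) via the projection formula is also fine, taking $\alpha=[X(w_{(n-k)^k})]$, the Schubert class that is birational onto $G/P$ under $\pi$, so $\pi_*\alpha=1$.
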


Schubert varieties are $T$-equivariant,  and the corresponding equivariant fundamental classes
\begin{align*}
[X(\lambda)], \,\,  [Y(\lambda)] &\in H^\bullet_T(G/P), \qquad\lambda\subseteq (n-k)^k,\\
[X(w)],\,\,  [Y(w)] & \in H^\bullet_T(G/B),\qquad w\in \S_n,
\end{align*}
are    known as equivariant \emph{Schubert classes}. 
 We next collect some properties about Schubert classes. 
The  \emph{BGG Demazure operator} over $H_T^\bullet(G/B)$ \cite{BGG} is defined by 
\begin{align}\label{partiali}
\partial_i =\frac{\operatorname{id}-s_i}{x_i-x_{i+1}},  
\end{align}
where $s_i$ is the right Weyl group action exchanging $x_i$ and $x_{i+1}$. 
These operators satisfy  the relations $\partial_i^2  = 0$,
$\partial_i\partial_j=\partial_j\partial_i$ for $|i-j|>1$, and $\partial_i\partial_{i+1}\partial_i=\partial_{i+1}\partial_i\partial_{i+1}$.
Hence, for $w\in \S_n$, one may define $\partial_w=\partial_{i_1}\cdots\partial_{i_\ell}$ for any reduced decomposition $w=s_{i_1}\cdots s_{i_\ell}$. 

We also recall the {restriction map}. 
Denote $e=1\cdot B/B\in G/B$. Since $e$ is a torus fixed point, we have the restriction map
$$\cdot |_e: H_T^\bullet(G/B)\longrightarrow H_T^\bullet(e)\cong H_T^\bullet(\pt). $$
This map satisfies 
$$x_i|_e=t_i,\qquad t_i|_e=t_i.$$ 

\begin{Prop}[see for example {\cite{AF}}]\label{SchRep1}
We have
\begin{enumerate}[\rm(i)]
\setlength{\itemsep}{1ex}

\item $\big\{[Y(w)]\colon w\in \S_n\big\}$ forms a basis of $H^\bullet_T(G/B)$ over $H^\bullet_T(\pt)$; 

\item $[Y(w)] = \partial_{w^{-1}w_0}[Y(w_0)]$;

\item $[Y(w)]\big|_e\neq 0$ if and only if $w=\operatorname{id}$.

\end{enumerate}
\end{Prop}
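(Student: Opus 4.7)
\medskip

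\noindent\textbf{Proof proposal.}

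For (i), the plan is to exploit the Bruhat decomposition $G/B=\bigsqcup_{w\in \S_n} Y(w)^\circ$ into opposite Schubert cells, each $T$-equivariantly isomorphic to an affine space of complex dimension $\dim G/B-\ell(w)$. This exhibits $G/B$ as a $T$-invariant stratification with only even-real-dimensional cells, and a standard long-exact-sequence argument in equivariant cohomology then guarantees that the equivariant fundamental classes $\{[Y(w)]\}_{w\in \S_n}$ form a free $H^\bullet_T(\pt)$-basis of $H^\bullet_T(G/B)$.

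For (ii), the central technical input is the BGG recursion
\[
\partial_i [Y(w)] = \begin{cases} [Y(ws_i)] & \text{if } \ell(ws_i) < \ell(w), \\ 0 & \text{if } \ell(ws_i) > \ell(w). \end{cases}
\]
I would prove this geometrically via the $\mathbb{P}^1$-bundle $\pi_i\colon G/B\to G/P_i$, where $P_i$ is the minimal parabolic generated by $B$ and $s_i$. A direct analysis of the product $B^- wB\cdot P_i$ shows that $Y(w)$ is saturated under $\pi_i$ exactly when $\ell(ws_i)>\ell(w)$, and that otherwise $\pi_i|_{Y(w)}$ is birational onto $\pi_i(Y(ws_i))$. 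Since $\partial_i=\pi_i^*(\pi_i)_*$ on $H^\bullet_T(G/B)$, the two cases produce $0$ and $[Y(ws_i)]$ respectively. With the recursion in hand, choose a reduced expression $w^{-1}w_0=s_{i_1}\cdots s_{i_k}$; applying the operators successively walks down the Bruhat chain $w_0>w_0s_{i_k}>\cdots>w$, yielding $\partial_{w^{-1}w_0}[Y(w_0)]=[Y(w)]$. The independence of this expression on the reduced word is absorbed by the braid relations already satisfied by the $\partial_i$.

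For (iii), I would use the $T$-fixed-point picture. The $T$-fixed points of $G/B$ are $\{uB/B\colon u\in \S_n\}$, and a standard check shows that the fixed locus of $Y(w)$ is $\{uB/B\colon u\geq w\}$ in the Bruhat order. Thus $e=\operatorname{id}\cdot B/B$ belongs to $Y(w)$ iff $\operatorname{id}\geq w$, i.e.\ iff $w=\operatorname{id}$. For $w\neq \operatorname{id}$, the class $[Y(w)]$ is supported on $Y(w)$, which does not contain $e$, so by equivariant localization $[Y(w)]|_e=0$. When $w=\operatorname{id}$, we have $Y(\operatorname{id})=G/B$, hence $[Y(\operatorname{id})]=1$ and restricts to $1\neq 0$ at $e$.

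The main obstacle is the geometric verification underlying the BGG recursion in (ii): one must track precisely how each $B^- wB/B$ interacts with the $\mathbb{P}^1$-fibers of $\pi_i$ in order to conclude either saturation or birationality, and then carry out the push-pull computation. Everything else reduces to standard equivariant-cohomology bookkeeping and elementary combinatorics of the Bruhat order.
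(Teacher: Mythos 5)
The paper does not prove this proposition; it is quoted as standard, with a pointer to \cite{AF}. There is therefore no in-house argument to compare against, and the only question is whether your proof is sound. It is: part (i) is the standard affine-paving argument via the Bruhat stratification by opposite cells $Y(w)^\circ\cong\mathbb{A}^{\dim G/B-\ell(w)}$; part (iii) is the standard observation that $Y(w)=\bigsqcup_{u\ge w}Y(u)^\circ$ contains the fixed point $e$ iff $w=\operatorname{id}$, and that a class supported on a closed $T$-stable subvariety restricts to zero at any fixed point outside it; and part (ii) is the push-pull derivation of the BGG recursion for opposite Schubert classes, which you state in the correct direction (for $Y(w)$, of codimension $\ell(w)$, one has $\partial_i[Y(w)]=[Y(ws_i)]$ precisely when $\ell(ws_i)<\ell(w)$, and $0$ otherwise), and saturation of $Y(w)$ under $\pi_i$ is correctly attributed to the case $ws_i>w$ because $Y(ws_i)\subset Y(w)$ exactly then. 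Two small points worth making explicit: first, you invoke the identity $\partial_i=\pi_i^*(\pi_i)_*$ on $H^\bullet_T(G/B)$ without comment — this is itself a theorem (Kostant--Kumar) and in this paper's setup must be taken as known, on the same footing as the proposition itself; second, the well-definedness of $\partial_{w^{-1}w_0}$ via the braid relations should logically appear before, not after, you write down the composite, but the paper already records the braid relations for $\partial_i$ just above the proposition, so this is merely a matter of ordering.
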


Let us turn to the Grassmannian case. 
Recall   the \emph{Poincar\'e pairing} 
$$\left<\alpha,\beta\right>=\int_{G/P} \alpha\cup \beta \in H^\bullet_T(\pt)$$
for $\alpha,\beta\in H_T^\bullet(G/P)$. 
Denote by $w_0^L$ the left Weyl group action on $H^\bullet_T(G/P)$ for the longest element $w_0=n\cdots 21$. It satisfies 
$$w_0^Lc_r(\mathcal{V}^\vee)=c_r(\mathcal{V}^\vee) \quad (1\leq r\leq k),\qquad
w_0^Lt_i=t_{n+1-i}\quad (1\leq i\leq n).$$
For   $\lambda\subseteq (n-k)^k$, the dual partition 
$\overline{\lambda}=(\overline{\lambda}_1,\ldots, \overline{\lambda}_k) $ of $\lambda$ is complement of $\lambda$ in the rectangle $(n-k)^k$, that is, 
\begin{align}\label{eq:lambdabar}
    \lambda_i+\overline{\lambda}_{k+1-i}=n-k,\quad \text{for  $i=1,\ldots,k$}.
\end{align}

\begin{Prop}[see for example {\cite{AF}}]\label{SchRep1:Gr}
Let  ${\lambda}\subseteq (n-k)^k$, and $\overline{\lambda}$ be its dual. Then
\begin{enumerate}[\rm(i)]
\setlength{\itemsep}{1ex}

\item $\pi^*[Y(\lambda)] = [Y(w_\lambda)]$;

\item $[X(\lambda)]$ is dual to $[Y(\lambda)]$ with respect to the Poincar\'e pairing; 

\item $[X(\lambda)]=w_0^L [Y(\overline{\lambda})]$. 
\end{enumerate}
\end{Prop}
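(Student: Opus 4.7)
\smallskip

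\noindent\textbf{Proof plan for Proposition \ref{SchRep1:Gr}.}

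For (i), the plan is to identify $\pi^{-1}(Y(\lambda))$ with $Y(w_\lambda)$ and then invoke flat pullback of fundamental classes. Since $\pi\colon G/B\to G/P$ is a $P/B$-bundle, hence smooth and surjective, for any irreducible closed $T$-invariant subvariety $Z\subseteq G/P$ one has $\pi^{*}[Z]=[\pi^{-1}(Z)]$ in $H^\bullet_T(G/B)$, with the codimension preserved. I would then decompose
\[
\pi^{-1}(Y(\lambda)^\circ)\;=\;B^- w_\lambda P/B\;=\;\bigsqcup_{v\in \S_k\times \S_{n-k}} B^- w_\lambda v\, B/B\;=\;\bigsqcup_{v} Y(w_\lambda v)^\circ.
\]
Because $w_\lambda$ is the minimal coset representative, the product $w_\lambda v$ is length-additive for every $v\in \S_k\times \S_{n-k}$; concatenating reduced words shows $w_\lambda\leq w_\lambda v$ in Bruhat order, so each $Y(w_\lambda v)^\circ\subseteq Y(w_\lambda)$. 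Hence $\pi^{-1}(Y(\lambda))\subseteq Y(w_\lambda)$, and both sides are irreducible of the common dimension $\dim G/B-|\lambda|$, forcing equality. Thus $\pi^*[Y(\lambda)]=[Y(w_\lambda)]$.

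For (ii), the plan is the standard transversality argument upgraded to the equivariant setting. By codimension, $[X(\lambda)]\cdot[Y(\mu)]$ lies in top degree only when $|\lambda|=|\mu|$, so $\int_{G/P}[X(\lambda)]\cdot[Y(\mu)]=0$ otherwise. When $|\lambda|=|\mu|$, the Richardson-type description of cell intersections gives $X(\lambda)^\circ\cap Y(\mu)^\circ=\emptyset$ unless $\mu\subseteq\lambda$, which under $|\mu|=|\lambda|$ forces $\mu=\lambda$. In that case the intersection consists of the single $T$-fixed point $w_\lambda P/P$, and the two cells meet transversally there because their tangent spaces are the opposite affine cells of $G/P$ at $w_\lambda P/P$, whose weights are disjoint. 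Consequently the local intersection multiplicity equals $1$, and since the intersection point is $T$-fixed with a clean transverse intersection one obtains $\int_{G/P}[X(\lambda)]\cdot[Y(\mu)]=\delta_{\lambda,\mu}$, the Poincar\'e duality.

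For (iii), the plan is to realize $w_0^L$ geometrically through left multiplication $L_{w_0}\colon G/P\to G/P$, $gP\mapsto w_0 gP$. Since $w_0 B w_0^{-1}=B^-$, one computes
\[
L_{w_0}(X(\lambda)^\circ)=w_0 B w_\lambda P/P=(w_0 B w_0^{-1})(w_0 w_\lambda)P/P=B^-(w_0 w_\lambda)P/P.
\]
Using $w_0(j)=n+1-j$ together with \eqref{GRWS-12} and \eqref{eq:lambdabar},
\[
(w_0 w_\lambda)(k+1-i)=n+1-\lambda_i-(k+1-i)=\overline{\lambda}_{k+1-i}+i,
\]
so the set $\{(w_0 w_\lambda)(j):1\leq j\leq k\}$ equals $\{w_{\overline{\lambda}}(j):1\leq j\leq k\}$; hence $w_0 w_\lambda$ and $w_{\overline{\lambda}}$ lie in the same left coset of $\S_k\times \S_{n-k}$, giving $L_{w_0}(X(\lambda))=Y(\overline{\lambda})$. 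Finally, $L_{w_0}$ is $T$-equivariant up to the twist $t\mapsto w_0 t w_0^{-1}$, which on $H^\bullet_T(\pt)$ sends $t_i\mapsto t_{n+1-i}$; this is precisely the action $w_0^L$ stated before the proposition. Pulling back the equation $L_{w_0}(X(\lambda))=Y(\overline{\lambda})$ through $L_{w_0}$ therefore yields $w_0^L[Y(\overline{\lambda})]=[X(\lambda)]$.

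The main obstacle I anticipate is not a conceptual one but a bookkeeping one: making sure that the equivariant refinements in (ii) and (iii) are handled cleanly, in particular verifying that the transverse intersection in (ii) contributes $1$ (not an equivariant weight) in the equivariant Poincar\'e pairing, and that the $T$-action twist in (iii) is exactly the one encoded by $w_0^L$ on $H^\bullet_T(\pt)$.
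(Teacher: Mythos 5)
The paper itself gives no proof of this proposition; it is cited as standard from Anderson--Fulton, so there is no in-text argument to compare against. Your outline for parts (i) and (iii) is sound: the irreducibility and dimension count for $\pi^{-1}(Y(\lambda))=Y(w_\lambda)$ is the standard route, and the translation-by-$w_0$ computation correctly identifies $L_{w_0}(X(\lambda))$ with $Y(\overline{\lambda})$ via the same left-coset argument (one only needs $w_0w_\lambda$ and $w_{\overline{\lambda}}$ to lie in the same left $\S_k\times\S_{n-k}$-coset, which your set computation shows).

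There is, however, a genuine gap in your treatment of (ii). You claim that when $|\lambda|\neq|\mu|$ the pairing vanishes ``by codimension.'' This is true equivariantly only when $|\lambda|>|\mu|$: the pushforward $\int_{G/P}$ lowers degree by $2\dim G/P$, so $\int_{G/P}[X(\lambda)]\cdot[Y(\mu)]$ lands in $H^{2(|\mu|-|\lambda|)}_{T}(\pt)$, and $H_T^{\bullet}(\pt)=\mathbb{Q}[t_1,\ldots,t_n]$ vanishes in negative degrees. When $|\mu|>|\lambda|$ the target is in positive degree and the degree argument says nothing; the result \emph{could} be a nonzero polynomial in the $t_i$. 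You then invoke the Richardson emptiness criterion only under the hypothesis $|\lambda|=|\mu|$, so the case $|\mu|>|\lambda|$ is simply not covered. The fix is to apply the intersection-theoretic step unconditionally: $X(\lambda)\cap Y(\mu)=\emptyset$ for all $\mu\not\subseteq\lambda$ (in particular whenever $|\mu|>|\lambda|$), and equivariant classes with disjoint supports have vanishing product --- this can be seen, e.g., from GKM restriction to $T$-fixed points, since $[X(\lambda)]|_\nu\neq 0$ requires $\nu\subseteq\lambda$ while $[Y(\mu)]|_\nu\neq 0$ requires $\nu\supseteq\mu$. With that correction the degree argument is only needed (and suffices) for $|\lambda|>|\mu|$, and the transversality-at-a-single-fixed-point argument for $\lambda=\mu$ closes the proof.
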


\begin{Rmk}
The Schubert classes $[Y(w)]$ can be represented by the double Schubert polynomials $\mathfrak{S}_w(x, t)$  due to  Lascoux and Sch\"utzenberger \cite{LS}, which are defined by
$$\mathfrak{S}_w(x, t)=\partial_{w^{-1}w_0}\prod_{i+j\leq n}(x_i-t_j).$$
In the case when $w=w_{\lambda}$ is a Grassmannian permutation, $\mathfrak{S}_{w_\lambda}(x, t)$ reduces to the  double Schur polynomial $s_\lambda(x,t)$, representing the  Schubert class $[Y(\lambda)]$. 
\end{Rmk}

\subsection{Equivariant K-theory and classes of structure/ideal sheaves}
  
Let $X$ be a nonsingular $T$-variety. 
The $T$-equivariant K-group $K_T(X)$  of $X$ is the Grothendieck group of the category of $T$-equivariant coherent sheaves on $X$. 
To be specific, 
$K_T(X)$ is generated by the symbols $[\mathcal{F}]$ for all $T$-equivariant coherent sheaves modulo the relation $[\mathcal{F}]=[\mathcal{F}_1]+[\mathcal{F}_2]$ for all short exact sequences $0\to \mathcal{F}_1\to \mathcal{F}\to \mathcal{F}_2\to 0$ of equivariant coherent sheaves. 
With the assumption that  $X$ is non-singular, there is a well-defined product structure such that $[\mathcal{F}]\otimes [\mathcal{G}]=[\mathcal{F}\otimes \mathcal{G}]$ when $\mathcal{F}$ is flat. For a reference of the equivariant K-theory, see for example \cite{CG}.

For any $T$-equivariant vector bundle $\mathcal{V}$ of rank $k$ over $X$, we denote the $r$-th \emph{equivariant Chern class} by $c_r(\mathcal{V})\in K_T(X)$, in the sense of algebraic cobordism \cite{LM}. 
They are characterized by
$$\sum_{r=0}^k q^rc_r(\mathcal{V}) = \sum_{r=0}^k (1+q)^{k-r}(-q)^r [\mathsf{\Lambda}^r\mathcal{V}^\vee]\in K_T(X)[q],$$
where $q$ is a formal variable. 
For example, for a line bundle $\mathcal{L}$, the first Chern class is 
$c_1(\mathcal{L})=1-[\mathcal{L}]^{-1}$, see \cite[Example 1.1.5]{LM}. 
Note that $c_r(\mathcal{V})$ satisfies the 
Whitney product formula \cite[Prop 4.1.15 (3)]{LM}
and the splitting principle \cite[Remark 4.1.12]{LM}. Moreover, under the Chern character, the lowest degree component of $c_r(\mathcal{V})$ is the usual Chern class in cohomology. 



Let 
\begin{align*}
t_i & = c_1(\mathbb{C}_{-z_i})
=1-[\mathbb{C}_{z_i}]\in K_T(\pt).
\end{align*}
As before, $\mathbb{C}_{-z_i}$ is the bundle corresponding to the character
$$T\ni\operatorname{diag}(z_1,\ldots,z_n)\longmapsto z_i^{-1}\in \mathbb{C}^\times. $$
Then $K_T(\pt)$ is isomorphic to 
\begin{align*}
K_T(\pt)
=\mathbb{Q}\big[t_1,\tfrac{-t_1}{1-t_1},\ldots,t_n,\tfrac{-t_n}{1-t_n}\big].
\end{align*}

We turn to the case when $X$ is $\Gr(k,n)=G/P$ or $\Fl(n)=G/B$. 
Similar to   the equivariant cohomology,  consider
 the $r$-th equivariant Chern class of $\mathcal{V}^\vee$
\begin{align}\label{UHGR}
 c_r(\mathcal{V}^\vee)\in K_T(G/P),
\end{align}
where  $\mathcal{V}$ is the tautological bundle over $G/P$. 
For $i=1,\ldots,n$, write
\begin{align}\label{X-12345}
x_i &= c_1\big((\mathcal{V}_i/\mathcal{V}_{i-1})^\vee\big)
=1-[\mathcal{V}_i/\mathcal{V}_{i-1}]\in K_T(G/B).
\end{align}
Denote by
\begin{align}\label{eq:pullK}
\pi^*\colon K_T(G/P)\longrightarrow
K_T(G/B)
\end{align}
the algebra homomorphism induced by \eqref{eq:pi:GB->GP}.  
By the Whitney product formula, we have
\begin{align*}
    \pi^*c_r(\mathcal{V}^\vee) & =e_r(x_1,\ldots,x_k)\in K_T(G/B).
\end{align*}

\begin{Th}[\cite{KK}]\label{th:KBorelinj} We have 
\begin{enumerate}[\rm(i)]
\setlength{\itemsep}{1ex}

    \item The algebra $K_T(G/B)$ is generated by $x_1,\ldots,x_n$ over $K_T(\pt)$; 

    \item The algebra $K_T(G/P)$ is generated by $c_1(\mathcal{V}^\vee),\ldots,c_k(\mathcal{V}^\vee)$ over $K_T(\pt)$;
    
    \item The map $\pi^*$ defined in \eqref{eq:pullK} is an injective algebra homomorphism. 
\end{enumerate}
\end{Th}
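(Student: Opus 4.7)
The plan is to reduce each assertion to the K-theoretic projective bundle formula in the equivariant setting, following the approach of Kostant--Kumar \cite{KK}.

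For part (i), I would realize $G/B=\Fl(n)$ as the top of a tower of $T$-equivariant projective bundles
$$G/B = F_{n-1}\longrightarrow F_{n-2}\longrightarrow \cdots \longrightarrow F_1\longrightarrow F_0=\pt,$$
where $F_i=\Fl(1,2,\ldots,i;n)$ and each projection $F_{i+1}\to F_i$ is the projectivization of the universal rank-$(n-i)$ quotient bundle $\mathcal{O}^{\oplus n}/\mathcal{V}_i$ on $F_i$. The equivariant projective bundle theorem gives
$$K_T(\mathbb{P}(\mathcal{E})) \;=\; K_T(F_i)[\xi]\big/\bigl\langle \textstyle\prod_j (1-[\mathcal{L}_j]\xi)\bigr\rangle,$$
where $\xi=[\mathcal{O}(1)]$ and the $[\mathcal{L}_j]$ run over the Chern roots of $\mathcal{E}$. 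Iterating this formula and identifying the new $\xi$ at each stage with a new $x_{i+1}$ shows that $K_T(G/B)$ is generated over $K_T(\pt)$ by $x_1,\ldots,x_n$, establishing (i).

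For part (ii), a single application of the equivariant Grassmann bundle formula to $\Gr(k,n)\to\pt$ shows that $K_T(G/P)$ is generated over $K_T(\pt)$ by $c_1(\mathcal{V}^\vee),\ldots,c_k(\mathcal{V}^\vee)$. Alternatively, one may identify the image of $\pi^*$ in $K_T(G/B)$ with the subring of $(\S_k\times\S_{n-k})$-invariants in $x_1,\ldots,x_n$ (where the Weyl subgroup permutes $x_1,\ldots,x_k$ and $x_{k+1},\ldots,x_n$ separately); by classical invariant theory this is generated by the elementary symmetric polynomials in the two blocks, with those in the second block being redundant modulo the relations coming from the trivial bundle $\mathcal{O}^{\oplus n}$.

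For part (iii), the fiber of $\pi$ is $P/B$, which is a product of flag varieties, hence proper with $\chi(\mathcal{O}_{P/B})=1$. The projection formula then yields $\pi_*\circ\pi^* = \mathrm{id}_{K_T(G/P)}$, so $\pi^*$ admits a left inverse and is injective. The main obstacle is part (ii): one must check that the claimed Chern classes not only lie in $K_T(G/P)$ but in fact generate it as a $K_T(\pt)$-algebra. I would resolve this by comparing both sides as free $K_T(\pt)$-modules of rank $\binom{n}{k}$, the left-hand side having the Schubert basis $\{[\mathcal{O}_{Y(\lambda)}]\}_{\lambda\subseteq (n-k)^k}$ and the right-hand side being shown to be free of the same rank by a direct count via the equivariant Grassmann bundle formula.
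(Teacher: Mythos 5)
The paper does not prove this theorem --- it is cited to Kostant--Kumar [KK] without argument. Your approach via the equivariant projective-bundle and Grassmann-bundle theorems is correct in outline and is a standard geometric route, but it is quite different in spirit from [KK], whose proof is essentially algebraic (built around the nil-Hecke ring, Demazure operators, and localization at $T$-fixed points). Two cautions on your write-up. First, in (ii) the rank-counting fallback does not actually close the gap: two free $K_T(\pt)$-modules of the same rank can admit an injection that is not surjective (the cokernel may be torsion over $K_T(\pt)$), so equality of ranks does not force the subalgebra generated by $c_1(\mathcal{V}^\vee),\ldots,c_k(\mathcal{V}^\vee)$ to be all of $K_T(G/P)$; fortunately you do not need it, since the equivariant Grassmann-bundle formula for $\Gr(k,n)\to\pt$ already presents $K_T(G/P)$ as a quotient of a polynomial ring in these classes, giving generation directly. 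Second, in (iii) the statement ``$\chi(\mathcal{O}_{P/B})=1$ hence $\pi_*\circ\pi^*=\mathrm{id}$'' should be tightened: what one uses is $R\pi_*\mathcal{O}_{G/B}=\mathcal{O}_{G/P}$, i.e.\ $\pi_*\mathcal{O}_{G/B}=\mathcal{O}_{G/P}$ together with the vanishing $R^i\pi_*\mathcal{O}_{G/B}=0$ for $i>0$ (Bott vanishing on the fiber $P/B$), since $\pi_*$ in K-theory is the derived pushforward. With those adjustments your argument is a correct alternative proof.
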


In equivariant K-theory, we  consider the classes of structure sheaves 
\begin{align*}
[\mathcal{O}_{X(w)}],\,[\mathcal{O}_{Y(w)}]\,
& \in K_T(G/B),\qquad w\in \S_n,\\
[\mathcal{O}_{X(\lambda)}],\,[\mathcal{O}_{Y(\lambda)}]\,
& \in K_T(G/P),\qquad \lambda\subseteq (n-k)^k.
\end{align*}
Denote  
$$\mathcal{I}_{\partial X(w)}=\mathcal{O}_{X(w)}(-\partial X(w))
\subseteq \mathcal{O}_{X(w)},$$ 
which is the pushforward of the ideal sheaf of $\partial X(w)=X(w)\setminus X(w)^\circ$. 
In the  same manner, we can define $\mathcal{I}_{\partial Y(w)}$, $\mathcal{I}_{\partial X(\lambda)}$, and $\mathcal{I}_{\partial Y(\lambda)}$. Their corresponding classes are
\begin{align*}
[\mathcal{I}_{\partial X(w)}],\,[\mathcal{I}_{\partial Y(w)}] 
& \in K_T(G/B),\qquad w\in \S_n,\\
[\mathcal{I}_{\partial X(\lambda)}],\,[\mathcal{I}_{\partial Y(\lambda)}]
& \in K_T(G/P),\qquad \lambda\subseteq (n-k)^k.
\end{align*}

For $1\leq i\leq n-1$, 
  the \emph{Demazure operators} on $K_T(G/B)$\cite{Demazure}  are
\begin{align}
\pi_i
=\frac{(1-x_{i+1})\operatorname{id}-(1-x_i)s_i}{x_i-x_{i+1}}, \qquad 
\widehat{\pi}_i
=(1-x_{i})\frac{\operatorname{id}-s_i}{x_i-x_{i+1}}=\pi_i-\operatorname{id}.\label{pihat}
\end{align}
Note that $\pi_i$'s are geometrically induced by push-pull, see \cite{KK}.
These operators satisfy 
$\pi_i^2 = \pi_i$, $\widehat{\pi}_i^2 = -\widehat{\pi}_i $, as well as the braid relations.
Thus $\pi_w$ and $\widehat{\pi}_w$ for   $w\in \S_n$ are well defined. 
We similarly denote 
$$\cdot |_e: K_T(G/B)\longrightarrow K_T(e)\cong K_T(\pt), \qquad 
x_i|_e=t_i,\qquad t_i|_e=t_i.$$

\begin{Prop}[\cite{GK}]\label{SchRep2}
We have
\begin{enumerate}[\rm(i)]
\setlength{\itemsep}{1ex}


\item $\big\{[\mathcal{I}_{\partial Y(w)}]\colon w\in \S_n\big\}$ forms a basis of $K_T(G/B)$ over $K_T(\pt)$; 

\item $[\mathcal{I}_{\partial Y(w)}]= \widehat{\pi}_{w^{-1}w_0}[\mathcal{I}_{\partial Y(w_0)}]$; 

\item $[\mathcal{I}_{\partial Y(w)}]\big|_e\neq 0$ if and only if $w=\operatorname{id}$. 
\end{enumerate}
\end{Prop}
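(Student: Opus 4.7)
The plan is to first establish (ii), then derive (iii) from geometric considerations, and finally deduce (i) from (ii) together with a triangularity argument. The starting point is the base case $[\mathcal{I}_{\partial Y(w_0)}]$, where $Y(w_0) = \{e\}$ is the single $T$-fixed point, so the ideal sheaf class is essentially the class of the structure sheaf of a point, whose restriction to $e$ we can compute directly.

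For part (ii), the key is the geometric realization of $\widehat{\pi}_i = \pi_i - \operatorname{id}$ via the $\mathbb{P}^1$-bundle $p_i\colon G/B\to G/P_i$, where $P_i$ is the minimal parabolic associated to $s_i$. Writing $\pi_i = p_i^*(p_i)_*$ (Demazure's theorem), I would analyze the restriction of $p_i$ to the opposite Schubert variety $Y(w)$: when $\ell(ws_i) > \ell(w)$, the map $p_i|_{Y(w)}\colon Y(w)\to p_i(Y(w)) = Y(ws_i P_i/B)$ is generically a $\mathbb{P}^1$-fibration; when $\ell(ws_i) < \ell(w)$, it is birational. Combining this with a short exact sequence relating $\mathcal{I}_{\partial Y(w)}$ to $\mathcal{O}_{Y(w)}$ and $\mathcal{O}_{\partial Y(w)}$, one checks that
\[
\widehat{\pi}_i[\mathcal{I}_{\partial Y(w)}] = \begin{cases} [\mathcal{I}_{\partial Y(ws_i)}] & \text{if }\ell(ws_i) > \ell(w), \\ -[\mathcal{I}_{\partial Y(w)}] & \text{if }\ell(ws_i) < \ell(w).\end{cases}
\]
Iterating along a reduced decomposition of $w^{-1}w_0$ starting from $[\mathcal{I}_{\partial Y(w_0)}]$ yields (ii); the relation $\widehat{\pi}_i^2 = -\widehat{\pi}_i$ makes the result independent of the chosen reduced word.

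For (iii), I would use that the $T$-fixed point $e = B/B$ lies in the opposite Schubert cell $Y(w)^\circ = B^- w B/B$ precisely when $w = \operatorname{id}$. If $w \neq \operatorname{id}$, then $e \notin Y(w)$, hence all stalks at $e$ vanish and the restriction is $0$. For $w = \operatorname{id}$, the variety $Y(\operatorname{id}) = G/B$ and $\partial Y(\operatorname{id}) = \bigcup_i Y(s_i)$ meets $e$ transversally via the negative simple root directions, so $[\mathcal{I}_{\partial Y(\operatorname{id})}]|_e = \prod_{\alpha > 0}(1 - e^{\alpha})$ (or the appropriate $t$-version) — in particular nonzero. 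Then (i) follows by a triangularity argument: the classes $[\mathcal{I}_{\partial Y(w)}]$ expand in the Schubert basis $[\mathcal{O}_{Y(w)}]$ (known to be a basis by Proposition-type statements for structure sheaves) as $[\mathcal{I}_{\partial Y(w)}] = [\mathcal{O}_{Y(w)}] + \sum_{u > w} c_u [\mathcal{O}_{Y(u)}]$, via the exact sequence $0 \to \mathcal{I}_{\partial Y(w)} \to \mathcal{O}_{Y(w)} \to \mathcal{O}_{\partial Y(w)} \to 0$, and triangularity together with the linear independence coming from (ii) gives the basis property.

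The main obstacle I expect is the precise geometric push-pull computation in the proof of (ii), specifically verifying that $\widehat{\pi}_i$ sends ideal sheaf classes to ideal sheaf classes rather than just structure sheaf classes. This requires a careful analysis of how the boundary $\partial Y(w)$ behaves under the projection $p_i$ — in particular, one has to check that the pushforward of the ideal sheaf picks up exactly the boundary of $Y(ws_i)$ and no extra components. All the other ingredients (braid relations, localization formulas, triangularity) are formal once this geometric input is in place.
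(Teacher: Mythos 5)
The paper itself does not prove this proposition; it is cited verbatim from Graham--Kumar \cite{GK}, so there is no proof in the paper to compare against. Your plan reproduces the standard argument: base case at $Y(w_0)$, $\widehat\pi_i$-recursion for (ii), a support argument for (iii), and triangularity against $\{[\mathcal{O}_{Y(w)}]\}$ for (i). But there is a genuine inconsistency at the heart of step (ii). You describe the geometry correctly --- $p_i|_{Y(w)}$ is a $\mathbb{P}^1$-bundle onto its image when $\ell(ws_i)>\ell(w)$, and birational when $\ell(ws_i)<\ell(w)$ --- yet the recursion you write,
\[
\widehat{\pi}_i[\mathcal{I}_{\partial Y(w)}] = \begin{cases} [\mathcal{I}_{\partial Y(ws_i)}] & \text{if }\ell(ws_i) > \ell(w), \\[2pt] -[\mathcal{I}_{\partial Y(w)}] & \text{if }\ell(ws_i) < \ell(w),\end{cases}
\]
has the two cases swapped relative to that geometry. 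In the $\mathbb{P}^1$-bundle case one has $p_i^{-1}(p_i(Y(w)))=Y(w)$, so $\pi_i=p_i^*p_{i*}$ preserves the variety and $\widehat\pi_i=\pi_i-\operatorname{id}$ returns the $(-1)$-eigenvector; in the birational case $p_i^{-1}(p_i(Y(w)))=Y(ws_i)$ is the \emph{larger} variety, which is where the new ideal-sheaf class is produced. With the rule as written, the recursion cannot even start: for every $i$ one has $\ell(w_0 s_i)<\ell(w_0)$, so the first step would give $\widehat\pi_i[\mathcal{I}_{\partial Y(w_0)}]=-[\mathcal{I}_{\partial Y(w_0)}]$ and never reach a larger Schubert variety. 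The correct formula, parallel to $\partial_i[Y(w)]=[Y(ws_i)]$ for $\ell(ws_i)<\ell(w)$ underlying Proposition~\ref{SchRep1}(ii), is
\[
\widehat{\pi}_i[\mathcal{I}_{\partial Y(w)}] = \begin{cases} [\mathcal{I}_{\partial Y(ws_i)}] & \text{if }\ell(ws_i) < \ell(w), \\[2pt] -[\mathcal{I}_{\partial Y(w)}] & \text{if }\ell(ws_i) > \ell(w).\end{cases}
\]

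Two smaller slips. The base variety $Y(w_0)=\overline{B^-w_0B/B}$ is the fixed point $w_0B/B$, not $e=1\cdot B/B$; these are distinct $T$-fixed points for $n\ge 2$, and conflating them undermines the starting point of the induction as well as the consistency with (iii). And in (iii), for $w=\operatorname{id}$ the point $e$ does \emph{not} lie on $\partial Y(\operatorname{id})=\bigcup_i Y(s_i)$ (each $Y(s_i)$ misses $e$ since $s_i\ne\operatorname{id}$), so near $e$ the sheaf $\mathcal{I}_{\partial Y(\operatorname{id})}=\mathcal{O}_{G/B}(-\partial Y(\operatorname{id}))$ is $T$-equivariantly isomorphic to $\mathcal{O}_{G/B}$ and $[\mathcal{I}_{\partial Y(\operatorname{id})}]|_e=1$; the Euler-class-like product you wrote is not the right value, though the nonvanishing conclusion is of course correct. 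The triangularity argument for (i) is fine and does not actually need (ii) or (iii) --- unitriangularity of the transition matrix to $\{[\mathcal{O}_{Y(w)}]\}$ already gives the basis property.
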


We now consider the case of $\Gr(k,n)=G/P$. 
Recall   the \emph{Poincar\'e pairing} 
$$\left<\alpha,\beta\right>=\chi\big(G/P,\alpha\otimes \beta) \in K_T(\pt)$$
for $\alpha,\beta\in K_T(G/P)$, where $\chi$ is induced by the pushforward to a point. 
Recall that the left Weyl group   action  $w_0^L$ of    $w_0$  on $K_T(G/P)$ satisfies
\begin{equation}\label{wyleaCT}
w_0^Lc_r(\mathcal{V}^\vee)=c_r(\mathcal{V}^\vee) \quad (1\leq r\leq k),\qquad 
w_0^Lt_i=t_{n+1-i}\quad (1\leq i\leq n).
\end{equation}

\begin{Prop}[\cite{GK}]\label{SchRep2:Gr}
Let  ${\lambda}\subseteq (n-k)^k$, and $\overline{\lambda}$ be its dual. Then
\begin{enumerate}[\rm(i)]
\setlength{\itemsep}{1ex}

\item $\pi^*[\mathcal{I}_{\partial Y(\lambda)}] = \displaystyle\sum_{\begin{subarray}{c}
w\in \S_n\\
\Gr(w)=\lambda
\end{subarray}} [\mathcal{I}_{\partial Y(w)}]$ and $\pi^*[\mathcal{O}_{Y(\lambda)}]=[\mathcal{O}_{Y(w_{\lambda})}]$; 

\item $[\mathcal{I}_{\partial X(\lambda)}]$ is dual to $[\mathcal{O}_{Y(\lambda)}]$ with respect to the Poincar\'e pairing; 

\item $[\mathcal{I}_{\partial Y(\lambda)}]$ is dual to $[\mathcal{O}_{X(\lambda)}]$ with respect to the Poincar\'e pairing; 

\item $[\mathcal{I}_{\partial Y(\lambda)}]=w_0^L
[\mathcal{I}_{\partial X(\overline{\lambda})}]$;

\item $[\mathcal{O}_{Y(\lambda)}]=w_0^L [\mathcal{O}_{X(\overline{\lambda})}]$.
\end{enumerate}
\end{Prop}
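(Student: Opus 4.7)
My plan is to address the five parts sequentially, exploiting the flatness of the projection $\pi\colon G/B\to G/P$, Kleiman--Bertini transversality, and the action of the longest Weyl element $w_0$.

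For part (i), the geometric heart is the identity $\pi^{-1}(Y(\lambda))=Y(w_\lambda)$ as closed subvarieties of $G/B$: since $\pi$ is flat with irreducible fibers $\cong P/B$ and $Y(\lambda)$ is irreducible, the preimage $\pi^{-1}(Y(\lambda))$ is irreducible of codimension $|\lambda|$, and it contains the irreducible subvariety $Y(w_\lambda)$ of the same codimension, so the two coincide. Flatness of $\pi$ then delivers $\pi^*[\mathcal{O}_{Y(\lambda)}]=[\mathcal{O}_{Y(w_\lambda)}]$ at once. For the first identity I would combine the set-theoretic decomposition
\[
\pi^{-1}(Y(\lambda)^\circ)=\bigsqcup_{\Gr(w)=\lambda}Y(w)^\circ
\]
with the injectivity of $\pi^*$ (Theorem \ref{th:KBorelinj}(iii)), and verify the identity after restricting to each $T$-fixed point of $G/B$, using the translated analog of Proposition \ref{SchRep2}(iii) at every fixed point to characterize the ideal-sheaf basis by the support of its fixed-point restrictions. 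Alternatively, downward induction on $|\lambda|$ via the short exact sequence $0\to\mathcal{I}_{\partial Y(\mu)}\to\mathcal{O}_{Y(\mu)}\to\mathcal{O}_{\partial Y(\mu)}\to 0$ reduces everything to the structure-sheaf identity just established.

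For parts (ii) and (iii), I would invoke the standard Brion--Graham--Kumar duality argument. By Kleiman transversality the Richardson intersection $X(\lambda)\cap Y(\mu)$ is reduced, irreducible and Cohen--Macaulay of dimension $|\lambda|-|\mu|$ when $\mu\subseteq\lambda$, and empty otherwise. Combining this with the rational singularities of Richardson varieties (so that $\chi(X(\lambda)\cap Y(\mu),\mathcal{O})=1$) and the vanishing of the Euler characteristic of the boundary ideal sheaf $\mathcal{I}_{(\partial X(\lambda))\cap Y(\mu)}$ inside this Richardson variety when $\mu\subsetneq\lambda$, one obtains
\[
\chi\bigl(G/P,\,[\mathcal{O}_{Y(\mu)}]\otimes[\mathcal{I}_{\partial X(\lambda)}]\bigr)=\delta_{\lambda\mu},
\]
proving (ii); swapping the roles of $X$ and $Y$ yields (iii).

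For parts (iv) and (v), left multiplication by $w_0$ defines an automorphism $\varphi\colon G/P\to G/P$ that sends the $B$-orbit $BgP/P$ to the $B^-$-orbit $B^-(w_0g)P/P$. Using the explicit formula \eqref{GRWS-12}, a short combinatorial check shows that $w_0 w_{\overline{\lambda}}$ and $w_\lambda$ lie in the same left $W_P$-coset, whence $\varphi(X(\overline{\lambda}))=Y(\lambda)$ and $\varphi(\partial X(\overline{\lambda}))=\partial Y(\lambda)$ as closed subvarieties of $G/P$. Identifying $\varphi^*$ with $w_0^L$ on $K_T(G/P)$ (compatible with the character action recorded in \eqref{wyleaCT}) then delivers (iv) and (v). The principal obstacle is the first identity in (i): the locally closed pieces $Y(w_\lambda v)^\circ$ inside $\pi^{-1}(Y(\lambda)^\circ)$ have varying dimensions $\dim G/B-|\lambda|-\ell(v)$, so translating a set-theoretic decomposition into a clean K-theoretic identity requires either the careful inclusion--exclusion on the structure-sheaf side or the fixed-point localization argument sketched above.
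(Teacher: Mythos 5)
The paper does not prove this proposition itself; it cites Graham--Kumar \cite{GK}, so there is no in-text argument to compare against. Your route---identifying $\pi^{-1}(Y(\lambda))=Y(w_\lambda)$ via flatness and dimension count, deducing the structure-sheaf pullback immediately and handling the ideal-sheaf pullback by fixed-point localization (or inclusion--exclusion), then Kleiman transversality and rational singularities of Richardson varieties for the Poincar\'e duality in (ii)--(iii), and the $w_0$-translation automorphism carrying $B$-orbits to $B^-$-orbits for (iv)--(v)---is exactly the standard argument recorded in Graham--Kumar and Brion, and it is correct. You also correctly isolate the genuine subtlety (that the set-theoretic cell decomposition of $\pi^{-1}(Y(\lambda)^\circ)$ does not automatically give the K-theoretic ideal-sheaf identity because the cells have varying dimension) and provide two viable ways to close it.
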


The above proposition  can be illustrated by  the following diagram:
$$\xymatrix@!C=6pc{
[\mathcal{I}_{\partial Y(\lambda)}] 
\ar@{<->}[r]^{\text{Weyl action}}
\ar@{<->}[dr]|{\phantom{\rule[-0.25pc]{0pc}{1pc}
\text{Poincar\'e dual}}}& \ar@{<->}[dl]|{\rule[-0.25pc]{0pc}{1pc}\text{Poincar\'e dual}}
[\mathcal{I}_{\partial X(\lambda)}]\\
[\mathcal{O}_{Y(\lambda)}] 
\ar@{<->}[r]_{\text{Weyl action}}& 
[\mathcal{O}_{X(\lambda)}]
}$$

\begin{Rmk}\label{rmk:GrothenConv}
The classes of structure sheaves $[\mathcal{O}_{Y(w)}]$ can be represented by the double Grothendieck polynomials $\mathfrak{G}_w(x,t)$ also defined  by  Lascoux and Sch\"utzenberger \cite{LS-2}: for   $w\in \S_n$, 
$$\mathfrak{G}_w(x,t)
=\pi_{w^{-1}w_0}\prod_{i+j\leq n}\frac{x_i-t_j}{1-t_i}.$$ 
When $w=w_{\lambda}$ is a Grassmannian permutation, $\mathfrak{G}_{w_\lambda}(x, t)$ represents the class $[\mathcal{O}_{Y(\lambda)}]$. 
\end{Rmk}

\subsection{Chern--Schwartz--MacPherson classes and Segre--MacPherson class}
For any algebraic variety $X$ over $\mathbb{C}$, let $\Fun(X)$ denote the space of constructible functions over $X$ with values in $\mathbb{Z}$. For any proper morphism $f\colon X\to Y$, we define 
$f_*\colon \Fun(X)\to \Fun(Y)$ such that for any $\varphi\in \Fun(X)$
$$(f_*\varphi)(y)=\sum_{q\in \operatorname{im}\varphi}\chi_c\big(f^{-1}(y)\cap \varphi^{-1}(q)\big)q,$$
where $\chi_c$ stands for the Euler characteristic of compact support. It was conjectured by Grothendieck and Deligne and confirmed by MacPherson \cite{MacPherson} that there is a natural transformation
$$c_*\colon \Fun(-)\longrightarrow H_\bullet(-)$$
from the functor of rational constructible functions to Borel--Moore homology such that 
when $X$ is nonsingular, the constant constructible function $\mathbf{1}_X$ is sent to the total Chern class of tangent bundle, namely,
$$c_*(\mathbf{1}_X) = c(\mathscr{T}_X)\cap [X]. $$
The class $c_*(\mathbf{1}_X)$ (possibly for singular $X$)  was later shown to coincide with a class defined earlier by   Schwartz \cite{Sch65a,Sch65b}.
This transformation is extended to the equivariant setting by Ohmoto \cite{Ohmoto}. 

Assume further that $X$ is a complete nonsingular $T$-variety. We can identify the Borel--Moore homology with the cohomology through Poincar\'e duality. 
Denote by
$$\mathbb{H}_T(-) = H^\bullet_T(-)_{\mathbb{F}}$$
the localized equivariant cohomology of $X$, where $\mathbb{F}=\mathbb{Q}(t_1,\ldots,t_n)$ is the fraction field of $H^\bullet_T(\pt)$.
For a $T$-equivariant constructible subset $W$, we denote the \emph{Chern--Schwartz--MacPherson (CSM) class} and 
\emph{Segre--MacPherson class} by
\begin{align*}
\CSM(W)= 
c_*(\mathbf{1}_W) \in \mathbb{H}_T(X),\qquad 
\SM(W)= \frac{c_*(W)}{c(\mathscr{T}_{X})}\in \mathbb{H}_T(X),
\end{align*}
where $\mathbf{1}_W$ is the characteristic function of $W$, and $c(\mathscr{T}_{X})$ is the total Chern class of the tangent bundle of $X$. 

Restricting to the case $X=G/B$ or $G/P$, one can define 
\begin{align*}
\CSM(X(w)^\circ),\,\CSM(Y(w)^\circ),\,
\SM(X(w)^\circ),\,\SM(Y(w)^\circ)
&\in \mathbb{H}_T(G/B),\\[5pt]
\CSM(X(\lambda)^\circ),\,\CSM(Y(\lambda)^\circ),\,
\SM(X(\lambda)^\circ),\,\SM(Y(\lambda)^\circ)
&\in \mathbb{H}_T(G/P).
\end{align*}
For $1\leq i\leq n-1$, 
 the \emph{Demazure--Lusztig operator} on $\mathbb{H}_T(G/B)$ is 
\begin{align}\label{tihat}
\mathcal{T}_i=\partial_i-s_i.
\end{align}
They  satisfy $\mathcal{T}_i^2 = \operatorname{id}$ and the braid relations, and so  $\mathcal{T}_w$ for   $w\in \S_n$ is well defined.

\begin{Prop}[{\cite{AM,AMSS17}}]\label{OPUY}
We have
\begin{enumerate}[\rm(i)]
\setlength{\itemsep}{1ex}
\item $\{\CSM(Y(w)^\circ)\colon w\in \S_n\}$ is a basis of $\mathbb{H}_T(G/B)$ over $\mathbb{H}_T(\pt)$; 
\item $\CSM(Y(w)^\circ)= \mathcal{T}_{w^{-1}w_0}\CSM(Y(w_0)^\circ)$; 
\item $\CSM(Y(w)^\circ)\big|_e\neq 0$ if and only if $w=\operatorname{id}$.
\end{enumerate}
\end{Prop}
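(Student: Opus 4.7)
The plan is to prove (ii) first, then deduce (i) and (iii). The key geometric input for (ii) is to realize the Demazure--Lusztig operator $\mathcal{T}_i=\partial_i-s_i$ via the $\mathbb{P}^1$-bundle $p_i\colon G/B\to G/P_i$, where $P_i\supset B$ is the minimal parabolic attached to $s_i$, and to exploit the naturality of $c_*$ under proper and smooth morphisms.

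Concretely, I would first analyze how the constructible function $\mathbf{1}_{Y(w)^\circ}$ behaves under the push--pull $p_i^{*}(p_i)_{*}$. Each fiber of $p_i$ is a $\mathbb{P}^1$ carrying two $T$-fixed points; inspecting the cell structure on this fiber shows that, when $ws_i<w$,
\[
p_i^{*}(p_i)_{*}\mathbf{1}_{Y(w)^\circ}=\mathbf{1}_{Y(w)^\circ}+\mathbf{1}_{Y(ws_i)^\circ}.
\]
Applying $c_*$ to both sides and using the Verdier-type relation between $c_*(p_i^*\varphi)$ and $p_i^*c_*(\varphi)$ mediated by the Chern class of the relative tangent bundle of $p_i$, one identifies the induced cohomological operator on $\mathbb{H}_T(G/B)$ with $\operatorname{id}+\mathcal{T}_i$. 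Rearranging yields the recursion
\[
\mathcal{T}_i\,\CSM(Y(w)^\circ)=\CSM(Y(ws_i)^\circ),\qquad ws_i<w.
\]
With the base case $\CSM(Y(w_0)^\circ)=[w_0B/B]$ (the class of the single $T$-fixed point $w_0B/B$), iterating along a reduced word for $w^{-1}w_0$ proves (ii).

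For (i), the standard expansion $\mathcal{T}_i[Y(u)]=[Y(us_i)]+(\text{combinations of }[Y(v)]\text{ with }v<u)$ in the Schubert basis, combined with (ii) and an induction on $\ell(w^{-1}w_0)$, expresses
\[
\CSM(Y(w)^\circ)=[Y(w)]+\sum_{u<w}c_{u,w}\,[Y(u)],\qquad c_{u,w}\in\mathbb{H}_T(\pt).
\]
By Proposition \ref{SchRep1}(i) the transition matrix is upper-triangular with unit diagonal, hence invertible, and $\{\CSM(Y(w)^\circ)\}$ is itself a basis. For (iii), since $\CSM(Y(w)^\circ)$ is supported on $Y(w)$, equivariant localization forces $\CSM(Y(w)^\circ)|_e=0$ whenever $e=\operatorname{id}\cdot B/B\notin Y(w)$; and $e\in Y(w)$ iff $w=\operatorname{id}$, which handles the case $w\neq\operatorname{id}$. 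Non-vanishing at $w=\operatorname{id}$ follows from additivity of $c_*$ applied to the stratification $G/B=\bigsqcup_w Y(w)^\circ$: one gets $\sum_w\CSM(Y(w)^\circ)=\CSM(G/B)=c(\mathscr{T}_{G/B})\cap[G/B]$, whose restriction at $e$ equals $\prod_{\alpha}(1+\alpha|_e)$ over the weights $\alpha$ on $\mathscr{T}_{G/B,e}$, a nonzero element of $\mathbb{H}_T(\pt)$.

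The main obstacle is the precise identification of the cohomological operator produced by $p_i^{*}(p_i)_{*}$ together with the Verdier--Riemann--Roch correction with $\operatorname{id}+\mathcal{T}_i$, rather than a neighboring operator such as $\operatorname{id}+\partial_i$ or $\operatorname{id}-s_i$. Getting the Euler-class correction and the sign exactly right requires careful bookkeeping of the relative tangent bundle of $p_i$ and an analysis of how $Y(w)^\circ$ meets each $\mathbb{P}^1$-fiber in the various cases $ws_i\lessgtr w$. Once this calibration is secured, the remaining arguments follow formally.
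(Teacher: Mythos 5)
The paper does not prove this proposition itself: it is quoted with citations to \cite{AM,AMSS17}, so there is no in-paper argument to compare against. Your outline is precisely the strategy of those references (push--pull on constructible functions via the minimal-parabolic fibrations $p_i\colon G/B\to G/P_i$, combined with the Verdier--Riemann--Roch correction by $c(\mathscr{T}_{p_i})$, in the non-equivariant case in \cite{AM} and upgraded to the equivariant setting in \cite{AMSS17}), so in that sense it is the standard proof rather than an alternative one.

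A few clarifications worth recording. The constructible-function identity $p_i^*(p_i)_*\mathbf{1}_{Y(w)^\circ}=\mathbf{1}_{Y(w)^\circ}+\mathbf{1}_{Y(ws_i)^\circ}$ actually holds for every $w$, not only when $ws_i<w$, because $\chi_c(\mathbb{A}^1)=\chi_c(\mathrm{pt})=1$; the dichotomy $ws_i\lessgtr w$ only affects whether the fiber meets the given cell in a line or a point, not the pushforward. Once the operator $c(\mathscr{T}_{p_i})\cdot p_i^*(p_i)_*$ is identified with $\operatorname{id}+\mathcal{T}_i$, one gets $\mathcal{T}_i\,\CSM(Y(w)^\circ)=\CSM(Y(ws_i)^\circ)$ for \emph{all} $w$, consistent with $\mathcal{T}_i^2=\operatorname{id}$. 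That identification is indeed the crux, and your honest flagging of it as the calibration step is appropriate: one must pin down $c_1(\mathscr{T}_{p_i})$ in terms of $x_i-x_{i+1}$, check that $p_i^*(p_i)_*$ on cohomology is $\partial_i$ in the conventions of this paper, and verify $\partial_i+ (x_i-x_{i+1})\partial_i-\partial_i\cdot(x_i-x_{i+1})\cdots$ collapses to $\partial_i-s_i+\operatorname{id}$; the equivariant version also needs $T$-equivariant VRR (Ohmoto). Your triangularity argument for (i) and the support/localization plus $\sum_w\CSM(Y(w)^\circ)=c(\mathscr{T}_{G/B})$ argument for (iii) are both correct and standard.
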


Let us denote 
\begin{align}
\pi_*\colon H^\bullet_T(G/B) \longrightarrow H^\bullet_T(G/P)
\end{align}
the pushforward induced by \eqref{eq:pi:GB->GP}.

\begin{Prop}[{\cite{AMSS17}}]\label{SchRep3}
Let  ${\lambda}\subseteq (n-k)^k$, and $\overline{\lambda}$ be its dual. Then
\begin{enumerate}[\rm(i)]
\setlength{\itemsep}{1ex}
\item $\pi_*\CSM(Y(w)^\circ) = \CSM(Y(\Gr(w))^\circ)$; 
\item $\CSM(X(\lambda)^\circ)$ is dual to $\SM(Y(\lambda)^\circ)$ with respect to the Poincar\'e pairing;
\item $\CSM(X(\lambda)^\circ)=w_0^L
\CSM(Y(\overline{\lambda})^\circ)$;
\item $\SM(X(\lambda)^\circ)=w_0^L
\SM(Y(\overline{\lambda})^\circ)$.
\end{enumerate}
\end{Prop}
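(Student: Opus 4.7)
My plan is to treat the four parts by leveraging functoriality/naturality of the characteristic classes. I would start with part (i), which follows from the functoriality of the (equivariant) MacPherson transformation $c_*$ applied to the proper morphism $\pi|_{Y(w)^\circ}\colon Y(w)^\circ\to G/P$. Writing $w=w_\lambda v$ with $\lambda=\Gr(w)$ and $v\in\S_k\times\S_{n-k}$, the image of $Y(w)^\circ$ is $Y(\lambda)^\circ$, and each fiber is isomorphic to an opposite Schubert cell of dimension $\ell(v)$ inside $P/B\cong\Fl(k)\times\Fl(n-k)$. Since the compactly supported Euler characteristic of an affine space equals $1$, the pushforward of constructible functions gives $\pi_*\mathbf{1}_{Y(w)^\circ}=\mathbf{1}_{Y(\lambda)^\circ}$, and functoriality of $c_*$ then yields (i).

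For (iii) and (iv), I would use that left translation by $w_0\in N_G(T)$ is a $T$-equivariant automorphism of $G/P$ (with the $T$-action on the target twisted by $w_0$), inducing the left Weyl action $w_0^L$ on $H^\bullet_T(G/P)$. A direct check from the formula $w_\lambda(k+1-i)=\lambda_i+k+1-i$ together with \eqref{eq:lambdabar} shows that the parabolic decomposition of $w_0w_{\overline\lambda}$ is $w_\lambda v_0$, where $v_0$ is the longest element of $\S_k\times\S_{n-k}$; hence $w_0\cdot Y(\overline\lambda)^\circ=Bw_\lambda P/P=X(\lambda)^\circ$ as subvarieties. Naturality of $c_*$ under equivariant isomorphisms then gives (iii). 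For (iv), I additionally use that $\mathscr T_{G/P}$ is $G$-equivariant, so $c(\mathscr T_{G/P})$ is $w_0^L$-invariant, and therefore $\SM=\CSM/c(\mathscr T_{G/P})$ transforms under $w_0^L$ in exactly the same way as $\CSM$.

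For (ii), I would employ fixed-point localization. The $T$-fixed points of $G/P$ are $p_\nu=w_\nu P/P$ for $\nu\subseteq(n-k)^k$, and by Atiyah--Bott the Poincar\'e pairing reads
\begin{equation*}
\langle \alpha,\beta\rangle=\sum_{\nu\subseteq(n-k)^k}\frac{\alpha|_{p_\nu}\,\beta|_{p_\nu}}{e(\mathscr T_{G/P}|_{p_\nu})}.
\end{equation*}
I would then establish cellular support conditions: $\CSM(X(\lambda)^\circ)|_{p_\nu}$ vanishes unless $p_\nu\in X(\lambda)$ (equivalently $\nu\subseteq \lambda$ in the Bruhat order on Grassmannian cosets), and $\SM(Y(\mu)^\circ)|_{p_\nu}$ vanishes unless $p_\nu\in Y(\mu)$ (equivalently $\nu\supseteq\mu$). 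These force the localization sum to collapse to the diagonal $\lambda=\mu$, with a single contribution from $p_\lambda$. The diagonal normalization $\langle\CSM(X(\lambda)^\circ),\SM(Y(\lambda)^\circ)\rangle=1$ can then be verified by computing at $p_\lambda$ using the Demazure--Lusztig recursion from Proposition~\ref{OPUY} combined with parts (iii) and (iv) to convert between $X$- and $Y$-classes.

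The main obstacle will be (ii), where one must precisely establish the support/vanishing conditions for the fixed-point restrictions, and verify the diagonal normalization at $p_\lambda$; the rest of the argument is essentially formal once the fiber structure in (i) and the Weyl action identification in (iii), (iv) are in place.
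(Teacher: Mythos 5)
This proposition is cited (to \cite{AMSS17}) rather than proved in the paper, so there is no internal proof to match against; I will assess your argument on its own merits.

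Your treatment of (i), (iii), and (iv) is sound. For (i), the key facts you invoke — that $\pi$ restricts to a map $Y(w)^\circ\to Y(\Gr(w))^\circ$ whose fibers are affine cells in the Levi flag variety, together with $\chi_c(\mathbb{A}^m)=1$ and functoriality of the (equivariant) MacPherson transformation — are exactly what make the pushforward formula work. For (iii), your computation that $w_0 w_{\overline\lambda}=w_\lambda v_0$ is correct (one gets $(w_0 w_{\overline\lambda})(j)=\lambda_j+k+1-j$ for $1\le j\le k$ from \eqref{GRWS-12} and \eqref{eq:lambdabar}), so $w_0\cdot Y(\overline\lambda)^\circ=X(\lambda)^\circ$, and since left translation by $w_0$ intertwines the $T$-action with its $w_0$-conjugate, naturality of $c_*$ does give the $w_0^L$-equivariance. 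For (iv), the invariance of $c(\mathscr{T}_{G/P})$ follows because $\mathscr{T}_{G/P}$ is $G$-equivariant (not merely $T$-equivariant), so its Chern classes are $W$-invariant, and then $\SM$ inherits the same transformation law as $\CSM$.

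Part (ii) as you have structured it has a genuine gap. The support conditions you state — $\CSM(X(\lambda)^\circ)$ supported on $X(\lambda)$ and $\SM(Y(\mu)^\circ)$ supported on $Y(\mu)$ — only restrict the localization sum to fixed points $p_\nu$ with $\mu\subseteq\nu\subseteq\lambda$. This forces the pairing $\langle\CSM(X(\lambda)^\circ),\SM(Y(\mu)^\circ)\rangle$ to vanish when $\mu\not\subseteq\lambda$, i.e.\ it gives \emph{upper triangularity} in the containment order, but it does \emph{not} force the collapse onto the diagonal $\lambda=\mu$. For $\mu\subsetneq\lambda$ there are in general several fixed points $\nu$ in the interval $[\mu,\lambda]$, each with nonzero restriction of both classes, and support considerations say nothing about why these Atiyah–Bott terms should cancel. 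One also cannot rescue the argument by applying the Weyl symmetry from (iii)–(iv): it transports the support conditions to $\overline\lambda\subseteq\nu\subseteq\overline\mu$, which is the \emph{same} inequality $\mu\subseteq\lambda$, not a complementary one that would sandwich you onto the diagonal. The correct mechanism — and this is what \cite{AMSS17} actually uses — is the adjointness of the Demazure–Lusztig operators $\mathcal{T}_i$ with respect to the Poincar\'e pairing combined with the recursions from Proposition~\ref{OPUY} for $\CSM$ (and the companion recursion via the adjoint operator for $\SM$). This yields $\langle\CSM(X(u)^\circ),\SM(Y(v)^\circ)\rangle=\delta_{u,v}$ on $G/B$ by induction on length, and the Grassmannian case then follows by pushing forward via (i) and a projection-formula argument. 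Your remark that the DL recursion should enter is correct; however it must carry the whole of (ii), not merely the diagonal normalization step.
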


The relations in Proposition \ref{SchRep3} can be summarized as follows:
$$\xymatrix@!C=6pc{
\CSM(Y(\lambda)^\circ)
\ar@{<->}[r]^{\text{Weyl action}}
\ar@{<->}[dr]|{\phantom{\rule[-0.25pc]{0pc}{1pc}
\text{Poincar\'e dual}}}& \ar@{<->}[dl]|{\rule[-0.25pc]{0pc}{1pc}\text{Poincar\'e dual}}
\CSM(X(\lambda)^\circ)\\
\SM(Y(\lambda)^\circ)
\ar@{<->}[r]_{\text{Weyl action}}& 
\SM(X(\lambda)^\circ)
}$$



\subsection{Motivic Chern classes and Segre motivic classes }\label{MCCCC}

Let $X$ be an algebraic variety. Define the \emph{Grothendieck  group}
$G_0(X)$ to be the group generated by $[Z\stackrel{\varphi}\to X]$ for all morphisms $\varphi\colon Z\to X$ modulo the relation 
$$[Z\stackrel{\varphi}\to X]=[U\stackrel{\varphi}\to X]+[Z\setminus U\stackrel{\varphi}\to X]$$
for an open subset $U\subseteq Z$. 
For a proper morphism $f\colon X\to Y$,  define $f_*:G_0(X)\to G_0(Y)$ by 
$$f_*[Z\stackrel{\varphi}\to X]=[Z\stackrel{\varphi}\to X\stackrel{f}\to Y].$$
By \cite{BSY}, there exists a unique natural transformation
$$\MC_y\colon G_0(-)\longrightarrow K(-)[y]$$
such that when $X$ is nonsingular, the identity morphism is sent to the total $\lambda$-class of cotangent bundle
$$\MC_y([X\stackrel{\operatorname{id}}\to X])=\lambda_y(\mathscr{T}^\vee_X):=\sum_{r\geq 0}y^r[\mathsf{\Lambda}^r\mathscr{T}^\vee_X]. $$
Here $y$ is a formal variable.
The equivariant version was established in \cite{FRW,AMSS19}. 

Assume that $X$ is a complete nonsingular $T$-variety. 
Denote 
$$\mathbb{K}_T(X)=K_T(X)_{\mathbb{F}},$$
where $\mathbb{F}=\mathbb{Q}(t_1,\ldots,t_n,y)$ is the fraction field of $K_T(\pt)[y]$.
Let $W$ be an equi-dimensional $T$-equivariant constructible subset. 
The equivariant  \emph{motivic Chern class} and 
\emph{Segre motivic class} of $W$ are 
\begin{align*}
\MC_y(W) & =\MC_y([W\hookrightarrow X])\in \mathbb{K}_T(X),\\[5pt]
\SMC_y(W)&= (-y)^{\dim W}\frac{\mathcal{D}(\MC_y(W))}{\lambda_y(\mathscr{T}^\vee_X)}\in \mathbb{K}_T(X),
\end{align*}
where $\mathcal{D}$ is induced by the Serre duality functor $R\mathscr{H}{\it \!om}(-,\omega_X[\dim X])$.

Restricting to the case $X=G/B$ or $G/P$, we can consider equivariant motivic Chern classes and Segre motivic classes of Schubert cells:
\begin{align*}
\MC_y(X(w)^\circ),\MC_y(Y(w)^\circ),
\SMC_y(X(w)^\circ),\SMC_y(Y(w)^\circ)
&\in \mathbb{K}_T(G/B),\\[5pt]
\MC_y(X(\lambda)^\circ),\MC_y(Y(\lambda)^\circ),
\SMC_y(X(\lambda)^\circ),\SMC_y(Y(\lambda)^\circ)
&\in \mathbb{K}_T(G/P).
\end{align*}
For $1\leq i\leq n-1$,   the \emph{Demazure--Lusztig operator} on $\mathbb{K}_T(G/B)$ is
\begin{align}\label{scriptti}
\mathfrak{T}_i
=\big(1+y\tfrac{1-x_{i}}{1-x_{i+1}}\big)\pi_i-\operatorname{id}.
\end{align}
The operators satisfy 
$(\mathfrak{T}_i+1)(\mathfrak{T}_i+y)=0$ and the braid relations, and thus $\mathfrak{T}_w$ for  $w\in \S_n$ is well defined. 

\begin{Prop}[{\cite{AMSS19}}]\label{SchRep4}
We have
\begin{enumerate}[\rm(i)]
\setlength{\itemsep}{1ex}
\item $\{\MC_y(Y(w)^\circ)\colon w\in \S_n\}$ forms  a basis of $\mathbb{K}_T(G/B)$ over $\mathbb{K}_T(\pt)$; 
\item $\MC_y(Y(w)^\circ)= \mathfrak{T}_{w^{-1}w_0}\MC_y(Y(w_0)^\circ)$; 
\item $\MC_y(Y(w)^\circ)\big|_e\neq 0$ if and only if $w=\operatorname{id}$.
\end{enumerate}
\end{Prop}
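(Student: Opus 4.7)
\emph{Plan.} I would follow the template of the analogous Propositions~\ref{SchRep1}, \ref{SchRep2}, and \ref{OPUY}, deriving part (ii) first and then extracting (i) and (iii) as consequences. For (ii), the plan is to induct on $\ell(w_0) - \ell(w)$, with the trivial base case $w = w_0$, so that the inductive step reduces to the single recursion
\[
\mathfrak{T}_i\,\MC_y(Y(w)^\circ) \;=\; \MC_y(Y(ws_i)^\circ) \qquad\text{whenever } ws_i < w.
\]
I would prove this by applying the natural transformation property of $\MC_y$ to the $\mathbb{P}^1$-bundle $\pi_i\colon G/B \to G/P_i$ associated with the minimal parabolic generated by $s_i$. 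When $ws_i < w$, one has a motivic decomposition $\pi_i^{-1}\pi_i(Y(w)^\circ) = Y(w)^\circ \sqcup Y(ws_i)^\circ$, where $\pi_i$ restricted to $Y(w)^\circ$ is an isomorphism onto its image, so additivity of motivic Chern classes yields
\[
\MC_y(Y(w)^\circ) + \MC_y(Y(ws_i)^\circ) \;=\; \MC_y\big(\pi_i^{-1}\pi_i(Y(w)^\circ)\big).
\]
The right-hand side is computed through the push-pull formula for the smooth morphism $\pi_i$: the composition $\pi_i^*(\pi_i)_*$ is the K-theoretic Demazure operator $\pi_i$ of \eqref{pihat}, while the total $\lambda$-class of the relative cotangent bundle $\mathscr{T}^\vee_{\pi_i}$, which has K-class $\tfrac{1-x_i}{1-x_{i+1}}$, contributes the multiplicative factor in \eqref{scriptti}. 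Combining yields $\MC_y(Y(w)^\circ) + \MC_y(Y(ws_i)^\circ) = (\mathfrak{T}_i + \operatorname{id})\MC_y(Y(w)^\circ)$, from which the recursion follows by cancellation.

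\emph{Parts (i) and (iii).} For (i), spanning follows from (ii) together with invertibility of each $\mathfrak{T}_w$, itself a consequence of $(\mathfrak{T}_i + 1)(\mathfrak{T}_i + y) = 0$. Linear independence follows by specializing at $y = 0$: the identity $\MC_y(Y(w)^\circ)|_{y=0} = [\mathcal{I}_{\partial Y(w)}]$, combined with Proposition~\ref{SchRep2}(i), forces linear independence over $\mathbb{K}_T(\pt)$. For (iii), the ``only if'' direction follows from support: when $w \neq \operatorname{id}$ the torus fixed point $e$ lies outside $Y(w)$, so the restriction to $e$ of any class supported on $Y(w)$ vanishes. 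For the ``if'' direction, $e \in Y(\operatorname{id})^\circ$, and applying additivity to the decomposition $G/B = Y(\operatorname{id})^\circ \sqcup \partial Y(\operatorname{id})$ and then restricting to $e$ yields $\MC_y(Y(\operatorname{id})^\circ)|_e = \MC_y(G/B)|_e = \lambda_y(\mathscr{T}^\vee_{G/B})|_e$, a manifestly nonzero element of $\mathbb{K}_T(\pt)$ since its $y^0$ coefficient equals $1$.

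\emph{Main obstacle.} The crux of the argument is the geometric identity used in (ii): precisely matching the motivic push-pull of $\pi_i$, together with the relative cotangent $\lambda$-class contribution on the $\mathbb{P}^1$-fibration, to the algebraic operator $\mathfrak{T}_i$ defined in \eqref{scriptti}. This compatibility is exactly what elevates $\mathfrak{T}_i$ to the natural ``geometric operator'' acting on motivic Chern classes of Schubert cells, and once it is established, the rest of the proposition follows by standard formal manipulations.
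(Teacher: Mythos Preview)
The paper does not supply its own proof of this proposition; it is quoted directly from \cite{AMSS19} without argument. Your sketch is essentially the proof given in that reference and is correct in outline: the identification of $\mathfrak{T}_i+\operatorname{id}$ with $\lambda_y(\mathscr{T}^\vee_{\pi_i})\cdot\pi_i^*(\pi_i)_*$ on the $\mathbb{P}^1$-bundle $G/B\to G/P_i$, together with additivity and the Verdier--Riemann--Roch property of $\MC_y$ under smooth pullback, is exactly the mechanism used there to establish (ii), and your deductions of (i) and (iii) from (ii) are standard. One small point worth tightening in (i): invertibility of $\mathfrak{T}_w$ alone does not give spanning of $\mathbb{K}_T(G/B)$; you implicitly also need that the $\MC_y(Y(w)^\circ)$ are related to a known basis (e.g.\ the $[\mathcal{I}_{\partial Y(w)}]$) by a transition matrix whose $y=0$ specialization is the identity, which makes the matrix invertible over $\mathbb{K}_T(\pt)$ and yields both spanning and independence simultaneously.
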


Similarly, denote by
\begin{align}
\pi_*\colon K_T(G/B) \longrightarrow K_T(G/P)
\end{align}
the pushforward induced by \eqref{eq:pi:GB->GP}.

\begin{Prop}[{\cite{AMSS19,MNS}}]\label{SchRep4:Gr}

We have
\begin{enumerate}[\rm(i)]
\setlength{\itemsep}{1ex}
\item $\pi_*\MC_y(Y(w)^\circ) = (-y)^{\binom{k}{2}+\binom{n-k}{2}-\ell(w)+|\Gr(w)|} \MC_y(Y(\Gr(w))^\circ)$;


\item $\MC_y(X(\lambda)^\circ)$ is dual to $\SMC_y(Y(\lambda)^\circ)$ with respect to the Poincar\'e pairing;
\item $\MC_y(X(\lambda)^\circ)=w_0^L \MC_y(Y(\overline{\lambda})^\circ)$;
\item $\SMC_y(X(\lambda)^\circ)=w_0^L \SMC_y(Y(\overline{\lambda})^\circ)$.
\end{enumerate}
\end{Prop}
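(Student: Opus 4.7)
The plan is to prove the four assertions by leveraging the corresponding statements in the flag variety $G/B$ (Proposition~\ref{SchRep4}) together with the geometry of the parabolic projection $\pi\colon G/B\to G/P$.

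For (i), the parabolic decomposition $w=w_{\lambda}v$ with $v\in\S_k\times\S_{n-k}$ (see \eqref{eq:parabolicdecomposition}) exhibits the restriction $\pi|_{Y(w)^\circ}\colon Y(w)^\circ\to Y(\Gr(w))^\circ$ as a Zariski-locally trivial bundle whose fiber is the opposite Schubert cell in $P/B\cong\Fl(k)\times\Fl(n-k)$ indexed by $v$. This fiber is an affine space of dimension
\[
m=\binom{k}{2}+\binom{n-k}{2}-\ell(v)=\binom{k}{2}+\binom{n-k}{2}-\ell(w)+|\Gr(w)|.
\]
By the naturality of $\MC_y$ as a natural transformation $G_0(-)\to K(-)[y]$ from \cite{BSY,FRW,AMSS19}, the pushforward $\pi_*\MC_y(Y(w)^\circ)$ multiplies $\MC_y(Y(\Gr(w))^\circ)$ by the equivariant Hirzebruch $\chi_y$-class of the fiber, which for an affine space $\mathbb{A}^m$ is $(-y)^m$. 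An alternative route is to invoke the Demazure--Lusztig recursion $\MC_y(Y(w)^\circ)=\mathfrak{T}_{v^{-1}}\MC_y(Y(w_{\lambda})^\circ)$ from Proposition~\ref{SchRep4}(ii) and to verify the push-pull identity $\pi_*\mathfrak{T}_i=(-y)\pi_*$ for every simple reflection $s_i\in\S_k\times\S_{n-k}$, then iterate over a reduced word for $v^{-1}$.

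For (ii), I would derive the Grassmannian Poincar\'e duality from its $G/B$ counterpart $\langle\MC_y(X(u)^\circ),\SMC_y(Y(v)^\circ)\rangle_{G/B}=\delta_{uv}$ of \cite{AMSS19}. Combining the projection formula with (i) and its $X$-cell analog (obtained by swapping the Borel and opposite Borel throughout), the pairing $\langle\MC_y(X(\lambda)^\circ),\SMC_y(Y(\mu)^\circ)\rangle_{G/P}$ reduces to a single $G/B$ pairing whose $(-y)^m$ prefactors from (i) cancel precisely, yielding $\delta_{\lambda\mu}$.

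Parts (iii) and (iv) will follow from the same symmetry argument underpinning Propositions~\ref{SchRep1:Gr}(iii), \ref{SchRep2:Gr}(iv)--(v), and \ref{SchRep3}(iii)--(iv). Left multiplication by $w_0$ is an automorphism of $G/P$ that swaps $B$- and $B^-$-orbits; a direct check using \eqref{GRWS-12} confirms that $w_0 w_{\overline{\lambda}}W_P=w_{\lambda}W_P$, so this automorphism carries $Y(\overline{\lambda})^\circ$ to $X(\lambda)^\circ$. Applying this twisted $T$-equivariant isomorphism, together with the naturality of $\MC_y$ and $\SMC_y$ and the definition of $w_0^L$ on $K_T(G/P)$ recorded in \eqref{wyleaCT}, produces (iii) and (iv). The main obstacle is part (i): justifying the scalar $(-y)^m$ demands either a careful tracking of torus weights in the equivariant $\chi_y$-genus of a $T$-stable affine Schubert cell in $P/B$, or a verification of the push-pull identity $\pi_*\mathfrak{T}_i=(-y)\pi_*$, both of which rest on the subtle $K$-theoretic behavior of motivic Chern classes under parabolic bundles.
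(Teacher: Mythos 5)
This proposition is quoted in the paper from \cite{AMSS19,MNS} without proof, so there is no in-paper argument to compare against; I am assessing your proposal on its own merits. Your overall strategy --- reduce to the $G/B$ statements through $\pi$, use a push--pull identity for the Demazure--Lusztig operators, and use the left Weyl twist for (iii)--(iv) --- is the standard route and is indeed what the cited references use. The push--pull identity $\pi_*\mathfrak{T}_i=(-y)\pi_*$ for $s_i\in\S_k\times\S_{n-k}$ that you propose to verify is correct (it follows from $\pi_i\big(1+y\tfrac{1-x_i}{1-x_{i+1}}\big)=1-y$ together with $\pi_*\pi_i=\pi_*$), and your verification in (iii)--(iv) that $w_0w_{\overline\lambda}W_P=w_\lambda W_P$ can be checked directly from \eqref{GRWS-12}.

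There is, however, a genuine error in your alternative route for (i): the recursion you state, $\MC_y(Y(w)^\circ)=\mathfrak{T}_{v^{-1}}\MC_y(Y(w_{\lambda})^\circ)$ for $w=w_\lambda v$, does not follow from Proposition~\ref{SchRep4}(ii) and is false. Since $\ell(w^{-1}w_0)=\ell(w_\lambda^{-1}w_0)-\ell(v)$, the length-additive factorization is $\mathfrak{T}_{w_\lambda^{-1}w_0}=\mathfrak{T}_v\,\mathfrak{T}_{w^{-1}w_0}$, which gives $\MC_y(Y(w_\lambda)^\circ)=\mathfrak{T}_v\MC_y(Y(w)^\circ)$; you cannot then invert freely because $\mathfrak{T}_v^{-1}\ne\mathfrak{T}_{v^{-1}}$ in the Hecke algebra. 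With the corrected direction the push--pull yields $\pi_*\MC_y(Y(w)^\circ)=(-y)^{-\ell(v)}\pi_*\MC_y(Y(w_\lambda)^\circ)$, so the induction on $\ell(v)$ runs the opposite way and you still need a base case; the natural one is $w=w_\lambda v_0$ with $v_0$ the longest element of $\S_k\times\S_{n-k}$, where $\pi$ restricts to an isomorphism $Y(w_\lambda v_0)^\circ\xrightarrow{\sim}Y(\lambda)^\circ$ and hence $\pi_*\MC_y(Y(w_\lambda v_0)^\circ)=\MC_y(Y(\lambda)^\circ)$. This base case is not addressed in your write-up. Your first (geometric) route is correct in outline, but the step asserting that the equivariant $\chi_y$-class of an affine fiber $\mathbb{A}^m$ equals $(-y)^m$ independent of the torus weights is a genuine rigidity phenomenon and deserves at least a remark (e.g.\ $\chi_y^T(\mathbb{A}^1_\chi)=\chi_y^T(\mathbb{P}^1_\chi)-\chi_y^T(\pt)=(1-y)-1=-y$, with the character $\chi$ cancelling); stated as you did, it could easily be mistaken for a non-equivariant fact. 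Finally, part (ii) is only gestured at: you would need to make explicit how the projection formula and the $(-y)$-prefactors from both the $X$- and $Y$-cell pushforwards interact, since nothing in the sketch pins down the claimed cancellation.
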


We use the following diagram to illustrate Proposition \ref{SchRep4:Gr}:
$$\xymatrix@!C=8pc{
\MC_y(Y(\lambda)^\circ)
\ar@{<->}[r]^{\text{Weyl action}}
\ar@{<->}[dr]|{\phantom{\rule[-0.25pc]{0pc}{1pc}
\text{Poincar\'e dual}}}& \ar@{<->}[dl]|{\rule[-0.25pc]{0pc}{1pc}\text{Poincar\'e dual}}
\MC_y(X(\lambda)^\circ)\\
\SMC_y(Y(\lambda)^\circ)
\ar@{<->}[r]_{\text{Weyl action}}& 
\SMC_y(X(\lambda)^\circ)
}$$

\begin{Rmk}\label{RRR-N}
By \cite{AMSS19} and \cite{MNS}, if we specialize $y=0$, then we have 
$$
\MC_y(Y(w)^\circ)|_{y=0} = [\mathcal{I}_{\partial Y(w)}],\qquad 
\SMC_y(Y(w)^\circ)|_{y=0} = [\mathcal{O}_{Y(w)}],$$
and
$$
\MC_y(Y(\lambda)^\circ)|_{y=0} = [\mathcal{I}_{\partial Y(\lambda)}],\qquad 
\SMC_y(Y(\lambda)^\circ)|_{y=0} = [\mathcal{O}_{Y(\lambda)}].$$
\end{Rmk}


\section{Affine Hecke algebras and Schubert representations}\label{sec3}

In this section, we introduce an affine Hecke algebra $\Haff_n$,  and define the  Schubert representation of $\Haff_n$. We illustrate how motivic Chern classes, as well as three other classes in Section \ref{sec:geom}, are incorporated naturally in this setting. 

Let
$\mathcal{A}=\mathbb{Q}[p,q]$ be the ring   of polynomials over $\mathbb{Q}$ in two parameters  $p$ and $q$. 
The (type $A$) \emph{Hecke algebra} $\H_n$ over $\mathcal{A}$  is the associative algebra with generators  $\T_i$ ($1\leq i \leq n-1$), subject to the relations 
    \begin{align}
    (\T_i+p)&(\T_i-q)=0,\label{HeckeRelation1}\\[5pt]
\T_i\T_j&=\T_j\T_i,\quad |i-j|>1,\label{BraidRelation1}\\[5pt]
\T_i\T_{i+1}\T_i&=\T_{i+1}\T_i\T_{i+1}. \label{BraidRelation2} 
    \end{align}
Due to the braid relations in \eqref{BraidRelation1} and \eqref{BraidRelation2}, one can define $\T_w=\T_{{i_1}}\cdots \T_{{i_{\ell(w)}}}$ 
for $w\in \S_n$,
where $w=s_{i_1}\cdots  s_{i_{\ell(w)}}$ is any 
reduced decomposition of $w$.
Then $\H_n$ is a free $\mathcal{A}$-module with basis
$\{\T_w\colon w\in \S_n\}$. 
The relation in \eqref{HeckeRelation1}
can be rewritten as 
$\T_i^2=-(p-q)\T_i+pq$. 
The basis elements satisfy the following multiplication rule
\begin{align*}
\T_i\T_w=\left\{
  \begin{array}{ll}
    \T_{s_iw}, & \hbox{if $\ell(s_iw)>\ell(w)$,} \\[5pt]
    -(p-q)\T_w+pq\T_{s_iw}, & \hbox{if $\ell(s_iw)<\ell(w)$.}
  \end{array}
\right.
\end{align*}
Denote $\bT_i=\T_i+p-q$. 
It is direct  to check that $\bT_i$'s satisfy
\begin{align}
(\bT_i+q)&(\bT_i-p)=0,\\[5pt]
\label{barBraidRelation1}
\bT_i\bT_j&=\bT_j\bT_i,\quad |i-j|>1,\\[5pt]
\label{barBraidRelation2}
\bT_i\bT_{i+1}\bT_i&=\bT_{i+1}\bT_i\bT_{i+1}. 
\end{align}
Thus, we may write  
$\bT_w=\bT_{{i_1}}\cdots \bT_{{i_{\ell(w)}}}$
for any reduced decomposition  $w=s_{i_1}\cdots  s_{i_{\ell(w)}}$ of $w\in \S_n$.  Then $\{\bT_w\colon w\in \S_n\}$ is also an $\mathcal{A}$-basis of $H_n$.

\begin{Def}\label{def:affineHecke}
Let $\A=\mathcal{A}[\hbar]$. 
The \emph{affine Hecke algebra}  $\Haff_n$ over $\A$ is the algebra generated by $\T_1,\ldots, \T_{n-1}$
and  $x_1,\ldots, x_n$,  with   relations  \eqref{HeckeRelation1}--\eqref{BraidRelation2} together with
\begin{align}
x_ix_j&=x_jx_i,\\[5pt]
\label{Lebniz1}
\T_ix_j&=x_j\T_i,\quad j\neq i,i+1, \\[5pt]
\label{Lebniz2}
\T_ix_i &= x_{i+1}\T_i+(\hbar-(p-q) x_{i}),\\[5pt]
\label{Lebniz3}
\T_ix_{i+1} &= x_{i}\T_i-(\hbar-(p-q) x_{i}).
\end{align}
\end{Def}



Notice that $\Haff_n$ can be viewed as a free
$\A$-module
$$\Haff_n=\A[x_1,\ldots,x_n]\otimes_{\mathcal{A}}\H_n,$$
including  $\A[x_1,\ldots,x_n]$ and $\H_{n}$ as subalgebras.  
For $\alpha=(\alpha_1,\ldots, \alpha_n)\in \mathbb{Z}_{\geq 0}^n$, write $x^\alpha=x_1^{\alpha_1}\cdots x_n^{\alpha_n}$. Then both 
\[
\big\{x^\alpha \T_w\colon  w\in \S_n, \,\alpha\in \mathbb{Z}_{\geq 0}^n\big\}
\quad \text{and}\quad  
\big\{x^\alpha \bT_w\colon  w\in \S_n, \,\alpha\in \mathbb{Z}_{\geq 0}^n\big\}\]
are $\A$-basis of $\Haff_n$.


\begin{Rmk}
Here are some remarks about $\H_n$  and $\Haff_n$.
\begin{itemize}
    \item Our definition of    $\H_n$   is a special case of generic Hecke algebras in type $A$, see \cite[\textsection 7.1]{Hump}, {\it loc. cit}, $a=q-p$ and $b=pq$. 
    The classical Hecke algebra \cite{IM95,KL} is the case with $p=1$  and with $q$ inverted. 
    Another popular convention is the case when $q=-v$ and $p=-v^{-1}$ \cite{Seo}. 
    
    \item The basis element $\bT_w$ is the image of $\T_w$ under Goldman involution introduced by Iwahori \cite{Iwahori}. 
    For $p=-v^{-1}$ and $q=-v$, it coincides with the image under the bar involution in  \cite{Seo}. For  $p=1$,   $\bT_w$ is the same as $q_w\bT_w$ in \cite{KL}. 

    \item The affine Hekce algebra   $\Haff_n$ in Definition \ref{def:affineHecke} is a generalization of graded affine Hecke algebras defined by Drinfeld \cite{Drin} (extended to any type by Lusztig \cite{Lusztig2}). 
    If we denote $X_i=\hbar-(p-q)x_i$, then it can be checked that
    $\T_iX_j=X_j\T_i$ for $j\neq i,i+1$ and 
    $\T_i X_{i+1}\T_i=pq X_i$, specializing the Bernstein relations of type $A$ \cite{Lusztig1}. 
\end{itemize}
\end{Rmk}


Taking specific values of the parameters $p, q, \hbar$, the relations satisfied by $T_i$ and $x_j$ in \eqref{HeckeRelation1}--\eqref{BraidRelation2} as well as in  \eqref{Lebniz1}--\eqref{Lebniz3} are compatible with the relations satisfied by the operators appearing in  Section \ref{sec:geom}.  The details are listed in the following proposition. 

\begin{Prop}\label{THaffineHeckeEg} 
Via direct computations, one can check that
\begin{enumerate}[\rm(i)]
\setlength{\itemsep}{1ex}

\item The operator  $\partial_i$   in \eqref{partiali} corresponds to $(p,q,\hbar)=(0,0,1)$; 

\item 
The operator $\widehat{\pi}_i$ in \eqref{pihat}   corresponds to $(p,q,\hbar)=(1,0,1)$;




\item The operator $\mathcal{T}_i$   in \eqref{tihat}  corresponds to $(p,q,\hbar)=(1,1,1)$;


\item The operator $\mathfrak{T}_i$  in \eqref{scriptti} corresponds to $(p,q,\hbar)=(1,-y,1+y)$. 



\end{enumerate}
\end{Prop}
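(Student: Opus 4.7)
The plan is to verify, case by case, that each of the four operators $\partial_i, \widehat{\pi}_i, \mathcal{T}_i, \mathfrak{T}_i$ satisfies the defining relations of $\Haff_n$ from Definition \ref{def:affineHecke} under the claimed specialization of $(p,q,\hbar)$. For each operator three families of relations need to be checked: the quadratic Hecke relation \eqref{HeckeRelation1}, the braid relations \eqref{BraidRelation1}--\eqref{BraidRelation2}, and the polynomial-commuting Leibniz-type identities \eqref{Lebniz1}--\eqref{Lebniz3}. The braid relations and the ``easy'' commutation $T_ix_j=x_jT_i$ for $j\neq i,i+1$ are stated or transparent in Section \ref{sec:geom}, and the quadratic relation in each case is precisely the one already recorded there: $\partial_i^2=0$ gives $(p,q)=(0,0)$; $\widehat{\pi}_i^2=-\widehat{\pi}_i$ gives $(p,q)=(1,0)$; $\mathcal{T}_i^2=\operatorname{id}$ gives $(p,q)=(1,1)$; and $(\mathfrak{T}_i+1)(\mathfrak{T}_i+y)=0$ gives $(p,q)=(1,-y)$. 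So the substantive content is verifying the Leibniz identities \eqref{Lebniz2}--\eqref{Lebniz3} with the correct value of $\hbar-(p-q)x_i$ in each case.

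First I would establish the base identity
\[
(\operatorname{id}-s_i)\,x_i \;=\; (x_i-x_{i+1})\,\operatorname{id} \;+\; x_{i+1}\,(\operatorname{id}-s_i)
\]
as operators on $\mathbb{Q}[x_1,\ldots,x_n]$, obtained by evaluating both sides on an arbitrary polynomial. Dividing through by $(x_i-x_{i+1})$ gives
\[
\partial_i\, x_i \;=\; x_{i+1}\,\partial_i \;+\; \operatorname{id}, \qquad
\partial_i\, x_{i+1} \;=\; x_i\,\partial_i \;-\; \operatorname{id},
\]
which is exactly \eqref{Lebniz2}--\eqref{Lebniz3} with $\hbar-(p-q)x_i=1$, proving (i).

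Each of the remaining operators is a short polynomial expression in $\partial_i$, $s_i$, $x_i$, $x_{i+1}$, so the Leibniz verifications reduce to an expansion plus one application of the base identity above (together with $s_i\,x_i=x_{i+1}\,s_i$ and $s_i\,x_{i+1}=x_i\,s_i$). Concretely, for (iii), $\mathcal{T}_i\,x_i=\partial_ix_i-s_ix_i=x_{i+1}\partial_i+\operatorname{id}-x_{i+1}s_i=x_{i+1}\mathcal{T}_i+\operatorname{id}$, matching $\hbar-(p-q)x_i=1$ at $(p,q,\hbar)=(1,1,1)$. For (ii), write $\widehat{\pi}_i=(1-x_i)\partial_i$ and apply the base rule; the resulting linear coefficient simplifies to $1-x_i$, matching $(p,q,\hbar)=(1,0,1)$. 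For (iv), rewrite $\mathfrak{T}_i=\pi_i+y\tfrac{1-x_i}{1-x_{i+1}}\pi_i-\operatorname{id}$ and compute $\mathfrak{T}_i\,x_i$ using the rule for $\pi_i$ (equivalently, $\widehat\pi_i=\pi_i-\operatorname{id}$) together with the $s_i$-action; the correction term collects into $(1+y)(1-x_i)$, which is exactly $\hbar-(p-q)x_i$ at $(p,q,\hbar)=(1,-y,1+y)$.

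The main obstacle is the careful bookkeeping for $\mathfrak{T}_i$, since the factor $(1-x_{i+1})^{-1}$ must be commuted past $x_i$, and this commutation is mediated by $s_i$ swapping $x_i\leftrightarrow x_{i+1}$; one has to track the resulting sign/denominator carefully before the ratios collapse. Apart from this, each verification is a few lines of algebra, and the specializations produce exactly the four parameter values listed in (i)--(iv).
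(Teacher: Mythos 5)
Your proposal is correct and follows exactly the route the paper intends by ``direct computations'': one verifies the quadratic relation by matching the stated identities $\partial_i^2=0$, $\widehat\pi_i^2=-\widehat\pi_i$, $\mathcal T_i^2=\operatorname{id}$, $(\mathfrak T_i+1)(\mathfrak T_i+y)=0$ with $(\T_i+p)(\T_i-q)=0$, and then checks the Bernstein--Leibniz identities by reducing each operator to $\partial_i$ and using $\partial_i x_i = x_{i+1}\partial_i+\operatorname{id}$. Your individual evaluations (the coefficient $1-x_i$ for $\widehat\pi_i$ and $(1+y)(1-x_i)$ for $\mathfrak T_i$) are accurate and match $\hbar-(p-q)x_i$ at the claimed specializations.
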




Motivated by Proposition \ref{THaffineHeckeEg}, we define the Schubert  representation of  $\Haff_n$ as follows. 

\begin{Def}\label{DDD-1}
Let $\mathbb{V}$ be a representation of $\Haff_n$ over  $\mathbb{F}=\mathbb{Q}(t_1,\ldots,t_n)$. 
We say that $\mathbb{V}$ is a \emph{Schubert representation} if 
\begin{itemize}
\item There exists a distinguished element $\mathcal{Y}_{w_0}\in \mathbb{V}$ ($w_0=n\cdots 21$ is the longest permutation), such that the elements, called \emph{generic Schubert classes}, defined by 
$$\mathcal{Y}_{w}=\T_{w^{-1}w_0}\cdot\mathcal{Y}_{w_0}\in \mathbb{V},\qquad w\in \S_n,$$
form  a basis of $\mathbb{V}$ over $\mathbb{F}$.

\item There exists  an $\mathbb{F}$-linear \emph{evaluation map} $\ev\colon \mathbb{V}\to \mathbb{F}$ such that 
$$\ev(\mathcal{Y}_w)\neq 0\iff w = \operatorname{id}.$$

\item There exist \emph{equivariant parameters} $t_1,\ldots,t_n\in \mathbb{F}$, such that for any $\mathcal{Y}_w\in\mathbb{V}$,
\begin{equation}\label{UYTR}
  \ev(x_i \cdot \mathcal{Y}_w)=t_i\ev(\mathcal{Y}_w).  
\end{equation}
\end{itemize}
\end{Def}

Combining  Proposition \ref{THaffineHeckeEg} and Propositions \ref{SchRep1}, \ref{SchRep2}, \ref {OPUY} and \ref{SchRep4}, we illustrate  in the following example  concrete Schubert representations that  we need.

\begin{Eg}\label{ThSchubertcls}
Choose 
\[
\mathbb{V} = \mathbb{H}_T(G/B),\quad
\mathbb{F} = \mathbb{H}_T(\pt),\quad 
\ev=-|_{e},
\]
or 
\[
\mathbb{V} = \mathbb{K}_T(G/B),\quad
\mathbb{F} = \mathbb{K}_T(\pt),\quad 
\ev=-|_{e}.
\]
We have the following setups:
\begin{align*}
(p,q,\hbar) &= (0,0,1), 
&& \T_i=\partial_i,
&& \text{$x_i$ in \eqref{X-123}},
&& \mathcal{Y}_{w}=[Y(w)]\in \mathbb{H}_T(G/B),\\[5pt]
(p,q,\hbar) &= (1,0,1),
&& \T_i=\hat{\pi}_i,
&& \text{$x_i$ in \eqref{X-12345}},
&& \mathcal{Y}_{w}=[\mathcal{I}_{\partial Y(w)}]\in \mathbb{K}_T(G/B),\\[5pt]
(p,q,\hbar) &= (1,1,1),
&& \T_i=\mathcal{T}_i,
&& \text{$x_i$ in \eqref{X-123}},
&& \mathcal{Y}_{w}=\CSM(Y(w)^\circ)\in \mathbb{H}_T(G/B),\\[5pt]
(p,q,\hbar) &= (1,-y,1+y),
&& \T_i=\mathfrak{T}_i, 
&& \text{$x_i$ in \eqref{X-12345}},
&& \mathcal{Y}_{w}=\MC_y(Y(w)^\circ)\in \mathbb{K}_T(G/B).
\end{align*}
\end{Eg}

In the remaining of this section, we clarify the equivalence  between the product $ \bT_w f$ and the action $f\cdot \mathcal{Y}_w$ for $f\in \A[x_1,\ldots,x_n]$.
Its Grassmannian analogue will be our main task in Section \ref{sec4}.

\begin{Th}\label{Prodform}
Let $\mathbb{V}$ be a Schubert representation of $\Haff_n$.
For $w\in \S_n$ and $f\in \A[x_1,\ldots,x_n]$, assume that
\begin{equation}\label{00PPO}
    \bT_w f=\sum_{u\in \S_n} P_{u, w}(x_1,\ldots,x_n)\,\bT_u\in \Haff_n,
\end{equation}
where $P_{u,w}\in \A[x_1,\ldots,x_n]$.
Then 
$$f\cdot \mathcal{Y}_u = \sum_{w\in \S_n}P_{u, w}(t_1,\ldots,t_n)\, \mathcal{Y}_w\in \mathbb{V}.$$
\end{Th}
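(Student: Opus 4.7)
The plan is to use associativity of the $\Haff_n$-module action on $\mathbb{V}$ together with the scalar-extracting behavior of $\ev$ to reduce the claim to a purely algebraic identity in $\H_n$. First, I note that the property \eqref{UYTR} extends by $\mathbb{F}$-linearity and iteration to $\ev(g(x_1,\ldots,x_n)\cdot v) = g(t_1,\ldots,t_n)\,\ev(v)$ for any polynomial $g\in\A[x_1,\ldots,x_n]$ and any $v\in\mathbb{V}$. Write $f\cdot\mathcal{Y}_u = \sum_{w'} Q_{u,w'}\,\mathcal{Y}_{w'}$ for unknown coefficients $Q_{u,w'}\in\mathbb{F}$; the goal is to show $Q_{u,w'} = P_{u,w'}(t_1,\ldots,t_n)$. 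Applying $\ev$ to the associativity identity $(\bT_w f)\cdot\mathcal{Y}_u = \bT_w\cdot(f\cdot\mathcal{Y}_u)$, substituting \eqref{00PPO} on the left, and extracting each $P_{v,w}(x)$ as a scalar via the property just noted, yields the matrix equation
\[
\sum_v P_{v,w}(t)\,M_{v,u} \;=\; \sum_{w'} Q_{u,w'}\,M_{w,w'},\qquad M_{v,u}:=\ev(\bT_v\cdot\mathcal{Y}_u).
\]
Hence, once I establish the orthogonality $M_{v,u} = \ev(\mathcal{Y}_{\operatorname{id}})\,\delta_{v,u}$, the equation collapses to $P_{u,w}(t) = Q_{u,w}$ (using $\ev(\mathcal{Y}_{\operatorname{id}})\ne 0$), and the theorem is done.

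For the orthogonality, I rewrite $\mathcal{Y}_u = \T_{u^{-1}w_0}\mathcal{Y}_{w_0}$ and expand $\bT_v\T_{u^{-1}w_0} = \sum_a c_a\,\T_a$ in the $\T$-basis of $\H_n$. Setting $w = w_0 a^{-1}$ in the defining recursion $\mathcal{Y}_w = \T_{w^{-1}w_0}\mathcal{Y}_{w_0}$ gives $\T_a\mathcal{Y}_{w_0} = \mathcal{Y}_{w_0 a^{-1}}$, hence
\[
M_{v,u} \;=\; \sum_a c_a\,\ev(\mathcal{Y}_{w_0 a^{-1}}) \;=\; c_{w_0}\,\ev(\mathcal{Y}_{\operatorname{id}}),
\]
since $\ev(\mathcal{Y}_{w_0 a^{-1}})$ vanishes unless $a=w_0$. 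The orthogonality therefore reduces to the purely Hecke-algebraic identity that the coefficient of $\T_{w_0}$ in $\bT_v\T_{u^{-1}w_0}$ equals $\delta_{v,u}$.

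I would prove this identity by induction on $\ell(v)$. The base $v=\operatorname{id}$ is immediate: $\bT_{\operatorname{id}}\T_{u^{-1}w_0} = \T_{u^{-1}w_0}$, whose $\T_{w_0}$-coefficient is $\delta_{u^{-1}w_0,w_0} = \delta_{u,\operatorname{id}}$. For the inductive step, choose a right descent $s_j$ of $v$, write $v=v's_j$ reduced so that $\bT_v=\bT_{v'}\bT_j$, and use the relations \eqref{HeckeRelation1}--\eqref{Lebniz3} to derive
\[
\bT_j\T_b \;=\; \begin{cases} \T_{s_j b}+(p-q)\,\T_b, & \ell(s_j b)>\ell(b),\\[2pt] pq\,\T_{s_j b}, & \ell(s_j b)<\ell(b),\end{cases}
\]
with $b = u^{-1}w_0$. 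Applying the inductive hypothesis to each resulting piece, a short Bruhat-order check rules out the potentially surviving cross-terms: the would-be $(p-q)$-contribution in the length-increasing case vanishes because $b=(v')^{-1}w_0$ would force $\ell(s_jb)<\ell(b)$, contradicting the case assumption; the would-be $pq$-contribution in the length-decreasing case would require $b=v^{-1}w_0$ (i.e.\ $u=v$), but $u=v$ forces $\ell(s_jb)>\ell(b)$, again contradicting the case assumption. What remains is exactly $\delta_{v,u}$. This combinatorial bookkeeping is the expected main obstacle, but once it is verified the first two steps assemble at once into the complete proof.
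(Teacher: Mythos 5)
Your proposal is correct and follows essentially the same route as the paper: you reduce the theorem to the orthogonality $\ev(\bT_v\cdot\mathcal{Y}_u)=\delta_{v,u}\ev(\mathcal{Y}_{\operatorname{id}})$ and thence to the Hecke-algebra identity $\epsilon(\bT_v\T_{u^{-1}w_0})=\delta_{v,u}$ (the paper's Lemma \ref{LLL-1}), proved by induction using the same multiplication rule \eqref{NNNN}. The paper packages the first step as a separate "reconstruction formula" while you phrase it as a matrix equation, and your induction for the Hecke lemma handles all cases uniformly via Bruhat-order checks rather than splitting off the low-length regime as the paper does, but these are only cosmetic differences.
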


The proof of Theorem \ref{Prodform}
is based on the following property.

\begin{Prop}\label{Reconstruction}
Let $\mathbb{V}$ be a Schubert representation of $\Haff_n$. 
For  $u, v\in \S_n$, 
\begin{equation}\label{EEE-1}
\ev\left(\bT_u\cdot\mathcal{Y}_v\right) = 
\begin{cases}
\ev\left(\mathcal{Y}_{\operatorname{id}}\right), & u= v,\\[5pt]
0, & \text{otherwise}.
\end{cases}
\end{equation}
Moreover, for   $\mathcal{Y}\in \mathbb{V}$, we have
\begin{equation}\label{EEE-2}
\mathcal{Y}=\frac{1}{\ev(\mathcal{Y}_{\operatorname{id}})}\sum_{w\in \S_n} \ev\left(\bT_w \cdot\mathcal{Y}\right)\mathcal{Y}_w.
\end{equation}
\end{Prop}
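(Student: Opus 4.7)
The plan is to first establish the orthogonality relation \eqref{EEE-1} by induction on $\ell(u)$, and then derive the reconstruction formula \eqref{EEE-2} as an immediate consequence of expanding an arbitrary element in the basis $\{\mathcal{Y}_w\}$.

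The preparatory step is to compute an explicit ``right Demazure-type'' action of $\bT_i$ on the generic Schubert basis. Starting from $\mathcal{Y}_v=\T_{v^{-1}w_0}\mathcal{Y}_{w_0}$ and writing $\bT_i=\T_i+(p-q)$, apply the Hecke multiplication rule to $\T_i\T_{v^{-1}w_0}$, splitting into cases according to whether $\ell(s_i\cdot v^{-1}w_0)$ exceeds $\ell(v^{-1}w_0)$ or not. Via the identity $(v^{-1}w_0)^{-1}=w_0v$ this length condition translates into the simpler condition $vs_i<v$. A short calculation then yields the formula
\[
\bT_i\cdot \mathcal{Y}_v \;=\;
\begin{cases}
\mathcal{Y}_{vs_i}+(p-q)\mathcal{Y}_v, & vs_i<v,\\[2pt]
pq\,\mathcal{Y}_{vs_i}, & vs_i>v,
\end{cases}
\]
which is the only tool needed from the Hecke structure.

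The induction itself then runs cleanly. The base case $u=\operatorname{id}$ is immediate since $\bT_{\operatorname{id}}=1$ and $\ev(\mathcal{Y}_v)=\delta_{\operatorname{id},v}\ev(\mathcal{Y}_{\operatorname{id}})$ by the defining axiom of a Schubert representation. For the inductive step, pick any right descent $i$ of $u$, write $u=u's_i$ with $\ell(u')=\ell(u)-1$ so that $\bT_u=\bT_{u'}\bT_i$, expand $\bT_u\cdot\mathcal{Y}_v=\bT_{u'}\cdot(\bT_i\cdot\mathcal{Y}_v)$ using the two-case formula, then apply $\ev$ and invoke the inductive hypothesis to replace each $\ev(\bT_{u'}\cdot\mathcal{Y}_?)$ by the appropriate Kronecker delta. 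In the diagonal case $u=v$ one automatically falls into the first case (since $vs_i=us_i<u$ by the choice of $i$), and the resulting expression $\delta_{u',u'}\ev(\mathcal{Y}_{\operatorname{id}})+(p-q)\delta_{u',u}\ev(\mathcal{Y}_{\operatorname{id}})$ collapses to $\ev(\mathcal{Y}_{\operatorname{id}})$, as desired.

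The step I expect to be the most delicate is the off-diagonal case $u\neq v$: here one must check that every surviving Kronecker term vanishes, which reduces to length bookkeeping of two implications, namely $u'=vs_i\Rightarrow u=v$ (directly), and $u'=v$ combined with the active case hypothesis would force $\ell(u)<\ell(u')$, contradicting that $i$ is a descent of $u$. Once \eqref{EEE-1} is established, \eqref{EEE-2} follows with no further work: write a general $\mathcal{Y}\in\mathbb{V}$ as $\mathcal{Y}=\sum_{w}a_w\mathcal{Y}_w$, apply $\bT_w$ and then $\ev$, and use \eqref{EEE-1} to obtain $\ev(\bT_w\cdot\mathcal{Y})=a_w\ev(\mathcal{Y}_{\operatorname{id}})$; solving for $a_w$ and substituting back gives exactly \eqref{EEE-2}. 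The only nontrivial input throughout is the three axioms of Definition \ref{DDD-1}; no deeper structural facts about $\mathbb{V}$ are required.
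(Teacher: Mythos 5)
Your argument is correct, and it reaches the same conclusion by an argument of essentially the same difficulty; it is worth noting, though, that the route through the intermediate formula differs slightly from what the paper does. The paper first isolates a purely Hecke-algebraic statement (Lemma~\ref{LLL-1}): $\epsilon(\bT_u\T_v)=\delta_{uv,w_0}$, where $\epsilon$ extracts the coefficient of $\T_{w_0}$, and proves it by induction on $\ell(u)$ using the triangularity $\bT_w=\T_w+\sum_{y<w}a_{y,w}\T_y$ to dispatch the regime $\ell(u)\le\ell(w_0)-\ell(v)$ in one stroke, before falling back to a descent decomposition $\bT_u=\bT_{us_i}\bT_i$ for the harder regime. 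Formula~\eqref{EEE-1} then falls out in one line via $\ev(\bT_u\cdot\mathcal{Y}_v)=\epsilon(\bT_u\T_{v^{-1}w_0})\ev(\mathcal{Y}_{\operatorname{id}})$. You instead avoid introducing $\epsilon$ by pushing the Hecke multiplication rule through the module: the two-case right-action formula
$\bT_i\cdot\mathcal{Y}_v=\mathcal{Y}_{vs_i}+(p-q)\mathcal{Y}_v$ if $vs_i<v$, and $\bT_i\cdot\mathcal{Y}_v=pq\,\mathcal{Y}_{vs_i}$ if $vs_i>v$, is exactly the translated form of $\bT_i\T_w$ acting on $\T_{w_0}$ (the sign of $\ell(s_i\,v^{-1}w_0)-\ell(v^{-1}w_0)$ flips to the sign of $\ell(v)-\ell(vs_i)$ via $\ell(gw_0)=\ell(w_0)-\ell(g)$, precisely as you noted). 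From there, your induction on $\ell(u)$ via $\bT_u=\bT_{u'}\bT_i$ and the length-contradiction argument for the off-diagonal Kronecker term $\delta_{u',v}$ mirror the paper's contradiction for $\epsilon(\bT_{us_i}\T_v)$ exactly. Your derivation of \eqref{EEE-2} from \eqref{EEE-1} is identical to the paper's. In short: same core induction and same bookkeeping, with the computation carried out directly in $\mathbb{V}$ rather than being factored through a self-contained Hecke-algebra identity. The paper's factoring is slightly more modular (the lemma is reusable independent of the representation), while your version is marginally more self-contained within the module; both are valid.
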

{ The proof of the above proposition needs a classical identity in Hecke algebras \cite{KL}, and here we include  a proof    for completeness.} 
Denote by
$\epsilon\colon \H_n\to \mathcal{A}$ the operation  of taking the coefficient of $\T_{w_0}$. 

\begin{Lemma}[]\label{LLL-1}
For   $u,v\in \S_n$, 
$$\epsilon(\bT_u\T_v)=
\begin{cases}
1, & uv=w_0,\\[5pt]
0, & \text{otherwise}.
\end{cases}$$
\end{Lemma}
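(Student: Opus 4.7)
The plan is to prove the lemma by induction on $\ell(u)$. The base case $u = \operatorname{id}$ is immediate: $\bT_u = 1$, and $\epsilon(\T_v)$ is by definition the coefficient of $\T_{w_0}$ in $\T_v$, which equals $1$ exactly when $v = w_0 = \operatorname{id} \cdot v$.

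For the inductive step with $\ell(u) \geq 1$, I would pick a reduced factorization $u = u' s_i$ with $\ell(u') = \ell(u) - 1$, so that $\bT_u = \bT_{u'} \bT_i$. A direct expansion of $\bT_i \T_v = \T_i \T_v + (p-q) \T_v$ using the multiplication rule of $\H_n$ yields
\begin{align*}
\bT_i \T_v =
\begin{cases}
\T_{s_i v} + (p-q)\, \T_v, & \text{if } \ell(s_i v) > \ell(v), \\
pq\, \T_{s_i v}, & \text{if } \ell(s_i v) < \ell(v),
\end{cases}
\end{align*}
where in the length-decreasing case the two contributions proportional to $\T_v$ cancel exactly. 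Multiplying by $\bT_{u'}$ on the left, applying $\epsilon$, and invoking the inductive hypothesis (noting $u' s_i v = uv$) then expresses $\epsilon(\bT_u \T_v)$ as a linear combination of the indicators $[uv = w_0]$ and $[u' v = w_0]$. The desired identity thus reduces to showing that the spurious indicators vanish in each regime.

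The main obstacle is this Bruhat-order compatibility check. In the length-increasing case, one obtains
\[
\epsilon(\bT_u \T_v) = [uv = w_0] + (p-q)\,[u' v = w_0],
\]
and one needs $[u' v = w_0] = 0$. The key observation is that $u' v = w_0$ would force $u' = w_0 v^{-1}$; passing to one-line notation gives $u'(i) = n+1 - v^{-1}(i)$, so the condition $\ell(u' s_i) > \ell(u')$ (i.e. $u'(i) < u'(i+1)$) would translate to $v^{-1}(i) > v^{-1}(i+1)$, equivalently $\ell(s_i v) < \ell(v)$, contradicting the case assumption. In the length-decreasing case, one gets $\epsilon(\bT_u \T_v) = pq\,[uv = w_0]$, and a length count forces $[uv = w_0] = 0$: if $uv = w_0$ then $\ell(u) + \ell(v) = \ell(w_0)$, but $uv = u' \cdot (s_i v)$ with $\ell(u') + \ell(s_i v) = \ell(u) + \ell(v) - 2 = \ell(w_0) - 2$, which violates $\ell(w_0) = \ell(u'(s_i v)) \leq \ell(u') + \ell(s_i v)$. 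Both obstructions resolve cleanly and the induction closes.
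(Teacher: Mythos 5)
Your proof is correct. You use the same inductive skeleton as the paper: induction on $\ell(u)$, peeling off a simple reflection on the right via $u = u's_i$ with $\ell(u')=\ell(u)-1$, and then applying the multiplication rule for $\bT_i\T_v$ (the paper's formula \eqref{NNNN}). Where you diverge is in how the ``spurious'' term is killed. The paper first dispatches the regime $\ell(u)\le\ell(w_0)-\ell(v)$ directly, using the triangularity $\bT_w = \T_w + \sum_{y<w}a_{y,w}\T_y$ (so that, by the length filtration, the coefficient of $\T_{w_0}$ cannot arise spuriously), and then runs the induction only in the complementary regime $\ell(u)>\ell(w_0)-\ell(v)$, where a length count handles the extra term $\epsilon(\bT_{us_i}\T_v)$. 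Your version instead runs a single uniform induction: in the length-increasing case you rule out $u'v=w_0$ by the one-line-notation observation that $u'=w_0v^{-1}$ would force $u'(i)<u'(i+1)$ to contradict $\ell(s_iv)>\ell(v)$, an argument that works without any hypothesis on $\ell(u)+\ell(v)$ and hence absorbs both of the paper's regimes at once. Both routes are valid; yours avoids the separate triangularity step and is slightly more self-contained, while the paper's keeps the purely combinatorial one-line computation out of the picture.
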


\begin{proof}
We use induction on $\ell(u)$. 
This is clear for $u=\operatorname{id}$. We next consider $\ell(u)\geq 1$. 
By definition, it is easily seen  that for $w\in \S_n$,
\[\bT_w=T_w+\sum_{y<w} a_{y, w} T_y,\]
where $a_{y, w}\in \mathbb{Q}[p,q]$, and $<$ is the Bruhat order.   
So the assertion is true  when  $\ell(u)\leq \ell(w_0)-\ell(v)$.
Now we are left with  the case when $\ell(u)>\ell(w_0)-\ell(v)$. In this case, we have $uv\neq w_0$, and so we need to verify $\epsilon(\bT_u\T_v)=0$.
Notice that for $w\in \S_n$ and $1\leq i\leq n-1$,
\begin{align}\label{NNNN}
\bT_i\T_w=\left\{
  \begin{array}{ll}
    pq\T_{s_iw}, & \hbox{$\ell(s_iw)<\ell(w)$,} \\[5pt]
    (p-q)\T_w+\T_{s_iw}, & \hbox{$\ell(s_iw)>\ell(w)$.}
  \end{array}
\right.
\end{align}
Since $\ell(u)\geq 1$, we may pick an index $i$ such that $\ell(us_i)<\ell(u)$. By \eqref{NNNN},
$$\epsilon(\bT_u\T_v)=
\epsilon(\bT_{us_i}\bT_i\T_{v})
= \begin{cases}
pq\epsilon(\bT_{us_i}\T_{s_iv}), & \ell(s_iv)<\ell(v),\\[5pt]
\epsilon(\bT_{us_i}\T_{s_iv})
+(p-q)\epsilon(\bT_{us_i}\T_{v}), & \ell(s_iv)>\ell(v).
\end{cases}$$
Since  $us_is_iv=uv\neq w_0$, we have $\epsilon(\bT_{us_i}\T_{s_iv})=0$ by induction. To conclude the proof, we need to check that $\epsilon(\bT_{us_i}\T_{v})=0$. 
Suppose to the contrary that $\epsilon(\bT_{us_i}\T_{v})$ is non-zero, that is, $us_iv=w_0$ by induction. 
However, this would lead to
$$\ell(v)=\ell(s_iv)-1=\ell(w_0)-\ell(u)-1,$$
contradicting  the assumption that $\ell(v)>\ell(w_0)-\ell(u)$. 
\end{proof}

\begin{proof}[Proof of Theorem \ref{Reconstruction}]
For $u, v\in \S_n$, it follows from Definition \ref{DDD-1} that
$$\ev(\bT_u\cdot \mathcal{Y}_v) = \ev(\bT_u\T_{v^{-1}w_0}\cdot \mathcal{Y}_{w_0})
=\epsilon(\bT_u\T_{v^{-1}w_0})\ev(\mathcal{Y}_{\operatorname{id}}).$$
By Lemma \ref{LLL-1}, $\epsilon(\bT_u\T_{v^{-1}w_0})$ equals $1$ if $u=v$, and vanishes otherwise. 
This proves \eqref{EEE-1}. 
To verify \eqref{EEE-2}, write 
$\mathcal{Y}=\sum_{w\in \S_n} c_w \mathcal{Y}_w$.
By \eqref{EEE-1},
\[
\ev(\bT_u\cdot \mathcal{Y})=\sum_{w\in \S_n} c_w\ev(\bT_u\cdot \mathcal{Y}_w)=  c_u \ev(\mathcal{Y}_{\operatorname{id}}),
\]
which justifies  \eqref{EEE-2}.
\end{proof}

\begin{proof}[Proof of Theorem \ref{Prodform}]
By \eqref{EEE-2}, we see that 
\[
f\cdot \mathcal{Y}_u 
 = 
\frac{1}{\ev(\mathcal{Y}_{\operatorname{id}})}
\sum_{w\in \S_n}\ev(\bT_w f\cdot \mathcal{Y}_u)\mathcal{Y}_w,
\]
which,  along with
\eqref{00PPO}, leads to 
\begin{equation}\label{VFR}
 f\cdot \mathcal{Y}_u  =
\frac{1}{\ev(\mathcal{Y}_{\operatorname{id}})}
\sum_{w\in \S_n}\left(\sum_{v\in \S_n} \ev(P_{v,w}(x_1,\ldots,x_n)\bT_v\cdot \mathcal{Y}_u)\right)\mathcal{Y}_w.   
\end{equation}
In view of \eqref{UYTR} and \eqref{EEE-1}, one can simplify  
\eqref{VFR} as
\begin{align*}
f\cdot \mathcal{Y}_u 
&   =
\frac{1}{\ev(\mathcal{Y}_{\operatorname{id}})}
\sum_{w\in \S_n} P_{u,w}(t_1,\ldots,t_n)\ev(\mathcal{Y}_{\operatorname{id}})\mathcal{Y}_w\\[5pt]
& = 
\sum_{w\in \S_n} P_{u, w}(t_1,\ldots,t_n)\mathcal{Y}_w. \qedhere
\end{align*}
\end{proof}

\section{Schubert calculus on Grassmannians}\label{sec4}

The aim of this section is to relate the Schubert calculus (in particular, the Pieri formula case) over Grassmannians to the computation  in the affine Hecke algebra  $\Haff_n$. The result is described  in Theorem \ref{ThPProdForm}, which can be regarded as a Grassmannian analogue of  Theorem \ref{Prodform}.

\subsection{Main result}
We start with some definitions and notation.



\begin{Def}
Let $\mathbb{V}$ be a Schubert representation of $\Haff_n$ with basis $\{\mathcal{Y}_{w}\colon w\in\S_n\}$. 
For  $\lambda\subseteq (n-k)^k$, the \emph{generic Grassmannian Schubert class} is defined by
\begin{equation}\label{DEFGT}
\mathcal{Y}_{\lambda} = \sum_{v\in \S_k\times \S_{n-k}} p^{\ell(v)}\mathcal{Y}_{w_{\lambda}v}=
\sum_{\begin{subarray}{c}
w\in \S_n\\
\mathrm{Gr}(w)=\lambda
\end{subarray}} p^{\ell(w)-|\lambda|}\mathcal{Y}_w.  
\end{equation}
Here, recall that $w_\lambda$ is defined in \eqref{eq:parabolicdecomposition}.  
\end{Def}

By taking specific values of $(p,q,\hbar)$  as illustrated in Example \ref{ThSchubertcls}, we  obtain  the concrete realizations of generic
Grassmannian Schubert classes.

\begin{Eg}\label{specialvalue}
\begin{itemize}
    \item[(i)]    Schubert classes: $(p,q,\hbar) = (0,0,1)$. In this case, by Example \ref{ThSchubertcls}, we have $
\mathcal{Y}_{w} = [Y(w)]$.
Since $p=0$, the only basis element appearing in  \eqref{DEFGT} is $\mathcal{Y}_{w_\lambda}$, and so we have
$\mathcal{Y}_{\lambda}=\mathcal{Y}_{w_\lambda}=[Y(w_{\lambda})]$. Along with  Proposition \ref{SchRep1:Gr}(i),  we see that 
$$\mathcal{Y}_{\lambda} = [Y(w_{\lambda})]=\pi^*[Y(\lambda)]. $$

\item[(ii)] Classes of ideal sheaves:  $(p,q,\hbar) = (1,0,1)$. In this case,  we have $\mathcal{Y}_w =[\mathcal{I}_{\partial Y(w)}]$. So, by \eqref{DEFGT} and Proposition \ref{SchRep2:Gr}(i), 
$$\mathcal{Y}_{\lambda}=\sum_{
\begin{subarray}{c}
w\in \S_n\\
\mathrm{Gr}(w)=\lambda
\end{subarray}} \mathcal{Y}_w = \sum_{
\begin{subarray}{c}
w\in \S_n\\
\mathrm{Gr}(w)=\lambda
\end{subarray}}[\mathcal{I}_{\partial Y(w)}]=\pi^*[\mathcal{I}_{\partial Y(\lambda)}]. $$

\item[(iii)] CSM classes:  $(p,q,\hbar) = (1,1,1)$. In this case, we have $\mathcal{Y}_w=\CSM(Y(w)^\circ)$. It follows from  \eqref{DEFGT} that
$$\mathcal{Y}_{\lambda} =\sum_{
\begin{subarray}{c}
w\in \S_n\\
\mathrm{Gr}(w)=\lambda
\end{subarray}} \mathcal{Y}_w = 
\sum_{
\begin{subarray}{c}
w\in \S_n\\
\mathrm{Gr}(w)=\lambda
\end{subarray}}\CSM(Y(w)^\circ)=\CSM\left(\bigcup_{
\begin{subarray}{c}
w\in \S_n\\
\mathrm{Gr}(w)=\lambda
\end{subarray}}Y(w)^\circ\right). $$

\item[(iv)] Motivic Chern classes: $(p,q,\hbar) = (1,-y,1+y)$. In this case, we have $\mathcal{Y}_w=\MC_y(Y(w)^\circ)$. It follows from  \eqref{DEFGT} that
$$\mathcal{Y}_{\lambda} = \sum_{
\begin{subarray}{c}
w\in \S_n\\
\mathrm{Gr}(w)=\lambda
\end{subarray}} \MC_y(Y(w)^\circ)=\MC_y\left(\bigcup_{
\begin{subarray}{c}
w\in \S_n\\
\mathrm{Gr}(w)=\lambda
\end{subarray}} Y(w)^\circ\right). $$
\end{itemize}
\end{Eg}

For a nonnegative integer $m$, the \emph{quantum number} refers to
$$[m]=\frac{p^m-q^m}{p-q},\qquad [m]!=[m][m-1]\cdots [1].$$  {Here we follow the usual convention   $[0]!=1$. }

\begin{Def}\label{YHN5766}
The \emph{quantum symmetrizer} is defined as the following element in $\H_n$:
\begin{equation}
\Sigma_k^{n} = \frac{1}{[k]![n-k]!}\sum_{v\in \S_k\times \S_{n-k}} q^{d-\ell(v)}\bT_v,    
\end{equation}
where $d=\binom{k}{2}+\binom{n-k}{2}=\ell(v_0)$ with $v_0=k \cdots 1\, n \cdots (k+1)$ being the longest permutation in $\S_k\times \S_{n-k}$. 
With this notation, for  $\lambda\subseteq (n-k)^k$, we define the \emph{generic Grassmannian Hecke operator} by
\begin{equation}
 \bT_{\lambda} = \bT_{w_{\lambda}}\Sigma_k^n.   
\end{equation}
More concretely,
\[
\bT_{\lambda} = \frac{1}{[k]![n-k]!}\sum_{v\in \S_k\times \S_{n-k}} q^{d-\ell(v)}\bT_{w_{\lambda}v}=\frac{1}{[k]![n-k]!}\sum_{
\begin{subarray}{c}
w\in \S_n\\
\mathrm{Gr}(w)=\lambda
\end{subarray}} q^{d-\ell(w)+|\lambda|}\bT_{w}.
\]

\end{Def}

\begin{Rmk}
\begin{itemize}
\item {The structures  in Definition \ref{YHN5766} were essentially investigated  by Deodhar \cite{Deodhar} in his study of  the parabolic analogue  of Kazhdan--Lusztig polynomials.}

\item { The coefficient  in $\Sigma_k^n$ is a normalization due to the following  identity (see for example \cite[Chatper III (1.3)]{Mac}):
\begin{equation}\label{eq:qiden}
\frac{1}{[k]![n-k]!}\sum_{v\in \S_k\times \S_{n-k}} q^{d-\ell(v)}p^{\ell(v)}=1.
\end{equation}}
\end{itemize}
\end{Rmk}

Our main result in this section can now be stated as follows. 

\begin{Th}\label{ThPProdForm}
Let $\mathbb{V}$ be a Schubert representation of $\Haff_n$ with basis $\{\mathcal{Y}_{w}\colon w\in\S_n\}$. 
Assume that  $f\in \A[x_1,\ldots,x_n]$ is symmetric under $\S_k\times \S_{n-k}$, that is, $f$ is symmetric both in $x_1,\ldots, x_k$ and in $x_{k+1},\ldots, x_n$. 
\begin{itemize}
    \item[(i)] For $\mu\subseteq (n-k)^k$, 
the   product $\bT_\mu f$ in $\Haff_n$ can be  expressed as
\begin{equation}\label{BH-1}
\bT_\mu f=\sum_{\lambda\subseteq (n-k)^k} P_{\lambda, \mu}(x_1,\ldots,x_n)\,\bT_{\lambda},
\end{equation}
where $P_{\lambda, \mu}(x_1,\ldots,x_n)\in \A[x_1,\ldots,x_n].$

\item[(ii)] For $\lambda\subseteq (n-k)^k$,
  the action $f\cdot \mathcal{Y}_\lambda$
in  $\mathbb{V}$ is determined by \eqref{BH-1}, that is,
\begin{equation}\label{BH-2}
f\cdot \mathcal{Y}_\lambda = \sum_{\mu\subseteq (n-k)^k}P_{\lambda,  \mu}(t_1,\ldots,t_n) \,\mathcal{Y}_{\mu}.
\end{equation}
\end{itemize}
\end{Th}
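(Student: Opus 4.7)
The plan is to realize the Grassmannian Schubert calculus through the idempotent $\Sigma_k^n$ inside $\Haff_n$ and to transfer the expansion~\eqref{BH-1} to the representation identity~\eqref{BH-2} via an orthogonality pairing between the classes $\bT_\mu$ and $\mathcal{Y}_\lambda$. I would first assemble the following purely algebraic facts inside $\Haff_n$: (a) any polynomial symmetric in $x_i,x_{i+1}$ commutes with $\bT_i$, which reduces via~\eqref{Lebniz1}--\eqref{Lebniz3} to checking the invariant generators $x_i+x_{i+1}$ and $x_ix_{i+1}$; (b) $\bT_v\Sigma_k^n=\Sigma_k^n\bT_v=p^{\ell(v)}\Sigma_k^n$ for every $v\in\S_k\times\S_{n-k}$, verified by a pairwise coset computation together with $\bT_i^{2}=(p-q)\bT_i+pq$; (c) $\Sigma_k^n$ is an idempotent, via (b) and the identity~\eqref{eq:qiden}; and (d) the absorption formula $\bT_w\Sigma_k^n=p^{\ell(w)-|\Gr(w)|}\bT_\lambda$ for $w=w_\lambda v$ with $v\in\S_k\times\S_{n-k}$, obtained by combining (b) with $\bT_w=\bT_{w_\lambda}\bT_v$.

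For part (i), since $f$ is $\S_k\times\S_{n-k}$-symmetric, fact (a) promotes to $\Sigma_k^n f=f\Sigma_k^n$, so $\bT_\mu f=\bT_{w_\mu}f\Sigma_k^n$. Expanding $\bT_{w_\mu}f=\sum_{w\in\S_n}A_w(x)\bT_w$ in the standard $\A[x_1,\ldots,x_n]$-basis of $\Haff_n$ and applying (d) term-by-term, a regrouping by $\lambda=\Gr(w)$ yields~\eqref{BH-1} with the explicit coefficients
\[
P_{\lambda,\mu}(x)=\sum_{w\colon\Gr(w)=\lambda}p^{\ell(w)-|\lambda|}A_w(x).
\]

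The proof of part (ii) requires two new ingredients. First I would establish the orthogonality $\ev(\bT_\mu\mathcal{Y}_\lambda)=\delta_{\lambda,\mu}\ev(\mathcal{Y}_{\operatorname{id}})$ by expanding $\bT_\mu=\tfrac{1}{[k]![n-k]!}\sum_{v'}q^{d-\ell(v')}\bT_{w_\mu v'}$ and $\mathcal{Y}_\lambda=\sum_{v''}p^{\ell(v'')}\mathcal{Y}_{w_\lambda v''}$, invoking~\eqref{EEE-1}, and collapsing the residual sum via~\eqref{eq:qiden}. Combining the flag reconstruction~\eqref{EEE-2} with (d), I would then show $\Sigma_k^n\mathcal{Y}_\lambda=\mathcal{Y}_\lambda$ and that $\Sigma_k^n\mathbb{V}$ coincides with $\mathbb{V}^P:=\mathrm{span}\{\mathcal{Y}_\lambda\colon\lambda\subseteq(n-k)^k\}$, so that $\{\mathcal{Y}_\lambda\}$ is a basis of $\mathbb{V}^P$. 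Since $f$ commutes with $\Sigma_k^n$, the subspace $\mathbb{V}^P$ is stable under multiplication by $f$, giving $f\cdot\mathcal{Y}_\lambda=\sum_\mu Q_{\lambda,\mu}(t)\mathcal{Y}_\mu$ for some coefficients $Q_{\lambda,\mu}$. Applying the functional $\ev(\bT_\mu\cdot)$ to both sides and using part~(i), the evaluation axiom~\eqref{UYTR}, and the orthogonality, both expressions collapse to give $P_{\lambda,\mu}(t)\ev(\mathcal{Y}_{\operatorname{id}})=Q_{\lambda,\mu}(t)\ev(\mathcal{Y}_{\operatorname{id}})$, which is exactly~\eqref{BH-2}.

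The main obstacle I anticipate is the Grassmannian-level duality underpinning the argument: the orthogonality pairing and the invariance identity $\Sigma_k^n\mathcal{Y}_\lambda=\mathcal{Y}_\lambda$. These play the role of Proposition~\ref{Reconstruction} at the Grassmannian level, and their verification ties together the explicit coset formulas for $\bT_\mu$ and $\mathcal{Y}_\lambda$, the absorption formula~(d), and the normalization identity~\eqref{eq:qiden}.
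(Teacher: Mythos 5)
Your proof is correct, and part (i) follows the same path as the paper (commute $f$ past $\Sigma_k^n$, apply the absorption $\bT_w\Sigma_k^n=p^{\ell(v)}\bT_{\Gr(w)}$, regroup by $\Gr(w)$). For part (ii) you take a genuinely different route. The paper introduces a \emph{second} symmetrizer $\Pi_k^n=\sum_{v}p^{d-\ell(v)}\T_v$ together with Lemma~\ref{PLem1prime}, which says $\Pi_k^n\cdot\mathcal{Y}_{w_\lambda v}=q^{d-\ell(v)}\mathcal{Y}_\lambda$; from $\mathcal{Y}_\lambda=\Pi_k^n\cdot\mathcal{Y}_{w_\lambda v_0}$ and $\Pi_k^n f=f\Pi_k^n$ it deduces $f\cdot\mathcal{Y}_\lambda=\Pi_k^n\cdot(f\cdot\mathcal{Y}_{w_\lambda v_0})\in\mathbb{V}^P$, and then matches coefficients against Theorem~\ref{dualbasis}. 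You instead keep working with $\Sigma_k^n$ alone: from the orthogonality $\ev(\bT_\mu\mathcal{Y}_\lambda)=\delta_{\lambda\mu}\ev(\mathcal{Y}_{\operatorname{id}})$ together with the reconstruction formula~\eqref{EEE-2} and absorption you get both $\Sigma_k^n\cdot\mathcal{Y}_\lambda=\mathcal{Y}_\lambda$ and $\Sigma_k^n\cdot\mathbb{V}=\mathbb{V}^P$, so stability of $\mathbb{V}^P$ under $f$ is immediate from $f\Sigma_k^n=\Sigma_k^n f$; the coefficient match is then the same pairing argument. What your approach buys is economy: it eliminates the auxiliary $\Pi_k^n$ and its commutation lemma entirely, obtaining the parabolic invariance of $\mathbb{V}^P$ as a byproduct of the orthogonality already needed for the final step. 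What the paper's route buys is a slightly sharper localized statement (Lemma~\ref{PLem1prime} records the exact scalar $q^{d-\ell(v)}$, which is reused in Proposition~\ref{SchRep:Grr}); your version gives the weaker but sufficient $\Sigma_k^n\mathbb{V}=\mathbb{V}^P$ without that refinement.

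One small point worth making explicit in a write-up: when you apply $\ev(\bT_\mu\,\cdot\,)$ and invoke~\eqref{UYTR}, you need $\ev(P(x)\cdot\mathcal{Y})=P(t)\,\ev(\mathcal{Y})$ for arbitrary $\mathcal{Y}\in\mathbb{V}$ and arbitrary polynomials~$P$, not just for the class $\mathcal{Y}_w$ and a single variable $x_i$; this does follow by $\mathbb{F}$-linearity of $\ev$ and of multiplication and an induction on the degree of $P$, exactly as in the proof of Theorem~\ref{Prodform}, but it should be said.
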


 

For our purpose of deriving Pieri type formulas, we pay attention to the case of elementary symmetric polynomials $f=e_r(x_1,\ldots, x_k)$. We  adopt the abbreviation $e_r(x_{[k]}):=e_r(x_1,\ldots, x_k)$. Based upon the observations in  Examples \ref{ThSchubertcls} and \ref{specialvalue}, one may transform the determination of Pieri formulas to the computation of the action in \eqref{BH-2}, as explained below.

\begin{Prop}
\label{SchRep:Grr}
\begin{itemize}
\item[(i)]  Schubert classes: $(p,q,\hbar) = (0,0,1)$. In this case, $\mathcal{Y}_{\lambda} =\pi^*[Y(\lambda)]$.
Suppose that 
\[
c_r(\mathcal{V}^\vee)\cdot [Y(\lambda)]=\sum_{\mu\subseteq (n-k)^k} P_{\lambda,  \mu}^{(1)}(t)\,  [Y(\mu)].
\]
Applying the injection $\pi^*$ to both sides yields that 
\[
e_r(x_{[k]})\cdot \mathcal{Y}_\lambda = \sum_{\mu\subseteq (n-k)^k}P_{\lambda,  \mu}^{(1)}(t)\, \mathcal{Y}_{\mu}.
\]

\item[(ii)] Classes of ideal sheaves:  $(p,q,\hbar) = (1,0,1)$. 
In this case, $\mathcal{Y}_{\lambda}=\pi^*[\mathcal{I}_{\partial Y(\lambda)}]$. 
Suppose that 
\[
c_r(\mathcal{V}^\vee)\cdot \mathcal{I}_{\partial Y(\lambda)}=\sum_{\mu\subseteq (n-k)^k} P_{\lambda,  \mu}^{(2)}(t)\,  \mathcal{I}_{\partial Y(\mu)}.
\]
Applying the injection $\pi^*$ to both sides yields that 
\[
e_r(x_{[k]})\cdot \mathcal{Y}_\lambda = \sum_{\mu\subseteq (n-k)^k}P_{\lambda,  \mu}^{(2)}(t)\, \mathcal{Y}_\mu.
\]

\item[(iii)] CSM classes:  $(p,q,\hbar) = (1,1,1)$.
In this case, 
\[
\mathcal{Y}_{\lambda}=\sum_{
\begin{subarray}{c}
w\in \S_n\\
\mathrm{Gr}(w)=\lambda
\end{subarray}}\CSM(Y(w)^\circ).
\]
Using Proposition \ref{SchRep3}(i), we obtain that
\[
\pi_*\mathcal{Y}_{\lambda}
=k!(n-k)!\cdot  \CSM(Y(\lambda)^\circ).
\]
Suppose that  
\[
e_r(x_{[k]})\cdot \mathcal{Y}_\lambda = \sum_{\mu\subseteq (n-k)^k}P_{\lambda,  \mu}^{(3)}(t)\, \mathcal{Y}_\mu.
\]
By the projection formula, 
\begin{align*}   c_r(\mathcal{V}^\vee)\cdot \CSM(Y(\lambda)^\circ)
& = \frac{1}{k!(n-k)!}c_r(\mathcal{V}^\vee)\cdot \pi_*\mathcal{Y}_{\lambda}
 =\frac{1}{k!(n-k)!}\pi_*(e_r(x_{[k]}) \cdot \mathcal{Y}_{\lambda})\\[5pt]
& = \frac{1}{k!(n-k)!} 
\sum_{\mu\subseteq (n-k)^k} P_{\lambda,\mu}^{(3)}(t)\,\pi_*\mathcal{Y}_{\mu}\\[5pt]
& =  \sum_{\mu\subseteq (n-k)^k} P_{\lambda,\mu}^{(3)}(t)\,\CSM(Y(\mu)^\circ).
\end{align*}

\item[(iv)] Motivic Chern classes: $(p,q,\hbar) = (1,-y,1+y)$. 
In this case, 
\[
\mathcal{Y}_{\lambda}=\sum_{
\begin{subarray}{c}
w\in \S_n\\
\mathrm{Gr}(w)=\lambda
\end{subarray}}\MC(Y(w)^\circ).
\]
By Proposition \ref{SchRep4:Gr}(i), 
we have 
\begin{align*}
\pi_*\mathcal{Y}_{\lambda}&=\left(\sum_{
\begin{subarray}{c}
w\in \S_n\\
\mathrm{Gr}(w)=\lambda
\end{subarray}}(-y)^{\binom{k}{2}+\binom{n-k}{2}-\ell(w)+|\Gr(w)|}\right)\MC_y(Y(\lambda)^\circ)\\[5pt]
&=  \left(\sum_{v\in \S_k\times \S_{n-k}} (-y)^{\binom{k}{2}+\binom{n-k}{2}-\ell(v)}\right)\,  \MC_y(Y(\lambda)^\circ)\\[5pt]
&=\left(\sum_{v\in \S_k\times \S_{n-k}} (-y)^{\ell(v)}\right)\,  \MC_y(Y(\lambda)^\circ).  
\end{align*}
Suppose that  
\[
e_r(x_{[k]})\cdot \mathcal{Y}_\lambda = \sum_{\mu\subseteq (n-k)^k}P_{\lambda,  \mu}^{(4)}(t)\, \mathcal{Y}_\mu.
\]
Still, as in (iii),  using the projection formula, we obtain that 
$$c_r(\mathcal{V}^\vee)\cdot \MC_y(Y(\lambda)^\circ)=\sum_{\mu\subseteq (n-k)^k} P_{\lambda,\mu}^{(4)}(t)\,\MC_y(Y(\mu)^\circ).$$
\end{itemize}
\end{Prop}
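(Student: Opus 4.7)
The plan is to verify each of the four cases (i)--(iv) separately, relying on the identifications of $\mathcal{Y}_\lambda$ established in Example \ref{specialvalue}, together with functorial properties of $\pi^*$ and $\pi_*$, and taking as input the expansion $e_r(x_{[k]}) \cdot \mathcal{Y}_\lambda = \sum_\mu P_{\lambda,\mu}(t)\, \mathcal{Y}_\mu$ provided by Theorem \ref{ThPProdForm}.

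For cases (i) and (ii), Example \ref{specialvalue} identifies $\mathcal{Y}_\lambda$ as $\pi^*[Y(\lambda)]$ and $\pi^*[\mathcal{I}_{\partial Y(\lambda)}]$ respectively. Since $\pi^*$ is a ring homomorphism satisfying $\pi^* c_r(\mathcal{V}^\vee) = e_r(x_{[k]})$ by the Whitney product formula, and is injective by Theorems \ref{Borelthm:inj} and \ref{th:KBorelinj}, I simply apply $\pi^*$ to the alleged Pieri identity on $\Gr(k,n)$, match it against \eqref{BH-2}, and use injectivity to conclude that the coefficients $P_{\lambda,\mu}(t)$ must coincide with the Pieri coefficients downstairs.

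For cases (iii) and (iv), $\mathcal{Y}_\lambda$ is a sum over all $w \in \S_n$ with $\Gr(w) = \lambda$ of the geometric class on $G/B$, so the strategy is to push forward via $\pi_*$ and then invoke the projection formula. First I compute $\pi_* \mathcal{Y}_\lambda$: in case (iii), Proposition \ref{SchRep3}(i) gives $\pi_*\CSM(Y(w)^\circ) = \CSM(Y(\Gr(w))^\circ)$ for each summand, so the sum over the $k!(n-k)!$ permutations with $\Gr(w) = \lambda$ gives $\pi_*\mathcal{Y}_\lambda = k!(n-k)!\cdot \CSM(Y(\lambda)^\circ)$; in case (iv), Proposition \ref{SchRep4:Gr}(i) gives a weighted pushforward whose weight $(-y)^{\binom{k}{2}+\binom{n-k}{2}-\ell(w)+|\Gr(w)|}$, under the parabolic decomposition $w = w_\lambda v$ with $\ell(w) = |\lambda| + \ell(v)$, depends only on $\ell(v)$. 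Re-indexing over $v \in \S_k \times \S_{n-k}$ produces $\pi_*\mathcal{Y}_\lambda = C_y \cdot \MC_y(Y(\lambda)^\circ)$ with $C_y = \sum_{v \in \S_k \times \S_{n-k}} (-y)^{\ell(v)}$, a constant \emph{independent of $\lambda$}. Applying the projection formula $\pi_*(\pi^*(\alpha) \cdot \beta) = \alpha \cdot \pi_* \beta$ with $\alpha = c_r(\mathcal{V}^\vee)$ and $\beta = \mathcal{Y}_\lambda$, and expanding $e_r(x_{[k]}) \cdot \mathcal{Y}_\lambda$ via Theorem \ref{ThPProdForm} before pushing down, yields the Pieri formula after the $\lambda$-independent scalar cancels from both sides.

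The main obstacle is to verify that $\pi_*\mathcal{Y}_\lambda$ factors cleanly as a $\lambda$-independent scalar times the corresponding geometric class on $\Gr(k,n)$; this is immediate in case (iii) where every summand contributes the same class, but in case (iv) requires the observation that the exponent in Proposition \ref{SchRep4:Gr}(i) reduces, via the parabolic decomposition, to a function of $\ell(v)$ alone. Once this scalar cancellation is established, no further computation is needed beyond invoking Theorem \ref{ThPProdForm} and the projection formula.
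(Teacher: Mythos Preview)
Your proposal is correct and follows essentially the same approach as the paper: cases (i)--(ii) use the injectivity of $\pi^*$ together with $\pi^*c_r(\mathcal{V}^\vee)=e_r(x_{[k]})$, while cases (iii)--(iv) compute $\pi_*\mathcal{Y}_\lambda$ as a $\lambda$-independent scalar times the Grassmannian class and then apply the projection formula. The only detail you elide is the passage from $\sum_{v}(-y)^{d-\ell(v)}$ to $\sum_{v}(-y)^{\ell(v)}$ in case (iv), which the paper writes out explicitly via the substitution $v\mapsto v_0v$; but since your argument only needs that the scalar is independent of $\lambda$, this is immaterial.
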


\subsection{Proof of  Theorem \ref{ThPProdForm}}
In order to give a proof of Theorem \ref{ThPProdForm}, we first demonstrate  several lemmas, which we think might possibly  be known to experts in   affine Hecke algebras.

\begin{Lemma}\label{PLem1}
Let  $f\in \A[x_1,\ldots,x_n]$. If   $f$ is symmetric in $x_i$ and $x_{i+1}$, then  
\begin{equation}\label{PPOO-1}
\bT_if=f\bT_i.
\end{equation}
\end{Lemma}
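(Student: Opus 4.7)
The plan is to reduce the commutation $\bT_i f = f\bT_i$ to the commutation $\T_i f = f\T_i$. Since $\bT_i = \T_i + (p-q)$ and the scalar $(p-q) \in \mathcal{A}$ commutes with every polynomial in $\A[x_1,\ldots,x_n]$, these two identities are equivalent. So the entire task is to show that $\T_i$ commutes with any polynomial $f$ symmetric in $x_i$ and $x_{i+1}$.

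For this I would invoke the fundamental theorem of symmetric polynomials: any such $f$ lies in the polynomial subalgebra generated over $\A$ by $\{x_j : j \neq i, i+1\}$ together with the two elementary symmetric polynomials $\sigma_1 = x_i + x_{i+1}$ and $\sigma_2 = x_i x_{i+1}$. Since the centralizer of $\T_i$ inside $\Haff_n$ is a subalgebra, it suffices to verify that $\T_i$ commutes with each of these three types of generators. For $x_j$ with $j \neq i, i+1$ this is exactly relation \eqref{Lebniz1}. For $\sigma_1$, simply add \eqref{Lebniz2} and \eqref{Lebniz3}: the $\pm(\hbar - (p-q)x_i)$ terms cancel, giving $\T_i(x_i + x_{i+1}) = (x_i + x_{i+1})\T_i$. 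For $\sigma_2$, I would apply \eqref{Lebniz2} to move $\T_i$ past $x_i$ and then \eqref{Lebniz3} to move it past $x_{i+1}$, and check that the two inhomogeneous contributions cancel, yielding $\T_i(x_i x_{i+1}) = (x_i x_{i+1})\T_i$.

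An alternative packaging, which is slightly slicker and may be preferable in the write-up, is to establish a general twisted Leibniz formula
\[
\T_i f = s_i(f)\,\T_i + (\hbar - (p-q)x_i)\,\partial_i(f),
\]
where $s_i$ swaps $x_i$ with $x_{i+1}$ and $\partial_i$ is the BGG divided difference operator from \eqref{partiali}. This identity reduces on the generators to the defining relations \eqref{Lebniz1}--\eqref{Lebniz3}, and extends to arbitrary polynomials by induction on degree using the standard Leibniz rule $\partial_i(gh) = \partial_i(g)h + s_i(g)\partial_i(h)$. The lemma then follows immediately, since a polynomial symmetric in $x_i, x_{i+1}$ satisfies both $s_i(f) = f$ and $\partial_i(f) = 0$, collapsing the formula to $\T_i f = f \T_i$.

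I do not anticipate any real obstacle here: the argument is a direct manipulation of the defining relations of $\Haff_n$. The only mild care is in the computation with $\sigma_2$ (or equivalently in the inductive step of the twisted Leibniz rule), where one needs to track the cancellation of the inhomogeneous terms carefully.
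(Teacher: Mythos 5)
Your primary argument is essentially the same as the paper's: invoke the fundamental theorem of symmetric polynomials to reduce to $x_j$ ($j\neq i,i+1$), $x_i+x_{i+1}$, and $x_ix_{i+1}$, then verify each directly from relations \eqref{Lebniz1}--\eqref{Lebniz3}. The paper just works with $\bT_i$ throughout rather than first stripping off the scalar $(p-q)$; this makes no mathematical difference. Your alternative via the twisted Leibniz identity is also correct and a pleasant repackaging, but not a genuinely different route.
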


\begin{proof}
By \eqref{Lebniz1}, this is clear for $f\in \A[x_j\colon j\neq i,i+1]$. Since $f$ is symmetric in $x_i$ and $x_{i+1}$, ``the fundamental theorem of symmetric functions'' tells  that $f$  can be expressed  as a polynomial in $e_1(x_i, x_{i+1})=x_i+x_{i+1}$ and $e_2(x_i, x_{i+1})=x_ix_{i+1}$ with coefficients in $\A[x_j\colon j\neq i,i+1]$. Hence it suffices to verify \eqref{PPOO-1} for 
 $f=x_i+x_{i+1}$ or $f=x_ix_{i+1}$. 
It follows from \eqref{Lebniz2} and \eqref{Lebniz3} that 
\begin{align*}
\bT_ix_i&=x_{i+1}\bT_i+(\hbar-(p-q)x_{i+1}),\\[5pt]
\bT_ix_{i+1}&=x_{i}\bT_i-(\hbar-(p-q)x_{i+1}),
\end{align*}
from which we can directly check that 
$\bT_i(x_i+x_{i+1})=(x_i+x_{i+1})\bT_i$ 
and $\bT_i(x_ix_{i+1})=(x_ix_{i+1})\bT_i$. 
\end{proof}

\begin{Lemma}\label{absorb}
For $v\in \S_k\times \S_{n-k}$, we have 
\begin{equation}\label{Lem-12}
\Sigma_k^n\bT_v=\bT_v\Sigma^n_k=p^{\ell(v)}\Sigma^n_k.
\end{equation}
In particular, if $w\in \S_n$ has the parabolic decomposition  $w=w_{\lambda}v$, then 
\begin{equation}\label{Lem-1223}
\bT_w\Sigma_k^n = \bT_{w_{\lambda}}\bT_v \Sigma_k^n = p^{\ell(v)}\bT_{\lambda}.
\end{equation}
\end{Lemma}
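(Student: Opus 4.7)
The strategy is to reduce everything to the simple-reflection case. Specifically, I plan to first establish that $\Sigma_k^n \bT_i = p \Sigma_k^n$ for every simple reflection $s_i$ with $i \neq k$ (i.e., $s_i \in \S_k \times \S_{n-k}$), and then derive $\Sigma_k^n \bT_v = p^{\ell(v)} \Sigma_k^n$ by an immediate induction on $\ell(v)$: writing a reduced decomposition $v = v's_i$ with $\ell(v) = \ell(v')+1$ gives $\bT_v = \bT_{v'}\bT_i$, so $\Sigma_k^n \bT_v = p^{\ell(v')}\Sigma_k^n \bT_i = p^{\ell(v)} \Sigma_k^n$.

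For the key identity $\Sigma_k^n \bT_i = p \Sigma_k^n$, the idea is to partition $\S_k \times \S_{n-k}$ into orbits $\{v, vs_i\}$ under right multiplication by $s_i$. For each such pair, assume without loss of generality that $\ell(vs_i) > \ell(v)$, so $\bT_{vs_i} = \bT_v \bT_i$. The pair's contribution to the sum defining $\Sigma_k^n$ is
$$q^{d-\ell(v)}\bT_v + q^{d-\ell(v)-1}\bT_v\bT_i.$$
Right-multiplying by $\bT_i$ and using the quadratic relation $\bT_i^2 = (p-q)\bT_i + pq$ (equivalent to $(\bT_i-p)(\bT_i+q)=0$), I compute
$$q^{d-\ell(v)}\bT_v\bT_i + q^{d-\ell(v)-1}\bT_v\big((p-q)\bT_i + pq\big) = p\Big(q^{d-\ell(v)}\bT_v + q^{d-\ell(v)-1}\bT_v\bT_i\Big),$$
which shows the pair is a right $\bT_i$-eigenvector with eigenvalue $p$. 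Summing over all pairs yields $\Sigma_k^n \bT_i = p \Sigma_k^n$. The same argument, applied from the left using the multiplication rule for $\bT_i \bT_v$, gives the symmetric statement $\bT_v \Sigma_k^n = p^{\ell(v)}\Sigma_k^n$.

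For the second assertion, since $w = w_\lambda v$ is a parabolic decomposition, one has the length additivity $\ell(w) = \ell(w_\lambda) + \ell(v)$, so concatenating reduced expressions gives $\bT_w = \bT_{w_\lambda}\bT_v$. Applying the first part of the lemma then yields
$$\bT_w \Sigma_k^n = \bT_{w_\lambda}\bT_v\Sigma_k^n = p^{\ell(v)}\bT_{w_\lambda}\Sigma_k^n = p^{\ell(v)}\bT_\lambda,$$
as desired. The only substantive step is the pairing calculation for simple reflections; everything else is bookkeeping. The calculation is essentially forced once one recognizes that the weighting $q^{d-\ell(v)}$ in the definition of $\Sigma_k^n$ is precisely calibrated so that each length-two orbit $\{v, vs_i\}$ becomes a right $p$-eigenvector for $\bT_i$, and the identity \eqref{eq:qiden} confirms that the overall normalization $1/([k]![n-k]!)$ is consistent with this eigenvalue interpretation.
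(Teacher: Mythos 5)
Your proof is correct and takes essentially the same approach as the paper: both group the terms of $\Sigma_k^n$ by right $s_i$-cosets of $\S_k\times\S_{n-k}$ (the paper writes this as the factorization $\Sigma_k^n = \text{const}\cdot\bigl(\sum_{v(i)<v(i+1)} q^{d-1-\ell(v)}\bT_v\bigr)(\bT_i+q)$, which is your orbit grouping), and then use the quadratic relation for $\bT_i$ to extract the eigenvalue $p$; the paper packages that step as $(\bT_i+q)\bT_i=p(\bT_i+q)$ where you expand $\bT_i^2$ directly, a cosmetic difference. The reduction to simple reflections, the symmetric left-hand argument, and the derivation of \eqref{Lem-1223} from length-additivity all match the paper.
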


\begin{proof}
To conclude \eqref{Lem-12}, it suffices to check 
that for $v=s_i$ with $1\leq i<k$ or $k<i< n$, 
\[\Sigma_k^n\bT_{i}=\bT_{i}\Sigma^n_k=p\Sigma^n_k.\]
To justify $\Sigma_k^n\bT_{i}=p\Sigma^n_k$, notice that 
\begin{align*}
\Sigma^n_k&=\frac{1}{[k]![n-k]!}
\left(\sum_{v(i)<v(i+1)} q^{d-\ell(v)}\bT_v+\sum_{v(i)<v(i+1)} q^{d-\ell(v)-1}\bT_{v}\bT_i\right)\\[5pt]
&=\frac{1}{[k]![n-k]!}
\left(\sum_{v(i)<v(i+1)} q^{d-1-\ell(v)}\bT_v\right)
\big(\bT_i+q\big).  
\end{align*}
Since $(\bT_i+q)\bT_i=p(\bT_i+q),$
we deduce that $\Sigma_k^n\bT_{i}=p\Sigma^n_k$.
Similarly, we obtain $\bT_{i}\Sigma_k^n=p\Sigma^n_k$ by noticing  
\begin{align*}
 \Sigma^n_k&=
 \frac{1}{[k]![n-k]!}
\left(\sum_{v^{-1}(i)<v^{-1}(i+1)} q^{d-\ell(v)}\bT_v+\sum_{v^{-1}(i)<v^{-1}(i+1)} q^{d-1-\ell(v)}\bT_i\bT_{v}\right)\\[5pt]
&=\frac{1}{[k]![n-k]!}\big(\bT_i+q\big)
\left(\sum_{v^{-1}(i)<v^{-1}(i+1)} q^{d-1-\ell(v)}\bT_v\right)
\end{align*}
and   $\bT_i(\bT_i+q)=p(\bT_i+q)$.
\end{proof}

The third lemma concerns  another \emph{quantum symmetrizer}
$$\Pi_k^n = \sum_{v\in \S_k\times \S_{n-k}} p^{d-\ell(v)}\T_v,$$
where, as before, $d=\binom{k}{2}+\binom{n-k}{2}$ is the length of the longest permutation  in $\S_k\times \S_{n-k}$.

\begin{Lemma}\label{PLem1prime} 
Suppose that   $w\in \S_n$ has the parabolic decomposition  $w=w_{\lambda}v$ with respect to $\S_k\times \S_{n-k}$. Then, for the basis element $\mathcal{Y}_{w}$ in  $\mathbb{V}$,
$$\Pi_k^n\cdot \mathcal{Y}_{w}=q^{d-\ell(v)}\mathcal{Y}_{\lambda}.$$
\end{Lemma}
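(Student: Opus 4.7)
The plan is to reduce the lemma to two clean algebraic identities and then assemble them by reindexing, which avoids any division-based induction. Write $v_0$ for the longest element of $\S_k\times\S_{n-k}$. The two ingredients I need are the right absorption $\Pi_k^n\T_u = q^{\ell(u)}\Pi_k^n$ for every $u\in\S_k\times\S_{n-k}$, and the ``top of coset'' identity $\T_{v'}\cdot\mathcal{Y}_{w_\lambda v_0}=\mathcal{Y}_{w_\lambda v_0(v')^{-1}}$ for every $v'\in\S_k\times\S_{n-k}$.

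The absorption identity reduces by the braid relations to the case $u=s_i$ with $s_i\in\S_k\times\S_{n-k}$, which I would prove in parallel to Lemma \ref{absorb}: split $\Pi_k^n=\sum_{v'}p^{d-\ell(v')}\T_{v'}$ into the parts $\ell(v's_i)>\ell(v')$ and $\ell(v's_i)<\ell(v')$, apply the Hecke multiplication $\T_{v'}\T_i=\T_{v's_i}$ or $\T_{v'}\T_i=-(p-q)\T_{v'}+pq\,\T_{v's_i}$ respectively, and check that the coefficient of each $\T_u$ on the right-hand side collapses to $q\,p^{d-\ell(u)}$. For the second identity I expand $\T_{v'}=\T_{s_{j_1}}\cdots\T_{s_{j_\ell}}$ in a reduced decomposition and apply these factors to $\mathcal{Y}_{w_\lambda v_0}$ from right to left. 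Since $v_0$ is the longest element of $\S_k\times\S_{n-k}$, the target length drops from $|\lambda|+d$ to $|\lambda|+d-\ell(v')$ over $\ell(v')$ unit-size steps, so every intermediate step is a genuine descent, placing the Hecke action uniformly in the case $\T_j\mathcal{Y}_u=\mathcal{Y}_{us_j}$, and iterating yields the claimed formula.

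Given these, the base case $v=v_0$ unfolds as
\[
\Pi_k^n\cdot\mathcal{Y}_{w_\lambda v_0}\;=\;\sum_{v'}p^{d-\ell(v')}\mathcal{Y}_{w_\lambda v_0(v')^{-1}}\;=\;\sum_u p^{\ell(u)}\mathcal{Y}_{w_\lambda u}\;=\;\mathcal{Y}_\lambda,
\]
where the substitution $u=v_0(v')^{-1}$ is a bijection of $\S_k\times\S_{n-k}$ (using that $v_0$ is an involution) satisfying $\ell(v')=d-\ell(u)$. For a general $w=w_\lambda v$, apply the second identity with $v'=v^{-1}v_0$, which has length $d-\ell(v)$, to obtain $\mathcal{Y}_{w_\lambda v}=\T_{v^{-1}v_0}\cdot\mathcal{Y}_{w_\lambda v_0}$, and then invoke the first identity:
\[
\Pi_k^n\cdot\mathcal{Y}_{w_\lambda v}\;=\;\Pi_k^n\T_{v^{-1}v_0}\cdot\mathcal{Y}_{w_\lambda v_0}\;=\;q^{d-\ell(v)}\Pi_k^n\cdot\mathcal{Y}_{w_\lambda v_0}\;=\;q^{d-\ell(v)}\mathcal{Y}_\lambda.
\]
The main obstacle is the careful coefficient bookkeeping in the absorption identity; once it is in hand, the remaining argument requires no inversion of $p$ or $q$, so it specializes cleanly across all the representations in Example \ref{ThSchubertcls}.
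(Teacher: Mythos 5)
Your proof is correct and follows essentially the same route as the paper's: the same absorption identity $\Pi_k^n\T_u=q^{\ell(u)}\Pi_k^n$, the same base case $w=w_\lambda v_0$ handled by the reindexing $u\mapsto v_0 u^{-1}$, and the same reduction of the general case via $\mathcal{Y}_w=\T_{v^{-1}v_0}\mathcal{Y}_{w_\lambda v_0}$. You simply make explicit two steps (the coefficient bookkeeping for the absorption, and the length-descent argument for $\T_{v'}\cdot\mathcal{Y}_{w_\lambda v_0}=\mathcal{Y}_{w_\lambda v_0(v')^{-1}}$) that the paper leaves to the reader.
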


\begin{proof}
Similar to Lemma \ref{absorb}, we can deduce that for any $v\in \S_k\times \S_{n-k}$,
\begin{equation}\label{PO-12}
\Pi_k^n\T_v=\T_v\Pi^n_k=q^{\ell(v)}\Pi^n_k.
\end{equation}
If $w=w_{\lambda}v_0$ with $v_0$ being the longest element of $\S_k\times \S_{n-k}$, then 
\begin{align*}
\Pi_k^n\cdot \mathcal{Y}_{w} 
& = \sum_{v\in \S_k\times \S_{n-k}} p^{d-\ell(v)}\T_v
\cdot\mathcal{Y}_{w}
= \sum_{v\in \S_k\times \S_{n-k}}p^{d-\ell(v)}\mathcal{Y}_{w_{\lambda}v_0v^{-1}}\\[5pt]
& = \sum_{v\in \S_k\times \S_{n-k}}p^{\ell(v)}\mathcal{Y}_{w_{\lambda}v} 
=\mathcal{Y}_{\lambda}.
\end{align*}
In general, note that 
$$\mathcal{Y}_w=\T_{v^{-1}v_0}\mathcal{Y}_{w_{\lambda}v_0}.$$
This, combined with \eqref{PO-12} and the above case for $w_{\lambda}v_0$, gives 
\begin{align*}
\Pi_k^n\cdot\mathcal{Y}_{w} = \Pi_k^n\cdot \left(\T_{v^{-1}v_0}\cdot\mathcal{Y}_{w_{\lambda}v_0}\right)
=q^{\ell(v^{-1}v_0)}\Pi_k^n\cdot\mathcal{Y}_{w_{\lambda}v_0}=q^{d-\ell(v)}\mathcal{Y}_{\lambda}.
\end{align*}
This completes the proof.
\end{proof}

We still need a  Grassmannian
analogue of Theorem \ref{Reconstruction}.

\begin{Th}\label{dualbasis}
For   $\mu,\nu\subseteq (n-k)^k$, we have
$$\ev\left(\bT_{\mu}\cdot\mathcal{Y}_{\nu}\right)=
\begin{cases}
\ev(\mathcal{Y}_{\operatorname{id}}), & \mu=\nu,\\[5pt]
0, & \text{otherwise}.
\end{cases}
$$
Moreover, for any $\mathbb{F}$-linear combination $f$ of $\mathcal{Y}_{\lambda}$'s, we have
\begin{equation}\label{OO-11}
f=\frac{1}{\ev(\mathcal{Y}_{\operatorname{id}})}\sum_{\lambda\subseteq (n-k)^k} \ev\left(\bT_\lambda \cdot f\right)\mathcal{Y}_\lambda.
\end{equation}
\end{Th}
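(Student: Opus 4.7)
The plan is to reduce the statement directly to Proposition \ref{Reconstruction} by expanding $\bT_\mu$ and $\mathcal{Y}_\nu$ into their defining sums over the parabolic subgroup $\S_k\times \S_{n-k}$ and then carefully pairing the indices via the uniqueness of the parabolic decomposition.

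First I would unpack the two ingredients. By Definition \ref{YHN5766},
\[
\bT_\mu = \frac{1}{[k]![n-k]!}\sum_{v\in \S_k\times \S_{n-k}} q^{d-\ell(v)}\bT_{w_\mu v},
\]
where I use that $\ell(w_\mu v)=\ell(w_\mu)+\ell(v)$ for $v\in \S_k\times \S_{n-k}$ to rewrite $\bT_{w_\mu}\bT_v=\bT_{w_\mu v}$. On the other hand,
\[
\mathcal{Y}_\nu = \sum_{u\in \S_k\times \S_{n-k}} p^{\ell(u)}\mathcal{Y}_{w_\nu u}.
\]
Applying $\ev$ to the product and using $\mathbb{F}$-linearity, I then obtain
\[
\ev(\bT_\mu\cdot\mathcal{Y}_\nu) = \frac{1}{[k]![n-k]!}\sum_{v,u\in \S_k\times \S_{n-k}} q^{d-\ell(v)}p^{\ell(u)}\,\ev(\bT_{w_\mu v}\cdot \mathcal{Y}_{w_\nu u}).
\]

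Next I would invoke Proposition \ref{Reconstruction}, which gives $\ev(\bT_{w_\mu v}\cdot \mathcal{Y}_{w_\nu u})=\ev(\mathcal{Y}_{\operatorname{id}})\delta_{w_\mu v,\,w_\nu u}$. By the uniqueness of the parabolic decomposition \eqref{eq:parabolicdecomposition}, the equality $w_\mu v=w_\nu u$ forces $\mu=\nu$ and $v=u$. Hence if $\mu\ne\nu$ every term vanishes, while for $\mu=\nu$ the double sum collapses to the diagonal and gives
\[
\ev(\bT_\mu\cdot\mathcal{Y}_\mu) = \frac{\ev(\mathcal{Y}_{\operatorname{id}})}{[k]![n-k]!}\sum_{v\in \S_k\times \S_{n-k}} q^{d-\ell(v)}p^{\ell(v)} = \ev(\mathcal{Y}_{\operatorname{id}}),
\]
where the last equality is the normalization identity \eqref{eq:qiden}. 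This settles the first assertion.

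Finally, for the expansion formula \eqref{OO-11}, write $f=\sum_\nu c_\nu \mathcal{Y}_\nu$ with $c_\nu\in \mathbb{F}$. Applying $\bT_\lambda$ and then $\ev$ and using the first part, all cross terms die and
\[
\ev(\bT_\lambda\cdot f) = \sum_{\nu\subseteq (n-k)^k} c_\nu\,\ev(\bT_\lambda\cdot\mathcal{Y}_\nu) = c_\lambda\,\ev(\mathcal{Y}_{\operatorname{id}}),
\]
which yields \eqref{OO-11} upon solving for $c_\lambda$. The only potential obstacle is ensuring that the decomposition $\bT_{w_\mu}\bT_v=\bT_{w_\mu v}$ is genuinely additive on lengths, and that the sum defining $\mathcal{Y}_\nu$ runs over precisely the same set of permutations with a given Grassmannian part; both points are immediate from the minimality of $w_\mu$ as a coset representative together with the braid-relation consistency of $\bT_w$ under reduced decompositions.
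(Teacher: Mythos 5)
Your proof is correct and takes essentially the same route as the paper: expand $\bT_\mu$ and $\mathcal{Y}_\nu$ over $\S_k\times\S_{n-k}$, apply Proposition~\ref{Reconstruction} to collapse the double sum to the diagonal via uniqueness of the parabolic decomposition, and finish with the normalization identity~\eqref{eq:qiden}. Your write-up is a bit more explicit about the index pairing and about the derivation of~\eqref{OO-11}, which the paper merely says is ``completely the same'' as in Proposition~\ref{Reconstruction}.
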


\begin{proof}
By the definitions of $\bT_{\mu}$ and 
$\mathcal{Y}_{\nu}$,
we see that
\[
\ev\left(\bT_{\mu}\cdot\mathcal{Y}_{\nu}\right)
= \frac{1}{[k]![n-k]!}\sum_{v_1,v_2\in \S_k\times \S_{n-k}} q^{d-\ell(v_1)}p^{\ell(v_2)}\ev\left(\bT_{w_{\mu}v_1}\cdot\mathcal{Y}_{w_{\nu}v_2}\right),
\]
which, in conjunction  with Theorem \ref{Reconstruction}, reduces to
\[
\delta_{\mu\nu}\frac{\ev(\mathcal{Y}_{\operatorname{id}})}{[k]![n-k]!}\sum_{v\in \S_k\times \S_{n-k}} q^{d-\ell(v)}p^{\ell(v)},
\]
and can be further  simplified to 
$\delta_{\mu\nu}\ev(\mathcal{Y}_{\operatorname{id}})$ by means of \eqref{eq:qiden}.
The proof of the expansion in \eqref{OO-11}  is completely  the same as that in Theorem \ref{Reconstruction}, and thus is omitted. 
\end{proof}

We are now in a position to finish the proof of Theorem  \ref{ThPProdForm}.

\begin{proof}[Proof of Theorem \ref{ThPProdForm}]
Since  $f\in \A[x_1,\ldots,x_n]$ is symmetric under $\S_k\times \S_{n-k}$,
by Lemma \ref{PLem1}, we see that $f$ commutes with $\bT_v$ for each $v\in \S_k\times \S_{n-k}$, and thus $\Sigma_{k}^n f=f \Sigma_k^n.$
In particular,
$$\bT_{\mu}f=\bT_{w_{\mu}}\Sigma_k^nf=
\bT_{w_{\mu}}f\Sigma_k^n\in \Haff_n\cdot \Sigma_k^n=\sum_{\lambda\subseteq (n-k)^k}\A[x_1,\ldots,x_n]\cdot\bT_{\lambda},$$
where the last equality is implied by 
\eqref{Lem-1223}. 
This shows that $\bT_\mu f$ can be expanded  as a form  given in \eqref{BH-1}.

We next show that $f\cdot \mathcal{Y}_{\lambda}$ has an expansion in \eqref{BH-2}. 
Let $v_0\in \S_k\times \S_{n-k}$ be the longest element. By Lemma \ref{PLem1prime}, we have
$ \mathcal{Y}_{\lambda}=\Pi_k^n\cdot \mathcal{Y}_{w_\lambda v_0}$. 
Similar to Lemma \ref{PLem1}, we
can deduce $\Pi_k^nf=f\Pi_k^n$. 
Thus, 
\[
f\cdot \mathcal{Y}_{\lambda} 
= f\cdot \left(\Pi_k^n\cdot\mathcal{Y}_{w_\lambda v_0}\right)
= \Pi_k^n\cdot (f\cdot \mathcal{Y}_{w_\lambda v_0}).
\]
Expanding   $f\cdot \mathcal{Y}_{w_\lambda v_0}$   in the basis $\{\mathcal{Y}_w\colon w\in \S_n\}$ and using   Lemma \ref{PLem1prime}, we  see that $f\cdot \mathcal{Y}_{\lambda}$ can be  expanded in the form of \eqref{BH-2}. 
Finally, we  invoke Theorem \ref{dualbasis}, and then use  the same manner as in the proof of  Theorem \ref{Prodform} to conclude that the coefficients in  \eqref{BH-1} and  \eqref{BH-2} coincide. 
\end{proof}

\section{Pieri type formulas}\label{sec5}

To establish our  Pieri formulas, it follows from Theorem \ref{ThPProdForm} and Proposition  \ref{SchRep:Grr}
that we need to evaluate the action $e_r(x_{[k]})\cdot \mathcal{Y}_{\lambda}$, or equivalently, 
the product $\bT_{\mu}e_r(x_{[k]})$, which is the goal of this section.

 \subsection{Main result}
 
Recall that  for
$\lambda\subseteq (n-k)^k$ and 
$1\leq i\leq k$, 
\begin{equation}\label{POIU-1}
\fhead{i}\to \lambda = 
t_{c}\cdot \lambda+
\sum_{\mu/\lambda=\eta}
\left(\hbar-(p-q)t_{\mathtt{h}(\eta)}\right)p^{\htt(\eta)-1}q^{\wdd(\eta)-1}\cdot\mu,
\end{equation}
where $c=\lambda_i+k+1-i$, 
$\mathtt{h}(\eta)=\mu_i+k+1-i$,
and the sum runs   over  $\mu\subseteq (n-k)^k$ such that $\mu/\lambda$ is a ribbon with {head} in   row $i$.  Replacing $\lambda$ and $\mu$ respectively by $\mathcal{Y}_{\lambda}$ and  $\mathcal{Y}_{\mu}$   in \eqref{POIU-1}, we are led to the notation  
\[
\fhead{i}\to \mathcal{Y}_{\lambda}.
\]

The main theorem is as follows.

\begin{Th}\label{MainThPieri}
Let $\mathbb{V}$ be a Schubert representation of $\Haff_n$ with basis $\{\mathcal{Y}_{w}\colon w\in\S_n\}$. 
For  $\lambda\subseteq (n-k)^k$ and  $0\leq r\leq k$,  we have 
\begin{equation}\label{UTYRE-1}
e_r(x_{[k]})\cdot \mathcal{Y}_{\lambda}
=\sum_{1\leq i_1<\cdots<i_r\leq k}
\fhead{i_r}\to\cdots \to
\fhead{i_1}\to \mathcal{Y}_{\lambda}.
\end{equation}

\end{Th}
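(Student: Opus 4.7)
The plan is to use Theorem \ref{ThPProdForm} to translate \eqref{UTYRE-1} into a purely algebraic identity in $\Haff_n$. Since $e_r(x_{[k]})$ is symmetric under $\S_k\times\S_{n-k}$, Theorem \ref{ThPProdForm} reduces the task to showing that in the expansion $\bT_\mu\, e_r(x_{[k]}) = \sum_\lambda P_{\lambda,\mu}(x_1,\ldots,x_n)\,\bT_\lambda$ inside $\Haff_n$, the polynomial $P_{\lambda,\mu}$ coincides, after renaming $t_j$ to $x_j$, with the coefficient of $\mu$ in the iterated ribbon operator $\sum_{1\leq i_1<\cdots<i_r\leq k}\fhead{i_r}\to\cdots\to\fhead{i_1}\to\lambda$. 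Thus the proof is shifted from Schubert calculus to a computation of products in the affine Hecke algebra.

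The technical core is a single-variable commutation lemma describing how a variable $x_j$ slides past a generic Grassmannian Hecke operator $\bT_\lambda$. Using the Leibniz relations \eqref{Lebniz2}--\eqref{Lebniz3}, each crossing of $x_j$ with $\bT_i$ splits into a swap term, which relabels $x_i\leftrightarrow x_{i+1}$, plus a residual $\hbar-(p-q)x_\bullet$. Fixing a convenient reduced expression for $w_\lambda$ (for example, the one obtained by reading its Young diagram column-by-column), I would track the residuals produced as the variable travels and use Lemma \ref{absorb} to absorb any excursions outside the rectangle $(n-k)^k$ back into the quantum symmetrizer $\Sigma_k^n$. Aggregating residuals should yield an identity of the form
\begin{equation*}
x_j\cdot \bT_\lambda \;=\; x_c\,\bT_\lambda \;+\; \sum_\eta \bigl(\hbar - (p-q)x_{\mathtt{h}(\eta)}\bigr)\,p^{\htt(\eta)-1}q^{\wdd(\eta)-1}\,\bT_{\lambda\cup\eta},
\end{equation*}
where $\eta$ runs over ribbons attached to $\lambda$ whose head lies in the row prescribed by $j$. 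The forbidden $2\times 2$ square in a ribbon corresponds to the quadratic relation $(\bT_i+q)(\bT_i-p)=0$ killing a would-be double swap at a corner. With this lemma in hand, I would proceed by induction on $r$, writing $e_r(x_{[k]})=\sum_{i_1<\cdots<i_r} x_{i_1}\cdots x_{i_r}$ and peeling off one variable at a time; each application attaches a new ribbon, and the strict inequality $i_1<\cdots<i_r$ in the target sum encodes the monotonicity of the head rows of successively attached ribbons, enforced precisely because ribbons avoiding a $2\times 2$ square cannot be composed out of order.

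The main obstacle will be the combinatorial bookkeeping inside the single-variable lemma: verifying that the residuals produced by the intermediate swaps assemble into a genuine connected ribbon (rather than some disconnected skew shape), that the head row is determined correctly by $j$, and that the exponents of $p$ and $q$ exactly record the height minus one and width minus one of the ribbon. This requires a judicious choice of reduced word for $w_\lambda$ and a careful analysis of edge effects near the boundary of the rectangle $(n-k)^k$. A likely strategy is to identify the trajectory of $x_j$ through the reduced word with a lattice path along the boundary of the ribbon $\eta$, so that the horizontal and vertical steps of the path account for the $q^{\wdd(\eta)-1}$ and $p^{\htt(\eta)-1}$ factors respectively, while the terminal residual produces the linear factor $\hbar-(p-q)x_{\mathtt{h}(\eta)}$.
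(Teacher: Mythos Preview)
Your high-level strategy---invoke Theorem~\ref{ThPProdForm} to convert the statement into an identity for $\bT_\mu\,e_r(x_{[k]})$ in $\Haff_n$---is exactly the paper's approach. The gap is in your proposed implementation.

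Your ``single-variable commutation lemma'' is ill-posed as written. The displayed identity has $x_j$ on the left of $\bT_\lambda$; but $x_j\bT_\lambda$ is already an element of $\A[x_1,\dots,x_n]\otimes \H_n$ in normal form, so there is nothing to compute. If instead you mean to move $x_j$ from the right of $\bT_\mu$ to the left, the obstruction is that $\bT_\mu=\bT_{w_\mu}\Sigma_k^n$ and a \emph{single} variable $x_j$ with $j\le k$ does not commute with the symmetrizer $\Sigma_k^n$ (only polynomials symmetric under $\S_k\times\S_{n-k}$ do, by Lemma~\ref{PLem1}). Hence $\bT_\mu x_j=\bT_{w_\mu}\Sigma_k^n x_j$ is not a $\A[x]$-combination of $\bT_\lambda$'s, and your inductive scheme of peeling off one variable at a time collapses at the first step.

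The paper avoids this by first passing $e_r(x_{[k]})$ (which \emph{is} symmetric) through $\Sigma_k^n$, obtaining $\bT_\mu e_r=\bT_{w_\mu}e_r\,\Sigma_k^n$, and then analysing $\bT_{w_\mu}x_{i_1}\cdots x_{i_r}\Sigma_k^n$ monomial by monomial (Theorem~\ref{Amonomial}). The induction is on $k$, not on $r$: one factors $\bT_{w_\mu}=\bT_{w_{\mu^-}}\bT_k^{[\mu_1]}$ and uses Lemma~\ref{PLem2-1} to separate the problem into a $(k-1)$-variable instance on $\mu^-$ and a residual $\bT_k^{[\mu_1]}x_k\Sigma_k^n$ handled by Lemma~\ref{PLem4}. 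The ribbons then arise from Lemma~\ref{PLem3-1}, which rewrites $\bT_{w_\nu}\bT_k^{[a]}\Sigma_k^n$ as $p^{\htt-1}\bT_{\nu'}$ for the partition obtained by removing a ribbon. Your intuition that the $p$- and $q$-exponents track height and width, and that Lemma~\ref{absorb} absorbs stray factors, is correct, but the mechanism is this row-by-row recursion rather than a single commutation of $x_j$ past $\bT_\lambda$.
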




By Theorem \ref{ThPProdForm},  the proof of Theorem \ref{MainThPieri} 
is equivalent to  the determination of  $\bT_{\mu}e_r(x_{[k]})$. To expand $\bT_{\mu}e_r(x_{[k]})$, we need the dual version  of the ribbon Schubert operator in \eqref{POIU-1}. That is, for $\mu\subseteq (n-k)^k$ and $1\leq i\leq k$, let 
\begin{equation}\label{YHNG}
 \mu\to\fhead{i}
=t_{c}\cdot \mu+
\sum_{\mu/\lambda=\eta}
\left(\hbar-(p-q)t_{\mathtt{h}(\eta)}\right)p^{\htt(\eta)-1}q^{\wdd(\eta)-1}\cdot\lambda,   
\end{equation}
where $c=\mathtt{h}(\eta)=\mu_i+k+1-i$, 
and the sum is   over  $\lambda\subseteq (n-k)^k$ such that $\mu/\lambda$ is a ribbon with {head} in   row $i$. Similarly,
let 
\[
\bT_\mu \to\fhead{i} 
\]
be defined by replacing $\mu$ and $\lambda$  
respectively by $\bT_\mu$ and $\bT_\lambda$  in \eqref{YHNG}.

\begin{Th}\label{MainThPieri00}
For  $\mu\subseteq (n-k)^k$ and  $0\leq r\leq k$,  we have 
\begin{equation}\label{UTYRE-100}
\bT_{\mu}e_r(x_{[k]})
=\sum_{1\leq i_1<\cdots<i_r\leq k}
\bT_{\mu}\to \fhead{i_r}\to\cdots\to\fhead{i_1}.
\end{equation}

\end{Th}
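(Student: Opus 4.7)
The plan is to prove Theorem \ref{MainThPieri00} by induction on $r$. The case $r=0$ is immediate since both sides reduce to $\bT_\mu$. The main computational step is the case $r=1$, from which the higher $r$ cases can be obtained by iteration together with the Newton identities for symmetric polynomials.

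For $r=1$, the identity $\bT_\mu e_1(x_{[k]}) = \sum_{i=1}^k \bT_\mu \to \fhead{i}$ is a Monk-type formula in $\Haff_n$. I would write $\bT_\mu = \bT_{w_\mu}\Sigma_k^n$ and observe that $e_1(x_{[k]}) = x_1 + \cdots + x_k$, being $\S_k \times \S_{n-k}$-symmetric, commutes with $\Sigma_k^n$ by Lemma \ref{PLem1}. It then suffices to compute $\bT_{w_\mu}(x_1 + \cdots + x_k)\Sigma_k^n$. I would push each $x_j$ past a reduced word for $\bT_{w_\mu}$ using the Lebniz-type relations \eqref{Lebniz2}--\eqref{Lebniz3}; each local commutation across a factor $\bT_i$ swaps $x_j \leftrightarrow x_{j\pm 1}$ and leaves behind a defect of the form $\pm(\hbar - (p-q)x_\bullet)$. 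Collecting all defects and absorbing the resulting parabolic tails into Grassmannian Hecke operators via Lemma \ref{absorb} should yield a sum over partitions $\nu$ obtained from $\mu$ by removing a single ribbon; the row location of the head and the height/width statistics should emerge naturally from the combinatorics of the reduced word, matching precisely the definition \eqref{YHNG} of $\fhead{i}$.

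For the inductive step from $r-1$ to $r$, I would right-multiply the inductive hypothesis
\[
\bT_\mu e_{r-1}(x_{[k]}) = \sum_{1 \leq i_1 < \cdots < i_{r-1} \leq k} \bT_\mu \to \fhead{i_{r-1}} \to \cdots \to \fhead{i_1}
\]
by $e_1(x_{[k]})$ and apply the $r=1$ formula term-by-term to each intermediate partition. Newton's identity (in the form $r e_r = e_{r-1} p_1 - e_{r-2} p_2 + \cdots$) then allows one to isolate $\bT_\mu e_r(x_{[k]})$ on the left, while on the right the resulting sum is indexed by all (not necessarily sorted) $r$-tuples of row indices and must be rearranged into the ordered sum of \eqref{UTYRE-100}. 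The ``out-of-order'' and ``collision'' contributions (where a newly applied $\fhead{j}$ hits a row already used) are expected to cancel against the lower-index corrections supplied by Newton's identity.

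The main obstacle will be controlling these cancellations: the ribbon Schubert operators $\fhead{i}$ at different rows do not commute, and the combinatorics of ribbons added at adjacent or overlapping rows must conspire to produce exactly the correction terms dictated by Newton's formula. Verifying this requires a careful weighted analysis using the $p$, $q$, $\hbar$ structure encoded in \eqref{POIU-1} and \eqref{YHNG}, and will likely demand an auxiliary combinatorial identity on pairs of ribbons sharing a common row. An alternative approach, which may sidestep some of this bookkeeping, is the generating function reformulation $\bT_\mu \prod_{i=1}^k(1+ux_i) = (1+u R_1)(1+u R_2)\cdots (1+u R_k)\bT_\mu$, with $R_i$ denoting the operator $\to\fhead{i}$, proved by an induction on $k$; but this route must contend with the $k$-dependence of the symmetrizer $\Sigma_k^n$.
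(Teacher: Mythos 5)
Your proposal takes a genuinely different route --- induction on $r$ rather than the paper's induction on $k$ --- but it contains two substantive gaps that prevent it from being a proof. First, even the claimed $r=1$ Monk formula is only a sketch: the assertion that the local defects produced by commuting a single $x_j$ past a reduced word for $\bT_{w_\mu}$ assemble into exactly the ribbons $\eta=\mu/\nu$ with the weights $p^{\htt(\eta)-1}q^{\wdd(\eta)-1}$ and the factor $\hbar-(p-q)x_{\mathtt{h}(\eta)}$ hides the full content of Lemmas~\ref{PLem4} and~\ref{PLem3-1}, which carry out precisely this bookkeeping by peeling off one row of $\mu$ at a time; nothing in the word-by-word commutation picture automatically produces the connectivity and no-$2\times 2$-square constraints on the skew shape. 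Second, and more seriously, the inductive step is not carried out at all. Since $e_1\cdot e_{r-1}=e_r+s_{(2,1^{r-2})}$, applying the $r=1$ rule term-by-term to the $(r-1)$-case produces, in addition to every ordered chain of $r$ ribbon operators, an $s_{(2,1^{r-2})}$-contribution that must be cancelled against ``collision'' and ``out-of-order'' composites; and to isolate $e_r$ via the Newton recursion as proposed you would need independent control over $\bT_\mu p_i$ for all $i\le r$, not merely $\bT_\mu e_1$. You correctly flag that an auxiliary combinatorial identity for non-commuting $\fhead{i}$ at interacting rows is required, but you do not state or prove it, and there is no a priori reason the cancellation works out without an argument.

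For comparison, the paper instead proves the finer monomial statement (Theorem~\ref{Amonomial}): $\bT_{w_\mu}x_{i_1}\cdots x_{i_r}\Sigma_k^n = \bT_\mu\to\fhead{k+1-i_1}\to\cdots\to\fhead{k+1-i_r}$, by induction on $k$. The core moves are the factorization $\bT_{w_\mu}=\bT_{w_{\mu^-}}\bT_k^{[\mu_1]}$ with Lemma~\ref{PLem2-1} to decouple the last variable $x_k$, Lemma~\ref{PLem4} computing $\bT_k^{[\mu_1]}x_k\Sigma_k^n$ in closed form, and Lemma~\ref{PLem3-1} translating the resulting terms into ribbon deletions; Theorem~\ref{MainThPieri00} then follows from the monomial version by summing over $r$-subsets. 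Your alternative generating-function formulation $\bT_\mu\prod_i(1+ux_i)=\prod_i(1+uR_i)\bT_\mu$ is, once unwound coefficient-wise, essentially this monomial theorem, and the $k$-dependence of $\Sigma_k^n$ that you flag as its obstacle is exactly what the paper's induction on $k$ (via Lemma~\ref{PLem2}) is designed to control, so you cannot sidestep the paper's technical core by switching to that route.
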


Using   Theorem \ref{ThPProdForm} and comparing the definitions in \eqref{POIU-1} and \eqref{YHNG}, it can be seen  that once 
Theorem \ref{MainThPieri00} is true, we arrive at a proof of  Theorem \ref{MainThPieri}.
By Lemma \ref{PLem1}, we notice that
$$\bT_{\mu}e_r(x_{[k]})=\bT_{w_{\mu}} \Sigma_k^n\,e_r(x_{[k]})=
\bT_{w_{\mu}}e_r(x_{[k]})\Sigma_k^n. $$
In fact, we have the following refinement,  which may also be regarded as an algebraic interpretation  of ribbon Schubert  operators. 

\begin{Th}\label{Amonomial}
For $\mu\subseteq (n-k)^k$ and  a subset $I=\{ i_1<\cdots<i_r\}$ of $\{1,\ldots, k\}$, we have
$$\bT_{w_{\mu}}x_{i_1}\cdots x_{i_r} \Sigma_k^n = 
\bT_{\mu}\to \fhead{k+1-i_1}\to\cdots\to\fhead{k+1-i_r}.$$
\end{Th}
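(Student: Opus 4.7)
The plan is to prove Theorem \ref{Amonomial} by induction on $r$, reducing to the heart-of-the-matter $r=1$ identity
\[
\bT_{w_\mu}\, x_j\, \Sigma_k^n \;=\; \bT_\mu \to \fhead{k+1-j}, \qquad 1 \leq j \leq k.
\]
The base case $r = 0$ is the definition $\bT_{w_\mu}\Sigma_k^n = \bT_\mu$. For the inductive step, one strips off the last factor $x_{i_r}$: using the Bernstein-type relations \eqref{Lebniz1}--\eqref{Lebniz3} together with Lemma \ref{PLem1}, one expands $\bT_{w_\mu} x_{i_1}\cdots x_{i_{r-1}}$ in the basis $\{x^\alpha \bT_w\}$, then applies the $r=1$ identity (extended to arbitrary $\bT_w$ via the parabolic decomposition $w = w_\lambda v$ with $v \in \S_k \times \S_{n-k}$ followed by Lemma \ref{absorb}) to each term of the form $\bT_w\, x_{i_r}\, \Sigma_k^n$. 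The iterated ribbon formula on the right-hand side emerges from repeatedly reassembling these pieces.

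For the $r = 1$ lemma, I would fix a canonical reduced decomposition of the Grassmannian permutation $w_\mu$ read box-by-box through the columns of $\mu \subseteq (n-k)^k$ from right to left, then iteratively apply
\[
\bT_a x_a = x_{a+1} \bT_a + (\hbar - (p-q) x_{a+1}), \qquad \bT_a x_{a+1} = x_a \bT_a - (\hbar - (p-q) x_{a+1})
\]
to push $x_j$ leftward through $\bT_{w_\mu}$. The \emph{pass-through} branch produces the main term $x_{w_\mu(j)} \bT_{w_\mu}$, which after appending $\Sigma_k^n$ becomes the $t_c \cdot \bT_\mu$ summand of $\bT_\mu \to \fhead{k+1-j}$, since $c = w_\mu(j) = \mu_{k+1-j} + j$ coincides with $\mathtt{h}(\eta)$ for \emph{every} ribbon $\eta$ with head in row $i = k+1-j$. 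Each \emph{correction} branch spawns the common factor $(\hbar - (p-q) x_{\mathtt{h}(\eta)})$, after which subsequent commutations collect the weights $p^{\htt(\eta)-1} q^{\wdd(\eta)-1}$: each horizontal step of the ribbon $\eta$ corresponds to a $q$-weighted commutation past a $\bT_a$, while each vertical step corresponds to a transposition that becomes part of a residual permutation $v \in \S_k \times \S_{n-k}$ of length $\htt(\eta)-1$. Lemma \ref{absorb} then collapses $\bT_{w_\lambda v}\Sigma_k^n = p^{\ell(v)} \bT_\lambda$, supplying the $p^{\htt(\eta)-1}$ factor and producing exactly the summand $(\hbar - (p-q) t_{\mathtt{h}(\eta)}) p^{\htt(\eta)-1} q^{\wdd(\eta)-1} \bT_\lambda$ of $\bT_\mu \to \fhead{k+1-j}$.

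The main obstacle is the combinatorial bookkeeping: exhibiting an explicit bijection between correction-branches of the iterated commutation and pairs $(\eta, v)$ consisting of a ribbon $\eta = \mu/\lambda$ with head in row $k+1-j$ and a residual permutation $v \in \S_k \times \S_{n-k}$ of length $\htt(\eta)-1$, and matching the weights of $p$, $q$, and $(\hbar - (p-q)x_{\mathtt{h}(\eta)})$ on both sides. A delicate point is that the branching depends on the chosen reduced decomposition of $w_\mu$ --- while consistency across different choices is guaranteed abstractly by the braid relations \eqref{barBraidRelation1}--\eqref{barBraidRelation2}, one must still commit to a specific reading order to carry out the bijection cleanly; the quantum identity \eqref{eq:qiden} enters when reconciling the normalization $1/([k]![n-k]!)$ of $\Sigma_k^n$ with the coset sum, closing the argument.
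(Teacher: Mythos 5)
Your proposal follows a genuinely different route from the paper (which inducts on $k$ by peeling off the top row $\mu_1$ of the partition via Lemma \ref{PLem2-1}, reducing the problem on $\Gr(k,n)$ to one on $\Gr(k-1,m)$ with $m=k-1+\mu_2$, and then handles $x_k$ via Lemma \ref{PLem4} and Lemma \ref{PLem3-1}), but the inductive step in your scheme does not go through as described. The target $\bT_{w_{\mu}}x_{i_1}\cdots x_{i_r}\Sigma_k^n$ has $\Sigma_k^n$ appearing only once, on the far right; the iterated ribbon operator $\bT_{\mu}\to\fhead{k+1-i_1}\to\cdots$ implicitly re-symmetrizes after each arrow, but you cannot insert $\Sigma_k^n$ between $x_{i_{r-1}}$ and $x_{i_r}$ because $\Sigma_k^n\, x_{i_r}\Sigma_k^n\neq x_{i_r}\Sigma_k^n$ (a single $x_{i_r}$ is not $\S_k\times\S_{n-k}$-symmetric and hence is not fixed by the projection). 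Expanding $\bT_{w_{\mu}}x_{i_1}\cdots x_{i_{r-1}}$ in the $\{x^\alpha\bT_w\}$ basis and then reattaching $x_{i_r}\Sigma_k^n$ therefore gives terms $\bT_w x_{i_r}\Sigma_k^n$ with $w$ \emph{arbitrary}, and your proposed extension of the $r=1$ identity via the parabolic decomposition $w=w_\lambda v$ and Lemma \ref{absorb} fails: $\bT_v$ ($v\in\S_k\times\S_{n-k}$, so a word in $\bT_a$ with $a\ne k$) does not commute past $x_{i_r}$, so it cannot simply be absorbed into $\Sigma_k^n$ at cost $p^{\ell(v)}$. You would first have to push $x_{i_r}$ through $\bT_v$, generating new correction branches that are not accounted for.

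The $r=1$ heuristic has the right qualitative features. The pass-through branch does give $x_{w_\mu(j)}\bT_{w_\mu}$, and after a correction the residual $x$-factor does propagate via \eqref{EQQQ-2} to become $x_{\mathtt{h}(\eta)}$; $q$-factors do arise from horizontal steps and $p$-factors from absorption of the residual partial word. But you flag the bijection between correction branches and pairs (ribbon, residual $v$) --- with matching $p,q$-weights --- as \emph{the main obstacle} without supplying it, which is exactly the content of Lemma \ref{PLem3-1} in the paper and is nontrivial (the paper needs the braided commutation Lemma \ref{PLem3} to carry it out). Also note that \eqref{eq:qiden} is already baked into the normalization of $\Sigma_k^n$ and does not re-enter at the end as you suggest. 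In short: the plan identifies the right phenomena but contains a structural gap in the reduction on $r$, and does not carry out the combinatorial identification that does the real work; the paper's induction on $k$ is what makes the ribbon structure appear row-by-row without having to split the single projector $\Sigma_k^n$.
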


Summing over all subsets $\{ i_1<\cdots<i_r\}$ of $\{1,\ldots, k\}$, Theorem \ref{Amonomial}  implies Theorem \ref{MainThPieri00}, because of the   correspondence between $i_1<\cdots<i_r$ and $k+1-i_r<\cdots< k+1-i_1$.

The rest of this section will be devoted to a  proof of Theorem  \ref{Amonomial}. Our approach is to use induction on $k$.
For simplicity, we use the following notation: 
for $i>0$ and $j\geq 0$, 
$$
\bT_i^{[j]}=\bT_{i+j-1}\cdots \bT_{i+1}\bT_i.
$$
Here, for $j=0$, we adopt the convention that $\bT_i^{[0]}=\T_{\operatorname{id}}$.
If letting  
\[s_i^{[j]}=s_{i+j-1}\cdots s_{i+1}s_i,\] then  for $\lambda\subseteq (n-k)^k$, $w_\lambda$  has the following reduced decomposition 
\[
w_\lambda=s^{[\lambda_k]}_1\, s^{[\lambda_{k-1}]}_{2}\cdots s^{[\lambda_1]}_k.
\]
Thus  
$$\bT_{w_\lambda}=\bT^{[\lambda_k]}_1\,\bT^{[\lambda_{k-1}]}_{2}\cdots \bT^{[\lambda_1]}_k.$$
For example, when $k=3$, $n=8$ and $\lambda=(4,2,1)$, we have $w_\lambda=24713568$, and so 
\begin{align*}
w_{(4,2,1)}=s_1\cdot s_3s_2\cdot s_6s_5s_4s_3,\qquad 
\bT_{w_{(4,2,1)}} = 
\bT_1
\cdot\bT_3\bT_2
\cdot\bT_6\bT_5\bT_4\bT_3.
\end{align*}

We now present a proof of Theorem \ref{Amonomial}. In the proof, we shall need results in several lemmas whose proofs  will be given  in Subsection \ref{lemmassome}

\begin{proof}[Proof of Theorem \ref{Amonomial}]
As promised, we use induction on $k$. 
We first check the case of $k=1$, namely, $\mu=(\mu_1)$. We need to justify
\[
\bT_{w_\mu}x_1\Sigma_1^n=\bT_\mu\rightarrow   \fhead{1},
\]
which is exactly the  statement of Lemma \ref{PLem4} in the case when $k=1$  and $j=\mu_1$.

Now assume that $k>1$. Denote by $\mu^-=(\mu_2, \ldots,\mu_k)$ the partition obtained from $\mu$ by removing $\mu_1$. Then $\mu^-$ belongs to the $(k-1)\times \mu_2$ rectangle $(\mu_2)^{k-1}$. Let $m=k-1+\mu_2$. So $w_{\mu^-}$ is a Grassmannian permutation of $\S_m$. Notice that $\bT_{w_\mu}=\bT_{w_{\mu^-}}\, \bT_{k}^{[\mu_1]}$.

Case 1. 
  $k\notin I$, that is, $i_r<k$. By Lemma \ref{PLem2-1} and induction,
\begin{align}
\bT_{w_{\mu}}x_{i_1}\cdots x_{i_r} \Sigma_k^n
& = \bT_{w_{\mu^-}}x_{i_1}\cdots x_{i_r}\Sigma_{k-1}^{m}\,\bT_{k}^{[\mu_1]} \Sigma_k^n\nonumber\\[5pt]
& = \left(\bT_{\mu^-}\rightarrow \fhead{k-i_1}\to\cdots\rightarrow\fhead{k-i_r}\right)
\bT_{k}^{[\mu_1]}\Sigma_k^n.\label{HGF-1}
\end{align}
For each $\bT_\nu$ appearing in 
$\bT_{\mu^-}\rightarrow \fhead{k-i_1}\to\cdots\rightarrow\fhead{k-i_r}$, by \eqref{tmksigma}, we find that 
$\bT_{\nu}\, \bT_{k}^{[\mu_1]}\, \Sigma_{k}^n=\bT_{\nu^+}$, 
where $\nu^+=(\mu_1, \nu_1, \ldots, \nu_{k-1})\subseteq (n-k)^k$ is obtained from $\nu$ by adding $\mu_1$ as the largest part.
So, \eqref{HGF-1} can be reformulated as
\[
\bT_{\mu}\rightarrow \fhead{k+1-i_1}\to\cdots\rightarrow\fhead{k+1-i_r}.
\]

Case 2.   $k\in I$, that is, $i_r=k$.  By Lemma \ref{PLem2-1} and induction,
\begin{align}
\bT_{w_{\mu}} x_{i_1}\cdots x_{i_r} \Sigma_k^n
& = \bT_{w_{\mu^-}}x_{i_1}\cdots x_{i_{r-1}}\Sigma_{k-1}^{m}\,\bT_{k}^{[\mu_1]}x_k\Sigma_k^n\nonumber\\[5pt]
& = \left(\bT_{\mu^-}\rightarrow \fhead{k-i_1}\to\cdots\rightarrow\fhead{k-i_{r-1}}\right)\,
\bT_{k}^{[\mu_1]} x_k\Sigma_k^n.\label{TIEA}
\end{align}
Assume that 
\begin{align}\label{TIEB}
\bT_{\mu^-}\rightarrow \fhead{k-i_1}\to\cdots\rightarrow\fhead{k-i_{r-1}}
=\sum_{\nu\subseteq (\mu_2)^{k-1}}c_{\nu}\,\bT_{\nu}.    \end{align}
We have to consider the product 
\[
\bT_{\nu}\bT_{k}^{[\mu_1]} x_k\Sigma_k^n=\bT_{w_\nu}\Sigma_{k-1}^m\bT_{k}^{[\mu_1]} x_k\Sigma_k^n.
\]

By Lemma \ref{PLem2}, 
\[
\Sigma_{k-1}^m\bT_{k}^{[\mu_1]} x_k\Sigma_k^n=\bT_{k}^{[\mu_1]} x_k\Sigma_k^n,
\]
and so
\[
\bT_{\nu}\bT_{k}^{[\mu_1]} x_k\Sigma_k^n = \bT_{w_\nu}\bT_{k}^{[\mu_1]} x_k\Sigma_k^n.
\]
Applying Lemma \ref{PLem4} to $\bT_{k}^{[\mu_1]} x_k\Sigma_k^n$ on the right-hand side, we obtain that
\begin{align}\label{12YFR}
\bT_{\nu}\bT_{k}^{[\mu_1]} x_k\Sigma_k^n= \bT_{w_\nu}\left(x_{k+\mu_1}\bT_k^{[\mu_1]}\Sigma_k^n+\sum_{a+b=\mu_1-1}(\hbar-(p-q)x_{k+\mu_1}) q^{b}\bT_{k}^{[a]}\Sigma_k^n\right).
\end{align}
Since $\nu$ is in the rectangle $(\mu_2)^{k-1}$,
it follows from \eqref{Lebniz1} that $\bT_{w_\nu}$  and $x_{k+\mu_1}$ commute, and so \eqref{12YFR} can be rewritten as 
\begin{align}\label{TIEC}
\bT_{\nu}\bT_{k}^{[\mu_1]} x_k\Sigma_k^n =
x_{k+\mu_1}\bT_{w_\nu}\bT_k^{[\mu_1]}\Sigma_k^n+\sum_{a+b=\mu_1-1}(\hbar-(p-q)x_{k+\mu_1}) q^{b}\bT_{w_\nu}\bT_{k}^{[a]}\Sigma_k^n.
\end{align}

Let  $\nu^+=(\mu_1, \nu_1, \ldots, \nu_{k-1})\subseteq (n-k)^k$. 
It is clear that  
\begin{align}\label{U7643}
\bT_{w_\nu}\bT_k^{[\mu_1]}\Sigma_k^n=\bT_{w_{\nu^+}}\Sigma_k^n=\bT_{\nu^+}. 
\end{align}
On the other hand, 
 by Lemma \ref{PLem3-1}, we obtain that 
 \begin{align*}
q^{b}\bT_{w_\nu}\bT_{k}^{[a]}\Sigma_k^n= q^{\wdd(\nu^+/\nu')-1} p^{\htt(\nu^+/\nu')-1}\bT_{\nu'},    
 \end{align*}
 where $\nu^+/\nu'$ is the ribbon with head in the
 first row and with width equal to $\mu_1-a=b+1$.
Therefore, the total contribution of the  summation term in \eqref{TIEC} is exactly
\begin{align}\label{PIE09}
\sum_{\nu'}(\hbar-(p-q)x_{k+\mu_1})\, p^{\htt(\nu^+/\nu')-1}q^{\wdd(\nu^+/\nu')-1}\bT_{\nu'},    
\end{align}
ranging over  $\nu'$ obtained from $\nu^+$ by deleting a ribbon with head in the first row. 
In view of \eqref{U7643} and \eqref{PIE09}, \eqref{TIEC} becomes 
\begin{align}\label{BUBUBU}
\bT_{\nu}\bT_{k}^{[\mu_1]} x_k\Sigma_k^n &=x_{k+\mu_1}\bT_{\nu^+}+\sum_{\nu'}(\hbar-(p-q)x_{k+\mu_1})\, p^{\htt(\nu^+/\nu')-1}q^{\wdd(\nu^+/\nu')-1}\bT_{\nu'}\nonumber\\
&=\bT_{\nu^+}\to\fhead{1}.
\end{align}
 
Collecting \eqref{TIEA}, \eqref{TIEA} and \eqref{BUBUBU}, we finally conclude that 
$$\bT_{w_{\mu}}x_{i_1}\cdots x_{i_{r-1}}x_k \Sigma_k^n =\bT_{\mu}\rightarrow \fhead{k+1-i_1}\to\cdots\rightarrow\fhead{k+1-i_r}\rightarrow\fhead{1}.$$
The completes the proof of Case 2. 
\end{proof}


\subsection{Lemmas for the proof of Theorem \ref{Amonomial}}\label{lemmassome}

\begin{Lemma}\label{PLem2}
For $1\leq k\leq m<k+j\leq n$ and any polynomial $f(x_k)$ in $x_k$, we have 
$$\Sigma_{k-1}^m\bT_k^{[j]}f(x_{k})\Sigma_k^n=\bT_k^{[j]}f(x_k)\Sigma_k^n.$$
\end{Lemma}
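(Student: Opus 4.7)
My plan is to show that the element $X:=\bT_k^{[j]}f(x_k)\Sigma_k^n$ is a $p$-eigenvector of every simple generator of $\S_{k-1}\times\S_{m-k+1}$; iterating along reduced expressions then gives $\bT_v X = p^{\ell(v)}X$ for all such $v$, and the normalization identity \eqref{eq:qiden} collapses the weighted average defining $\Sigma_{k-1}^m$ to yield $\Sigma_{k-1}^m X = X$, which is the claim.

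The simple generators of $\S_{k-1}\times\S_{m-k+1}$ split into two ranges. For $1\leq i\leq k-2$, the generator $\bT_i$ commutes with every factor of $\bT_k^{[j]}=\bT_{k+j-1}\cdots\bT_k$ (pairwise distance $\geq 2$) and with $f(x_k)$ via \eqref{Lebniz1} (since $k\neq i,i+1$). Combined with $\bT_i\Sigma_k^n=p\Sigma_k^n$ from Lemma \ref{absorb} (as $s_i\in \S_k\times \S_{n-k}$), this immediately gives $\bT_i X = pX$.

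The case $k\leq i\leq m-1$ rests on the braid identity
\[
\bT_i\,\bT_k^{[j]}\;=\;\bT_k^{[j]}\,\bT_{i+1},\qquad k\leq i\leq k+j-2,
\]
which I would prove by writing $\bT_k^{[j]}=(\bT_{k+j-1}\cdots\bT_{i+2})\cdot\bT_{i+1}\bT_i\cdot(\bT_{i-1}\cdots\bT_k)$, commuting $\bT_i$ leftward past the factors $\bT_{k+j-1},\ldots,\bT_{i+2}$ (distance $\geq 2$), applying the braid relation $\bT_i\bT_{i+1}\bT_i=\bT_{i+1}\bT_i\bT_{i+1}$, and finally commuting the trailing $\bT_{i+1}$ rightward past $\bT_{i-1},\ldots,\bT_k$. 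The hypothesis $m<k+j$ gives $m-1\leq k+j-2$, so this identity applies throughout $k\leq i\leq m-1$. Given it,
\[
\bT_i X \;=\; \bT_k^{[j]}\bT_{i+1}f(x_k)\Sigma_k^n \;=\; \bT_k^{[j]}f(x_k)\bT_{i+1}\Sigma_k^n \;=\; pX,
\]
where the middle equality uses $i+1>k$ together with \eqref{Lebniz1} to commute $\bT_{i+1}$ past $f(x_k)$, and the final one uses $s_{i+1}\in \S_k\times\S_{n-k}$ (here $k+1\leq i+1\leq k+j-1\leq n-1$) together with Lemma \ref{absorb}.

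The argument is essentially bookkeeping built around the one braid identity above; the main point where the hypotheses must be used carefully is ensuring that $i+1$ always lies in $\{k+1,\ldots,n-1\}$, so that $\bT_{i+1}$ is genuinely absorbed by $\Sigma_k^n$. This is precisely what forces both $k+j\leq n$ and $m<k+j$ into the statement.
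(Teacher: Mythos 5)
Your proof is correct and takes essentially the same route as the paper: you establish that $\bT_k^{[j]}f(x_k)\Sigma_k^n$ is a $p$-eigenvector of $\bT_i$ for each simple reflection $s_i$ of $\S_{k-1}\times\S_{m-k+1}$, using the very same braid identity $\bT_i\bT_k^{[j]}=\bT_k^{[j]}\bT_{i+1}$ (together with commutation past $f(x_k)$ and absorption by $\Sigma_k^n$ via Lemma \ref{absorb}), and then collapse $\Sigma_{k-1}^m$ using \eqref{eq:qiden}. The only difference is cosmetic — you spell out the three-factor decomposition behind the braid identity and make the bound $i+1\leq k+j-1\leq n-1$ explicit, whereas the paper leaves this implicit.
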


\begin{proof}
We first check that for $v\in \S_{k-1}\times \S_{m-k+1}$, 
\begin{equation}\label{II-1}
\bT_v\bT_k^{[j]}f(x_{k})\Sigma_k^n=p^{\ell(v)}\bT_k^{[j]}f(x_{k})\Sigma_k^n.
\end{equation}
It suffices to justify \eqref{II-1} for $v=s_i$ with $1\leq i<k-1$ or $k-1<i<m$. 
To do this, we notice that
\begin{align}\label{ETBUY}
\bT_i\bT_{k}^{[j]}f(x_k)
=\begin{cases}
\bT_{k}^{[j]}\bT_if(x_k),& 1\leq i< k-1,\\[5pt]
\bT_{k}^{[j]}\bT_{i+1}f(x_k),& k-1<i<m,
\end{cases}
\end{align}
where the first equality  follows directly from \eqref{barBraidRelation1}, and the second equality  can be obtained by applying  the  braid relations in  \eqref{barBraidRelation1} and \eqref{barBraidRelation2}: 
\begin{align*}
\bT_i\bT_{k}^{[j]}
& = \bT_i(\cdots \bT_{i+1}\bT_{i}\cdots )
  = \cdots \bT_i\bT_{i+1}\bT_{i}\cdots \\
& = \cdots \bT_{i+1}\bT_{i}\bT_{i+1}\cdots 
  = (\cdots \bT_{i+1}\bT_{i}\cdots )\bT_{i+1}.
\end{align*}
In the above derivation, the condition  $m<k+j$ ensures that there  must appear the factor $\bT_{i+1}$
in $\bT_{k}^{[j]}$. 
By the commutative relation in \eqref{Lebniz1}, \eqref{ETBUY} can be written as 
$$\bT_i\bT_{k}^{[j]}f(x_k)
=\begin{cases}
\bT_{k}^{[j]}f(x_k)\bT_i,& 1\leq i< k-1,\\[5pt]
\bT_{k}^{[j]}f(x_k)\bT_{i+1},& k-1<i<m.
\end{cases}$$
Therefore, by Lemma \ref{absorb}, we obtain 
$$
\bT_i\bT_k^{[j]}f(x_k)\Sigma_k^n
= p\bT_k^{[j]}f(x_k)\Sigma_k^n.$$
This verifies \eqref{II-1} for $v=s_i$, and thereby for general $v\in \S_{k-1}\times \S_{m-k+1}$. 
We can now conclude the lemma by combining \eqref{II-1} and \eqref{eq:qiden}. 
\end{proof}

\begin{Lemma}\label{PLem2-1}
For $\mu\subseteq (n-k)^k$ and a monomial $x_1^{d_1}\cdots x_k^{d_k}$,
\begin{equation}\label{POQ-1}
\bT_{w_{\mu}}x_1^{d_1}\cdots x_k^{d_k}\Sigma_{k}^n
= \bT_{w_{\mu^-}}x_1^{d_1}\cdots x_{k-1}^{d_{k-1}}\Sigma_{k-1}^{m}
\bT_{k}^{[\mu_1]}x_k^{d_k}\Sigma_{k}^n,
\end{equation}
where $m= k-1+\mu_2$, and $\mu^-=(\mu_2,\ldots, \mu_{k})  $ is the partition in the rectangle $(\mu_2)^{k-1}$   by removing  $\mu_1$ from $\mu$. In particular, when $d_1=\cdots=d_k=0$,   we have
\begin{align}\label{tmksigma}
\bT_\mu=\bT_{\mu^-}\, \bT_{k}^{[\mu_1]}\, \Sigma_{k}^n.
\end{align}
\end{Lemma}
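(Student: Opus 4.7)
The plan is to use the known reduced decomposition of $w_\mu$ to isolate $\bT_k^{[\mu_1]}$ on the right, then commute it past the $x_i$'s with $i<k$, and finally invoke Lemma~\ref{PLem2} to insert the symmetrizer $\Sigma_{k-1}^m$ into the product.

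First, from the reduced word $w_\mu = s_1^{[\mu_k]}\,s_2^{[\mu_{k-1}]}\cdots s_{k-1}^{[\mu_2]}\,s_k^{[\mu_1]}$ displayed just before the statement, we read off $w_\mu = w_{\mu^-}\,s_k^{[\mu_1]}$ as a reduced factorization, and therefore $\bT_{w_\mu} = \bT_{w_{\mu^-}}\,\bT_k^{[\mu_1]}$. Next, $\bT_k^{[\mu_1]} = \bT_{k+\mu_1-1}\cdots\bT_{k+1}\bT_k$ uses only generators indexed by $j\geq k$, and by relation \eqref{Lebniz1} each such $\bT_j$ commutes with $x_i$ whenever $i<k$. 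Hence $\bT_k^{[\mu_1]}$ slides through the commuting monomial $x_1^{d_1}\cdots x_{k-1}^{d_{k-1}}$, yielding
$$\bT_{w_\mu}\,x_1^{d_1}\cdots x_k^{d_k}\,\Sigma_k^n = \bT_{w_{\mu^-}}\,x_1^{d_1}\cdots x_{k-1}^{d_{k-1}}\,\bT_k^{[\mu_1]}\,x_k^{d_k}\,\Sigma_k^n.$$

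Finally, I would apply Lemma~\ref{PLem2} with $f(x_k) = x_k^{d_k}$, $j = \mu_1$, and $m = k-1+\mu_2$; its hypotheses $1\leq k\leq m < k+\mu_1\leq n$ hold because $\mu_2\geq 1$ (the edge case $\mu_2 = 0$ I treat separately below) and $\mu_2\leq \mu_1\leq n-k$. This allows us to insert $\Sigma_{k-1}^m$ between $x_1^{d_1}\cdots x_{k-1}^{d_{k-1}}$ and $\bT_k^{[\mu_1]}x_k^{d_k}\Sigma_k^n$, producing \eqref{POQ-1}. The particular case $d_1=\cdots=d_k=0$ then gives \eqref{tmksigma} upon recognizing $\bT_{w_{\mu^-}}\,\Sigma_{k-1}^m = \bT_{\mu^-}$ by definition.

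The only subtlety I see is the edge case $\mu_2=0$, when $m = k-1 < k$ and Lemma~\ref{PLem2} does not formally apply. In that case, however, $w_{\mu^-} = \operatorname{id}$ and $\Sigma_{k-1}^{k-1}$ involves only $\bT_1,\ldots,\bT_{k-2}$, all of which commute with $\bT_k^{[\mu_1]}$ and $x_k^{d_k}$; combined with the absorption $\Sigma_{k-1}^{k-1}\,\Sigma_k^n = \Sigma_k^n$ (immediate from Lemma~\ref{absorb} applied termwise together with identity~\eqref{eq:qiden}), the identity still holds. I do not anticipate a genuine obstacle: the proof is bookkeeping about index ranges so that Lemma~\ref{PLem2} is legally invoked, together with the basic commutation \eqref{Lebniz1}.
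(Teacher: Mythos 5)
Your proof follows exactly the paper's: factor $\bT_{w_\mu}=\bT_{w_{\mu^-}}\bT_k^{[\mu_1]}$ from the standard reduced decomposition, slide $\bT_k^{[\mu_1]}$ past $x_1^{d_1}\cdots x_{k-1}^{d_{k-1}}$ via \eqref{Lebniz1}, and insert $\Sigma_{k-1}^m$ by Lemma~\ref{PLem2}. The edge case $\mu_2=0$ (where the hypothesis $k\le m$ of Lemma~\ref{PLem2} formally fails) is not addressed in the paper's proof, so your explicit absorption argument $\Sigma_{k-1}^{k-1}\Sigma_k^n=\Sigma_k^n$ is a small but genuine improvement on the exposition.
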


\begin{proof}
This can be done via  straightforward  computation:
\begin{align*}
\bT_{w_{\mu}}x_1^{d_1}\cdots x_k^{d_k}\Sigma_{k}^n
& = 
\bT_{w_{\mu^-}}
\bT_{k}^{[\mu_1]}x_1^{d_1}\cdots x_k^{d_k}\Sigma_{k}^n\\[5pt]
& =
\bT_{w_{\mu^-}}x_1^{d_1}\cdots x_{k-1}^{d_{k-1}}
\bT_{k}^{[\mu_1]}x_k^{d_k}\Sigma_{k}^n\\[5pt]
& =
\bT_{w_{\mu^-}}x_1^{d_1}\cdots x_{k-1}^{d_{k-1}}
\Sigma_{k-1}^m\bT_{k}^{[\mu_1]}x_k^{d_k}\Sigma_{k}^n,
\end{align*}
where the second equality is clear from \eqref{Lebniz1}, and the last equality follows from Lemma \ref{PLem2}.
\end{proof}

The next lemma  reformulates   the factor $\bT_{k}^{[\mu_1]}x_k^{d_k}\Sigma_{k}^n$ in \eqref{POQ-1} in the case  $d_k=1$.
Its proof requies  the following identities 
\begin{gather}\label{EQQQ-1}
\bT_i x_i 
=x_{i+1}\bT_i+(\hbar-(p-q)x_{i+1}),\\[5pt]
\bT_i(\hbar-(p-q)x_i) =(\hbar-(p-q)x_{i+1})\T_i,\label{EQQQ-2}
\end{gather}
which can be easily deduced  from \eqref{Lebniz2} and \eqref{Lebniz3}.

\begin{Lemma}\label{PLem4}
For $k+j\leq n$,
\begin{equation}\label{HU-11}
\bT_k^{[j]}x_k\Sigma_k^n
=x_{k+j}\bT_k^{[j]}\Sigma_k^n+\sum_{a+b=j-1}(\hbar-(p-q)x_{k+j}) q^{b}\,\,\bT_{k}^{[a]}\Sigma_k^n.
\end{equation}
\end{Lemma}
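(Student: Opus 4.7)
The plan is to prove \eqref{HU-11} by induction on $j$, using the two identities \eqref{EQQQ-1}--\eqref{EQQQ-2} together with the absorption property of $\Sigma_k^n$.

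\textbf{Base case.} For $j=0$ both sides equal $x_k\Sigma_k^n$ (empty sum). For $j=1$, applying \eqref{EQQQ-1} directly gives
\[
\bT_k\,x_k\,\Sigma_k^n = x_{k+1}\bT_k\Sigma_k^n + (\hbar-(p-q)x_{k+1})\Sigma_k^n,
\]
which matches the right-hand side of \eqref{HU-11} (the sum ranges only over $(a,b)=(0,0)$).

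\textbf{Inductive step.} Assume the formula for $j-1$ (with $j\geq 2$), and write $\bT_k^{[j]}=\bT_{k+j-1}\bT_k^{[j-1]}$. Then
\[
\bT_k^{[j]}x_k\Sigma_k^n = \bT_{k+j-1}\Big(x_{k+j-1}\bT_k^{[j-1]}\Sigma_k^n + \sum_{a+b=j-2}(\hbar-(p-q)x_{k+j-1})\,q^b\,\bT_k^{[a]}\Sigma_k^n\Big).
\]
The first piece I would handle using \eqref{EQQQ-1}: $\bT_{k+j-1}x_{k+j-1}=x_{k+j}\bT_{k+j-1}+(\hbar-(p-q)x_{k+j})$, producing exactly the leading term $x_{k+j}\bT_k^{[j]}\Sigma_k^n$ plus an extra contribution $(\hbar-(p-q)x_{k+j})\bT_k^{[j-1]}\Sigma_k^n$, which is precisely the $b=0$ term ($a=j-1$) of the sum in \eqref{HU-11}.

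\textbf{The key step.} For each summand in the induction hypothesis, I would apply \eqref{EQQQ-2} to slide $\bT_{k+j-1}$ past $(\hbar-(p-q)x_{k+j-1})$:
\[
\bT_{k+j-1}(\hbar-(p-q)x_{k+j-1}) = (\hbar-(p-q)x_{k+j})\,\T_{k+j-1}.
\]
Since $a\leq j-2$, the operator $\bT_k^{[a]}$ involves only $\bT_i$ with $i\leq k+a-1\leq k+j-3$, so $\T_{k+j-1}$ commutes with $\bT_k^{[a]}$ through the braid/commutation relations. It then remains to absorb $\T_{k+j-1}$ into $\Sigma_k^n$. This is the technical heart of the argument: using $\T_i=\bT_i-(p-q)$ together with Lemma \ref{absorb}, one has $\T_{k+j-1}\Sigma_k^n = (p-(p-q))\Sigma_k^n = q\Sigma_k^n$ (valid because $k+1\leq k+j-1\leq n-1$, so $s_{k+j-1}\in\S_k\times\S_{n-k}$). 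Each such summand thus contributes $(\hbar-(p-q)x_{k+j})\,q^{b+1}\,\bT_k^{[a]}\Sigma_k^n$, i.e.\ the index $b$ is incremented by one, yielding precisely the terms with $a+b'=j-1$, $b'\geq 1$ in \eqref{HU-11}. Together with the $b'=0$ term generated above, the full sum on the right is reconstructed.

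\textbf{Main obstacle.} The only nontrivial point is the identity $\T_{k+j-1}\Sigma_k^n=q\Sigma_k^n$; once this is recognized (via the conversion $\T_i=\bT_i-(p-q)$ and Lemma \ref{absorb}), the rest is bookkeeping of indices. I would make sure to record explicitly the index condition $k+j\leq n$, which guarantees both that $s_{k+j-1}$ is a simple reflection in the parabolic subgroup and that no boundary issues arise when commuting $\T_{k+j-1}$ past $\bT_k^{[a]}$.
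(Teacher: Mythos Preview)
Your proposal is correct and follows essentially the same approach as the paper's proof: induction on $j$, using \eqref{EQQQ-1} for the leading term, \eqref{EQQQ-2} to push $\bT_{k+j-1}$ past the factor $(\hbar-(p-q)x_{k+j-1})$, commuting past $\bT_k^{[a]}$ since $a\leq j-2$, and then absorbing into $\Sigma_k^n$ via $\T_i=\bT_i-(p-q)$ and Lemma~\ref{absorb}. The only difference is cosmetic---the paper indexes the induction as $j\to j+1$ rather than $j-1\to j$.
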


\begin{proof}
This is obvious for $j=0$ since both sides are just $x_k\Sigma_k^n$. 
When $j=1$, \eqref{HU-11} follows from \eqref{EQQQ-1}. 
Assume  that     the assertion  is true for $j$. Consider the situation for $j+1$ with $j\geq 1$ and $k+j+1\leq n$. By induction,
\begin{align}
\bT_k^{[j+1]}x_k\Sigma_k^n
&=\bT_{k+j}\bT_k^{[j]}x_k\Sigma_k^n\nonumber\\[5pt]
& =
\bT_{k+j}
\left(x_{k+j}\bT_k^{[j]}\Sigma_k^n+
\sum_{a+b=j-1}(\hbar-(p-q)x_{k+j}) q^{b}\bT_{k}^{[a]}\Sigma_k^n\right).\label{FINAL}
\end{align}
We need the following evaluations. 
By \eqref{EQQQ-1}, we deduce that
\begin{align}
\bT_{k+j}x_{k+j}\bT_k^{[j]}
& = \left(x_{k+j+1}\bT_{k+j}+(\hbar-(p-q)x_{k+j+1})\right)\bT_k^{[j]}\nonumber\\[5pt]
& = x_{k+j+1}\bT_k^{[j+1]}+(\hbar-(p-q)x_{k+j+1})\bT_k^{[j]}.\label{XIONG-1}
\end{align}
By \eqref{EQQQ-2}, we have 
\begin{align}
&\bT_{k+j}(\hbar-(p-q)x_{k+j}) q^{b}\bT_{k}^{[a]}\Sigma_k^n\nonumber\\[5pt]
&\,= (\hbar-(p-q)x_{k+j+1}) \T_{k+j}q^b\bT_{k}^{[a]}\Sigma_k^n\nonumber\\[5pt]
&\,= (\hbar-(p-q)x_{k+j+1})q^b\big(\bT_{k+j}-(p-q)\big)\bT_{k}^{[a]}\Sigma_k^n.\label{MNB-11}
\end{align}
Since $a\leq j-1$, $\bT_{k+j}$ and $\bT_{k}^{[a]}$ commute. So
\eqref{MNB-11} can be rewritten
as
\begin{equation}\label{MNB-112}
(\hbar-(p-q)x_{k+j+1})q^b\bT_{k}^{[a]}
\big(\bT_{k+j}-(p-q)\big)\Sigma_k^n.
\end{equation}
By the assumption that $j\geq 1$ and $k+j+1\leq n$, we have $s_{k+j}\in \S_k\times \S_{n-k}$. So, by \eqref{Lem-12}, we see that $\bT_{k+j}\Sigma_k^n=p\Sigma_k^n$, and hence \eqref{MNB-112} can be simplified as
\begin{equation}\label{MNB-1123}
(\hbar-(p-q)x_{k+j+1})q^{b+1}\bT_{k}^{[a]}
\Sigma_k^n.
\end{equation}
Putting  \eqref{XIONG-1} and \eqref{MNB-1123} into \eqref{FINAL}, we are given 
\begin{align*}
\bT_k^{[j+1]}x_k\Sigma_k^n
& = x_{k+j+1}\bT_{k}^{[j+1]}\Sigma_k^n+
\sum_{a+b=j}(\hbar-(p-q)x_{k+j}) q^{b}\bT_{k}^{[a]}\Sigma_k^n.
\end{align*}
This confirms  the assertion  for $j+1$, completing the proof. 
\end{proof}


The following lemma will be used in  the proof of Lemma \ref{PLem3-1}.

\begin{Lemma}\label{PLem3}
For $i\geq 2$ and $a>b$,  
$$\bT_{i-1}^{[a]}\,\bT_i^{[b]}
=\bT_{i-1}^{[b]}\,\bT_{i-1}^{[a]}. $$
\end{Lemma}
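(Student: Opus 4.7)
My plan is to prove Lemma \ref{PLem3} by induction on $a$, using only the braid relations \eqref{barBraidRelation1} and \eqref{barBraidRelation2} in $\H_n$. The base case $a = 1$ forces $b = 0$, and both sides reduce to $\bT_{i-1}$. For the inductive step, assuming the identity for $a$ (and every $b < a$), I would establish it for $a+1$ (and every $b \leq a$), splitting into two cases.

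For $b < a$, I would factor $\bT_{i-1}^{[a+1]} = \bT_{i+a-1}\bT_{i-1}^{[a]}$ and apply the inductive hypothesis to obtain
$\bT_{i-1}^{[a+1]}\bT_i^{[b]} = \bT_{i+a-1}\bT_{i-1}^{[b]}\bT_{i-1}^{[a]}$. Since every factor of $\bT_{i-1}^{[b]}$ carries an index at most $i+b-2 \leq i+a-3$, each commutes with $\bT_{i+a-1}$ by \eqref{barBraidRelation1}, giving the desired $\bT_{i-1}^{[b]}\bT_{i-1}^{[a+1]}$.

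The case $b = a$ is the main obstacle, since the inductive hypothesis cannot be applied directly to the pair $(a+1, a)$. Here I would write $\bT_i^{[a]} = \bT_{i+a-1}\bT_i^{[a-1]}$, then simplify $\bT_{i-1}^{[a+1]}\bT_{i+a-1}$ by commuting $\bT_{i+a-1}$ across the low-index factors of $\bT_{i-1}^{[a+1]}$ and applying the braid relation $\bT_{i+a-1}\bT_{i+a-2}\bT_{i+a-1} = \bT_{i+a-2}\bT_{i+a-1}\bT_{i+a-2}$, reducing to $\bT_{i+a-2}\bT_{i+a-1}\bT_{i-1}^{[a]}$. Applying the inductive hypothesis now to $\bT_{i-1}^{[a]}\bT_i^{[a-1]}$ (legitimate since $a > a-1$), and again commuting $\bT_{i+a-1}$ across $\bT_{i-1}^{[a-1]}$, should assemble the right-hand side $\bT_{i-1}^{[a]}\bT_{i-1}^{[a+1]}$.

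A conceptually cleaner alternative would be to bypass the induction entirely: verify by direct computation in one-line notation that both sides, read as words in the simple reflections $s_j$, represent the same permutation $w \in \S_n$, and that $\ell(w) = a+b$ by counting inversions. Both expressions then become reduced words for $w$, hence represent the same element $\bT_w$ in $\H_n$ by Matsumoto's theorem.
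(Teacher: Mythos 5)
Both routes are sound. Your induction is close to the paper's but not identical: the paper inducts on $b$, so that the single braid-move reduction $\bT_{i-1}^{[a]}\,\bT_i^{[b]} = \bT_{i+b}^{[a-b-1]}\,\bT_{i+b-2}\,\bT_{i-1}^{[b+1]}\,\bT_i^{[b-1]}$ drops the induction variable by one and handles every $a>b$ uniformly, with no case split; your induction on $a$ forces the $b<a$ versus $b=a$ split, and the $b=a$ branch then reproduces the same braid manipulations the paper performs, only applied to a slightly different factorization before invoking the inductive hypothesis. The Matsumoto alternative, by contrast, is a genuinely different route from the paper's and is conceptually tighter: both $s_{i-1}^{[a]}\,s_i^{[b]}$ and $s_{i-1}^{[b]}\,s_{i-1}^{[a]}$ equal the permutation $w$ that fixes everything outside $\{i-1,\ldots,i+a-1\}$ and sends $i-1\mapsto i+a-1$, $i\mapsto i+b-1$, $j\mapsto j-2$ for $i+1\le j\le i+b$, and $j\mapsto j-1$ for $i+b+1\le j\le i+a-1$; this $w$ has exactly $a+b$ inversions ($a$ created at position $i-1$ and $b$ at position $i$), so both $(a+b)$-letter words are reduced and each $\bT$-product therefore equals $\bT_w$. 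What that buys you is a two-line conceptual proof; what it costs is a global computation of $w$ and $\ell(w)$, where the paper's induction gets by with purely local braid moves.
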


\begin{proof}
We complete the proof by induction on $b$. It is obvious when $b=0$. We next consider the case of $b\geq 1$. Notice that
\begin{align}
\bT_{i-1}^{[a]}\,\bT_i^{[b]}&=\left(\bT_{i+b}^{[a-b-1]}\, \bT_{i+b-1}\, \bT_{i+b-2}\, \bT_{i-1}^{[b-1]}\right) \left(\bT_{i+b-1}\, \bT_{i}^{[b-1]} \right)\nonumber\\[5pt]
&=\bT_{i+b}^{[a-b-1]}\, \left(\bT_{i+b-1}\, \bT_{i+b-2}\,\bT_{i+b-1} \right) \, \bT_{i-1}^{[b-1]}\,\bT_{i}^{[b-1]}\nonumber \\[5pt]
&=\bT_{i+b}^{[a-b-1]}\, \left(\bT_{i+b-2}\, \bT_{i+b-1}\,\bT_{i+b-2} \right) \, \bT_{i-1}^{[b-1]}\,\bT_{i}^{[b-1]}\nonumber \\[5pt]
&=\bT_{i+b}^{[a-b-1]}\,\bT_{i+b-2}\,  \bT_{i-1}^{[b+1]}\,\bT_{i}^{[b-1]}.\label{OOO-22}
\end{align}
Applying induction to $\bT_{i-1}^{[b+1]}\,\bT_{i}^{[b-1]}$ gives
\begin{equation}\label{OOO-23}
\bT_{i-1}^{[b+1]}\,\bT_{i}^{[b-1]}=\bT_{i-1}^{[b-1]}\,\bT_{i-1}^{[b+1]}.
\end{equation}
Substituting  \eqref{OOO-23} into \eqref{OOO-22}, we have 
\begin{align}
\bT_{i-1}^{[a]}\,\bT_i^{[b]}
&=\bT_{i+b}^{[a-b-1]}\,\bT_{i+b-2}\,\bT_{i-1}^{[b-1]}\,\bT_{i-1}^{[b+1]}\nonumber\\[5pt]
&=\bT_{i+b}^{[a-b-1]}\,\bT_{i-1}^{[b]}\,\bT_{i-1}^{[b+1]}.\label{OOO-234}
\end{align}
Noticing that $\bT_{i+b}^{[a-b-1]}$ and $\bT_{i-1}^{[b]}$ are commutable, \eqref{OOO-234}
can be rewritten  as
\[
\bT_{i-1}^{[a]}\,\bT_i^{[b]}=\bT_{i-1}^{[b]}\,\bT_{i+b}^{[a-b-1]}\,\bT_{i-1}^{[b+1]}=\bT_{i-1}^{[b]}\,\bT_{i-1}^{[a]},
\]
as required. 
\end{proof}

\begin{Lemma}\label{PLem3-1}
Let  $\lambda\subseteq (n-k)^k$ and $1\leq i\leq k$. For 
  $0\leq a_i<\lambda_i$, 
\begin{equation}\label{TYTY-1}
\bT_{1}^{[\lambda_k]}
\cdots
\bT_{k-i}^{[\lambda_{i+1}]}\,
\bT_{k+1-i}^{[a_i]}\,
\bT_{k+2-i}^{[\lambda_{i-1}]} 
\cdots
\bT_{k}^{[\lambda_{1}]}\Sigma_k^n 
=p^{\htt(\lambda/\nu)-1}
\bT_{\nu},
\end{equation}
where $\nu$ is the partition obtained from $\lambda$ by deleting a ribbon with   head in the $i$-th row and of width $\lambda_i-a_i$. 
\end{Lemma}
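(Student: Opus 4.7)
The LHS differs from the standard factorization $\bT_\lambda=\bT_{w_\lambda}\Sigma_k^n$ only in a single ``defect'' slot: $\bT_{k+1-i}^{[\lambda_i]}$ has been replaced by the shorter $\bT_{k+1-i}^{[a_i]}$. My plan is to push this defect leftward one slot at a time using Lemma \ref{PLem3}, each time extracting a factor of $p$ via commutation and the absorption property $\bT_j\Sigma_k^n=p\Sigma_k^n$ for $j\neq k$ (Lemma \ref{absorb}), continuing until the result matches the standard factorization of $\bT_{w_\nu}\Sigma_k^n=\bT_\nu$. The total number of shifts will equal $h-1$, where $h=\htt(\lambda/\nu)=j-i+1$, producing the claimed prefactor $p^{h-1}$.

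I organize the argument by induction on the ribbon height $h$. The base $h=1$ is immediate: the ribbon is a single row, so $\nu_r=\lambda_r$ for $r\neq i$ and $\nu_i=a_i$, and the LHS equals $\bT_1^{[\nu_k]}\cdots\bT_k^{[\nu_1]}\Sigma_k^n = \bT_\nu$. For $h\geq 2$, the ribbon structure forces $a_i<\lambda_{i+1}$, and Lemma \ref{PLem3} applied to the pair at slots $(k-i,k+1-i)$ (with $a=\lambda_{i+1}$, $b=a_i$), together with the identity $\bT_{k-i}^{[\lambda_{i+1}]} = \bT_{k+1-i}^{[\lambda_{i+1}-1]}\bT_{k-i}$, gives
\begin{equation*}
\bT_{k-i}^{[\lambda_{i+1}]}\bT_{k+1-i}^{[a_i]} = \bT_{k-i}^{[a_i]}\bT_{k+1-i}^{[\lambda_{i+1}-1]}\bT_{k-i}.
\end{equation*}
The trailing $\bT_{k-i}$ then commutes by \eqref{barBraidRelation1} past every factor on its right (all of whose indices are $\geq k+2-i$) and absorbs into $\Sigma_k^n$ as $p\Sigma_k^n$, since $k-i\neq k$.

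I then iterate this reduction $h-1$ times in total; at step $m$ the pair in question is $\bT_{k-i-m+1}^{[\lambda_{i+m}]}\bT_{k-i-m+2}^{[a_i]}$, and the inequality $\lambda_{i+m}>a_i$ needed to invoke Lemma \ref{PLem3} follows from the ribbon condition $a_i=\nu_j\leq\nu_{j-1}=\lambda_j-1\leq\lambda_{i+m}-1$, valid for $m\leq h-1$. After all $h-1$ iterations, the defect sits at slot $k+1-j$, and the slots $k+2-j,\ldots,k+1-i$ carry exponents $\lambda_j-1,\ldots,\lambda_{i+1}-1$ respectively. Matching this pattern against the explicit description of $\nu$ (namely $\nu_r=\lambda_r$ for $r<i$ or $r>j$, $\nu_r=\lambda_{r+1}-1$ for $i\leq r<j$, and $\nu_j=a_i$) identifies the resulting $\bT$-product as $\bT_{w_\nu}$, yielding $p^{h-1}\bT_\nu$. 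The only real task is the bookkeeping: checking at each of the $h-1$ iterations that Lemma \ref{PLem3} applies and that the split-off $\bT_{k-i-m+1}$ commutes through to $\Sigma_k^n$, both of which are immediate from the ribbon structure and the commutation relation $\bT_r\bT_s=\bT_s\bT_r$ for $|r-s|>1$; there is no serious obstacle beyond careful indexing.
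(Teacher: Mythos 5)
Your proof is correct and follows the same line of argument as the paper: iteratively apply Lemma~\ref{PLem3} to slide the "defect" exponent $[a_i]$ leftward one slot at a time, at each step splitting off a single $\bT$ factor that commutes rightward and is absorbed by $\Sigma_k^n$ as a factor of $p$, stopping after $\htt(\lambda/\nu)-1$ iterations. The paper phrases the stopping condition as "the smallest index $j$ with $a_i\geq \lambda_j$" rather than counting iterations by ribbon height, but these are the same bookkeeping, and your index checks are all correct.
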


\begin{proof}
The proof is divided into two cases.

Case 1.  $a_i\geq \lambda_{i+1}$. In this case,
the sequence obtained from $\lambda$ by replacing $\lambda_i$ with $a_i$ is still a partition in $(n-k)^k$, which is exactly the partition  $\nu$. So the left-hand side of  
\eqref{TYTY-1} is $\bT_\nu$, which equals the right-hand side of \eqref{TYTY-1} since the ribbon  $\lambda/\nu$ has exactly one row.

Case 2.  $a_i<\lambda_{i+1}$.
We see that 
\begin{align}
\bT_{1}^{[\lambda_k]}&
\cdots
\bT_{k-i}^{[\lambda_{i+1}]}\,
\bT_{k+1-i}^{[a_i]}\,
\bT_{k+2-i}^{[\lambda_{i-1}]} 
\cdots
\bT_{k}^{[\lambda_{1}]}\Sigma_k^n\nonumber\\[5pt]
&=\bT_{1}^{[\lambda_k]}
\cdots
\bT_{k-i}^{[a_i]}\,
\bT_{k-i}^{[\lambda_{i+1}]}\,
\bT_{k+2-i}^{[\lambda_{i-1}]} 
\cdots
\bT_{k}^{[\lambda_{1}]}\Sigma_k^n\nonumber\\[5pt]
&=\bT_{1}^{[\lambda_k]}
\cdots
\bT_{k-i}^{[a_i]}\,
\bT_{k+1-i}^{[\lambda_{i+1}-1]}\bT_{k-i}\,
\bT_{k+2-i}^{[\lambda_{i-1}]} 
\cdots
\bT_{k}^{[\lambda_{1}]}\Sigma_k^n\nonumber\\[5pt]
&=\bT_{1}^{[\lambda_k]}
\cdots
\bT_{k-i}^{[a_i]}\,
\bT_{k+1-i}^{[\lambda_{i+1}-1]}\,
\bT_{k+2-i}^{[\lambda_{i-1}]} 
\cdots
\bT_{k}^{[\lambda_{1}]}\,\bT_{k-i}\,\Sigma_k^n\nonumber\\[5pt]
&=p\left(\bT_{1}^{[\lambda_k]}
\cdots
\bT_{k-i}^{[a_i]}\,
\bT_{k+1-i}^{[\lambda_{i+1}-1]}\,
\bT_{k+2-i}^{[\lambda_{i-1}]} 
\cdots
\bT_{k}^{[\lambda_{1}]}\,\Sigma_k^n\right),\label{XGEN-1}
\end{align}
where the first equality is by Lemma \ref{PLem3}, the third equality holds since $\bT_{k-i}$ commutes with $\bT_{k+2-i}^{[\lambda_{i-1}]}, \ldots, \bT_{k}^{[\lambda_{1}]}$, and the last equality follows from  the 
fact $\bT_{k-i}\,\Sigma_k^n=p\Sigma_k^n$ 
according to \eqref{Lem-12}.

We proceed by comparing $a_i$ and $\lambda_{i+2}$. If $a_i\geq \lambda_{i+2}$,
then 
\[(\lambda_1,\ldots, \lambda_{i-1}, \lambda_{i+1}-1, a_i, \lambda_{i+2},\ldots, \lambda_k) 
\]
is exactly the partition $\nu$, and so
\eqref{XGEN-1} equals $p\bT_\nu$, which
is the right-hand side of \eqref{TYTY-1} by noticing that the ribbon $\lambda/\nu$ has two rows. Otherwise, we may continue the same procedure as in \eqref{XGEN-1}. The process will stop until meeting a part, say $\lambda_j$, such that $a_i\geq \lambda_j$ (here we   set $\lambda_{j}=0$ for $j=k+1$). In other words, $j$ is the smallest index satisfying $a_i\geq \lambda_j$. 
The left-hand side of \eqref{TYTY-1} eventually becomes 
\begin{equation}\label{P097}
p^{j-i-1}\left(\bT_{1}^{[\lambda_k]}
\cdots \bT_{k+1-j}^{[\lambda_{j}]}
\bT_{k+2-j}^{[a_i]}
\bT_{k+3-j}^{[{\lambda_{j-1}-1}]}\cdots
\bT_{k+1-i}^{[\lambda_{i+1}-1]}\,
\bT_{k+2-i}^{[\lambda_{i-1}]} 
\cdots
\bT_{k}^{[\lambda_{1}]}\,\Sigma_k^n\right).
\end{equation}
It is easily checked that $\nu$ is the partition
\[
(\lambda_1,\ldots,\lambda_{i-1}, \lambda_{i+1}-1,\ldots,\lambda_{j-1}-1,a_i,\lambda_j\ldots, \lambda_k).
\]
So \eqref{P097} is equal to  $p^{j-i-1} \bT_\nu$. We reach \eqref{TYTY-1} since there are $j-i$ rows in  the ribbon $\lambda/\nu$.
\end{proof}

\section{Proofs of Theorems \ref{ThmA}, \ref{ThmB}, \ref{ThmD} and \ref{ThmE}}\label{sec-66}

In this section, we finish the proofs of the theorems given in Introduction. The descriptions of Theorems \ref{ThmA} and \ref{ThmB}
are in terms of the operators 
$\fhead{i}$ and $\xfhead{i}$. 
We shall first define more types of ribbon Schubert operators, which will be used  for the proof  of  
 Theorem \ref{ThmB}, as well as for deriving Pieri formulas for other classes  in Section \ref{sec67}.

Let us recall that  
\begin{align*} 
\fhead{i}\to \lambda &= 
t_{c}\cdot \lambda+
\sum_{\mu/\lambda=\eta}
\left(\hbar-(p-q)t_{\mathtt{h}(\eta)}\right)p^{\htt(\eta)-1}q^{\wdd(\eta)-1}\cdot\mu,\\[5pt]
\xfhead{i}\to \lambda &= 
t_{c}\cdot \lambda+
\sum_{\mu/\lambda=\eta}
\left(\hbar-(p-q)t_{\mathtt{t}(\eta)}\right)p^{\htt(\eta)-1}q^{\wdd(\eta)-1}\cdot\mu,
\end{align*} 
where $c=\lambda_i+k+1-i$,  $\mathtt{h}(\eta)$ and $\mathtt{t}(\eta)$ are as defined in 
\eqref{POIU-1s}. If the ribbons added to $\lambda$ are required to have tails in row $i$, then we reach the following notation:
\begin{align} 
\xftail{i}\to \lambda &= 
t_{c}\cdot \lambda+
\sum_{\mu/\lambda=\eta}
\left(\hbar-(p-q)t_{\mathtt{h}(\eta)}\right)\,p^{\htt(\eta)-1}q^{\wdd(\eta)-1}\cdot\mu,\\[5pt]
\ftail{i}\to \lambda &= 
t_{c}\cdot \lambda+
\sum_{\mu/\lambda=\eta}
\left(\hbar-(p-q)t_{\mathtt{t}(\eta)}\right)\,p^{\htt(\eta)-1}q^{\wdd(\eta)-1}\cdot\mu,\label{TIETOU-1}
\end{align}
where the statistics $c$, $\mathtt{h}(\eta)$ and $\mathtt{t}(\eta)$ are the same as above, with the exception that now the sum is  over  $\mu\subseteq (n-k)^k$
such that $\mu/\lambda$ is a ribbon with tail in row $i$. We remark that in \eqref{TIETOU-1}, there holds that $c={\mathtt{t}(\eta)}$.

The dual versions of the above four operators are a procedure of deleting ribbons, as already encountered in \eqref{YHNG}. For $\mu\subseteq (n-k)^k$, define 
\begin{align} 
 \mu\to\fhead{i}
&=t_{c}\cdot \mu+
\sum_{\mu/\lambda=\eta}
\left(\hbar-(p-q)t_{\mathtt{h}(\eta)}\right)p^{\htt(\eta)-1}q^{\wdd(\eta)-1}\cdot\lambda, \label{QQ-1}  \\[5pt]
 \mu\to\xfhead{i}
&=t_{c}\cdot \mu+
\sum_{\mu/\lambda=\eta}
\left(\hbar-(p-q)t_{\mathtt{t}(\eta)}\right)p^{\htt(\eta)-1}q^{\wdd(\eta)-1}\cdot\lambda,\label{QQ-2}  \\[5pt]
 \mu\to\xftail{i}
&=t_{c}\cdot \mu+
\sum_{\mu/\lambda=\eta}
\left(\hbar-(p-q)t_{\mathtt{h}(\eta)}\right)p^{\htt(\eta)-1}q^{\wdd(\eta)-1}\cdot\lambda, \label{QQ-3}  \\[5pt]
 \mu\to\ftail{i}
&=t_{c}\cdot \mu+
\sum_{\mu/\lambda=\eta}
\left(\hbar-(p-q)t_{\mathtt{t}(\eta)}\right)p^{\htt(\eta)-1}q^{\wdd(\eta)-1}\cdot\lambda, \label{QQ-4} 
\end{align}
where $c=\mu_i+k+1-i$, and the sum goes   over  $\lambda\subseteq (n-k)^k$
such that in \eqref{QQ-1}  and \eqref{QQ-2} (resp., in \eqref{QQ-3}  and \eqref{QQ-4}),    $\mu/\lambda$ is a ribbon with head (resp., tail) in row $i$. We also remark that  in \eqref{QQ-1}, there holds that $c={\mathtt{h}(\eta)}$.

\begin{Eg}
Let  $k=3$, $n=7$,   $\lambda=(3,1,0)$, and $i=2$. 
\begin{align*}
    \fhead{2}\to \young{&&&u|\\\\|l|}
    &=t_3\cdot\young{&&&u|\\{\Ythicker\wall{r}}||\\l|}
    +
    (\hbar-(p-q)t_4)\cdot\young{&&&u|\\&||*(yellow)\,\,\,2\bullet\\|l|}
    +
    (\hbar-(p-q)t_5)\cdot\young{&&&u|\\&<|*(yellow)&>|*(yellow)\,\,\,2\bullet\\|l|}\\
    &\quad+
    (\hbar-(p-q)t_4)pq\cdot\young{&&&u|\\&|A|*(yellow)\,\,\,2\bullet\\<|*(yellow)&J|*(yellow)}+
    (\hbar-(p-q)t_5)pq^2\cdot\young{&&&u|\\&F|*(yellow)&>|*(yellow)\,\,\,2\bullet\\<|*(yellow)&J|*(yellow)},
\\
    \xfhead{2}\to \young{&&&u|\\\\|l|}
    &=t_3\cdot\young{&&&u|\\{\Ythicker\wall{r}}||\\l|}
    +
    (\hbar-(p-q)t_3)\cdot\young{&&&u|\\&||*(yellow)\bullet 2\,\,\,\\|l|}
    +
    (\hbar-(p-q)t_3)\cdot\young{&&&u|\\&<|*(yellow)\bullet\,\,\,\,\,&>|*(yellow)2\\|l|}\\
    &\quad+
    (\hbar-(p-q)t_1)pq\cdot\young{&&&u|\\&|A|2*(yellow) \\<|*(yellow)\bullet\,\,\,\,\,\,&J|*(yellow)}+
    (\hbar-(p-q)t_1)pq^2\cdot\young{&&&u|\\&F|*(yellow)&>|2*(yellow) \\<|*(yellow)\bullet\,\,\,\,\,\,&J|*(yellow)},
\\
    \xftail{2}\to \young{&&&u|\\&[]\\|l|}
    &=
    t_3\cdot\young{&&&u|\\{\Ythicker\wall{r}}||\\|l|}
    +
    (\hbar-(p-q)t_4)\cdot\young{&&&u|\\&*(yellow)\,\,\,2\bullet\\|l|}\\
    &\quad+
    (\hbar-(p-q)t_5)q\cdot\young{&&&u|\\&<|2*(yellow)&>|*(yellow)\,\,\,\,\,\bullet\\|l|} 
    +
    (\hbar-(p-q)t_7)pq^2\cdot\young{&&&A|*(yellow)\,\,\,\,\,\bullet\\&<|2*(yellow)&=|*(yellow)&J|*(yellow)\\|l|},
\\
    \ftail{2}\to \young{&&&u|\\&[]\\|l|}
    &=
    t_3\cdot\young{&&&u|\\{\Ythicker\wall{r}}||\\|l|}
    +
    (\hbar-(p-q)t_3)\cdot\young{&&&u|\\&*(yellow)\bullet 2\,\,\,\\|l|}\\
    &\quad+
    (\hbar-(p-q)t_3)q\cdot\young{&&&u|\\&<|*(yellow)\bullet 2\,\,\,&>|*(yellow)\\|l|} 
    +
    (\hbar-(p-q)t_3)pq^2\cdot\young{&&&A|*(yellow)\\&<|*(yellow)\bullet 2\,\,\,&=|*(yellow)&J|*(yellow)\\|l|}.
\\[5pt]
    \young{&&&u|\\\\l|}\to \fhead{2}& 
    = t_3\cdot \young{&&&u|\\{\Ythicker\wall{r}}||\\l|}
    +(\hbar-(p-q)t_3)\cdot 
    \young{&&&u|\\F|*(lightgray)\,\,\, 2\bullet\\l|}
\\[5pt]%
    \young{&&&u|\\\\l|}\to \xfhead{2}& 
    = t_3\cdot \young{&&&u|\\{\Ythicker\wall{r}}||\\l|}
    +(\hbar-(p-q)t_2)\cdot 
    \young{&&&u|\\F|*(lightgray)\bullet 2\,\,\,\\l|}
\\[5pt]%
    \young{&&&u|\\\\l|}\to \xftail{2}& 
    = t_3\cdot \young{&&&u|\\{\Ythicker\wall{r}}||\\l|}
    +(\hbar-(p-q)t_3)\cdot 
    \young{&&&u|\\F|*(lightgray)\,\,\,2\bullet\\l|}
    +pq^2(\hbar-(p-q)t_6)\cdot 
    \young{F|*(lightgray)&u|*(lightgray)&u|*(lightgray)\,\,\,\,\,\bullet&u|\\l|2*(lightgray) \\l|}
\\[5pt]%
    \young{&&&u|\\\\l|}\to \ftail{2}& 
    = t_3\cdot \young{&&&u|\\{\Ythicker\wall{r}}||\\l|}
    +(\hbar-(p-q)t_2)\cdot 
    \young{&&&u|\\F|*(lightgray)\bullet 2\,\,\,\\l|}
    +pq^2(\hbar-(p-q)t_2)\cdot 
    \young{F|*(lightgray)&u|*(lightgray)&u|*(lightgray)&u|\\l|*(lightgray)\bullet 2\,\,\,\\l|}
\end{align*}

\end{Eg}

\subsection{Proofs of Theorems \ref{ThmA} and \ref{ThmB}}

Clearly, Theorem \ref{ThmA} is a direct consequence of
Proposition \ref{SchRep:Grr} and Theorem \ref{MainThPieri}. To prove Theorem \ref{ThmB}, we first notice the following Pieri formula for equivariant  motivic Chern classes of Schubert cells.

\begin{Th}\label{ThmBGH}
Set 
 $(p, q, \hbar)=(1, -y, 1+y)$. 
For $\mu\subseteq (n-k)^k$ and $0\leq r\leq k$, the equivariant  motivic Chern classes
for $\MC_y(X(\mu)^\circ)\in K_T(\Gr(k,n))[y]$ satisfies the following Pieri formula:
\begin{align}\label{ThmA-PPP-UU}
c_r(\mathcal{V}^\vee)\cdot \MC_y(X(\mu)^\circ)
=\sum_{1\leq i_1<\cdots<i_r\leq k}
\MC_y(X(\mu)^\circ)\to \ftail{i_r}\to\cdots\to\ftail{i_1}.
\end{align}
\end{Th}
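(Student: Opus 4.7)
The plan is to deduce Theorem \ref{ThmBGH} from Theorem \ref{ThmA} by transporting the $Y$-side formula through the left Weyl group action $w_0^L$ of $w_0=n\cdots 21$. Substituting $\lambda=\overline{\mu}$ in \eqref{ThmA-PPP}, applying $w_0^L$, and invoking Proposition~\ref{SchRep4:Gr}(iii) together with the $w_0^L$-invariance of $c_r(\mathcal{V}^\vee)$ from \eqref{wyleaCT}, the left-hand side becomes $c_r(\mathcal{V}^\vee)\cdot\MC_y(X(\mu)^\circ)$, and the task reduces to rewriting the transformed right-hand side as the sum of tail-deletion operators in \eqref{ThmA-PPP-UU}.

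The central step is the single-operator identity
\[
w_0^L\bigl(\fhead{i}\to \MC_y(Y(\sigma)^\circ)\bigr)\;=\;\MC_y(X(\overline{\sigma})^\circ)\to\ftail{k+1-i},\qquad \sigma\subseteq(n-k)^k.
\]
This rests on two compatibilities. First, the involution $\lambda\mapsto\overline{\lambda}$ is a $180^\circ$ rotation inside $(n-k)^k$, which sends any ribbon $\mu/\lambda$ to the ribbon $\overline{\lambda}/\overline{\mu}$ of the same width and height, turning the upper-right head in row $i$ into the lower-left tail in row $k+1-i$. Second, the statistics transform correctly under $w_0^L t_c = t_{n+1-c}$: using $\overline{\mu}_{k+1-i}=n-k-\mu_i$ one computes
\[
n+1-\mathtt{h}(\mu/\lambda)\;=\;n+1-(\mu_i+k+1-i)\;=\;\overline{\mu}_{k+1-i}+i\;=\;\mathtt{t}(\overline{\lambda}/\overline{\mu}),
\]
and the same arithmetic converts the leading coefficient $t_{\lambda_i+k+1-i}$ of $\fhead{i}\to\MC_y(Y(\lambda)^\circ)$ into $t_{\overline{\lambda}_{k+1-i}+i}$, the leading coefficient in $\MC_y(X(\overline{\lambda})^\circ)\to\ftail{k+1-i}$ from \eqref{QQ-4}. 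Since $(p,q,\hbar)=(1,-y,1+y)$ is $w_0^L$-invariant, the factor $(1+y)(-y)^{\wdd-1}$ is preserved as well, so the transformed expression matches the definition in \eqref{QQ-4} specialized at these parameters.

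Iterating this identity produces
\[
w_0^L\bigl(\fhead{i_r}\to\cdots\to\fhead{i_1}\to\MC_y(Y(\overline{\mu})^\circ)\bigr)\;=\;\MC_y(X(\mu)^\circ)\to\ftail{k+1-i_1}\to\cdots\to\ftail{k+1-i_r},
\]
where the right-to-left application order on the $Y$-side becomes the left-to-right order on the $X$-side. The order-reversing bijection $(i_1,\ldots,i_r)\mapsto(k+1-i_r,\ldots,k+1-i_1)$ on strictly increasing $r$-tuples in $\{1,\ldots,k\}$ then rewrites the summation into the required form $\sum_{1\leq i'_1<\cdots<i'_r\leq k}\MC_y(X(\mu)^\circ)\to\ftail{i'_r}\to\cdots\to\ftail{i'_1}$, which is \eqref{ThmA-PPP-UU}.

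The main obstacle is the careful bookkeeping of the ribbon duality under $180^\circ$ rotation---verifying that heads become tails with the row index flip $i\leftrightarrow k+1-i$, that $\mathtt{h}$ on the $Y$-side converts to $\mathtt{t}$ on the $X$-side under $t_c\mapsto t_{n+1-c}$, and that the composition order reverses consistently across iterations. Once these dictionaries are in place, the rest is $\mathbb{K}_T(\pt)$-linearity.
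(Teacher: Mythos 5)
Your proposal is correct and follows essentially the same route as the paper: invoke $\MC_y(X(\mu)^\circ)=w_0^L\MC_y(Y(\overline{\mu})^\circ)$ from Proposition \ref{SchRep4:Gr}(iii), verify the single-operator intertwining identity $\MC_y(X(\overline{\sigma})^\circ)\to\ftail{k+1-i}=w_0^L(\fhead{i}\to\MC_y(Y(\sigma)^\circ))$ via the $180^\circ$-rotation dictionary on ribbons and $w_0^Lt_c=t_{n+1-c}$, then reindex by the order-reversing bijection and apply Theorem \ref{ThmA}. The only difference is that the paper leaves the intertwining identity as ``easy to check'' while you carry out the bookkeeping explicitly, and your arithmetic checks out.
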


\begin{proof}
By Proposition   \ref{SchRep4:Gr}(iii), we have $\MC_y(X(\mu)^\circ)=w_0^L \MC_y(Y(\overline{\mu})^\circ).$
In view of the Weyl action in  \eqref{wyleaCT}, it is easy to check that for $1\leq i\leq k$,
\[
\MC_y(X(\mu)^\circ)\to \ftail{i}=w_0^L \left(\fhead{k+1-i}\to\MC_y(Y(\overline{\mu})^\circ)\right).
\]
Therefore,
\begin{align*}
&\sum_{1\leq i_1<\cdots<i_r\leq k}
\MC_y(X(\mu)^\circ)\to \ftail{i_r}\to\cdots\to\ftail{i_1}\\[5pt]
&\qquad =  w_0^L  \sum_{1\leq i_1<\cdots<i_r\leq k}\fhead{k+1-i_1}\to\cdots\to\fhead{k+1-i_r}\to\MC_y(Y(\overline{\mu})^\circ)\\[5pt]
&\qquad =  w_0^L  \sum_{1\leq j_1<\cdots<j_r\leq k}\fhead{j_r}\to\cdots\to\fhead{j_1}\to\MC_y(Y(\overline{\mu})^\circ),
\end{align*}
which, along with Theorem \ref{ThmA}, becomes 
\[
w_0^L \left(c_r(\mathcal{V}^\vee)\cdot \MC_y(Y(\overline{\mu})^\circ)\right)=c_r(\mathcal{V}^\vee)\cdot \MC_y(X(\mu)^\circ).
\]
This completes the proof of \eqref{ThmA-PPP-UU}.
\end{proof}

Based on Theorem \ref{ThmBGH}, we obtain a Pieri formula for equivariant Segre motivic classes of opposite Schubert cells, which is formulated   in terms of the operator $\ftail{i}$. 

\begin{Th}\label{ThmBBB}
Set 
 $(p, q, \hbar)=(1, -y, 1+y)$. For $\lambda\subseteq (n-k)^k$ and $0\leq r\leq k$,
the equivariant  Segre motivic  class $\SMC_y(Y(\lambda)^\circ)\in \mathbb{K}_T(\Gr(k,n))$ satisfies the following Pieri formula:
\begin{align}\label{ThmB-QQQWQQ}
c_r(\mathcal{V}^\vee)\cdot \SMC_y(Y(\lambda)^\circ)
=\sum_{1\leq i_1<\cdots<i_r\leq k}
\ftail{i_r}\to \cdots \to \ftail{i_1}\to 
\SMC_y(Y(\lambda)^\circ).
\end{align}
\end{Th}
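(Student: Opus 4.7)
The plan is to derive Theorem~\ref{ThmBBB} from Theorem~\ref{ThmBGH} by combining Poincar\'e duality with a combinatorial matching of the two tail-valued ribbon operators $\ftail{i}$ and $\to\ftail{i}$. First, writing the unknown Pieri expansion as
$$c_r(\mathcal{V}^\vee)\cdot \SMC_y(Y(\lambda)^\circ)=\sum_{\mu\subseteq(n-k)^k}C_{\lambda,\mu}\,\SMC_y(Y(\mu)^\circ),$$
the Poincar\'e duality between $\{\MC_y(X(\mu)^\circ)\}$ and $\{\SMC_y(Y(\mu)^\circ)\}$ from Proposition~\ref{SchRep4:Gr}(ii) yields
$$C_{\lambda,\mu}=\langle \SMC_y(Y(\lambda)^\circ),\,c_r(\mathcal{V}^\vee)\cdot\MC_y(X(\mu)^\circ)\rangle,$$
which by Theorem~\ref{ThmBGH} equals the coefficient of $\MC_y(X(\lambda)^\circ)$ in the ribbon-deletion expansion $\sum_{1\leq i_1<\cdots<i_r\leq k}\MC_y(X(\mu)^\circ)\to\ftail{i_r}\to\cdots\to\ftail{i_1}$.

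It therefore suffices to prove the following combinatorial identity: for every sequence $1\leq i_1<\cdots<i_r\leq k$, the coefficient of $\lambda$ in $\mu\to\ftail{i_r}\to\cdots\to\ftail{i_1}$ coincides with the coefficient of $\mu$ in $\ftail{i_r}\to\cdots\to\ftail{i_1}\to \lambda$. Both $\ftail{i}$ (acting on the left) and $\to\ftail{i}$ (acting on the right) are assembled from identical local data: the diagonal entry $t_{\lambda_i+k+1-i}$ on the pair $(\lambda,\lambda)$, together with the off-diagonal entry $(\hbar-(p-q)t_{\mathtt{t}(\eta)})\,p^{\htt(\eta)-1}q^{\wdd(\eta)-1}$ on any pair $(\mu,\lambda)$ where $\eta=\mu/\lambda$ is a ribbon with tail in row $i$. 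Hence, in the basis of partitions, $\ftail{i}$ and $\to\ftail{i}$ are represented by the same matrix, and the corresponding identity for products follows by matrix multiplication.

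Concretely, one matches chains via $\mu_j\leftrightarrow\nu_{r-j}$: each ascending chain $\lambda=\mu_0\subseteq\cdots\subseteq\mu_r=\mu$ arising from the addition expansion corresponds to the descending chain $\mu=\nu_0\supseteq\cdots\supseteq\nu_r=\lambda$ of the deletion expansion, and the $j$-th addition step (row $i_j$, ribbon $\mu_j/\mu_{j-1}$) corresponds to the $(r-j+1)$-st deletion step (row $i_j$, ribbon $\nu_{r-j}/\nu_{r-j+1}=\mu_j/\mu_{j-1}$), contributing the same weight. The main obstacle is the small piece of bookkeeping required to line up this order reversal correctly with the common ordering $i_1<\cdots<i_r$ indexing the sum on each side; once that is verified, Theorem~\ref{ThmBBB} drops out at once.
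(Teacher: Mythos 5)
Your proof is correct and follows essentially the same route as the paper: use the Poincar\'e duality of $\{\MC_y(X(\mu)^\circ)\}$ and $\{\SMC_y(Y(\lambda)^\circ)\}$ from Proposition~\ref{SchRep4:Gr}(ii) together with the self-adjointness of multiplication by $c_r(\mathcal{V}^\vee)$ to transfer the problem to the Pieri expansion of $c_r(\mathcal{V}^\vee)\cdot\MC_y(X(\mu)^\circ)$ supplied by Theorem~\ref{ThmBGH}. You do usefully make explicit a step the paper leaves tacit, namely that for each fixed $1\leq i_1<\cdots<i_r\leq k$ the coefficient of $\lambda$ in $\mu\to\ftail{i_r}\to\cdots\to\ftail{i_1}$ equals the coefficient of $\mu$ in $\ftail{i_r}\to\cdots\to\ftail{i_1}\to\lambda$, which is what actually lets the $\MC_y(X)$ coefficient be read off as the desired $\SMC_y(Y)$ coefficient. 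One small imprecision: as operators, $\ftail{i}$ and $\to\ftail{i}$ are represented by \emph{transposed} matrices (their entries coincide under swapping the two partition indices), not literally the same matrix; this is exactly what makes the reversal of composition order in the two chains cancel out, and your chain-matching $\mu_j\leftrightarrow\nu_{r-j}$ is the correct way to say it.
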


\begin{proof}
The proof of \eqref{ThmB-QQQWQQ} is equivalent to showing that 
the coefficient of  $\SMC_y(Y(\mu)^\circ)$  in the expansion of $c_r(\mathcal{V}^\vee)\cdot \SMC_y(Y(\lambda)^\circ)$ is equal to the coefficient of $\MC_y(X(\lambda)^\circ)$ in the expansion of $c_r(\mathcal{V}^\vee)\cdot \MC_y(X(\mu)^\circ)$.
This can be seen as follows.
By  Proposition   \ref{SchRep4:Gr}(ii), 
$\MC_y(X(\lambda)^\circ)$ is dual to $\SMC_y(Y(\lambda)^\circ)$ under the Poincar\'e pairing.
So  the coefficient of  $\SMC_y(Y(\mu)^\circ)$  in  $c_r(\mathcal{V}^\vee)\cdot \SMC_y(Y(\lambda)^\circ)$ is 
\[
\left<\MC_y(X(\mu)^\circ),c_r(\mathcal{V}^\vee)\cdot \SMC_y(Y(\lambda)^\circ) \right>=\left<c_r(\mathcal{V}^\vee)\cdot \MC_y(X(\mu)^\circ),\SMC_y(Y(\lambda)^\circ) \right>,
\]
which is just the coefficient of $\MC_y(X(\lambda)^\circ)$ in  $c_r(\mathcal{V}^\vee)\cdot \MC_y(X(\mu)^\circ)$.
\end{proof}

We can now present a proof of Theorem \ref{ThmB}.

\begin{proof}[Proof of Theorem \ref{ThmB}]
Using \eqref{OMJUE}  in the appendix, we see that \eqref{ThmB-QQQWQQ} can be described equivalently  in terms of the operator $\xfhead{i}$, which is exactly what we need in 
Theorem \ref{ThmB}.
\end{proof}

\begin{Eg}
Let $n=5$, $k=3$, $r=2$, and $\lambda=(1,1,0)$.
We compute 
$c_2(\mathcal{V}^\vee)\cdot \SMC_y(Y(1,1,0)^\circ).$
The answer will be given by 
$$
\big(\xfhead{2}\to\xfhead{1}\to (1,1,0)\big)
+\big(\xfhead{3}\to\xfhead{1}\to (1,1,0)\big)
+\big(\xfhead{3}\to\xfhead{2}\to (1,1,0)\big). $$
We use the abbreviation $S_\lambda=\SMC_y(Y(\lambda)^\circ)$. Then we have
\begin{align*}
c_2(\mathcal{V}^\vee)\cdot   S_{(1,1,0)} & 
=(t_1t_3+t_1t_4+t_3t_4)\cdot S_{(1,1,0)}+(1+y)(t_1+t_3)(1-t_4)\cdot S_{(2,1,0)}\\
&\quad+ (1+y)(t_3+t_4)(1-t_1)\cdot S_{(1,1,1)}+(1+y)^2(1-t_1)(1-t_4)\cdot S_{(2,1,1)}\\
&\quad
+\big[(1+y)^2(1-t_3)(1-t_4)+(1+y)(1-t_3)(t_1+t_4)\big]\cdot S_{(2,2,0)}\\
&\quad +(1+y)^2(1-t_1)(1-t_3)\cdot S_{(2,2,1)}+\big[
-y(1+y)(1-t_1)(t_3+t_4)\big.
\\
&\quad 
\big.-y(1+y)^2(1-t_1)(1-t_4)-y(1+y)^2(1-t_1)(1-t_3)
\big]\cdot S_{(2,2,2)}.
\end{align*}
The diagram illustration is as follows.  
$$
\xymatrix@C=0pc{
&&&&&&&&
\young{&u|\\\\l|}
    \ar[dlllllll]|{{1}}
    \ar[dlll]|{{1}}
    \ar[dr]|{{1}}
    \ar[drrrr]|{{1}}
    \ar[drrrrrr]|{{2}}
&&&&&\\
&
\young{{\Ythicker\wall{r}}||&u|\\\\l|}
    \ar[dl]|{{2}} 
    \ar[d]|{{3}} 
    \ar[dr]|{{3}} 
&&&&
\young{&*(yellow)\bullet1\,\,\,\\\\l|}
    \ar[dll]|{{2}} 
    \ar[dl]|{{2}} 
    \ar[d]|{{2}} 
    \ar[dr]|{{3}} 
    \ar[drr]|{{3}} 
&&&&
\young{&A|*(yellow)1\\&V|*(yellow)\bullet\,\,\,\,\,\\l|}
    \ar[dl]|{{2}} 
    \ar[d]|{{3}} 
    \ar[dr]|{{3}} 
    \ar[drr]|{{3}} 
&&&
\young{&A|*(yellow)1\\&H|*(yellow)\\<|*(yellow)\bullet\,\,\,\,\,&J|*(yellow)}
    \ar[d]|{{2}} 
    \ar[dr]|{{3}} 
&&
\young{&u|\\{\Ythicker\wall{r}}||\\l|}
    \ar[d]|{{3}} 
    \ar[dr]|{{3}} 
\\
\young{
    {\Ythicker\wall{r}}||&u|\\
    {\Ythicker\wall{r}}||\\l|} 
&\young{
    {\Ythicker\wall{r}}||&u|\\\\
    {\Ythicker\wall{l}}|l|}
&\young{
    {\Ythicker\wall{r}}||&u|\\\\
    *(yellow)\bullet3\,\,\,}
&\young{
    &*(yellow)\bullet1\,\,\,\\{\Ythicker\wall{r}}||\\l|}
&\young{
    &*(yellow)\bullet1\,\,\,\\
    &*(yellow)\bullet2\,\,\,\\l|}
&\young{
    &*(yellow)\bullet1\,\,\,\\&A|*(yellow)2\\
    <|*(yellow)\bullet\,\,\,\,\,\,&J|*(yellow)}
&\young{
    &*(yellow)\bullet1\,\,\,\\\\
    {\Ythicker\wall{l}}|l|}
&\young{
    &*(yellow)\bullet1\,\,\,\\\\
    *(yellow)\bullet3\,\,\,}
&\young{
    &A|*(yellow)1\\
    &{\Ythicker\wall{r}}|V|*(yellow)\bullet\,\,\,\,\,\\l|}
&\young{
    &A|*(yellow)1\\
    &V|*(yellow)\bullet\,\,\,\,\,\\
    {\Ythicker\wall{l}}|l|}
&\young{
    &A|*(yellow)1\\
    &V|*(yellow)\bullet\,\,\,\,\,\\
    *(yellow)\bullet3\,\,\,}
&\young{
    &A|*(yellow)1\\
    &V|*(yellow)\bullet\,\,\,\,\,\\
    <|*(yellow)\bullet\,\,\,\,\,&>|*(yellow)3}
&\young{
    &A|*(yellow)1\\&{\Ythicker\wall{r}}|H|*(yellow)\\
    <|*(yellow)\bullet\,\,\,\,\,&J|*(yellow)}
&\young{
    &A|*(yellow)1\\&H|*(yellow)\\
    <|*(yellow)\bullet\,\,\,\,\,&{\Ythicker\wall{r}}|J|*(yellow)}
&\young{
    &u|\\{\Ythicker\wall{r}}||\\{\Ythicker\wall{l}}|l|}
&\young{
    &u|\\{\Ythicker\wall{r}}||\\
    *(yellow)\bullet3\,\,\,}
}$$
\end{Eg}

\subsection{Proof of Theorem   \ref{ThmD}}\label{sec:relMCSMC}

For  $\lambda\subseteq (n-k)^k$,
let   
$$\cdot|_{\lambda}\colon K_T(\Gr(k,n))\longrightarrow K_T(\pt)$$
be the restriction  map at the fixed point $w_{\lambda} P/P\in G/P=\Gr(k,n)$. 
Denote 
\begin{align*}
\mathcal{D}=1-c_1(\mathcal{V}^\vee)+\cdots+(-1)^k c_k(\mathcal{V}^\vee)=[\det \mathcal{V}]\in K_T(\Gr(k,n)).
\end{align*}
As $\pi^* \mathcal{D}=(1-x_1)\cdots (1-x_k)$, one has
\begin{align*}
\mathcal{D}_{\lambda} 
:=\mathcal{D}|_{\lambda}
=(1-t_{w_{\lambda}(1)})\cdots
(1-t_{w_{\lambda}(k)})\in K_T(\pt).
\end{align*} 
In particular,   $\mathcal{D}_{\varnothing}=(1-t_1)\cdots(1-t_k).$
Note that both $\mathcal{D}$ and $\mathcal{D}_{\lambda}$ are units in $K_T(\Gr(k,n))$. 
By \cite{LS-2}, the following relations hold:
\begin{equation}\label{eq:Gsquare=}
    \mathcal{D}=\mathcal{D}_{\varnothing}\cdot (1-[\mathcal{O}_{Y(\square)}]),
    \qquad 
\mathcal{D}_{\lambda}=\mathcal{D}_{\varnothing}\cdot (1-[\mathcal{O}_{Y(\square)}]|_{\lambda}).
\end{equation}
With  \eqref{eq:Gsquare=}, Theorem \ref{ThmD} is equivalent to the following statement. 

\begin{Th}\label{th:relMCSMC}
In $\mathbb{K}_T(\Gr(k,n))$, we have 
\begin{equation*}
\lambda_y(\mathscr{T}^\vee_{\Gr(k,n)})\cdot \mathcal{D}\cdot \SMC_y(Y(\lambda)^\circ)=
\mathcal{D}_{\lambda}\cdot \MC_y(Y(\lambda)^\circ).
\end{equation*}
\end{Th}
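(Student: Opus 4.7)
The plan is to derive Theorem \ref{th:relMCSMC} from the Pieri formulas of Theorems \ref{ThmA} and \ref{ThmB} via the following combinatorial ribbon identity, which is the precise bridge between the head-valued operators $\fhead{i}$ and the tail-valued operators $\xfhead{i}$. For any ribbon $\eta=\mu/\lambda$ inside $(n-k)^k$ with head in row $i$ and tail in row $j$,
\[
\frac{\mathcal{D}_\mu}{\mathcal{D}_\lambda}=\frac{1-t_{\mathtt{h}(\eta)}}{1-t_{\mathtt{t}(\eta)}}.
\]
This is verified by a direct computation: using $\mathcal{D}_\nu=\prod_{r=1}^k(1-t_{\nu_r+k+1-r})$ together with the ribbon structural relation $\mu_{r+1}=\lambda_r+1$ for $i\le r<j$ (a consequence of the no-$2\times 2$ condition), the factors indexed by the intermediate rows telescope, leaving only the head factor in the numerator and the tail factor in the denominator.

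Granted this, write the two Pieri formulas as $c_r(\mathcal{V}^\vee)\MC_y(Y(\lambda)^\circ)=\sum_\mu A^{(r)}_{\lambda,\mu}\MC_y(Y(\mu)^\circ)$ and $c_r(\mathcal{V}^\vee)\SMC_y(Y(\lambda)^\circ)=\sum_\mu B^{(r)}_{\lambda,\mu}\SMC_y(Y(\mu)^\circ)$. The coefficients $A^{(r)}$ and $B^{(r)}$ are identical sums over paths of iterated ribbon operators, differing only in the substitution $t_{\mathtt{h}(\eta)}\leftrightarrow t_{\mathtt{t}(\eta)}$ in each ribbon factor. A path-by-path application of the ribbon identity, with telescoping $\prod_s \mathcal{D}_{\mu_s}/\mathcal{D}_{\mu_{s-1}}=\mathcal{D}_\mu/\mathcal{D}_\lambda$ along any path $\lambda=\mu_0\to\cdots\to\mu_r=\mu$, yields the key coefficient identity $\mathcal{D}_\lambda A^{(r)}_{\lambda,\mu}=\mathcal{D}_\mu B^{(r)}_{\lambda,\mu}$ for all $\lambda,\mu,r$. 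Setting $N_\lambda:=\mathcal{D}_\lambda\MC_y(Y(\lambda)^\circ)$ and $N'_\lambda:=\lambda_y(\mathscr{T}^\vee_{\Gr(k,n)})\,\mathcal{D}\,\SMC_y(Y(\lambda)^\circ)$, I would prove $N_\lambda=N'_\lambda$ by downward induction on $|\lambda|$, starting from $\lambda=(n-k)^k$ where $Y(\lambda)^\circ$ is the dense open cell. Applying $c_r(\mathcal{V}^\vee)$ to both sides, invoking both Pieri formulas, the coefficient identity above, and the inductive hypothesis $N_\mu=N'_\mu$ for $\mu\supsetneq\lambda$, one obtains
\[
\bigl(c_r(\mathcal{V}^\vee)-c_r(\mathcal{V}^\vee)|_\lambda\bigr)\cdot(N_\lambda-N'_\lambda)=0 \qquad\text{for every }r=1,\dots,k.
\]

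The remaining task, which is the main technical obstacle, is to conclude $N_\lambda=N'_\lambda$ from this family of annihilation relations. Since the Chern classes $c_1(\mathcal{V}^\vee),\dots,c_k(\mathcal{V}^\vee)$ generate $\mathbb{K}_T(\Gr(k,n))$ over $\mathbb{K}_T(\pt)$ by Theorem \ref{th:KBorelinj}, localizing $N_\lambda-N'_\lambda$ at any fixed point $\mu\ne\lambda$ must vanish, because some $r$ satisfies $c_r(\mathcal{V}^\vee)|_\mu\ne c_r(\mathcal{V}^\vee)|_\lambda$; hence $N_\lambda-N'_\lambda$ is supported at the single fixed point $\lambda$. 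The final check that $N_\lambda|_\lambda=N'_\lambda|_\lambda$ is a standalone local computation: it follows from the Serre-duality relation $\lambda_y(\mathscr{T}^\vee)\SMC_y(Y(\lambda)^\circ)=(-y)^{|\lambda|}\mathcal{D}(\MC_y(Y(\lambda)^\circ))$ together with the standard formula for $\mathcal{D}$ at a torus fixed point, reducing to an explicit identity among the tangent characters of $\Gr(k,n)$ at $w_\lambda P/P$. Once Theorem \ref{th:relMCSMC} is established, Theorem \ref{ThmD} follows from the identifications \eqref{eq:Gsquare=}, and the non-equivariant identity \eqref{nonequismc} follows by the obvious specialization.
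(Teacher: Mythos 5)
Your proposal is correct in substance but follows a genuinely different route from the paper. The paper defines a $\mathbb{K}_T(\pt)$-linear map $\varphi$ sending $\SMC_y(Y(\lambda)^\circ)\mapsto\mathcal{D}_\lambda\cdot\MC_y(Y(\lambda)^\circ)$, proves (via a commutative-cube argument built on the ribbon identity $(1-t_{\mathtt{t}(\eta)})\mathcal{D}_\mu=(1-t_{\mathtt{h}(\eta)})\mathcal{D}_\lambda$ and the Pieri formulas) that $\varphi$ is a $\mathbb{K}_T(\Gr(k,n))$-module homomorphism, and then finishes by evaluating $\varphi$ at $1$: writing $\varphi(1)=\alpha\cdot\lambda_y(\mathscr{T}^\vee)\cdot\mathcal{D}$, it shows $\alpha|_\lambda=1$ for all $\lambda$ via Lemma \ref{lem:locidtt}. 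Your argument replaces the module-morphism abstraction with downward induction on $|\lambda|$: the coefficient identity $\mathcal{D}_\lambda A^{(r)}_{\lambda,\mu}=\mathcal{D}_\mu B^{(r)}_{\lambda,\mu}$ that you derive is exactly the module-morphism property in coordinates, and your annihilation relations $(c_r(\mathcal{V}^\vee)-c_r(\mathcal{V}^\vee)|_\lambda)(N_\lambda-N'_\lambda)=0$ plus the distinctness of restricted Chern classes at distinct fixed points localize the difference; the required local check $N_\lambda|_\lambda=N'_\lambda|_\lambda$ is precisely the content of Lemma \ref{lem:locidtt}. A small bonus of your route: by pairing Theorem \ref{ThmA} ($\fhead{i}$) with Theorem \ref{ThmB} ($\xfhead{i}$), whose operators both require the ribbon's head in row $i$, your path-by-path intertwining is valid term by term; the paper uses $\ftail{i}$ (Theorem \ref{ThmBBB}) against $\fhead{i}$, whose per-operator intertwining is false and only holds after summing over increasing index sequences, which implicitly relies on the bijection of the appendix.

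Two slips worth flagging. First, $Y((n-k)^k)^\circ$ is the fixed \emph{point}, not the dense cell (the dense opposite cell is $Y(\varnothing)^\circ$); the downward induction structure is still correct because Pieri only produces $\mu\supseteq\lambda$. Second, the Serre-duality relation you quote is off: the defining relation reads $\lambda_y(\mathscr{T}^\vee_X)\cdot\SMC_y(Y(\lambda)^\circ)=(-y)^{\dim Y(\lambda)^\circ}\,\mathcal{D}\big(\MC_y(Y(\lambda)^\circ)\big)$ with $\dim Y(\lambda)^\circ=k(n-k)-|\lambda|$, and the $\mathcal{D}$ that appears there is the Serre-duality functor from Section \ref{MCCCC}, not the class $[\det\mathcal{V}]$ of Section \ref{sec:relMCSMC}. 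The final local identity $\MC_y(Y(\lambda)^\circ)|_\lambda=\lambda_y(\mathscr{T}^\vee)|_\lambda\cdot\SMC_y(Y(\lambda)^\circ)|_\lambda$ does hold (Lemma \ref{lem:locidtt}), but the paper derives it from the Poincar\'e-pairing localization formula combined with \cite[Lemma 9.1(b)]{AMSS19}, not by localizing the Serre-duality functor directly; your sketch would need to be tightened at that point.
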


\begin{proof}
Define a $\mathbb{K}_T(\mathsf{pt})$-linear map $\varphi\colon \mathbb{K}_T(\Gr(k,n))\to 
\mathbb{K}_T(\Gr(k,n))$ by 
$$
\varphi\big(\SMC_y(Y(\lambda)^\circ)\big)= 
\mathcal{D}_{\lambda}\cdot \MC_y(Y(\lambda)^\circ). $$
We claim that for any $\beta,\gamma\in \mathbb{K}_T(\Gr(k,n))$, 
\begin{equation}\label{eq:modmorphi}
    \beta \cdot \varphi(\gamma)=\varphi(\beta\cdot\gamma),
\end{equation}
that is, $\varphi$ is a $\mathbb{K}_T(\Gr(k,n))$-module homomorphism. 
It suffices to show that
\begin{equation}\label{eq:modmorphicheck}
\varphi\big(c_r(\mathcal{V}^\vee) \cdot\SMC_y(Y(\lambda)^\circ)\big)
=
c_r(\mathcal{V}^\vee) \cdot \varphi\big(\SMC_y(Y(\lambda)^\circ)\big).
\end{equation}
Notice that for a ribbon $\eta=\mu/\lambda \subseteq (n-k)^k$, it is direct to check that
$$(1-t_{\mathtt{t}(\eta)})\cdot \mathcal{D}_{\mu} 
=(1-t_{\mathtt{h}(\eta)})\cdot \mathcal{D}_{\lambda}.  $$
Thus the factor $\mathcal{D}_{\lambda}$ intertwines the operators $\ftail{i}$ and $\fhead{i}$ (with the setting $(p,q,\hbar)=(1,-y,y+1)$). Specifically, 
$$L\circ (\ftail{i}\to) = (\fhead{i}\to) \circ L,$$
where  $L$  is the linear operator sending $\nu$ to $\mathcal{D}_{\nu}\cdot \nu$ for any $\nu \subseteq (n-k)^k$.
As a result, 
\begin{equation}\label{eq:intertwine}
    L\circ \left(\sum_{1\leq i_1<\ldots<i_r\leq k}\ftail{i_r}\to\cdots\to \ftail{i_1}\to\right) = \left(\sum_{1\leq i_1<\ldots<i_r\leq k}\fhead{i_r}\to\cdots \to\fhead{i_1}\to\right)  \circ L.
\end{equation}
Now we have the following commutative diagram. 
$$
\xymatrix{
\displaystyle\bigoplus_{\lambda\subseteq (n-k)^k}\mathbb{K}_T(\pt)\cdot \lambda
    \ar[ddd]_L
    \ar[rrr]^{{\footnotesize\displaystyle\sum_{1\leq i_1<\cdots<i_r\leq k}\ftail{i_r}\to\cdots\to \ftail{i_1}\to}}
    \ar@{..>}[dr]|-{\lambda\mapsto \SMC_y(Y(\lambda)^\circ)}
&&&
\displaystyle\bigoplus_{\lambda\subseteq (n-k)^k}\mathbb{K}_T(\pt)\cdot \lambda
    \ar[ddd]^{L}
    \ar@{..>}[dl]|-{\lambda\mapsto \SMC_y(Y(\lambda)^\circ)}\\
&\mathbb{K}_T(\Gr(k,n))
    \ar[d]_{\varphi}
    \ar[r]^{c_r(\mathcal{V}^\vee)} &
\mathbb{K}_T(\Gr(k,n))
    \ar[d]^{\varphi} \\
&\mathbb{K}_T(\Gr(k,n))
    \ar[r]_{c_r(\mathcal{V}^\vee)}&
    \mathbb{K}_T(\Gr(k,n))\\
\displaystyle\bigoplus_{\lambda\subseteq (n-k)^k}\mathbb{K}_T(\pt)\cdot \lambda
    \ar[rrr]_{{\footnotesize\displaystyle\sum_{1\leq i_1<\cdots<i_r\leq k}\fhead{i_r}\to\cdots \to\fhead{i_1}\to}}
    \ar@{..>}[ur]|-{\lambda\mapsto \MC_y(Y(\lambda)^\circ)}
&&&
\displaystyle\bigoplus_{\lambda\subseteq (n-k)^k}\mathbb{K}_T(\pt)\cdot \lambda
    \ar@{..>}[ul]|-{\lambda\mapsto \MC_y(Y(\lambda)^\circ)}
}$$
Consider the diagram as a cube with six faces. 
The commutativity of the front face (that is, the face with long solid arrows) follows from \eqref{eq:intertwine}, while the left and right faces commute by the definition of $\varphi$. The upper and lower faces commute by Theorem \ref{ThmA} and Theorem \ref{ThmB}.
Since all dashed arrows are isomorphisms, the back face also commutes, which implies \eqref{eq:modmorphicheck}. 
This proves the claim in \eqref{eq:modmorphi}.


Now setting  $\beta=\SMC_y(Y(\lambda)^\circ$ and $\gamma=1$  in \eqref{eq:modmorphi}, we obtain
\begin{equation}\label{1qacd}
\varphi(1)\cdot \SMC_y(Y(\lambda)^\circ)=\varphi(\SMC_y(Y(\lambda)^\circ).    
\end{equation}
Let  $\alpha\in \mathbb{K}_T(\Gr(k,n))$ be such that 
$$\varphi(1)=\alpha\cdot \lambda_{y}(\mathscr{T}_{\Gr(k,n)})\cdot \mathcal{D}.$$
Then \eqref{1qacd} becomes
\begin{equation}\label{eq:charalpha}
\alpha \cdot \lambda_{y}(\mathscr{T}_{\Gr(k,n)})\cdot \mathcal{D}\cdot \SMC_y(Y(\lambda)^\circ)= \mathcal{D}_{\lambda}\cdot \MC_y(Y(\lambda)^\circ),
\end{equation}
for any $\lambda$.
To finish the proof, it is enough to check that $\alpha=1$. 
By Lemma \ref{lem:locidtt} below, we see  that $\alpha|_{\lambda}=1$ for any $\lambda\subseteq (n-k)^k$, and hence $\alpha=1$. 
\end{proof}


\begin{Lemma}\label{lem:locidtt}
For   $\lambda\subseteq (n-k)^k$, we have
$$\MC_y(Y(\lambda)^\circ)|_{\lambda}
=\lambda_y(\mathscr{T}^\vee_{\Gr(k,n)})\big|_{\lambda}\cdot \SMC_y(Y(\lambda)^\circ)|_{\lambda}\neq 0.$$
\end{Lemma}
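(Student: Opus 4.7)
The plan is to reduce the equality to a single product-formula identity at the fixed point $p_\lambda := w_\lambda P/P$, via Poincar\'e duality and equivariant localization. By Proposition~\ref{SchRep4:Gr}(ii), we have $\langle\MC_y(X(\lambda)^\circ), \SMC_y(Y(\lambda)^\circ)\rangle = 1$ under the Poincar\'e pairing. Since $\MC_y(X(\lambda)^\circ)$ is supported on $X(\lambda)$ (whose $T$-fixed points are $\{p_\mu : \mu\subseteq\lambda\}$) while $\SMC_y(Y(\lambda)^\circ)$ is supported on $Y(\lambda)$ (whose $T$-fixed points are $\{p_\mu : \mu\supseteq\lambda\}$), the two supports intersect only at $p_\lambda$, and the Atiyah--Bott localization formula collapses to a single summand:
\begin{equation*}
    1 \;=\; \frac{\MC_y(X(\lambda)^\circ)|_\lambda \cdot \SMC_y(Y(\lambda)^\circ)|_\lambda}{\mathrm{eu}_K(T_{p_\lambda}\Gr(k,n))},
\end{equation*}
where $\mathrm{eu}_K$ denotes the $K$-theoretic Euler class.

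Consequently, the asserted equality becomes equivalent to
\begin{equation*}
    \MC_y(Y(\lambda)^\circ)|_\lambda \cdot \MC_y(X(\lambda)^\circ)|_\lambda \;=\; \lambda_y(\mathscr{T}^\vee_{\Gr(k,n)})|_\lambda \cdot \mathrm{eu}_K(T_{p_\lambda}\Gr(k,n)).
\end{equation*}
To establish this identity, I would invoke the standard fixed-point formula for motivic Chern classes of smooth cells. At $p_\lambda$, the tangent space $T_{p_\lambda}\Gr(k,n)$ decomposes as a $T$-representation as $T_{p_\lambda}Y(\lambda)^\circ \oplus T_{p_\lambda}X(\lambda)^\circ$. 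Denoting the corresponding sets of $T$-weights by $T_\lambda^-$ and $T_\lambda^+$, one obtains
\begin{align*}
    \MC_y(Y(\lambda)^\circ)|_\lambda &= \prod_{\alpha\in T_\lambda^-}(1+y[\mathbb{C}_{-\alpha}])\prod_{\alpha\in T_\lambda^+}(1-[\mathbb{C}_{-\alpha}]),\\
    \MC_y(X(\lambda)^\circ)|_\lambda &= \prod_{\alpha\in T_\lambda^+}(1+y[\mathbb{C}_{-\alpha}])\prod_{\alpha\in T_\lambda^-}(1-[\mathbb{C}_{-\alpha}]).
\end{align*}
Multiplying these two expressions, the factors recombine into products over all tangent weights at $p_\lambda$, producing exactly $\lambda_y(\mathscr{T}^\vee_{\Gr(k,n)})|_\lambda \cdot \mathrm{eu}_K(T_{p_\lambda}\Gr(k,n))$, as required.

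Non-vanishing is then immediate: each factor $1 - [\mathbb{C}_{-\alpha}]$ and $1 + y[\mathbb{C}_{-\alpha}]$ appearing above is a nonzero element of $\mathbb{F} = \mathbb{Q}(t_1,\ldots,t_n,y)$, since the weights $\alpha$ at $p_\lambda$ are nontrivial $T$-characters. The main technical obstacle is rigorously justifying the fixed-point restriction formulas for $\MC_y$ of a cell: this relies on $Y(\lambda)^\circ$ being open in $Y(\lambda)$ at $p_\lambda$, so that locally the motivic Chern class of the cell agrees with the pushforward class of a smooth closed subvariety, whence the standard formula $\MC_y(Z)|_p = \lambda_y(T^\vee_p Z) \cdot \mathrm{eu}_K(N_{p}Z/\Gr(k,n))$ applies. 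Both the decomposition of tangent weights into $T^\pm_\lambda$ and the resulting product formulas are standard in this setting; see for example \cite{AMSS19,FRW}.
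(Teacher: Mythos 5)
Your proposal is correct and follows the same route as the paper's proof: localize the Poincar\'e pairing $\langle\MC_y(X(\lambda)^\circ),\SMC_y(Y(\lambda)^\circ)\rangle=1$ to the single contributing fixed point $p_\lambda$, then combine with the product identity $\MC_y(Y(\lambda)^\circ)|_\lambda\cdot\MC_y(X(\lambda)^\circ)|_\lambda=\lambda_{-1}(\mathscr{T}^\vee_{\Gr(k,n)})|_\lambda\cdot\lambda_y(\mathscr{T}^\vee_{\Gr(k,n)})|_\lambda$. The only cosmetic difference is that you spell out the fixed-point product formulas for each $\MC_y$ factor using the tangent/normal weight decomposition at $p_\lambda$, whereas the paper packages this as a citation to \cite[Lemma 9.1(b)]{AMSS19} together with transversality of $X(\lambda)$ and $Y(\lambda)$ at $w_\lambda P$.
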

\begin{proof}
Since $\langle 
\MC_y(X(\lambda)^\circ),\SMC_y(Y(\lambda)^\circ)\rangle=1$, 
the localization formula gives
$$\MC_y(X(\lambda)^\circ)|_{\lambda}\cdot 
\SMC_y(Y(\lambda)^\circ)|_{\lambda}
=\lambda_{-1}(\mathscr{T}^\vee_{\Gr(k,n)})\big|_{\lambda}\neq 0.$$
On the other hand, by \cite[Lemma 9.1(b)]{AMSS19} and the transversality of the intersection of $X(\lambda)$ and $Y(\lambda)$ at the fixed point $w_\lambda P\in \Gr(k,n)$, we get
$$\MC_y(Y(\lambda)^\circ)|_{\lambda}\cdot
\MC_y(X(\lambda)^\circ)|_{\lambda}
=
\lambda_{-1}(\mathscr{T}^\vee_{\Gr(k,n)})\big|_{\lambda}\cdot \lambda_{y}(\mathscr{T}^\vee_{\Gr(k,n)})\big|_{\lambda}.$$
This finishes the proof of the Lemma.
\end{proof}


\begin{Rmk}
Lemma \ref{lem:locidtt} holds for any partial flag varieties with the same proof, however, the existence of the class $\alpha$ not depending on $\lambda$ in \eqref{eq:charalpha} would no longer be ensured in general. 
\end{Rmk}

\begin{Eg}For $\Gr(1,2)=\mathbb{P}^1$, we have 
$$K(\Gr(1,2))=\mathbb{Q}[x]/(x^2),\qquad x=1-[\mathcal{O}(-1)].$$
Note that 
$$\lambda_y(\mathscr{T}^\vee_{\mathbb{P}^1})
=1+y[\mathcal{O}(-2)]
=1+y(1-x)^2=(1+y)-2yx\mod (x^2).$$
Moreover, 
$$
\begin{array}{l@{\quad}l@{\quad}l}
{}[\mathcal{O}_{Y(\varnothing)}] = 1,&
{}\MC_y(Y(\varnothing)^\circ) = (1+y)-(2y+1)x,&
{}\SMC_y(Y(\varnothing)^\circ) = 1+\dfrac{y}{1+y}x, \\
{}[\mathcal{O}_{Y(\square)}] = x,&
{}\MC_y(Y(\square)^\circ) = x, &
{}\SMC_y(Y(\square)^\circ) = \dfrac{1}{1+y}x.
\end{array}$$
Hence we have  
\begin{align*}
\lambda_y(\mathscr{T}_{\mathbb{P}^1}^\vee)\cdot (1-[\mathcal{O}_{Y(\square)}])\cdot \SMC_y(Y(\varnothing)^\circ) 
& =\big((1+y)-2yx\big)\cdot (1-x)\cdot \left(1+\frac{y}{1+y}x\right)\\
& = (1+y)-(2y+1)x\, \bmod (x^2)\\
& = \MC_y(Y(\varnothing)^\circ),\\[5pt]
\lambda_y(\mathscr{T}_{\mathbb{P}^1}^\vee)\cdot (1-[\mathcal{O}_{Y(\square)}])\cdot \SMC_y(Y(\square)^\circ) 
& =\big((1+y)-2yx\big)\cdot (1-x)\cdot \frac{1}{1+y}x\\
& = x \,\bmod (x^2)\\
& = \MC_y(Y(\square)^\circ).
\end{align*}
\end{Eg}

\subsection{Proof  of Theorem \ref{ThmE}}\label{sec:dualsheaf}
We start with some terminology. 
The ring    $\Lambda=\bigoplus_\lambda \mathbb{Q}\cdot s_\lambda(x)$  of symmetric functions is the linear span of   Schur functions  $s_\lambda(x)$. 
The completion of $\Lambda$, denoted  $\hat{\Lambda}$, consists of all infinite linear combinations of Schur functions, or equivalently, 
$$\hat{\Lambda}=\mathbb{Q}[[e_1(x),e_2(x),\ldots]]=\mathbb{Q}[[h_1(x),h_2(x),\ldots]],$$
where
$$e_r(x) =s_{(1^r)}(x)=\sum_{1\leq i_1<\cdots<i_r} x_{i_1}\cdots x_{i_r},\qquad 
h_r(x)=s_{(r)}(x) =\sum_{1\leq i_1\leq \cdots\leq i_r} x_{i_1}\cdots x_{i_r},$$
are respectively the elementary and complete symmetric functions.  

The stable Grothendieck polynomial $G_{\lambda}(x)\in \hat{\Lambda}$  
is the stable limit of the single Grothendieck polynomial $\mathfrak{G}_{w_{\lambda}}(x,0)$. 
Buch \cite{Buch} showed that $G_\lambda(x)$ can be interpreted as the generating function of set-valued tableaux of shape $\lambda$. For two finite nonempty  subsets  $A$ and $B$ of positive integers, write $A < B$ if $\max A < \min B$, and $A \le B$ if $\max A \le\min B$.
A  \emph{set-valued  tableau} of shape $\lambda$ is
a filling of the boxes of $\lambda$ 
with finite nonempty  {subsets} of positive integers such that the subsets are weakly increasing along each row, and are   strictly increasing along each column. 
For a set-valued tableau $T$,
write $|T|$ for the total number of integers   in $T$, and  $x^T=\prod_i x_i^{a_i}$ where $a_i$ is the number of occurrences of $i$ in $T$.
 Buch's formula for $G_\lambda(x)$ is
\begin{equation}\label{POREQ01}
G_\lambda(x)=\sum_T (-1)^{|T|-|\lambda|} x^T,    
\end{equation}
where the sum is over all set-valued tableaux of shape $\lambda$. 


Let $\tilde{K}_{\lambda}(x)$ be obtained   from  $G_\lambda(x)$ by ignoring the sign, that is, \[\tilde{K}_\lambda(x)=\sum_T x^T,\]
where the sum is taken over all set-valued tableaux of shape $\lambda$.
As usual, denote by $\omega\colon {\Lambda}\to {\Lambda}$  the ring involution which sends $e_r(x)$ to $h_r(x)$  for  $r\geq 0$ \cite[Chapter 7]{Stanley2}. Denote $J_{\lambda}(x)=\omega \tilde{K}_{\lambda}(x)$. 
Lam and  Pylyavskyy  \cite[Theorem 9.22]{LP}  proved that $J_{\lambda}(x)$ is the generating function of  weak set-valued tableaux of shape $\lambda$.
A  \emph{weak set-valued tableau}   of shape $\lambda$ is a filling of the boxes of $\lambda$
with finite nonempty \emph{multisets} of positive integers (this means    repeated integers are allowed in each box), with now however the restriction that the   rows are strictly increasing and the   columns are weakly increasing.  
In the following figure, the left one is  a set-valued tableau, while the   right one is a weak set-valued tableau.
$$
\Young[1.5pc]{1&123&35&6\\
234&46\\
5}\qquad 
\Young[1.5pc]{11&334&55&6\\
12&4\\
223}$$

We   turn to the geometric side. 
Since   $Y(\lambda)$ is Cohen--Macaulay \cite[8.22(e)]{KK},   it admits a dualizing sheaf $\omega_{Y(\lambda)}$. 
By \cite[Theorem 5.1]{AMSS22}, one knows that 
\begin{equation}\label{YTRGPOU}
\MC_y(Y(\lambda)^\circ)
=
y^{k(n-k)-|\lambda|}[\omega_{Y(\lambda)}]+\text{classes of lower $y$-degree}.    
\end{equation}
That is, 
$$[\omega_{Y(\lambda)}]=\lim_{y\to \infty}
y^{|\lambda|-k(n-k)}\MC_y(Y(\lambda)^\circ).$$

By the Pieri formula for motivic Chern classes and \eqref{YTRGPOU}, we obtain the following Pieri formula for $[\omega_{Y(\lambda)}]$, which is the key observation  that directs us towards  Theorem \ref{ThmE}.  

\begin{Coro}\label{I87R43}
Let $\lambda\subseteq (n-k)^k$ and  $0\leq r\leq k$. Over $K(\Gr(k,n))$, we have 
$$c_r(\mathcal{V}^\vee)\cdot 
[\omega_{Y(\lambda)}]
=\sum_{\mu}
(-1)^{|\mu/\lambda|-r}\cdot [\omega_{Y(\mu)}],$$
where the sun is over  $\mu\subseteq (n-k)^k$ such that $\mu/\lambda$ is a skew shape with exactly $r$ nonempty rows. 
\end{Coro}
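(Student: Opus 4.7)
The plan is to derive Corollary~\ref{I87R43} as the leading-order behavior, as $y\to\infty$, of the non-equivariant Pieri formula provided by Corollary~\ref{coroC}. The asymptotic relation $[\omega_{Y(\mu)}] = \lim_{y\to\infty} y^{|\mu|-k(n-k)}\,\MC_y(Y(\mu)^\circ)$ built into \eqref{YTRGPOU} reduces the problem to tracking the top $y$-degree on both sides.

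First I would expand the right-hand side of the formula
\[
c_r(\mathcal{V}^\vee)\cdot\MC_y(Y(\lambda)^\circ)=\sum_{1\le i_1<\cdots<i_r\le k}\hhead{i_r}\!\to\cdots\to\hhead{i_1}\!\to \MC_y(Y(\lambda)^\circ)
\]
from Corollary~\ref{coroC} as a sum indexed by chains $\lambda=\nu^{(0)}\subsetneq\nu^{(1)}\subsetneq\cdots\subsetneq\nu^{(r)}=\mu$, where each $\eta_j=\nu^{(j)}/\nu^{(j-1)}$ is a ribbon with head in row $i_j$, carrying the coefficient $\prod_{j=1}^r (1+y)(-y)^{\wdd(\eta_j)-1}$. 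The leading $y$-degree of this coefficient is $W:=\sum_j\wdd(\eta_j)$, with leading sign $(-1)^{W-r}$.

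Next I would multiply both sides by $y^{|\lambda|-k(n-k)}$ and take $y\to\infty$. Writing $H:=\sum_j\htt(\eta_j)$, the ribbon identity $|\mu/\lambda|=H+W-r$ gives the net $y$-degree attached to $[\omega_{Y(\mu)}]$ as
\[
W + \bigl(|\lambda|-|\mu|\bigr) \;=\; W-(H+W-r)\;=\;r-H.
\]
Since $H\ge r$, only chains with $H=r$ survive the limit, and this forces every $\eta_j$ to be a horizontal strip confined to row $i_j$. In that case $W=|\mu/\lambda|$, the surviving coefficient is $(-1)^{W-r}=(-1)^{|\mu/\lambda|-r}$, and $\mu/\lambda$ is precisely a skew shape with exactly $r$ nonempty rows, namely rows $i_1<\cdots<i_r$.

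Finally I would verify the combinatorial bijection between the surviving chains and skew shapes $\mu/\lambda\subseteq (n-k)^k$ with exactly $r$ nonempty rows: given such $\mu/\lambda$, the nonempty rows determine $i_1<\cdots<i_r$ uniquely, and the intermediate partitions $\nu^{(j)}$ are obtained by filling rows $i_1,\ldots,i_j$ in order. The only point requiring a check is that each $\nu^{(j)}$ is a legitimate partition; this is immediate because any row of index not in $\{i_\ell\}$ is unchanged from $\lambda$ to $\mu$, so the partition inequality for $\mu$ transfers to all intermediate steps. Summing over $\mu$ yields the claimed identity. The only real obstacle is bookkeeping: one must be careful that the sub-leading terms in $\MC_y(Y(\mu)^\circ)$ — those of $y$-degree strictly below $k(n-k)-|\mu|$ — truly contribute nothing to the $y^0$ coefficient after the shift, which follows from the strict inequality $H>r$ ruling out any other configurations.
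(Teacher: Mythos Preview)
Your proposal is correct and follows essentially the same approach as the paper: both extract the top $y$-degree coefficient from the non-equivariant Pieri formula of Corollary~\ref{coroC}, using \eqref{YTRGPOU} to identify leading terms with dualizing-sheaf classes, and observe that only height-one ribbons (horizontal strips) survive. Your write-up is in fact more explicit than the paper's, spelling out the degree count via $|\mu/\lambda|=H+W-r$ and checking that the intermediate $\nu^{(j)}$ are partitions, points the paper leaves to the reader.
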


\begin{proof}
By Corollary \ref{coroC}, the Pieri formula for  non-equivariant motivic Chern classes is 
\begin{align}\label{ThmAIJY}
c_r(\mathcal{V}^\vee)\cdot \MC_y(Y(\lambda)^\circ)
=\sum_{1\leq i_1<\cdots<i_r\leq k}
\hhead{i_r}\to \cdots \to \hhead{i_1}\to 
\MC_y(Y(\lambda)^\circ),
\end{align}
where
 \begin{equation*} 
\hhead{i}\to \lambda = 
(1+y)\sum_{\mu/\lambda=\eta}
(-y)^{\wdd(\eta)-1}\cdot\mu.
\end{equation*}   
Take the coefficient of $y^{k(n-k)-|\lambda|}$ on both sides of \eqref{ThmAIJY}. By \eqref{YTRGPOU}, the left-hand side is $c_r(\mathcal{V}^\vee)\cdot [\omega_{Y(\lambda)}]$. To reach the highest degree of $y$ on the right-hand side, the  ribbons added  in each step must be of height one. As a result, the coefficient of  $y^{k(n-k)-|\lambda|}$  contains  the terms $[\omega_{Y(\mu)}]$ (with sign $(-1)^{|\mu/\lambda|-r}$) for partitions  that are obtained from $\lambda$ by adding consecutively $r$ single nonempty rows  from up to down.
This concludes the proof. 
\end{proof}

\begin{Rmk}
Notice that the Pieri formula in Corollary \ref{I87R43} can be described in term of the ribbon Schubert operator in \eqref{POIU-1s} by setting $t=0$ and $(p,q,\hbar)=(0,-1,1)$.   
\end{Rmk}




Define an algebra homomorphism
\begin{equation}\label{epsilo}
\rho\colon \hat{\Lambda}\longrightarrow K(\Gr(k,n)),\qquad \text{by extending }\quad e_r(x) \longmapsto 
\begin{cases}
c_r(\mathcal{V}^\vee), & r\leq k,\\
0, & r> k.
\end{cases}    
\end{equation}
This is well defined since $\rho(f)$ is nilpotent for $f$ without constant terms by the d\'evissage property \cite[Proposition 5.9.5]{CG}. 
Note that 
$$\pi^*(\rho(f))=f(x_1,\ldots,x_k)
\in K(\Fl(n)). $$

\begin{Rmk}
It should be pointed out that when restricted to the subring $\Gamma=\bigoplus_\lambda \mathbb{Q}\cdot G_\lambda(x)$ of $\hat{\Lambda}$, it can be shown that the map $\rho$ sends $G_\lambda(x)$ to  $[\mathcal{O}_{Y(\lambda)}]$, which has been investigated by Buch \cite[Section 8]{Buch}.
Since we do not use this fact for our purpose, the details will not be  discussed here.    
\end{Rmk}


\begin{Lemma}\label{BHT121}
We have
$$\rho\big((1-G_{\square}(x))^n\big)=[\omega_{\Gr(k,n)}].$$
\end{Lemma}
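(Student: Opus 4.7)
The plan is to reduce the identity to two elementary ingredients: a generating function identity for $G_\square(x)$ and the standard formula for the canonical bundle of the Grassmannian.

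First I would observe that a single-box set-valued tableau is just a nonempty finite subset $S\subseteq\mathbb{Z}_{>0}$ of positive integers, contributing $(-1)^{|S|-1}\prod_{i\in S}x_i$ in Buch's formula \eqref{POREQ01}. Summing over $S$ and adding the empty contribution gives the telescoping
\[
1-G_\square(x)=\sum_{S\text{ finite}}\prod_{i\in S}(-x_i)=\prod_{i\geq 1}(1-x_i)=\sum_{r\geq 0}(-1)^re_r(x).
\]
Applying the algebra homomorphism $\rho$ from \eqref{epsilo} kills $e_r(x)$ for $r>k$, so the right-hand side becomes $\sum_{r=0}^k(-1)^rc_r(\mathcal{V}^\vee)$, which by definition is the class $\mathcal{D}=[\det\mathcal{V}]$ recalled in Section~\ref{sec:relMCSMC}.

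Next I would raise this to the $n$-th power to get $\rho\big((1-G_\square(x))^n\big)=[\det\mathcal{V}]^{\otimes n}$. It then remains to identify this with $[\omega_{\Gr(k,n)}]$. This is the standard computation of the canonical bundle of $\Gr(k,n)$: from the tautological exact sequence $0\to\mathcal{V}\to\mathcal{O}^n\to\mathcal{Q}\to0$ one has $\mathscr{T}_{\Gr(k,n)}\cong\mathcal{V}^\vee\otimes\mathcal{Q}$, and taking determinants (using $\det\mathcal{Q}=(\det\mathcal{V})^{-1}$) yields $\det\mathscr{T}_{\Gr(k,n)}=(\det\mathcal{V})^{-n}$, hence $\omega_{\Gr(k,n)}=\det\mathscr{T}^\vee_{\Gr(k,n)}=(\det\mathcal{V})^{\otimes n}$.

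Combining these two steps gives $\rho((1-G_\square(x))^n)=[\det\mathcal{V}]^n=[\omega_{\Gr(k,n)}]$, as required. There is no real obstacle here: the only thing one must be slightly careful about is that $\rho$ is well defined on the completion $\hat{\Lambda}$ (which is already guaranteed by the d\'evissage nilpotence remark following the definition of $\rho$), so the infinite identity $1-G_\square=\prod(1-x_i)$ can legitimately be applied termwise before truncating at degree $k$.
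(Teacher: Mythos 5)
Your proof is correct and follows essentially the same two steps as the paper: expand $1-G_\square(x)=\prod_i(1-x_i)=\sum_r(-1)^re_r(x)$ from Buch's set-valued tableau formula so that $\rho$ sends it to $[\det\mathcal{V}]$, and then identify $\omega_{\Gr(k,n)}=(\det\mathcal{V})^{\otimes n}$ via the tautological exact sequence and $\mathscr{T}_{\Gr(k,n)}\cong\mathcal{V}^\vee\otimes\mathcal{Q}$. The only difference is that you spell out the one-box set-valued tableau computation explicitly, where the paper merely cites it as known.
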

\begin{proof}
Recall from \cite[Theorem 3.5]{3264} that
$\mathscr{T}_{\Gr(k,n)}
=\mathscr{H}\!{\it om}(\mathcal{V},\mathcal{Q})=\mathcal{V}^\vee\otimes\mathcal{Q}$ 
where $\mathcal{Q}$ is the universal quotient bundle satisfying the following short exact sequence
$$0\longrightarrow \mathcal{V}\longrightarrow\mathcal{O}^{\oplus n}\longrightarrow \mathcal{Q}\longrightarrow 0.$$
It follows that $\det(\mathcal{V})\otimes \det(\mathcal{Q})\cong \mathcal{O}$.
This implies 
$$\omega_{\Gr(k,n)}
=\det(\mathscr{T}^\vee_{\Gr(k,n)})
=\det(\mathcal{V})^{\otimes (n-k)}\otimes \det(\mathcal{Q}^\vee)^{\otimes k}
=\det(\mathcal{V})^{\otimes n}.$$
It is known (for example, from \eqref{POREQ01}) that
$G_{\square}(x) = e_1(x)-e_2(x)+\cdots $. So we have
\begin{align*}
{}[\omega_{\Gr(k,n)}]
&=[\det(\mathcal{V})]^{n}=\big(1-c_1(\mathcal{V}^\vee)+c_2(\mathcal{V}^\vee)-\cdots\big)^n
=\rho\big((1-G_{\square}(x))^n\big). \qedhere
\end{align*}
\end{proof}



We can now provide a proof of Theorem \ref{ThmE}. 

\begin{Th}[=Theorem \ref{ThmE}]
Over $K(\Gr(k,n))$,  we have
\begin{equation}
\rho\big((1-G_{\square}(x))^nJ_{\lambda'}(x)\big)
=
\begin{cases}
{}    [\omega_{Y(\lambda)}], & \lambda\subseteq (n-k)^k,\\[5pt]
0, & \text{otherwise},
\end{cases}
\end{equation}
where $\lambda'$ is the conjugate of $\lambda$. 
\end{Th}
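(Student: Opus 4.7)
The plan is to show that $\widetilde{P}_\lambda := \rho((1-G_\square(x))^n J_{\lambda'}(x))$ and $[\omega_{Y(\lambda)}]$ satisfy the same Pieri-type recursion (Corollary \ref{I87R43}) with matching initial data at $\lambda = \emptyset$, and then to invoke a cyclic-module argument on $K(\Gr(k,n))$. For the base case, $Y(\emptyset) = \Gr(k,n)$ and $J_\emptyset = 1$, so Lemma \ref{BHT121} gives directly $\widetilde{P}_\emptyset = \rho((1-G_\square)^n) = [\omega_{\Gr(k,n)}] = [\omega_{Y(\emptyset)}]$.

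The main combinatorial input is a Pieri formula for $J$ in $\hat{\Lambda}$: for every partition $\nu$ and every $r \geq 0$,
\begin{equation*}
e_r(x) \cdot J_\nu(x) = \sum_{\kappa \supseteq \nu} (-1)^{|\kappa/\nu|-r} J_\kappa(x),
\end{equation*}
summed over $\kappa$ with $\kappa/\nu$ a skew shape occupying exactly $r$ nonempty columns. Via the $\omega$ involution (sending $e_r \mapsto h_r$ and $J_\nu \mapsto \tilde K_\nu$) this translates to the corresponding identity for $h_r \cdot \tilde K_\nu$, which I would prove by a sign-reversing involution on set-valued tableaux, modelled on Buch's proof of the Pieri rule for $G_\lambda$. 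Substituting $\nu = \lambda'$ and $\kappa = \mu'$ (so that ``$r$ nonempty columns of $\kappa/\nu$'' becomes ``$r$ nonempty rows of $\mu/\lambda$''), multiplying by $(1-G_\square)^n$, and applying $\rho$ yields
\begin{equation*}
c_r(\mathcal{V}^\vee) \cdot \widetilde{P}_\lambda = \sum_{\mu \supseteq \lambda} (-1)^{|\mu/\lambda|-r} \widetilde{P}_\mu,
\end{equation*}
where the sum is a priori over \emph{all} $\mu$ with $\mu/\lambda$ having exactly $r$ nonempty rows. To match Corollary \ref{I87R43}, the out-of-rectangle terms must vanish: if $\ell(\mu) > k$ then a weak set-valued tableau of shape $\mu'$ would require a row of length $\mu_1' = \ell(\mu) > k$, which cannot be strictly filled with entries from $\{1,\ldots,k\}$, so $J_{\mu'}$ vanishes on the $k$ Chern roots of $\mathcal{V}^\vee$ and hence $\widetilde{P}_\mu = 0$; if instead $\mu_1 > n-k$, the vanishing follows from the $K$-theoretic relations defining $K(\Gr(k,n))$ as a quotient of $\hat{\Lambda}$, the factor $[\omega_{\Gr(k,n)}]$ supplied by $(1-G_\square)^n$ killing symmetric expressions of column-depth exceeding $n-k$.

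Finally, define the $\mathbb{Q}$-linear map $\phi\colon K(\Gr(k,n)) \to K(\Gr(k,n))$ by $\phi([\omega_{Y(\lambda)}]) = \widetilde{P}_\lambda$ on the basis $\{[\omega_{Y(\lambda)}]\}_{\lambda \subseteq (n-k)^k}$, which is a $\mathbb{Q}$-basis of $K(\Gr(k,n))$ by \eqref{YTRGPOU} together with Proposition \ref{SchRep4}. The matching Pieri rules show that $\phi$ commutes with multiplication by every $c_r(\mathcal{V}^\vee)$, and since $K(\Gr(k,n))$ is generated as an algebra over $\mathbb{Q}$ by these classes (Theorem \ref{th:KBorelinj}), $\phi$ is $K(\Gr(k,n))$-linear. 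The base case gives $\phi([\omega_{Y(\emptyset)}]) = [\omega_{Y(\emptyset)}]$; since $[\omega_{Y(\emptyset)}] = [\omega_{\Gr(k,n)}]$ is a unit, $K(\Gr(k,n))$-linearity forces $\phi(1) = 1$, hence $\phi = \operatorname{id}$ and $\widetilde{P}_\lambda = [\omega_{Y(\lambda)}]$ for all $\lambda \subseteq (n-k)^k$. The main obstacle will be the Pieri rule for $J$ (equivalently for $h_r \cdot \tilde K_\nu$), requiring a delicate combinatorial argument with weak set-valued tableaux; the vanishing step for $\mu_1 > n-k$, invoking specific relations of $K(\Gr(k,n))$, is the other nontrivial ingredient.
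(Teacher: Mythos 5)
Your overall strategy shares the skeleton of the paper's argument --- the base-case identity $\rho\big((1-G_\square(x))^n\big) = [\omega_{\Gr(k,n)}]$ from Lemma \ref{BHT121}, the Pieri rule for $J$ (which the paper simply cites as Lenart's Equation (3.3) applied through $\omega$, so you need not reprove it by a sign-reversing involution), matching against Corollary \ref{I87R43}, and exploiting invertibility of $(1-G_\square)^n$. But you set up the auxiliary map on the wrong side, and this introduces a genuine gap.

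The paper defines its map $\varphi\colon \hat{\Lambda}\to K(\Gr(k,n))$ on the \emph{source}, by decreeing $\varphi\big((1-G_{\square}(x))^nJ_{\mu'}(x)\big)=0$ whenever $\mu\not\subseteq(n-k)^k$. When the Pieri rule for $J$ is expanded and $\varphi$ is applied to both sides, the out-of-rectangle terms disappear by fiat, and the identity $\varphi=\rho$ then \emph{yields} the vanishing $\rho\big((1-G_\square)^n J_{\mu'}\big)=0$ for $\mu\not\subseteq(n-k)^k$ as a corollary of the theorem rather than an input. Your $\phi$ instead lives on the \emph{target} $K(\Gr(k,n))$, defined on the basis $\{[\omega_{Y(\lambda)}]\colon\lambda\subseteq(n-k)^k\}$, so to show $\phi$ intertwines multiplication by $c_r(\mathcal{V}^\vee)$ you must prove the vanishing $\widetilde{P}_\mu=0$ for $\mu\not\subseteq(n-k)^k$ \emph{before} the cyclic-module argument can even begin; if you could not, the two sides of your claimed commuting square would have different ranges of $\mu$ and the argument collapses.

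Your treatment of $\ell(\mu)>k$ is correct: the first row of $\mu'$ has $\mu'_1=\ell(\mu)>k$ boxes and cannot be strictly filled from $\{1,\ldots,k\}$, so $J_{\mu'}(x_1,\ldots,x_k,0,\ldots)=0$ identically, hence $\widetilde{P}_\mu=0$. But the remaining case $\ell(\mu)\leq k$ with $\mu_1>n-k$ is waved away (``the factor $[\omega_{\Gr(k,n)}]$ \ldots killing symmetric expressions of column-depth exceeding $n-k$''), and it is not dispatched by any such mechanism. Since $(1-G_\square)^n$ maps under $\rho$ to the \emph{unit} $[\det\mathcal{V}]^n$, it kills nothing; what you would actually need is $\rho(J_{\mu'})=0$, which is a honest relation in $K(\Gr(k,n))$. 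For example in $\Gr(2,3)$, with $\lambda=(1)$ the recursion for $c_1\cdot\widetilde{P}_{(1)}$ produces the term $\mu=(2)$, where $\ell(\mu)=1\leq k=2$ but $\mu_1=2>1=n-k$; the required vanishing $\rho\big((1-G_\square)^3 J_{(1,1)}\big)=0$ does hold, but only because the symmetric function collapses modulo the Grassmannian ideal after a genuine computation, not because of any degree or column-depth bound visible from the weak set-valued tableau model. Proving this vanishing for all such $\mu$ is logically equivalent to part of the theorem's own conclusion, so without an independent argument it is circular. This is exactly what the paper's source-side construction is designed to sidestep.
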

\begin{proof}
Note that 
$$(1-G_{\square}(x))\cdot J_{\lambda}(x)=s_{\lambda'}(x)+(\text{higher degrees}).$$
So each $f\in \hat{\Lambda}$ can be written as a (possibly infinite) linear combination of $(1-G_{\square}(x))\cdot J_{\lambda}(x)$'s. 
Hence we  can define a map
$\varphi\colon\hat{\Lambda}\longrightarrow K(\Gr(k,n))$ by letting 
$$(1-G_{\square}(x))^n J_{\lambda'}(x)\longmapsto \begin{cases}
{}    [\omega_{Y(\lambda)}], & \lambda\subseteq (n-k)^k,\\[5pt]
0, & \text{otherwise},
\end{cases}$$
and then extending linearly to $\hat{\Lambda}$. 
Our task is to prove $\varphi=\rho$. 
Note that for $\lambda=\emptyset$,
by Lemma \ref{BHT121},
\begin{equation}\label{PLNGI}
\varphi\big((1-G_{\square}(x))^n\big) = [\omega_{Y(\varnothing)}] = [\omega_{\Gr(k,n)}]
=\rho\big((1-G_{\square}(x))^n\big).   
\end{equation}
It suffices to show that for any $f,g\in \hat{\Lambda}$,
\begin{equation}\label{eq:phi=epsphi}
\varphi(fg)=\rho(f)\cdot \varphi(g),
\end{equation}
i.e., $\varphi$ is a $\hat{\Lambda}$-module morphism. 
This is because once \eqref{eq:phi=epsphi} is given, together with \eqref{PLNGI}, we see that for any $g\in \hat{\Lambda}$, 
\begin{align*}
\varphi(g)
=\rho\big(g(1-G_{\square}(x))^{-n}\big)\cdot \varphi\big((1-G_{\square}(x))^n\big)=\rho\big(g(1-G_{\square}(x))^{-n}\big)\cdot \rho\big((1-G_{\square}(x))^n\big)=\rho(g).
\end{align*}

To prove \eqref{eq:phi=epsphi}, it is enough to verify the case when $f=e_r(x)$ and $g=(1-G_{\square}(x))^nJ_{\lambda'}(x)$, that is, 
\begin{equation}\label{eq:phie=epsphi}
\varphi\big(e_r(x)\cdot (1-G_{\square}(x))^n\cdot J_{\lambda'}(x)\big)
=\rho(e_r(x))\cdot [\omega_{Y(\lambda)}].
\end{equation}
Let us first compute the left-hand side of \eqref{eq:phie=epsphi}. 
By \cite[Equation (3.3)]{Lenart1}, 
\[h_r(x)\cdot \tilde{K}_{\lambda}(x)=\sum_{\mu}(-1)^{|\mu/\lambda|-r} \tilde{K}_{\mu}(x)\]
with the sum over all $\mu\supset \lambda$ such that   $\mu/\lambda$ has exactly $r$ nonempty columns. 
Applying the involution $\omega$ to both sides and multiplied by $(1-G_{\square}(x))^n$, we obtain that
$$e_r(x)\cdot(1-G_{\square}(x))^n\cdot J_{\lambda}(x)
=\sum_{\mu} (-1)^{|\mu/\lambda|-r}\cdot (1-G_{\square}(x))^n\cdot J_{\mu}(x)$$
with the sum over all $\mu\supset \lambda$ such that   $\mu/\lambda$ has exactly $r$ nonempty columns. 
Therefore, 
the left-hand side of \eqref{eq:phie=epsphi} can be expressed as 
\begin{equation}\label{BHTV7}
\varphi\big(e_r(x)\cdot (1-G_{\square}(x))^n\cdot J_{\lambda'}(x)\big)=\sum_{\mu} (-1)^{|\mu/\lambda|-r}\cdot \varphi\big((1-G_{\square}(x))^n\cdot J_{\mu'}(x)\big),    
\end{equation}
where the sum is taken over $\mu\supset \lambda$ such that   $\mu/\lambda$ has exactly $r$ nonempty rows.

When $r>k$, each $\mu$ appearing in  \eqref{BHTV7} has strictly more than $k$ rows, and so  \eqref{BHTV7} equals zero, implying \eqref{eq:phie=epsphi}.
When $r\leq k$, \eqref{BHTV7} becomes 
\begin{equation}\label{BHTV789}
\varphi\big(e_r(x)\cdot (1-G_{\square}(x))^n\cdot J_{\lambda'}(x)\big)=\sum_{\mu} (-1)^{|\mu/\lambda|-r}\cdot [\omega_{Y(\mu)}],    
\end{equation}
where the sum is taken over $\mu\subseteq (n-k)^k$ such that   $\mu/\lambda$ has exactly $r$ nonempty rows. In view of Corollary \ref{I87R43}, we see that \eqref{BHTV789} is equal to $c_r(\mathcal{V}^\vee)\cdot 
[\omega_{Y(\lambda)}]=\rho(e_r(x))\cdot 
[\omega_{Y(\lambda)}]$. This justifies the correctness of   \eqref{eq:phie=epsphi} in the case of $r\leq k$. So the proof is complete.
\end{proof}

\begin{Eg}
Consider the case $\Gr(1,2)=\mathbb{P}^1$. 
Recall that
$$K(\Gr(1,2))=\mathbb{Q}[x]/(x^2),\qquad x=1-[\mathcal{O}(-1)].$$
Under this identification, the map $\rho$ is given by sending 
$f\in \hat{\Lambda}$ to $f(x,0,\ldots) \bmod (x^2)\in K(\Gr(1,2))$.
Notice that 
$$[\omega_{Y(\varnothing)}]=1-2x,\qquad [\omega_{Y(\square)}]=x.$$
Moreover,
we have $G_{\varnothing}(x)=J_{\varnothing}(x)=1$, and 
$$G_{\square}(x)=-\sum_{A} (-1)^{-|A|}x^A
=1-\prod_{i=1}^\infty(1-x_i),\qquad 
J_{\square}(x)=\sum_{B}x^B
=-1+\prod_{i=1}^\infty\frac{1}{1-x_i},$$
where the first (resp., the second) sum goes over nonempty sets $A$  (resp., multisets $B$) of positive integers. 
Thus 
$$(1-G_{\square}(x))^2\cdot J_{\varnothing}(x) = \prod_{i=1}^\infty(1-x_i)^2,\qquad 
(1-G_{\square}(x))^2\cdot J_{\square}(x)
= -\prod_{i=1}^\infty(1-x_i)^2+\prod_{i=1}^\infty(1-x_i).$$
This yields that 
\begin{align*}    
\rho\big((1-G_{\square}(x))^2\cdot J_{\varnothing}(x)\big) & = 1-2x=[\omega_{Y(\varnothing)}],\\
\rho\big((1-G_{\square}(x))^2\cdot J_{\square}(x)\big) &= -(1-2x)+1-x=x
=[\omega_{Y(\square)}]. 
\end{align*}
\end{Eg}

\section{Pieri formulas for other classes}\label{sec67}

In this section, we exhibit the Pieri formulas for other classes as introduced in Section \ref{sec:geom}. As direct consequences of Proposition  \ref{SchRep:Grr} and Theorem \ref{MainThPieri}, we are given  the Pieri formulas for  the classes 
$[Y(\lambda)]$, $[\mathcal{I}_{Y(\lambda)}]$, and $ \CSM(Y(\lambda)^\circ)$. Using the Poincar\'e pairing and the Weyl action, we may further obtain the Pieri formulas for the classes $[\mathcal{O}_{Y(\lambda)}]$ and 
$\SM(Y(\lambda)^\circ)$.

\subsection{Schubert Classes}
This corresponds to 
$(p,q,\hbar)=(0,0,1).$
In this case, for $\lambda\subseteq (n-k)^k$ and $1\leq i\leq k$, it is clear that 
\begin{equation*}\label{EDGT}
\fhead{i}\to [Y({\lambda})] = 
t_{c}\cdot [Y({\lambda})]+
[Y({\mu})],
\end{equation*}
where $\mu\subseteq (n-k)^k$ (if any) is obtained from $\lambda$ by adding a single box in row $i$.

\begin{Coro} 
Let $\lambda\subseteq (n-k)^k$ and  $0\leq r\leq k$. Over $H_T^\bullet(\Gr(k,n))$, we have 
\begin{align}\label{EDGTTY} 
c_r(\mathcal{V}^\vee)\cdot [Y({\lambda})]
=\sum_{1\leq i_1<\cdots<i_r\leq k}
\fhead{i_r}\to\cdots \to
\fhead{i_1}\to   
[Y({\lambda})].
\end{align}
\end{Coro}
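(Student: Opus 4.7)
The plan is to deduce this corollary by specializing the general affine Hecke algebra Pieri formula (Theorem \ref{MainThPieri}) to the Schubert representation for ordinary equivariant cohomology, and then transferring the result from $H_T^\bullet(G/B)$ back to $H_T^\bullet(G/P)$ via Borel's injective pullback.

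First, I invoke Example \ref{ThSchubertcls} with $(p,q,\hbar)=(0,0,1)$, $\T_i=\partial_i$, and $x_i$ as in \eqref{X-123}. This realizes $\mathbb{V}=\mathbb{H}_T(G/B)$ as a Schubert representation of $\Haff_n$ with $\mathcal{Y}_w=[Y(w)]$. Next, by Example \ref{specialvalue}(i), since $p=0$ forces the summation in \eqref{DEFGT} to collapse to a single term, the generic Grassmannian Schubert class is $\mathcal{Y}_\lambda=[Y(w_\lambda)]=\pi^\ast[Y(\lambda)]$, where the second equality uses Proposition \ref{SchRep1:Gr}(i).

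Second, I apply Theorem \ref{MainThPieri} to $f=e_r(x_{[k]})\in \A[x_1,\ldots,x_n]$, which is symmetric under $\S_k\times \S_{n-k}$, obtaining
\[
e_r(x_{[k]})\cdot \mathcal{Y}_\lambda
=\sum_{1\leq i_1<\cdots<i_r\leq k}\fhead{i_r}\to\cdots\to\fhead{i_1}\to\mathcal{Y}_\lambda
\qquad\text{in }\mathbb{H}_T(G/B).
\]
By the Whitney product formula, $\pi^\ast c_r(\mathcal{V}^\vee)=e_r(x_1,\ldots,x_k)$, so the left-hand side coincides with $\pi^\ast\bigl(c_r(\mathcal{V}^\vee)\cdot [Y(\lambda)]\bigr)$. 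On the right-hand side, the operator $\fhead{i}\to$ is an $H_T^\bullet(\pt)$-linear combination of the basis vectors $\mathcal{Y}_\nu=\pi^\ast[Y(\nu)]$ with identical coefficients whether one writes it for $[Y(\nu)]$ or for $\pi^\ast[Y(\nu)]$; thus the right-hand side equals $\pi^\ast$ applied to the analogous expression with $[Y(\lambda)]$ in place of $\mathcal{Y}_\lambda$.

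Finally, Borel's Theorem \ref{Borelthm:inj}(iii) tells us that $\pi^\ast\colon H_T^\bullet(G/P)\hookrightarrow H_T^\bullet(G/B)$ is injective, so we may cancel $\pi^\ast$ to obtain \eqref{EDGTTY}. There is no real obstacle at this stage: all the substantive work has already been carried out in Theorem \ref{MainThPieri} and in the translation Proposition \ref{SchRep:Grr}(i); the present corollary is simply the $(p,q,\hbar)=(0,0,1)$ specialization, with the classical observation that in the cohomological limit the factor $\hbar-(p-q)t_{\mathtt h(\eta)}$ becomes $1$ and the prefactor $p^{\htt(\eta)-1}q^{\wdd(\eta)-1}$ vanishes unless $\eta$ is a single box.
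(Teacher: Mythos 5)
Your proposal is correct and follows essentially the same route as the paper, which derives this corollary directly from Proposition \ref{SchRep:Grr}(i) (the injectivity of $\pi^*$ and the identification $\mathcal{Y}_\lambda = \pi^*[Y(\lambda)]$ at $(p,q,\hbar)=(0,0,1)$) together with Theorem \ref{MainThPieri}. Your closing observation that the prefactor $p^{\htt(\eta)-1}q^{\wdd(\eta)-1}$ vanishes unless $\eta$ is a single box at this specialization is also how the paper explains the collapse to the classical Pieri rule in the remark following the corollary.
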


\begin{Rmk}
A skew shape $\mu/\lambda$ is called  a {\it vertical strip} if  it has at most one box in the same row. Letting $t=0$ in \eqref{EDGTTY}, it is easy to see that $[Y(\mu)]$ appears in the expansion if and only if 
 $\mu/\lambda$ is a vertical strip with $r$ boxes.
That is,  the  non-equivariant version of \eqref{EDGTTY} is
 \begin{align}\label{YHGTRE} 
c_r(\mathcal{V}^\vee)\cdot [Y({\lambda})]
=\sum_{\mu}
[Y({\mu})],
\end{align}  
where the sum is over $\mu\subseteq (n-k)^k$ such that $\mu/\lambda$ is a vertical strip with $r$ boxes.
It is well known that the Schur polynomial $s_\lambda(x)$ (which is equal to  the single Schubert polynomial $ \mathfrak{S}_{w_\lambda}(x)$) is a polynomial representative of $[Y(\lambda)]$.
So \eqref{YHGTRE} is  the same as the classical Pieri formula for Schur polynomials. 
\end{Rmk}

\subsection{Classes of ideal sheaves and structure sheaves}


For the classes  of ideal sheaves, we set
$(p,q,\hbar)=(1,0,1)$. In this case, the ribbons added in each step are restricted to those with width one, namely, {\it connected} vertical strips.
Formally, for $\lambda\subseteq (n-k)^k$ and $1\leq i\leq k$,
\begin{equation*}\label{IUYE} 
\fhead{i}\to [\mathcal{I}_{\partial Y({\lambda})}] = 
t_{c}\cdot [\mathcal{I}_{\partial Y({\lambda})}]+
\sum_{\mu}
\left(1-t_{\mathtt{h}(\mu/\lambda)}\right)\cdot[\mathcal{I}_{\partial Y({\mu})}],
\end{equation*}
ranging  over $\mu\subseteq (n-k)^k$ such that $\mu/\lambda$ is a connected vertical strip with head in row $i$.

\begin{Coro}\label{BGRFV} 
Let $\lambda\subseteq (n-k)^k$ and  $0\leq r\leq k$. Over $K_T(\Gr(k,n))$, we have  
\begin{align*} 
c_r(\mathcal{V}^\vee)\cdot [\mathcal{I}_{\partial Y({\lambda})}]
=\sum_{1\leq i_1<\cdots<i_r\leq k}
\fhead{i_r}\to\cdots \to
\fhead{i_1}\to
[\mathcal{I}_{\partial Y({\lambda})}].
\end{align*}
\end{Coro}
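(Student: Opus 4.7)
The plan is to derive Corollary \ref{BGRFV} as a direct specialization of the general Pieri formula in Theorem \ref{MainThPieri}, applied to the Schubert representation corresponding to ideal sheaves. This follows exactly the template used earlier to establish Theorem \ref{ThmA}; only the parameter values and the identification of $\mathcal{Y}_w$ change.

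First I would set up the representation. By Proposition \ref{THaffineHeckeEg}(ii) and Example \ref{ThSchubertcls}, the choice $(p,q,\hbar) = (1,0,1)$ realizes $\Haff_n$ on $\mathbb{V} = \mathbb{K}_T(G/B)$ with $\T_i$ acting as the Demazure operator $\widehat{\pi}_i$, with $x_i$ as in \eqref{X-12345}, and with generic Schubert classes $\mathcal{Y}_w = [\mathcal{I}_{\partial Y(w)}]$. By Example \ref{specialvalue}(ii), the corresponding generic Grassmannian Schubert class is $\mathcal{Y}_\lambda = \pi^*[\mathcal{I}_{\partial Y(\lambda)}]$.

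Next I would apply Theorem \ref{MainThPieri} with the symmetric polynomial $f = e_r(x_{[k]})$, yielding in $\mathbb{V}$ the identity
\begin{equation*}
e_r(x_{[k]}) \cdot \mathcal{Y}_\lambda = \sum_{1 \leq i_1 < \cdots < i_r \leq k} \fhead{i_r} \to \cdots \to \fhead{i_1} \to \mathcal{Y}_\lambda,
\end{equation*}
where the operators $\fhead{i}$ are now evaluated at $(p,q,\hbar) = (1,0,1)$. Since Theorem \ref{th:KBorelinj}(iii) asserts that the pullback $\pi^*\colon K_T(\Gr(k,n)) \to K_T(G/B)$ is an injective algebra homomorphism with $\pi^* c_r(\mathcal{V}^\vee) = e_r(x_{[k]})$, the identity descends to the desired Pieri formula in $K_T(\Gr(k,n))$ upon replacing each $\mathcal{Y}_\nu$ by $\pi^*[\mathcal{I}_{\partial Y(\nu)}]$; this is the content of Proposition \ref{SchRep:Grr}(ii).

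Finally I would verify that at these parameter values the ribbon operator $\fhead{i}$ reduces to the formulation stated just before the corollary. In \eqref{POIU-1s}, the factor $q^{\wdd(\eta)-1}$ annihilates every contribution from ribbons $\eta$ of width $\geq 2$, so only connected vertical strips survive; for such a strip, $p^{\htt(\eta)-1} = 1$ and $\hbar - (p-q)\,t_{\mathtt{h}(\eta)} = 1 - t_{\mathtt{h}(\eta)}$, recovering exactly the displayed form. There is no genuine obstacle here: once Theorem \ref{MainThPieri} and the framework of Section \ref{sec4} are in place, the proof amounts to this specialization and a mechanical check of the reduced form of $\fhead{i}$, entirely parallel to the proof of Theorem \ref{ThmA}.
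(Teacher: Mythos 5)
Your proof is correct and follows exactly the route the paper takes for this corollary: specialize the general Pieri formula of Theorem~\ref{MainThPieri} at $(p,q,\hbar)=(1,0,1)$ inside the Schubert representation where $\mathcal{Y}_w=[\mathcal{I}_{\partial Y(w)}]$, pass from $e_r(x_{[k]})\cdot\mathcal{Y}_\lambda$ to $c_r(\mathcal{V}^\vee)\cdot[\mathcal{I}_{\partial Y(\lambda)}]$ via the injectivity of $\pi^*$ as packaged in Proposition~\ref{SchRep:Grr}(ii), and then observe that $q=0$ kills all ribbons of width $\geq 2$ so that $\fhead{i}$ reduces to the displayed operator on connected vertical strips. (The paper also remarks that the same identity drops out of Theorem~\ref{ThmA} at $y=0$ via Remark~\ref{RRR-N}, but your chosen derivation is the one the text primarily advertises.)
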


By Remark \ref{RRR-N}, we have $\MC_y(Y(\lambda)^\circ)|_{y=0} = [\mathcal{I}_{\partial Y(\lambda)}]$, and so Corollary \ref{BGRFV} can also be obtained  from   Theorem \ref{ThmA}   by setting $y=0$. 

Still, it follows from  Remark \ref{RRR-N} that 
$\SMC_y(Y(\lambda)^\circ)|_{y=0} = [\mathcal{O}_{Y(\lambda)}]$, and so we obtain a Pieri formula 
for the classes of structure sheaves from Theorem \ref{ThmB} by putting $y=0$. Let 
\begin{equation}\label{IJGTY}
\xfhead{i}\to [\mathcal{O}_{Y(\lambda)}]= 
t_{c}\cdot [\mathcal{O}_{Y(\lambda)}]+
\sum_{\mu}
\left(1-t_{\mathtt{t}(\mu/\lambda)}\right)\cdot [\mathcal{O}_{Y(\mu)}],    
\end{equation}
ranging    over $\mu\subseteq (n-k)^k$ such that $\mu/\lambda$ is a connected vertical strip with head in row $i$.

\begin{Coro} 
Let $\lambda\subseteq (n-k)^k$ and  $0\leq r\leq k$. Over $K_T(\Gr(k,n))$, we have  
\begin{align}\label{ORWP}
c_r(\mathcal{V}^\vee)\cdot [\mathcal{O}_{Y(\lambda)}]
=\sum_{1\leq i_1<\cdots<i_r\leq k}
\xfhead{i_r}\to\cdots \to
\xfhead{i_1}\to [\mathcal{O}_{Y(\lambda)}].
\end{align}
\end{Coro}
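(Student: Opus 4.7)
The proof will be a direct specialization of Theorem \ref{ThmB} to $y=0$. The plan is to start from the Pieri formula
\[
c_r(\mathcal{V}^\vee)\cdot \SMC_y(Y(\lambda)^\circ) = \sum_{1\leq i_1<\cdots<i_r\leq k} \xfhead{i_r}\to\cdots\to\xfhead{i_1}\to \SMC_y(Y(\lambda)^\circ),
\]
which holds under the parameter specialization $(p,q,\hbar)=(1,-y,1+y)$, and then set $y=0$. By Remark \ref{RRR-N}, the class $\SMC_y(Y(\lambda)^\circ)\big|_{y=0}$ is exactly $[\mathcal{O}_{Y(\lambda)}]$, so the left-hand side becomes $c_r(\mathcal{V}^\vee)\cdot[\mathcal{O}_{Y(\lambda)}]$ and each occurrence of $\SMC_y(Y(\mu)^\circ)$ on the right-hand side becomes $[\mathcal{O}_{Y(\mu)}]$.

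The only thing to verify is that the operator $\xfhead{i}\to$ appearing in Theorem \ref{ThmB} degenerates at $y=0$ to the operator displayed in \eqref{IJGTY}. Recall that at $(p,q,\hbar)=(1,-y,1+y)$, the operator reads
\[
\xfhead{i}\to\lambda = t_c\cdot\lambda + \sum_{\mu/\lambda=\eta}\bigl(1+y-(1+y)t_{\mathtt{t}(\eta)}\bigr)(-y)^{\wdd(\eta)-1}\cdot\mu,
\]
where the sum is over ribbons $\eta=\mu/\lambda$ with head in row $i$. Setting $y=0$ kills every term with $\wdd(\eta)>1$, leaving only ribbons of width one, that is, connected vertical strips whose head (and thus also tail) lies in row $i$. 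The remaining coefficient is $1-t_{\mathtt{t}(\eta)}$, matching \eqref{IJGTY} exactly. Applying this simplification to the nested composition on the right-hand side of Theorem \ref{ThmB} yields \eqref{ORWP}.

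There is no real obstacle here; this is essentially a one-line corollary of Theorem \ref{ThmB} combined with Remark \ref{RRR-N}, the only subtlety being to check that the composition $\xfhead{i_r}\to\cdots\to\xfhead{i_1}\to$ commutes with the specialization $y=0$. This commutativity is clear because at each stage, the operator produces a finite $\mathbb{K}_T(\mathsf{pt})[y]$-linear combination of Grassmannian basis elements whose coefficients are polynomial in $y$, so evaluating at $y=0$ may be performed either before or after applying the operators.
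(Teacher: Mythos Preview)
Your proof is correct and is exactly the approach the paper takes: the corollary is stated immediately after the sentence ``we obtain a Pieri formula for the classes of structure sheaves from Theorem \ref{ThmB} by putting $y=0$,'' and your argument just spells this out. One small slip: in the parenthetical ``connected vertical strips whose head (and thus also tail) lies in row $i$,'' the tail of a width-one ribbon need not lie in row $i$ (it lies in some row $j\ge i$), but this does not affect your computation, since the surviving coefficient $1-t_{\mathtt{t}(\eta)}$ and the range of the sum already match \eqref{IJGTY} as written.
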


\begin{Rmk}\label{JHG678}
Consider the non-equivariant version of \eqref{ORWP}. When $t=0$, \eqref{IJGTY} becomes
\begin{equation*} 
\hhead{i}\to [\mathcal{O}_{Y(\lambda)}]= 
\sum_{\mu}
 [\mathcal{O}_{Y(\mu)}].    
\end{equation*}
So, over $K(\Gr(k,n))$, we have 
\begin{align}\label{ORWPPP}
c_r(\mathcal{V}^\vee)\cdot [\mathcal{O}_{Y(\lambda)}]
=\sum_{1\leq i_1<\cdots<i_r\leq k}
\hhead{i_r}\to\cdots \to
\hhead{i_1}\to [\mathcal{O}_{Y(\lambda)}].
\end{align}
Hence  $[\mathcal{O}_{Y(\mu)}]$ appears on the right-hand side of \eqref{ORWPPP} if and only $\mu$ is obtained from $\lambda$ by adding consecutively $r$ connected vertical strips from up to down.
Equivalently, $\mu$ is obtained from $\lambda$ by adding  a vertical strip, satisfying  that $\mu/\lambda$ can be partitioned  into $r$ connected vertical strips. 
Just like the role that Schur polynomials play in cohomology,
the symmetric Grothendieck polynomial $\mathfrak{G}_{\lambda}(x)=\mathfrak{G}_{w_\lambda}(x)$
represents the class $[\mathcal{O}_{Y(\lambda)}]$.
Hence \eqref{ORWPPP} is equivalent to 
the formula \cite[(3.1)]{Lenart1} 
by Lenart. 
\end{Rmk}



\subsection{CSM classes and Segre--MacPherson  classes}

This time we take  
$(p,q,\hbar)=(1,1,1).$
Then 
\[
\fhead{i}\to \CSM(Y(\lambda)^\circ) 
=t_c\cdot\CSM(Y(\lambda)^\circ) + 
\sum_{\mu} \CSM(Y(\mu)^\circ),
\]
where the sum is over $\mu\subseteq (n-k)^k$ such that $\mu/\lambda$ is a ribbon with head in row $i$.

\begin{Coro}\label{09IJH} 
Let $\lambda\subseteq (n-k)^k$ and  $0\leq r\leq k$. Over $\mathbb{H}_T(\Gr(k,n))$, we have 
\begin{align}\label{9IUYT}
c_r(\mathcal{V}^\vee)\cdot \CSM(Y(\lambda)^\circ) 
=\sum_{1\leq i_1<\cdots<i_r\leq k}
\fhead{i_r}\to\cdots \to
\fhead{i_1}\to   
\CSM(Y(\lambda)^\circ) .
\end{align}
\end{Coro}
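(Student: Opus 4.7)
The plan is to apply the unified machinery of Section~\ref{sec5} to the CSM setting, mirroring the argument used for Theorem~\ref{ThmA}. Specializing the parameters to $(p,q,\hbar)=(1,1,1)$, Proposition~\ref{THaffineHeckeEg}(iii) identifies $\T_i$ with the Demazure--Lusztig operator $\mathcal{T}_i=\partial_i-s_i$, and Proposition~\ref{OPUY} together with Example~\ref{ThSchubertcls} endows $\mathbb{V}=\mathbb{H}_T(G/B)$ with the structure of a Schubert representation of $\Haff_n$, where $\mathcal{Y}_w=\CSM(Y(w)^\circ)$, the equivariant parameters are $t_1,\ldots,t_n$, and $\ev$ is the restriction to the identity fixed point.

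First I would record the generic Grassmannian Schubert class. By Example~\ref{specialvalue}(iii),
\[
\mathcal{Y}_\lambda=\sum_{w\in\S_n,\;\Gr(w)=\lambda}\CSM(Y(w)^\circ),
\]
and by Proposition~\ref{SchRep3}(i) its pushforward satisfies $\pi_*\mathcal{Y}_\lambda=k!(n-k)!\cdot\CSM(Y(\lambda)^\circ)$, exactly as tabulated in Proposition~\ref{SchRep:Grr}(iii).

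Next I would apply Theorem~\ref{MainThPieri} to obtain, in $\mathbb{H}_T(G/B)$, the Pieri expansion
\[
e_r(x_{[k]})\cdot\mathcal{Y}_\lambda=\sum_{1\leq i_1<\cdots<i_r\leq k}\fhead{i_r}\to\cdots\to\fhead{i_1}\to\mathcal{Y}_\lambda,
\]
with operators evaluated at $(p,q,\hbar)=(1,1,1)$. Combining the projection formula with the identity $\pi^*c_r(\mathcal{V}^\vee)=e_r(x_1,\ldots,x_k)$ and the pushforward above, the derivation in Proposition~\ref{SchRep:Grr}(iii) applies verbatim: the factorials $k!(n-k)!$ cancel on both sides, and each $\pi_*\mathcal{Y}_\mu$ produces the same scalar multiple of $\CSM(Y(\mu)^\circ)$. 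This transfers the identity to $\mathbb{H}_T(\Gr(k,n))$ and yields \eqref{9IUYT}.

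Finally, I would verify the claimed specialized form of $\fhead{i}$. At $(p,q,\hbar)=(1,1,1)$ the ribbon Schubert operator in \eqref{POIU-1s} collapses: since $p-q=0$ the coefficient $\hbar-(p-q)t_{\mathtt{h}(\eta)}$ reduces to $1$, and since $p=q=1$ the weight $p^{\htt(\eta)-1}q^{\wdd(\eta)-1}$ also reduces to $1$. This matches the formula displayed just before Corollary~\ref{09IJH}, completing the argument. There is no genuine obstacle; the only point requiring attention is the bookkeeping of the factorial normalization in the pushforward, which nevertheless cancels cleanly on both sides.
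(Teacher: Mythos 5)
Your proposal is correct and follows precisely the route the paper takes: the corollary is obtained by specializing $(p,q,\hbar)=(1,1,1)$ in Theorem~\ref{MainThPieri} and invoking the pushforward reduction of Proposition~\ref{SchRep:Grr}(iii), which is exactly how the paper frames it at the start of Section~\ref{sec67}. The care you take with the factorial normalization and with verifying that the ribbon weight collapses at $p=q=1$ is appropriate and matches the paper's treatment.
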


\begin{Rmk}
A Pieri formula for non-equivariant CSM classes was first given in  \cite[Theorem 7.3]{FGX}, 
which can be obtained  from  \eqref{9IUYT} by  replacing $\fhead{i}$ by $\xftail{i}$ (by Theorem \ref{sect8-2}) and then letting $t=0$.
\end{Rmk}

Let 
\[
\xfhead{i}\to \SM(Y(\lambda)^\circ) 
=t_c\cdot\CSM(Y(\lambda)^\circ) + 
\sum_{\mu} \CSM(Y(\mu)^\circ),
\]
where the sum is over $\mu\subseteq (n-k)^k$ such that $\mu/\lambda$ is a ribbon with head in row $i$.
Note that in the case $p=q=1$,  the two operators $\fhead{i}$ and  $\xfhead{i}$ 
have no difference since they are independent of   $\mathtt{h}(\mu/\lambda)$ and  $\mathtt{t}(\mu/\lambda)$.

Using Corollary \ref{09IJH}  and the facts (ii), (iv) in Proposition \ref{SchRep3}, we obtain the following Pieri formula for Segre--MacPherson  classes.

\begin{Coro} 
Let $\lambda\subseteq (n-k)^k$ and  $0\leq r\leq k$. Over $\mathbb{H}_T(\Gr(k,n))$, we have 
\begin{align*}
c_r(\mathcal{V}^\vee)\cdot \SM(Y(\lambda)^\circ) 
=\sum_{1\leq i_1<\cdots<i_r\leq k}
\xfhead{i_r}\to\cdots \to
\xfhead{i_1}\to   
\SM(Y(\lambda)^\circ) .
\end{align*}
\end{Coro}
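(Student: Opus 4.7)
The plan is to derive the formula from Corollary \ref{09IJH} by an essentially trivial manipulation, once one makes the following key observation.

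At the CSM specialization $(p,q,\hbar)=(1,1,1)$ the operators $\fhead{i}$ and $\xfhead{i}$ coincide as operators on partitions. Indeed, their defining formulas differ only in the factor $\hbar-(p-q)t_{\mathtt{h}(\eta)}$ versus $\hbar-(p-q)t_{\mathtt{t}(\eta)}$, and both reduce to the constant $1$ when $p=q=1$. Thus Corollary \ref{09IJH} reads equivalently as
\[
c_r(\mathcal{V}^\vee)\cdot\CSM(Y(\lambda)^\circ)=\sum_{1\le i_1<\cdots<i_r\le k}\xfhead{i_r}\to\cdots\to\xfhead{i_1}\to\CSM(Y(\lambda)^\circ).
\]

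To promote this from $\CSM$ to $\SM$ I would invoke the definition $\SM(W)=\CSM(W)/c(\mathscr{T}_{\Gr(k,n)})$ recalled in Section \ref{sec:geom}. Since $c(\mathscr{T}_{\Gr(k,n)})$ is a unit in $\mathbb{H}_T(\Gr(k,n))$ and the ribbon Schubert operators $\xfhead{i}$ act by $\mathbb{H}_T(\pt)$-linear combinations of the basis classes (with coefficients independent of which class is being acted upon), dividing both sides of the displayed identity through by $c(\mathscr{T}_{\Gr(k,n)})$ replaces every occurrence of $\CSM(Y(\nu)^\circ)$ by $\SM(Y(\nu)^\circ)$ without altering the coefficients, producing exactly the claimed Pieri formula.

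Alternatively, one may follow the route hinted in the paper using facts (ii) and (iv) of Proposition \ref{SchRep3}: fact (iv) transports between $X$- and $Y$-versions via $w_0^L$ and the Weyl-invariance of $c_r(\mathcal{V}^\vee)$, while fact (ii) (together with the Weyl-symmetry of the Poincar\'e pairing) identifies the coefficient of $\SM(Y(\mu)^\circ)$ in $c_r(\mathcal{V}^\vee)\cdot\SM(Y(\lambda)^\circ)$ with the coefficient of $\CSM(Y(\overline\lambda)^\circ)$ in $c_r(\mathcal{V}^\vee)\cdot\CSM(Y(\overline\mu)^\circ)$. The combinatorial matching proceeds through the $180^\circ$ rotation of the rectangle $(n-k)^k$, which interchanges heads and tails and permutes row indices by $i\leftrightarrow k+1-i$; the potential obstacle of bookkeeping the $t$-weights through this bijection collapses to triviality because the $t_{\mathtt h(\eta)}/t_{\mathtt t(\eta)}$ sensitivity disappears at $(1,1,1)$. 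In either approach, the proof requires no genuinely new combinatorial input beyond Corollary \ref{09IJH} and the elementary identification $\fhead{i}=\xfhead{i}$ at the CSM specialization.
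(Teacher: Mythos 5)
Both of your routes are correct, and the second one is essentially the paper's own: divide the formula for $\CSM(Y(\cdot)^\circ)$ through the Weyl action $w_0^L$ (Proposition~\ref{SchRep3}(iii)/(iv)) and Poincar\'e duality (Proposition~\ref{SchRep3}(ii)), mirroring the derivation of Theorem~\ref{ThmBBB} from Theorem~\ref{ThmBGH}, and finish with the observation the paper states just before the corollary, that $\fhead{i}=\xfhead{i}$ (and $\ftail{i}=\xftail{i}$) when $p=q=1$.

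Your first route, however, is genuinely different and arguably cleaner. Since $\SM(W)=\CSM(W)/c(\mathscr{T}_{\Gr(k,n)})$ with $c(\mathscr{T}_{\Gr(k,n)})$ a fixed invertible element of $\mathbb{H}_T(\Gr(k,n))$ independent of $\lambda$, dividing the Pieri identity for $\CSM(Y(\lambda)^\circ)$ by this unit replaces every $\CSM(Y(\mu)^\circ)$ by $\SM(Y(\mu)^\circ)$ without touching the $\mathbb{H}_T(\pt)$-coefficients; combined with $\fhead{i}=\xfhead{i}$ at $(1,1,1)$, this yields the corollary immediately. This shortcut buys a one-line proof and avoids the duality/Weyl-action bookkeeping. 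It is worth noting why the same shortcut does \emph{not} replace the paper's more involved argument in the K-theoretic setting: there $\SMC_y(W)$ is obtained from $\MC_y(W)$ not by a fixed scalar but via Serre duality $\mathcal{D}$ and a $\lambda$-dependent power $(-y)^{\dim W}$, so the Poincar\'e-pairing route is actually needed for Theorem~\ref{ThmB}. In the cohomological case that dependence collapses, which is exactly what your first argument exploits.
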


\appendix

\section{Equivalences between ribbon Schubert operators}
\label{sec:Ribboper}

This appendix gives   equivalent formulations concerning the eight ribbon Schubert operators we defined in this paper.

\begin{Th}\label{sect8-2}
For $\lambda\subseteq (n-k)^k$, we have 
\begin{align}
\sum_{1\leq  i_1<\cdots < i_r\leq k}
\fhead{i_r}\to\cdots \to
\fhead{i_1}\to {\lambda}&=\sum_{1\leq  i_1<\cdots < i_r\leq k}
\xftail{i_r}\to\cdots \to
\xftail{i_1}\to {\lambda},\label{OMJU}\\[5pt]
\sum_{1\leq  i_1<\cdots < i_r\leq k}
\xfhead{i_r}\to\cdots \to
\xfhead{i_1}\to {\lambda}&=\sum_{1\leq  i_1<\cdots < i_r\leq k}
\ftail{i_r}\to\cdots \to
\ftail{i_1}\to {\lambda}.\label{OMJUE}
\end{align}
Dually, for $\mu\subseteq (n-k)^k$, we have 
\begin{align}
\sum_{1\leq  i_1<\cdots < i_r\leq k}{\mu}\to 
\fhead{i_r}\to\cdots \to
\fhead{i_1}&=\sum_{1\leq  i_1<\cdots < i_r\leq k} {\mu}\to
\xftail{i_r}\to\cdots \to
\xftail{i_1},\label{OMJU-D}\\[5pt]
\sum_{1\leq  i_1<\cdots < i_r\leq k}\mu\to
\xfhead{i_r}\to\cdots \to
\xfhead{i_1}&=\sum_{1\leq  i_1<\cdots < i_r\leq k}
\mu\to\ftail{i_r}\to\cdots \to
\ftail{i_1}.\label{OMJUE-D}
\end{align}

\end{Th}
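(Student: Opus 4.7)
My plan is to prove the primary identity \eqref{OMJU} in full detail and deduce the other three. The dual identity \eqref{OMJU-D} is equivalent to \eqref{OMJU} term by term: matching coefficients, the coefficient of $\mu$ in $\fhead{i_r}\to\cdots\to\fhead{i_1}\to\lambda$ equals the coefficient of $\lambda$ in $\mu\to\fhead{i_r}\to\cdots\to\fhead{i_1}$, since both sums enumerate the same chains $\lambda = \nu^{(0)} \subseteq \cdots \subseteq \nu^{(r)} = \mu$ weighted by the same single-step ribbon factors $(\hbar-(p-q)t_{\mathtt{h}(\eta)}) p^{\htt(\eta)-1}q^{\wdd(\eta)-1}$ and diagonal factors $t_{\nu^{(j-1)}_{i_j}+k+1-i_j}$. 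The same reasoning shows \eqref{OMJUE-D} is equivalent to \eqref{OMJUE}. Furthermore, \eqref{OMJUE} follows from \eqref{OMJU} by uniformly replacing $t_{\mathtt{h}(\eta)}$ with $t_{\mathtt{t}(\eta)}$ in the ribbon-weight factors, since the cancellation argument described below manipulates only the diagonal weights and is independent of this factor. Hence all four identities reduce to the proof of \eqref{OMJU}.

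For \eqref{OMJU}, expand both iterated compositions. Each side is a sum over \emph{decorated chains} of length $r$: a chain $\lambda = \nu^{(0)} \subseteq \cdots \subseteq \nu^{(r)} = \mu$ where each step is either a diagonal step ($\nu^{(j)} = \nu^{(j-1)}$, weighted by $t_{\nu^{(j-1)}_{i_j}+k+1-i_j}$) or a ribbon step (adding a single ribbon $\eta_j$, weighted by the ribbon factor above), together with a strictly increasing sequence $i_1 < \cdots < i_r$ in $[k]$ such that at each ribbon step $j$ the index $i_j$ equals the head row of $\eta_j$ on the LHS or the tail row of $\eta_j$ on the RHS. The coefficient of $\mu$ on each side is the total weight of such decorated chains, and the task is to show these totals coincide.

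The proof proceeds by induction on $r$. The base case $r = 1$ is immediate: each ribbon $\eta = \mu/\lambda$ is counted exactly once on each side (via its head or tail row) with the same weight, and the diagonal terms $\sum_i t_{\lambda_i + k+1-i}\cdot\lambda$ agree. The inductive step relies on the following key local identity: for any ribbon $\eta$ added to $\lambda$ with head row $h$ and tail row $t$,
\[
\sum_{i=h}^{t-1} t_{\lambda_i + k+1-i} \;=\; \sum_{i=h+1}^{t} t_{(\lambda+\eta)_i + k+1-i},
\]
which holds because the ribbon condition forces $(\lambda+\eta)_{i+1} = \lambda_i + 1$ for $h \leq i < t$, identifying the two sums via the index shift $j = i+1$. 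This encodes the shift in allowable diagonal-step $i$-values as the position of a ribbon step is moved across the chain. Grouping decorated chains by their underlying ordered ribbon tiling of $\mu/\lambda$, the local identity converts the LHS decoration near each ribbon $\eta$ (diagonal slots in the ``$i > h$'' regime \emph{after} the ribbon step) into the RHS decoration (diagonal slots in the ``$i < t$'' regime \emph{before} the ribbon step).

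The main obstacle is the interaction between multiple ribbons in a tiling when their row ranges overlap with one another's diagonal slots. The plan is to peel off the last ribbon step $\eta_s$ of the tiling: applying the local identity to $\eta_s$ converts its LHS decoration into the RHS decoration, while the remaining chain $\lambda = \nu^{(0)} \subseteq \cdots \subseteq \nu^{(s-1)}$ is handled by the inductive hypothesis on the smaller partition $\nu^{(s-1)}$. The strict monotonicity $i_1 < \cdots < i_r$ is preserved because the local identity shifts each diagonal $i$-value by exactly one position, and the remaining constraints are compatible with the inductive hypothesis. Careful bookkeeping is needed when consecutive ribbons share endpoint rows, but in such cases the local identities for the two ribbons compose without conflict.
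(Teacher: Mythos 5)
Your reductions to \eqref{OMJU} are fine: passing between the forward and dual versions is a matter of transposing coefficients along chains, and passing from $t_{\mathtt{h}(\eta)}$ to $t_{\mathtt{t}(\eta)}$ in the ribbon weight only changes a factor that the rest of the argument should not touch, so \eqref{OMJUE}, \eqref{OMJU-D}, \eqref{OMJUE-D} follow once \eqref{OMJU} is established. Your ``local identity'' is also correct and is the right elementary computation: on a ribbon $\eta=\mu/\lambda$ with head row $h$ and tail row $t$, the ribbon condition gives $\mu_{i+1}=\lambda_i+1$ for $h\le i<t$, hence $\sum_{i=h}^{t-1}t_{\lambda_i+k+1-i}=\sum_{i=h+1}^{t}t_{\mu_i+k+1-i}$ termwise.

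The gap is in the proposed organization of the proof of \eqref{OMJU}. You propose to group decorated chains ``by their underlying ordered ribbon tiling of $\mu/\lambda$'' and then move only the diagonal decorations using the local identity. But the set of ordered ribbon tilings with strictly increasing head rows is \emph{not} the same as the set with strictly increasing tail rows, so this grouping is not parallel between the two sides. A small counterexample: take $k=2$, $\lambda=\varnothing$, $\mu=(2,2)$, $r=2$. On the head-valued side the chain $\varnothing\to(2,1)\to(2,2)$ has head rows $(1,2)$ (admissible) but tail rows $(2,2)$ (inadmissible); on the tail-valued side the chain $\varnothing\to(1,0)\to(2,2)$ has tail rows $(1,2)$ (admissible) but head rows $(1,1)$ (inadmissible). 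These two chains have equal total weight and are matched to one another, but they involve \emph{different} intermediate partitions and different ribbon tilings of $(2,2)$. Your local identity (which only redistributes diagonal weights around a fixed tiling) cannot produce this matching. This is precisely why the paper introduces the refined operators $\upsilon_{ab}$ indexed by head value $b$ and tail value $a$, proves the commutation $v_{a'b'}v_{ab}\cdot\lambda=v_{ab}v_{a'b'}\cdot\lambda$ for $a<a'$, $b'<b$ (observing that sliding the second ribbon along its diagonal preserves $a,b,\mathrm{ht},\mathrm{wd}$), proves distinctness of the $a$-values under the $b$-decreasing hypothesis, and builds a re-ordering/re-tiling bijection $\phi$ from these facts. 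That re-tiling is the essential content, and it is what your sketch---especially the final paragraph about ``careful bookkeeping'' near overlapping ribbons---does not supply.
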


We will present a proof of the equality in \eqref{OMJU}, and the remaining equalities  in Theorem \ref{sect8-2} can be concluded along the same line. To this end,  we defined  the \emph{refined ribbon Schubert operators $v_{ab}$} with $a\leq b$ by specifying   the head and tail values.
Let $\lambda\subseteq (n-k)^k$.  
\begin{itemize}
    \item[(i)] If $a<b$, then set
$$\upsilon_{ab}\cdot\lambda = 
\begin{cases}
\left(\hbar-(p-q)t_{b}\right)p^{\htt(\eta)-1}q^{\wdd(\eta)-1}\cdot\mu,&\hfill (*),\hfill\\[5pt]
0, & \text{otherwise},
\end{cases}$$
where  $(*)$  holds if there exists a (unique)  $\mu\subseteq (n-k)^k$ such that $\mu/\lambda=\eta$ is a ribbon with $\mathtt{h}(\mu/\lambda)=b$   and $\mathtt{t}(\mu/\lambda)=a$  (the verification of the uniqueness is left to the reader).  

\item[(ii)] If $a=b$, then set
$$\upsilon_{aa}\cdot \lambda = 
\begin{cases}
t_{a}\cdot \lambda, & \hfill a\in \{\lambda_i+k+1-i\colon 1\leq i\leq k\},\hfill\\[5pt]
0,  & \text{otherwise}.
\end{cases}$$
\end{itemize}
Comparing with the definitions of $\fhead{i}$ and $\xftail{i}$, we notice that  
\begin{align}\label{eq:upsilon1}
\sum_{1\leq i_1<\cdots<i_r\leq k} \fhead{i_r}\to\cdots\to\fhead{i_1}\to \lambda
& = \sum_{
\begin{subarray}{c}
    b_r<\cdots<b_1\\
    a_r,\ldots,a_1
\end{subarray}} \upsilon_{a_r b_r}\cdots\upsilon_{a_1 b_1}\cdot \lambda,
\end{align}
and that  
\begin{align}\label{eq:upsilon2}
\sum_{1\leq i_1<\cdots<i_r\leq k} \xftail{i_r}\to\cdots\to\xftail{i_1}\to \lambda
& = \sum_{
\begin{subarray}{c}
    a'_r<\cdots<a'_1\\
    b'_r,\ldots,b'_1
\end{subarray}} \upsilon_{a'_r b'_r}\cdots\upsilon_{a'_1 b'_1}\cdot \lambda.
\end{align}

\begin{proof}[Proof of Theorem \ref{sect8-2}]

It suffices  to show that the right-hand sides of   \eqref{eq:upsilon1} and \eqref{eq:upsilon2} are the same.  We do this by
establishing an explicit bijection 
\[
 \phi\colon\quad \left\{\upsilon_{a_r b_r}\cdots\upsilon_{a_1 b_1}\cdot \lambda\neq 0\colon b_r<\cdots<b_1\right\}\longrightarrow \left\{\upsilon_{a'_r b'_r}\cdots\upsilon_{a'_1 b'_1}\cdot \lambda\neq 0\colon a'_r<\cdots<a'_1\right\}.
\]

To describe $\phi$, we need the following two claims.

\medbreak
\noindent
{\bf Claim 1.} 
For any given $\lambda\subseteq (n-k)^k$, assume that $v_{a'b'}\cdot v_{a b}\cdot \lambda\neq 0$ with $b'<b$. Then (1) $a\neq a'$, and (2) if $a<a'$, 
then $v_{a'b'}\cdot v_{a b}\cdot \lambda=v_{ab}\cdot v_{a' b'}\cdot \lambda$.

\begin{proof}
The arguments are divided into two cases.

\begin{itemize}
    \item[(1)]  $a=b$ or $a'=b'$. This case has three subcases.
    \begin{itemize}
        \item[(i)] $a=b$ and $a'=b'$. In this situation, the assertions in (1) and (2) are trivial.
        
        \item[(ii)] $a=b$ and $a'<b'$. In this situation, the assertion in (1) is clear. Assume that $b=\lambda_i+k+1-i$, and let $\mu\subseteq (n-k)^k$ be such that $\mu/\lambda$ is the unique ribbon with  $\mathtt{h}(\eta_1)=b'$ and  $\mathtt{t}(\eta_1)=a'$. Since $b'<b$, we see that the head of  $\mu/\lambda$ is  strictly below row $i$. This implies that $v_{ab}$ and $v_{a'b'}$ commute, and so we have the assertion in (2). 
        
        \item[(iii)] $a<b$ and $a'=b'$. In this situation, let $\mu\subseteq (n-k)^k$ be such that $\mu/\lambda$ is the unique ribbon with  $\mathtt{h}(\eta_1)=b$ and  $\mathtt{t}(\eta_1)=a$.   Assume that the ribbon $\mu/\lambda$ has head in row $i$ and tail in row $j$, and       
        that $b'=\mu_{i'}+k+1-i'$.   
        Since $b'<b$,   we have $i'>i$. If $i'>j$, then we see that $a>a'$, and $v_{ab}$ and $v_{a'b'}$ commute, and so  the assertions in (1) and (2) are   checked.

        We now consider the case of $i< i'\leq j$. Now it is clear that $a<a'$, and so the assertion in (1) is true. Moreover, we notice  that $b'=\lambda_{i'-1}+k+1-(i'-1)$. Thus both $v_{a'b'}\cdot v_{a b}\cdot \lambda$ and $v_{ab}\cdot v_{a' b'}\cdot \lambda$ are equal to 
        \[
        t_{b'}\,\left(\hbar-(p-q)t_{b}\right)p^{\htt(\mu/\lambda)-1}q^{\wdd(\mu/\lambda)-1}\cdot\mu.
        \]
        This verifies the assertion in (2).

    \end{itemize}

    \item[(2)] $a<b$ and $a'<b'$. This means there exist $\lambda\subset \mu\subset \nu\subseteq (n-k)^k$ such that $\eta_1=\mu/\lambda$ (resp., $\eta_2=\nu/\mu$) is a ribbon with $\mathtt{h}(\eta_1)=b$ and  $\mathtt{t}(\eta_1)=a$ (resp.,  $\mathtt{h}(\eta_2)=b'$ and $\mathtt{t}(\eta_2)=a'$). 
Let us first check $a\neq a'$. 
If the tail of $\eta_2$ is strictly  below the tail of $\eta_1$, then we have $a'<a$, and otherwise we have  $a'>a$. This verifies $a\neq a'$. 

It remains to prove the assertion in (2). Since $a<a'$ and $b'<b$, we see that the head of $\eta_2$ is strictly below the head of $\eta_2$, and the tail of $\eta_2$ is weakly above the tail of $\eta_1$. Let $\eta_2'$ be the ribbon contained  in $\eta_1$ which is obtained  by  sliding  $\eta_2$ upwards  along the north-west to south-east diagonal by one unit, as illustrated below. 
$$\Young[0.8pc]{
[]&[]&[]&[]&[]&[]&[]&lu|\\
[]&[\scriptstyle\lambda]&[]&[]&[]&A|&u|\\
[]&[]&[]&F|&=|&J|\\
[]&F|&I|\scriptstyle\eta_1&J|&A|\\
[]&H|&F|&I|\scriptstyle\eta_2&J|\\
[]&V|&V|\\
dr|}
\hspace{.5cm}
\Longleftrightarrow 
\hspace{.5cm}
\Young[0.8pc]{
[]&[]&[]&[]&[]&[]&[]&lu|\\
[]&[\scriptstyle\lambda]&[]&[]&[]&A|&u|\\
[]&[]&[]&A|&F|&J|\\
[]&F|&I|\scriptstyle\eta_2'&J|&H|\\
[]&V|&F|&I|\scriptstyle\eta_1'&J|\\
[]&<|&J|\\
dr|}$$
Notice that $\mathtt{h}(\eta_2')=b'$ and  $\mathtt{t}(\eta_2')=a'$. 
Set $\mu'=\lambda\cup \eta_2'$.
Clearly, $\eta_1'=\nu/\mu'$ is a ribbon with $\mathtt{h}(\eta_1')=b$ and  $\mathtt{t}(\eta_1')=a$. Moreover, notice that $\eta_1'$ (resp., $\eta_2'$) has the same height and width as $\eta_1$ (resp,. $\eta_1$). Thus we have $v_{a'b'}\cdot v_{a b}\cdot \lambda=v_{ab}\cdot v_{a' b'}\cdot \lambda$, concluding the assertion in (2). \qedhere
\end{itemize} 
\end{proof}

\medbreak
\noindent
{\bf Claim 2.} 
For any given $\lambda\subseteq (n-k)^k$, if $\upsilon_{a_r b_r}\cdots\upsilon_{a_1 b_1}\cdot \lambda\neq 0$ with $b_r<\cdots<b_1$, then the values $a_1,\ldots, a_r$ are distinct. 

\begin{proof}
The proof is  by induction on $r$.
This is trivial when $r=1$. Assume now that $r>1$. Let $m=\max\{a_1,\ldots, a_r\}$. If there were two indices $1\leq i<j\leq r$ such that $a_i=a_j=m$ (with the assumption  that $a_k<m$ for $i<k<j$), then we could repeatedly apply (2) in Claim 1 to move $\upsilon_{a_j b_j}$ right to obtain that 
\[
\upsilon_{a_r b_r}\cdots\upsilon_{a_j b_j}\cdots\upsilon_{a_i b_i}\cdots\upsilon_{a_1 b_1}\cdot \lambda=\upsilon_{a_r b_r}\cdots\widehat{\upsilon_{a_j b_j}}\cdots\upsilon_{a_j b_j}\cdot\upsilon_{a_i b_i}\cdots\upsilon_{a_1 b_1}\cdot \lambda,
\]
which would vanish because $\upsilon_{a_j b_j}\upsilon_{a_i b_i}=0$ by (1) in Claim 1. So there is only one index, say $t_1$,   such that $a_{t_1}=m$. We now again  use (2) in Claim 1 to move $\upsilon_{a_{t_1} b_{t_1}}$ to the rightmost, yielding  that 
\begin{equation}\label{WERt}
\upsilon_{a_r b_r}\cdots\upsilon_{a_{t_1} b_{t_1}}\cdots\upsilon_{a_1 b_1}\cdot \lambda=\upsilon_{a_r b_r}\cdots\widehat{\upsilon_{a_{t_1} b_{t_1}}}\cdots\upsilon_{a_1 b_1}\cdot\upsilon_{a_{t_1} b_{t_1}}\cdot \lambda\neq 0.    
\end{equation}  
Letting  $\lambda'=\upsilon_{a_{t_1} b_{t_1}}\cdot \lambda\subseteq (n-k)^k$, 
we apply induction to $\upsilon_{a_r b_r}\cdots\widehat{\upsilon_{a_{t_1} b_{t_1}}}\cdots\upsilon_{a_1 b_1}\cdot \lambda'$, concluding that the elements in $\{a_1,\ldots, a_r\}\setminus \{a_{t_1}=m\}$  are distinct. So  $a_1,\ldots, a_r$ are distinct.
\end{proof}

We can now describe the map $\phi$. In fact, the arguments in the proof of  Claim 2 already imply the construction. Given $\upsilon_{a_r b_r}\cdots\upsilon_{a_1 b_1}\cdot \lambda\neq 0$  with $b_r<\cdots<b_1$, it follows from Claim 2 that we can rearrange $a_1,\ldots, a_r$ in increasing order, say   $a_{t_r}<\cdots<a_{t_1}$. Using (2) in Claim 1, we are able to move $v_{a_{t_1} b_{t_1}}$ to the rightmost, as given in   \eqref{WERt}. 
Implementing the same procedure to $\upsilon_{a_r b_r}\cdots\widehat{\upsilon_{a_{t_1} b_{t_1}}}\cdots\upsilon_{a_1 b_1}$, we can eventually rewrite 
$\upsilon_{a_r b_r}\cdots\upsilon_{a_1 b_1}\cdot \lambda$ as
\[
\upsilon_{a_{t_r}b_{t_r}}\cdots\upsilon_{a_{t_1} b_{t_1}}\cdot \lambda,
\]
which is defined as the image of 
$\upsilon_{a_r b_r}\cdots\upsilon_{a_1 b_1}\cdot \lambda$ under $\phi$.

Imitating the above construction, the inverse  of $\phi$ can be easily obtained  based on the following two facts, and  we omit the details here.

\medbreak
\noindent
{\bf Claim 1'.} 
For any given $\lambda\subseteq (n-k)^k$, assume that $v_{a'b'}\cdot v_{a b}\cdot \lambda\neq 0$ with $a'<a$. Then (1) $b\neq b'$, and (2) if $b<b'$, 
then $v_{a'b'}\cdot v_{a b}\cdot \lambda=v_{ab}\cdot v_{a' b'}\cdot \lambda$.

\medbreak
\noindent
{\bf Claim 2'.} For any given $\lambda\subseteq (n-k)^k$, if $\upsilon_{a_r' b_r'}\cdots\upsilon_{a_1' b_1'}\cdot \lambda\neq 0$ with $a_r'<\cdots<a_1'$, then the values $b_1',\ldots, b_r'$ are distinct.
\end{proof}

\end{document}